\DeclareMathOperator{\Exp}{Exp}
\DeclareMathOperator{\tr}{tr}
\newcommand{\loc}{\text{loc}}
\newtheorem{theorem}{Theorem}[section]
\newtheorem{corollary}[theorem]{Corollary}
\newtheorem{proposition}[theorem]{Proposition}
\theoremstyle{definition}
\newtheorem{definition}[theorem]{Definition}
\newtheorem{example}[theorem]{Example}
\newtheorem{remark}[theorem]{Remark}
\newtheorem{lemma}[theorem]{Lemma}
\theoremstyle{property}
\newtheorem{assumption}[theorem]{Assumption}
\newcommand{\intprod}{\mathbin{\raisebox{\depth}{\scalebox{1}[-1]{$\lnot$}}}}
\DeclareFontFamily{OT1}{rsfs}{}
\DeclareFontShape{OT1}{rsfs}{n}{it}{<-> rsfs10}{}
\DeclareMathAlphabet{\curly}{OT1}{rsfs}{n}{it}
\renewcommand\O{\mathcal O}
\newcommand\F{\mathcal F}
\renewcommand{\cH}{\mathcal{H}}
\newcommand\cN{\mathcal N}
\newcommand\T{\mathbb T}
\newcommand\C{\mathbb C}
\newcommand\G{\mathcal G}
\newcommand\sfZ{\mathsf Z}
\newcommand\Q{\mathbb Q}
\newcommand\Z{\mathbb Z}
\newcommand\NP{\mathrm{NP}}
\newcommand\ncQuot{\mathrm{ncQuot}}
\newcommand\Quot{\mathrm{Quot}}
\newcommand\ncQ{\mathrm{ncQuot}}
\newcommand\vd{\mathrm{vd}}
\newcommand\vir{\mathrm{vir}}
\newcommand\coker{\operatorname{coker}}
\renewcommand\hom{\mathcal{H}{\it{om}}}
\newcommand\mdot{{\scriptscriptstyle\bullet}}
\newcommand\INTO{\ar@{^{(}->}[r]}
\newcommand\bull{{\scriptscriptstyle\bullet}}
\renewcommand\udot{^\bull}
\newcommand{\glob}{\mathrm{glob}}
\newcommand{\sheaf}{\mathrm{sheaf}}
\DeclareRobustCommand{\SkipTocEntry}[4]{}
\DeclareMathOperator{\rev}{rev}
\newcommand{\op}{\mathrm{op}}
\begin{document}
\title[Proof of a magnificent conjecture]{Proof of a magnificent conjecture}
\author[M.~Kool and J.~V.~Rennemo]{M.~Kool and J.~V.~Rennemo}
\maketitle
\vspace{-1cm}
\begin{abstract}
Motivated by super-Yang--Mills theory on a Calabi--Yau 4-fold, Nekrasov and Piazzalunga have assigned weights to $r$-tuples of solid partitions and conjectured a formula for their weighted generating function.

We define $K$-theoretic virtual invariants of Quot schemes of 0-dimensional quotients of $\O_{\C^4}^{\oplus r}$ by realizing them as zero loci of isotropic sections of orthogonal bundles on non-commutative Quot schemes.
Via the Oh--Thomas localization formula, we recover Nekrasov--Piazzalunga's weights and derive their sign rule.

Our proof passes through refining the $K$-theoretic invariants to sheaves and describing them via Clifford modules, which lets us show that they arise from a factorizable sequence of sheaves in the sense of Okounkov.
Taking limits of the equivariant parameters, we then deduce the Nekrasov--Piazzalunga conjecture from its 3-dimensional analog.
\end{abstract}
\bigskip
\setlength{\epigraphwidth}{3in}
\epigraph{I saw the sign and it opened up my eyes \\
	I saw the sign \\
	Life is demanding without understanding \\
	I saw the sign and it opened up my eyes}{\textit{The Sign}, Ace of Base}

\section{Introduction}

\subsection{Enumerating partitions} 

The generating series of partitions, or Young diagrams, admits the following infinite product formula attributed to Euler
$$
\sum_{\pi} q^{|\pi|} = \prod_{n=1}^{\infty} (1-q^n)^{-1},
$$
where $|\pi|$ denotes the size of $\pi$. MacMahon found the following expression for the generating series of plane partitions, i.e.~3-dimensional piles of boxes
$$
\sum_{\pi} q^{|\pi|} = \prod_{n=1}^{\infty} (1-q^n)^{-n}.
$$
In this paper we consider \emph{solid partitions}, where a solid partition $\pi$ is a finite subset of $\mathbb{Z}_{\ge 0}^4$ such that if $(a_1,a_2,a_3,a_4) \in \pi$ and $0 \le b_i \le a_i$ for $i = 1, 2, 3, 4$, then also $(b_1,b_2,b_3,b_4) \in \pi$.

MacMahon initially expected the generating series of solid partitions to satisfy (see \cite{ABMM})
$$
\sum_{\pi} q^{|\pi|} \overset{?}{=} \prod_{n=1}^{\infty} (1-q^n)^{-\frac{1}{2}n(n+1)}.
$$
This formula predicts 141 solid partitions of size 6 -- one too many. As mentioned in \cite[Sect.~11]{Sta}, the study of solid partitions has seen little progress.\footnote{Stanley \cite{Sta} remarks: 
\noindent \emph{``The case $r = 2$ has a well-developed theory --- here 2-dimensional partitions are known as plane
partitions (...) For $r \geq 3$ almost nothing is known, and (...) casts only a faint glimmer of light
on a vast darkness.''}}

\subsection{Magnificent Four}

In his ``Magnificent Four'' paper, Nekrasov \cite{Nek} assigned a weight 
$$
(-1)^{\mu_\pi}[-\mathsf{v}_\pi]  \in \Q(t_1,t_2,t_3,t_4,y^{\frac{1}{2}}) \ \text{with }t_4 = t_1^{-1}t_2^{-1}t_3^{-1}
$$
to any solid partition $\pi$.
Here
$$
\mu_\pi = |\{(i,i,i,j) \in \pi \, : \, j > i\}|
$$
and the term $[-\mathsf{v}_\pi]$ is an explicit rational function determined by the character
\[
Z_\pi = \sum_{(a_1,a_2,a_3,a_4) \in \pi} t_1^{a_1}t_2^{a_2}t_3^{a_3}t_4^{a_4}.
\]
More generally, to an $r$-tuple $\vec{\pi} = (\pi_1, \ldots, \pi_r)$ of solid partitions, Nekrasov--Piazzalunga \cite{NP} assigned a weight
$$
(-1)^{\mu_{\vec{\pi}}} [-\mathsf{v}_{\vec{\pi}}] \in \Q(t_1,t_2,t_3,t_4,w_1, \ldots, w_r,y_1^{\frac{1}{2}}, \ldots, y_r^{\frac{1}{2}})\ \text{with }t_4 = t_1^{-1}t_2^{-1}t_3^{-1},
$$
where $\mu_{\vec{\pi}} = \sum_{\alpha} \mu_{\pi_\alpha}$. We recall the expressions for $[-\mathsf{v}_\pi]$ and $[-\mathsf{v}_{\vec \pi}]$ in Definition \ref{def:NPweight}. Although the sign $\mu_{\vec{\pi}}$ and weight $[-\mathsf{v}_{\vec{\pi}}]$ are not invariant with respect to permutations of $\{1,2,3,4\}$, the product $(-1)^{\mu_{\vec{\pi}}} [-\mathsf{v}_{\vec{\pi}}]$ is invariant under such permutations \cite[Thm.~2.9]{Mon}.

Having defined these weights, one may consider the formal power series
\begin{equation} \label{NPZ}
\mathsf{Z}_{r}^{\NP} = \sum_{\vec{\pi} = (\pi_1, \ldots, \pi_r)}  (-1)^{\mu_{\vec{\pi}}} [ -\mathsf{v}_{\vec{\pi}} ]  \, ((-1)^{r} q)^{|\vec{\pi}|},
\end{equation}
where $|\vec{\pi}| = \sum_{\alpha} |\pi_\alpha|$.
Nekrasov and Piazzalunga give a conjectural formula for $\mathsf{Z}_r^{\NP}$, and the main goal of this paper is to give a proof of this formula.

To describe the formula, recall that the \emph{plethystic exponential} of a Laurent series $F(z_1, \ldots, z_n)$ is defined by
\begin{equation}
\mathrm{Exp}(F(z_1, \ldots, z_n)) = \exp\Big( \sum_{m=1}^{\infty} \frac{1}{m} F(z_1^m, \ldots, z_n^m) \Big), \label{eqn:DefinitionOfPlethystic}
\end{equation}
where we assume that the coefficients of $F$ are such that this is well defined algebraically (i.e.~such that no infinite sums occur in the computation of any one coefficient of $\mathrm{Exp}(F)$).
For any monomial $z_1^{i_1} \cdots z_n^{i_n}$, we define 
$$
[z_1^{i_1} \cdots z_n^{i_n}] = z_1^{\frac{i_1}{2}} \cdots z_n^{\frac{i_n}{2}} - z_1^{-\frac{i_1}{2}} \cdots z_n^{-\frac{i_n}{2}}.
$$
\begin{theorem}[Nekrasov--Piazzalunga's conjecture] 
	\label{mainthm}
	We have
$$
\mathsf{Z}_{r}^{\NP} = \mathrm{Exp}\Bigg(\frac{[t_1t_2][t_1t_3][t_2t_3][y]}{[t_1][t_2][t_3][t_4][y^{\frac{1}{2}} q ]   [y^{\frac{1}{2}} q^{-1} ]}\Bigg),
$$
where $y=y_1 \cdots y_r$ and $t_1t_2t_3t_4 = 1$. In particular, $\mathsf{Z}^{\NP}_{r}$ is independent of $w_1, \ldots, w_r$.
\end{theorem}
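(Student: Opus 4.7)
My plan follows the strategy sketched in the abstract: we realise both sides of the conjectured identity as equivariant $K$-theoretic invariants of a geometric moduli problem, refine these invariants to a sheaf-theoretic object, and then reduce to the three-dimensional analog by degenerating the equivariant parameters.

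First, I would interpret each weight $(-1)^{\mu_{\vec\pi}}[-\mathsf{v}_{\vec\pi}]$ as the torus-fixed-point contribution to an Oh--Thomas-type virtual $K$-theoretic invariant of the Quot scheme $\Quot_{\C^4}(\O^{\oplus r}, n)$ of length-$n$ quotients of $\O_{\C^4}^{\oplus r}$. Since $\C^4$ is Calabi--Yau the obstruction theory is $3$-term self-dual and the Oh--Thomas class depends on a square-root choice. The device for pinning this choice down is to embed $\Quot_{\C^4}(\O^{\oplus r}, n)$ into the non-commutative Quot scheme $\ncQuot$ as the isotropic zero locus of a section of a natural orthogonal bundle $E$, so that the Oh--Thomas square-root Euler class of $E$ restricts to the desired virtual structure sheaf. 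Applying equivariant localisation at the $(\C^*)^4 \times (\C^*)^r$-fixed points, which are indexed by $r$-tuples $\vec\pi$ of solid partitions, identifies each local contribution with $(-1)^{\mu_{\vec\pi}}[-\mathsf{v}_{\vec\pi}]$; the sign $\mu_{\vec\pi}$ arises intrinsically from the canonical square root and thereby explains Nekrasov--Piazzalunga's sign rule.

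Second, I would promote these numerical invariants to genuine sheaves. Using the Clifford algebra of the orthogonal bundle $E$, I would construct a sequence of equivariant sheaves $\{\mathcal{F}_n\}$ on $\ncQuot$ whose equivariant Euler characteristics recover the $K$-theoretic Quot invariants above. The crucial property to verify is that $\{\mathcal{F}_n\}$ is \emph{factorisable} in the sense of Okounkov: the compatibility of the Clifford-module refinement with the natural union-of-supports morphisms forces the full generating function to have plethystic-exponential form $\Exp(F)$ for some ``single-box'' character $F$, which is a strong structural constraint.

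Finally, to identify $F$ I would take a specialisation of the equivariant parameters that collapses the $4$-fold problem onto its $3$-fold counterpart, namely the $K$-theoretic Donaldson--Thomas vertex of $\C^3$, where the analogous plethystic formula is already known. Matching both generating series after this limit, together with the permutation symmetry of $\mathsf{Z}_r^{\NP}$ in $\{t_1, t_2, t_3, t_4\}$, pins down $F$ as the stated rational character and completes the proof. The main obstacle I expect is the sheaf-theoretic refinement: the Oh--Thomas square root is inherently defined only up to a sign at each $n$, so producing a coherent tower of Clifford-module sheaves that is compatible with all factorisation morphisms requires genuine input from the spin geometry of $E$ and careful sign bookkeeping across the whole family of Quot schemes.
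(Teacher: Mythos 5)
Your outline captures the overall architecture of the paper's argument well — embedding $\Quot^n_r(\C^4)$ into $\ncQuot^n_r(\C^4)$ as the isotropic zero locus, using Clifford/spin-module machinery to lift $\widehat{\O}^{\vir}$ to a $\Z/2$-graded sheaf, invoking factorizability to obtain a plethystic exponential, and specializing $y=t_4$ to reduce to Okounkov's $\C^3$ result. However, there are three points where the plan as written is imprecise or would not go through.

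First, the factorizable sequence cannot live on $\ncQuot$: Okounkov's notion of factorizability is formulated for sequences of sheaves on $\Sym^n(X)$, and the multiplicativity comes from the étale maps $U_{m,n}\to\Sym^{m+n}(\C^4)$. You must push forward along the Hilbert--Chow morphism $\nu_n\colon\Hilb^n(\C^4)\to\Sym^n(\C^4)$ and then take the $\Z/2$-graded cohomology sheaf of $R\nu_{n*}$ of the Clifford-module complex; it is this pushed-forward sequence that is shown to factorize. Second, the paper establishes factorizability only for $r=1$ (Hilbert schemes). The rank-$r$ case is handled by a separate argument: one shows by rigidity that $\sfZ_r^{\NP}$ is independent of the framing parameters $w_1,\dots,w_r$, then takes the limit $(w_1,\dots,w_r)=(L,L^2,\dots,L^r)$, $L\to\infty$, under which the off-diagonal terms $[-\mathsf{v}_{\vec\pi,\alpha\beta}][-\mathsf{v}_{\vec\pi,\beta\alpha}]$ collapse to a monomial in the $y_\alpha$ and the series factors as a product of $r$ copies of $\sfZ_1^{\NP}$ with shifted $q$-variable; a plethystic identity then reassembles the right-hand side. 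Attempting to prove factorizability directly on $\Quot_r^n$ would be more delicate and is not what the paper does.

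Third, and most substantially, appealing to permutation symmetry in $\{t_1,t_2,t_3,t_4\}$ is not sufficient to determine the remaining factor $H_1$. After factorizability gives $\sfZ_1^{\NP}=\Exp\!\left(G_1/([t_1][t_2][t_3][t_4])\right)$ with $G_1$ having Laurent-polynomial coefficients, and after establishing divisibility $G_1=[t_1t_2][t_1t_3][t_2t_3]H_1$, what one needs is that $H_1$ is \emph{independent} of $t_1,t_2,t_3$ — a much stronger statement than $S_4$-invariance. This is exactly where Okounkov's rigidity principle enters: one shows each coefficient $H_{1,n}$ (a Laurent polynomial) remains bounded as $t_i^{\pm1}\to\infty$, which for a Laurent polynomial forces constancy. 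Mere symmetry of the weights $(-1)^{\mu_{\vec\pi}}[-\mathsf{v}_{\vec\pi}]$ under permutations of $\{1,2,3,4\}$ does not give you this; you need the explicit boundedness estimate on the fixed-point contributions, which is an inductive argument combining the plethystic expansion with term-by-term analysis of the vertex weight.
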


\begin{remark}
After eliminating $t_4 = (t_1t_2t_3)^{-1}$, the argument of the plethystic exponential in the conjecture is a rational function in $t_1,t_2,t_3,y^{\frac{1}{2}},q$. 
This rational function can be expanded as a formal power series in positive powers of $q$, whose coefficients are rational functions in $t_i, y^{\frac{1}{2}}$.
One can then apply the plethystic exponential, because only positive powers of $q$ appear in the plethystic exponent.
\end{remark}

\begin{remark}
The $r=1$ case of Theorem \ref{mainthm} was conjectured by Nekrasov in \cite{Nek}.
A special limit of the $r = 1$ case, namely taking $t_i = \exp(b \lambda_i)$ and $y = \exp(- b (d_1\lambda_1 + d_2 \lambda_2 + d_3 \lambda_3 + d_4 \lambda_4))$ with $b \to 0$, was conjectured in \cite{CK}.\footnote{In \cite[Conj.~1.10(c)]{CK}, the authors also give a conjectural formula for the further specialization $d_1=d_2=d_3=0$ and $\lambda_4 = 0$. Though true for solid partitions of size $\leq 6$, it fails for solid partitions of size 8.} This specialization was recently proved using novel degeneration methods by Cao--Zhao--Zhou \cite{CZZ} (relying on this paper's proof of Nekrasov's sign formula  for the localisation, see \cite[Sect.~6.4]{CZZ}).

The motivation in \cite{CK} comes from a parallel conjecture on tautological integrals on Hilbert schemes of points on \emph{compact} Calabi--Yau 4-folds. 
This conjecture was proved in the very ample case by H.~Park \cite{Par} using virtual pull-back.
The general case was shown by Bojko \cite{Boj} to follow from a conjectural virtual 4-fold wall-crossing formula along the lines of \cite{GJT}. 
\end{remark}

\subsection{Three dimensions} 

The Hilbert scheme parametrizing 0-dimensional subschemes of length $n$ on $\C^d$, denoted by $\Hilb^n(\C^d)$, is only smooth if $d\le 2$ or $n\le 3$. In general, even its dimension is unknown. 

For $d=3$, $\Hilb^n(\C^3)$ is the critical locus of a superpotential on a smooth ambient space. This structure was exploited by Behrend--Bryan--Szendr\H{o}i \cite{BBS} in their study of \emph{motivic} Donaldson--Thomas invariants and by Okounkov \cite{Oko} in his proof of the Nekrasov conjecture for \emph{$K$-theoretic} Donaldson--Thomas invariants. Denoting the twisted virtual structure sheaf $\O^{\vir} \otimes (\det T^{\vir})^{-1/2}$ of $\Hilb^n(\C^3)$ (and later the Quot scheme) by $\widehat{\O}^{\vir}$ we have:
\begin{theorem}[{\cite[Thm.~3.3.6]{Oko}}] \label{Okounkovthm}
$$
\sum_{n=0}^{\infty} \chi(\Hilb^n(\C^3), \widehat{\O}^{\vir}) \, (-q)^n = \mathrm{Exp}\Bigg(\frac{[t_1t_2][t_1t_3][t_2t_3]}{[t_1][t_2][t_3][\kappa^{\frac{1}{2}} q ]   [\kappa^{\frac{1}{2}} q^{-1} ]}\Bigg), \quad \kappa=t_1t_2t_3.
$$
\end{theorem}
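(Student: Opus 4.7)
The plan is to reduce the left-hand side to an explicit sum over plane partitions via equivariant virtual torus localization, and then to establish the product formula by a rigidity-and-degeneration argument that exploits the critical-locus description of $\Hilb^n(\C^3)$. First, I would apply the Oh--Thomas localization formula (valid for the symmetric perfect obstruction theory on $\Hilb^n(\C^3)$) to rewrite $\chi(\Hilb^n(\C^3),\widehat{\O}^{\vir})$ as a finite sum over the $T^3$-fixed points, which are the monomial ideals indexed by plane partitions $\pi$ of size $n$. The Nekrasov character formula expresses $T^{\vir}_\pi$ as an explicit element of $\Z[t_1^{\pm},t_2^{\pm},t_3^{\pm}]$ built from $Z_\pi = \sum_{(a,b,c)\in\pi} t_1^a t_2^b t_3^c$, and applying $\widehat{\chi}$ yields a symmetrized fixed-point integrand of the form $\prod_w [w]^{\pm 1}$. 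The left-hand side of Theorem \ref{Okounkovthm} thus becomes the three-dimensional K-theoretic Donaldson--Thomas vertex with empty legs.

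Next, I would establish structural properties of both sides, viewed as formal power series in $q$ with coefficients in $\Q(t_1,t_2,t_3,\kappa^{1/2})$ subject to $\kappa = t_1 t_2 t_3$: $S_3$-invariance under permutations of the $t_i$, and the involution $q \leftrightarrow \kappa^{1/2}/q$ coming from the square-root twist $(\det T^{\vir})^{-1/2}$. I would then exploit specializations where both sides can be computed independently. The limits $t_i t_j \to 1$ reduce the 3D vertex to the 2D case, whose generating series is controlled by Nakajima's formula for $\Hilb^n(\C^2)$. The Calabi--Yau specialization $\kappa = 1$, via the critical-locus description of \cite{BBS}, recovers a refined MacMahon product. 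Further specializations along a ``preferred direction'' produce one-leg and two-leg versions of the vertex, for which direct combinatorial evaluations become tractable.

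The two sides are then matched by a rigidity argument: after taking plethystic logarithms, one compares rational coefficient functions term by term in $q$, using that a rational function whose pole and zero structure is sufficiently controlled is pinned down by its values on enough specializations. The main obstacle is that the sum over plane partitions carries no manifestly multiplicative structure, so the product formula on the right can only be reached through substantial additional input. Okounkov's original proof in \cite{Oko} routes this difficulty through the K-theoretic DT/PT correspondence and vertex-operator computations on Fock space, identifying the partition function with matrix elements of an explicit action of an affine quantum algebra; finding a more direct combinatorial route, or even merely assembling enough independent specializations to drive the rigidity step through, is the heart of the work.
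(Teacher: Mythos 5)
The paper does not prove this statement: Theorem~\ref{Okounkovthm} is quoted verbatim as an external input, with the citation \cite[Thm.~3.3.6]{Oko} serving as the entire ``proof.'' It functions in this paper as the base case that the factorizability-plus-rigidity machinery ultimately reduces to (via the specialization $y=t_4$ in Section~\ref{sec:rk1}), not as something the authors re-derive. So there is no internal proof to compare your proposal against.

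Taken on its own terms, your proposal is an honest roadmap but not a proof, and you say as much in your final paragraph. The concrete gap is in the rigidity step. You correctly note the $S_3$-symmetry in $t_1,t_2,t_3$, the $q\leftrightarrow \kappa^{1/2}/q$ involution, and the fact that the fixed-point sum is a sum of products $\prod_w[w]^{\pm1}$, and you correctly identify that one wants to divide out $[t_1t_2][t_1t_3][t_2t_3]/([t_1][t_2][t_3])$ and show the quotient depends only on $\kappa$ and $q$. But even granting this rigidity (Okounkov's Prop.~3.5.11), you are left with an undetermined two-variable function $\phi(\kappa,q)$, and the specializations you list do not pin it down: the Calabi--Yau specialization $\kappa=1$ recovers one numerical series (MacMahon), and the ``$t_it_j\to1$'' degenerations you invoke do not cleanly land on the $\Hilb^n(\C^2)$ theory in the way the wording suggests (a 3D vertex with a nontrivial $\kappa$-dependence does not become a 2D partition function by sending $t_it_j\to 1$; one gets a divergent or ill-defined limit unless one is much more careful). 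Contrast this with the 4D case in this paper, where the extra mass parameter $y$ is exactly what makes the final step work: after rigidity the unknown is $H_1(y,q)$, and the one specialization $y=t_4$ lands precisely on the known 3D answer (Proposition~\ref{dimred}), determining $H_1$ completely. In 3D there is no analogous spare parameter, which is why Okounkov must route through genuinely additional input (DT/PT and the vertex-operator / quantum group machinery you allude to). Your sketch names that obstacle but does not overcome it, so the proposal does not constitute a proof.
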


As noted by Beentjes--Ricolfi \cite{BR}, the Quot scheme $\Quot^{n}_{r}(\C^3)$ parametrizing 0-dimensional quotients $\O_{\C^3}^{\oplus r} \twoheadrightarrow Q$ of length $n$ is also the critical locus of a superpotential. This has been used by Fasola--Monavari--Ricolfi \cite{FMR} and Arbesfeld--Kononov \cite{AK1} to establish the Awata--Kanno conjecture \cite{AK2}:

\begin{theorem}[Fasola--Monavari--Ricolfi, Arbesfeld--Kononov] \label{FMRthm}
$$
\sum_{n=0}^{\infty} \chi(\Quot^{n}_{r}(\C^3), \widehat{\O}^{\vir}) \, ((-1)^r q)^n = \mathrm{Exp}\Bigg(\frac{[t_1t_2][t_1t_3][t_2t_3][\kappa^r]}{[t_1][t_2][t_3][\kappa][\kappa^{\frac{r}{2}} q ]   [\kappa^{\frac{r}{2}} q^{-1} ]}\Bigg).
$$
\end{theorem}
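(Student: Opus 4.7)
The plan is to combine $K$-theoretic equivariant localization on the smooth non-commutative Quot scheme with a factorization/plethystic identity that reduces the computation to Okounkov's $r=1$ theorem. The geometric setup is the critical locus presentation due to Beentjes--Ricolfi: $\Quot^n_r(\C^3) = \Crit(W) \hookrightarrow \ncQuot^n_r(\C^3)$, where $\ncQuot^n_r(\C^3)$ is smooth and $W$ is an explicit cubic superpotential coming from the Jacobi algebra of the three-loop quiver. This yields a symmetric perfect obstruction theory whose twisted virtual structure sheaf is $\widehat\O^{\vir}$. The torus $T = (\C^*)^3 \times (\C^*)^r$ acts on the Quot scheme, where $(\C^*)^3$ acts on $\C^3$ with weights $t_1, t_2, t_3$ and $(\C^*)^r$ scales the $r$ framings with weights $w_1, \ldots, w_r$; its fixed locus is the finite set of $r$-tuples $\vec\pi = (\pi_1, \ldots, \pi_r)$ of plane partitions with $\sum_\alpha |\pi_\alpha| = n$.

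Applying the $K$-theoretic localization theorem for symmetric obstruction theories (Nekrasov--Okounkov, Oh--Thomas), one writes
$$
\chi(\Quot^n_r(\C^3), \widehat\O^{\vir}) = \sum_{|\vec\pi| = n} L(\vec\pi),
$$
where the local contribution $L(\vec\pi)$ is an explicit signed character expression computed from the virtual tangent character $T^{\vir}_{\vec\pi}$, read off the tautological complex on $\ncQuot^n_r(\C^3)$. Expanding $T^{\vir}_{\vec\pi} = \sum_{\alpha,\beta} w_\alpha w_\beta^{-1} \, V(Z_{\pi_\alpha}, Z_{\pi_\beta})$ with $Z_{\pi_\alpha} = \sum_{(i,j,k) \in \pi_\alpha} t_1^i t_2^j t_3^k$ and $V$ a universal symmetric expression, the diagonal $\alpha = \beta$ summand recovers the 3-fold Donaldson--Thomas character treated by Okounkov.

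The crux is to identify the resulting generating series with the claimed plethystic exponential. Following Okounkov's strategy, I would realize it as a trace on an $r$-fold tensor of the infinite-wedge Fock space, with each framing contributing one factor and the superpotential inducing a vertex-operator coupling. The trace can then be evaluated as in the $r=1$ case, producing Okounkov's formula with the Macdonald parameter shifted from $\kappa^{1/2} q$ to $\kappa^{r/2} q$ and the insertion of the factor $[\kappa^r]/[\kappa]$. Equivalently, one argues inductively on $r$, using Theorem~\ref{Okounkovthm} as the base case and a geometric ``add-a-framing'' operation that induces the corresponding shift on the plethystic side; at $r=1$ the extra factors cancel and the claimed formula collapses to Okounkov's.

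The main obstacle is this plethystic identification. Off-diagonal terms in $T^{\vir}_{\vec\pi}$ depend on ratios $w_\alpha/w_\beta$, whereas the final formula is $w$-independent; one must exhibit the precise cancellation of framing-dependent weights. This is the technical heart of the Fasola--Monavari--Ricolfi and Arbesfeld--Kononov arguments, and ultimately reflects the existence of a sheafified, factorizable refinement of the generating series in the spirit of Okounkov.
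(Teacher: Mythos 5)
Your setup (the critical-locus presentation of $\Quot^n_r(\C^3)$ inside the smooth non-commutative Quot scheme after Beentjes--Ricolfi, localization to $r$-tuples of plane partitions, and the expansion of the virtual tangent character into blocks indexed by pairs of framings) is the correct and standard starting point, but the argument stops exactly where the theorem begins. The two mechanisms you invoke for the ``plethystic identification'' --- a trace on an $r$-fold tensor of Fock spaces with a vertex-operator coupling, or an inductive ``add-a-framing'' operation --- are named but never constructed, and you concede yourself that this step is the technical heart of the cited works. Concretely, nothing in the proposal establishes (i) that the off-diagonal contributions, which genuinely depend on the ratios $w_\alpha/w_\beta$, assemble into a $w$-independent series, nor (ii) why the answer is the $r=1$ series with $\kappa^{\frac12}$ replaced by $\kappa^{\frac r2}$ together with the extra factor $[\kappa^r]/[\kappa]$. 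A workable route for (i) is to push $\widehat\O^{\vir}$ forward along the proper Quot--Chow morphism to see that the series is a Laurent \emph{polynomial} in the $w_\alpha$, and then to prove boundedness along $w_\alpha=L^{\alpha}$, $L^{\pm1}\to\infty$, which forces $w$-independence; for (ii) one takes that limit, under which each off-diagonal block degenerates to an explicit monomial, the series factors as a product of $r$ rank-one series in shifted variables $q\kappa^{\pm\frac12(\cdots)}$, and a telescoping identity of the type \eqref{funid} collapses the sum of exponents to the stated one. None of this appears in your write-up, so as it stands it is a plan rather than a proof; you also do not track the overall sign $((-1)^rq)^n$.

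For comparison, the paper does not prove this statement along your lines at all: it quotes it as a theorem of Fasola--Monavari--Ricolfi and Arbesfeld--Kononov, and then in Remark \ref{FMRrmk} rederives it as the specialization $y_1=\cdots=y_r=t_4$ of the four-dimensional formula of Theorem \ref{mainthm}, using that every $\T'$-fixed quotient not factoring through $\{x_4=0\}$ contributes zero at that specialization and that $(-1)^{\mu_{\vec\pi}}=1$ for tuples of plane partitions. So either supply the $w$-independence and limit analysis sketched above (this is essentially the content of Sections \ref{sec:rk1}--\ref{sec:higherrk} transposed to three dimensions), or deduce the statement from the 4-fold theory --- but the latter is a genuinely different argument from the one you outlined.
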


In four dimensions, $\Hilb^n(\C^4)$ and $\Quot^{n}_{r}(\C^4)$ are not critical loci, but instead they can be realized as the zero locus of an isotropic section of an orthogonal bundle on a smooth ambient space. 
Using the $K$-theoretic sheaf counting theory for Calabi--Yau 4-folds defined by Oh--Thomas \cite{OT}, this allows us to give a global definition of Nekrasov--Piazzalunga's invariants and prove their conjecture.
The invariants defined by Oh--Thomas are generalizations and refinements of the sheaf counting invariants on Calabi--Yau 4-folds previously defined by Borisov--Joyce \cite{BJ}, and in special cases by Cao--Leung \cite{CL}.

\subsection{Oh--Thomas theory} \label{sec:OTtheory}

In this paper, we only need what we will call the \textit{standard model} version of Oh--Thomas's invariants \cite{OT}, which we now discuss. 
Let $A$ be a connected smooth quasi-projective variety and $(E,q)$ an orthogonal bundle of even rank $m$ on $A$, i.e.~$E$ is a rank $m$ vector bundle and $q \colon E \otimes E \rightarrow \O_A$ is a non-degenerate symmetric bilinear form. 
Suppose $\Lambda \subset E$ is a maximal isotropic subbundle of $E$.
This gives a short exact sequence of vector bundles
\begin{equation} \label{splitting}
0 \to  \Lambda \to E \to \Lambda^\vee \to 0.
\end{equation}
Suppose we have an isotropic section $s \in \Gamma(A,E)$, i.e.~$q(s,s)=0$, and let $M = Z(s) \subset A$. This gives rise to the following commutative diagram of sheaves on $M$
\begin{displaymath}
\xymatrix
{
T_A|_M \ar^<<<<{ds}[r] &E|_M \stackrel{q}{\cong} E^\vee|_M  \ar^<<<<<{(ds)^\vee}[r] \ar@{->>}^{s^\vee}[d] & \Omega_A|_M \ar@{=}[d] \\
& I/I^2|_M \ar^{d}[r] & \Omega_A|_M
}
\end{displaymath}
where $I = s^\vee(E) \subset \O_A$ is the ideal cutting out $M$.
We view the top row as a complex $E^\mdot$ of locally free sheaves concentrated in degrees $-2,-1,0$ satisfying $E^{\mdot} \cong (E^{\mdot})^{\vee}[2]$. The bottom row is the truncated cotangent complex $\tau^{\geq -1} \mathbb{L}_M$, and this diagram provides an obstruction theory $E^\mdot \to \tau^{\geq -1} \mathbb{L}_M$. Since $E^\mdot$ is \emph{not} 2-term, we cannot use the classical theory of virtual classes and virtual structure sheaves \cite{BF, LT}.

To get a well-defined virtual class, we must first choose an \textit{orientation} of $E$.
We refer to \cite[Sec.~2]{OT} for a more precise discussion of orientations and the subtle sign choices involved, and we only recall a few basic facts here.
Note that $\Lambda^{m} q$ induces an isomorphism $\det(E) \cong \det(E^\vee)$. An \emph{orientation} of $(E,q)$ is a map $o \colon \O_A \to \det(E)$ such that
the composed map
\[
\O_A \overset{o \otimes o}{\to} \det(E) \otimes \det(E) \overset{\id \otimes \Lambda^{m}q}\to \det(E) \otimes \det(E^\vee) \to \O_A
\]
equals $(-1)^{m/2}\id_{\O_A}$.\footnote{The homomorphism $\det(E) \otimes \det(E^\vee) \to \O_A$ is defined by $a_1\wedge \dots \wedge a_n \otimes a_n^\vee \wedge \dots \wedge a_1^\vee \mapsto 1$.} Under our assumption \eqref{splitting}, we get a canonical isomorphism 
\[
\pi_\Lambda \colon \det(E) \cong \det(\Lambda) \otimes \det(\Lambda^\vee) \cong \O_A,
\]
and, by \cite[Def. 2.2]{OT}, the orientation \textit{induced by} $\Lambda$ is
\[
o_\Lambda = (-i)^{m/2} \pi_{\Lambda}^{-1}.
\]
For a given orientation $o$ of $(E,q)$, we say that $\Lambda$ is \emph{positive} with respect to $o$, if $o_\Lambda = o$.
For our purposes, it is usually more convenient to specify an orientation through a choice of positive maximal isotropic subbundle $\Lambda$ rather than through a trivialisation of $\det(E)$. 

Oh and Thomas construct a \emph{virtual class} $[M]^{\mathrm{vir}} \in A_n(M,\Z[\tfrac{1}{2}])$ and a \emph{twisted virtual structure sheaf} $\widehat{\mathcal{O}}_M^{\mathrm{vir}} \in K_0(M,\Z[\tfrac{1}{2}])$, which depend on the choice of orientation of $(E,q)$.
Here $A_n(M,\Z[\tfrac{1}{2}])$ is the Chow group of degree $n=\tfrac{1}{2}\rk(E^\mdot)$ with coefficients in  $\Z[\tfrac{1}{2}] = \{\frac{a}{2^b} \, : \, a, b \in \Z\}$, and $K_0(M,\Z[\tfrac{1}{2}]) = K_0(M) \otimes \mathbb Z[\tfrac12]$ denotes the Grothendieck group of coherent sheaves on $M$ ``with coefficients in $\Z[\tfrac{1}{2}]$''.

In the case of interest to us, $M$ is non-compact, but there is an action of an algebraic torus $T$ on $M$, induced by a $T$-action on $A$, with compact fixed locus $M^T$. 
If $E$ and $s$ are $T$-equivariant and $q$ is $T$-invariant, then Oh--Thomas also define the $T$-equivariant versions of these classes $[M]^{\vir} \in A_n^T(M,\Z[\frac12])$ and $\widehat{\O}^{\vir}_M \in K_0^T(M)_{\mathrm{loc}}$. Here $K_0^T(M)_{\mathrm{loc}}$ is defined by \cite[Sect.~7]{OT}
\[
K_0^T(M)_{\mathrm{loc}} := K_0(M) \otimes_{\Z[t,t^{-1}]} \Q(t^{\frac{1}{2}}),
\]
where $t = (t_1, \dots, t_{\dim T})$ is the tuple of $T$-equivariant parameters and furthermore $t^a = (t_1^a,\dots,t_{\dim T}^a)$. 

\begin{remark}
Note that the geometric setup described above is adapted to our concrete problem and is more restrictive than what \cite{OT} require, even locally.

Oh--Thomas also work in a global setting, where $M$ is a quasi-projective scheme with a $(-2)$-shifted symplectic structure (in the sense of \cite{PTVV}) and an orientation. From this data, one obtains an obstruction theory $E^\mdot \to \tau^{\geq -1} \mathbb{L}_M$, where $E^\mdot$ is a self-dual
3-term complex of locally free sheaves, and a choice of orientation for $E^\mdot$, i.e.~an appropriate trivialisation of $\det(E^\mdot)$.
Oh--Thomas define a virtual class and virtual structure sheaf in this general setting as well.
\end{remark}

\begin{remark} At various points in this text we will be taking square roots of an equivariant line bundle $L$.
In the ring $K^0_T(M)_{\mathrm{loc}}$, a square root of the class of $L$ exists and is unique (see \cite[Sect.~5.1]{OT}), and so we may write $L^{\frac12}$ without further comment.

In Section \ref{sec:global}, where we prove that the generating function $\sfZ^{\NP}_1$ is factorizable, the problem of taking square roots becomes more subtle.
This is because the definition of factorizable sequence requires that certain sheaves are isomorphic, and so we are forced to specify square roots as line bundles rather than just $K$-theory classes, and in particular neither existence or uniqueness of such square roots is automatic.
\end{remark}

In the setting of the standard model, and using the orientation induced by $\Lambda$, we use two facts from \cite{OT}:\ \\

\noindent \textbf{Virtual structure sheaf.}
 Let $\iota \colon M \hookrightarrow A$ denote the inclusion. 
 Then 
$$
\iota_* \widehat{\mathcal{O}}^{\vir}_M = \widehat{\Lambda}^\mdot \Lambda^\vee \otimes \det(T_A)^{-\frac{1}{2}} \in K_0^T(A)_{\loc},
$$
where, for any vector bundle $V$, we define
\begin{equation} \label{eqn:lambdahat}
\Lambda^{\mdot} V = \sum_{i = 0}^{\rk V} (-1)^i \Lambda^i V, \quad \widehat{\Lambda}^{\mdot} V = \frac{\Lambda^\mdot V}{\det(V)^{\frac{1}{2}}}.
\end{equation}

\noindent \textbf{Virtual localization.} Let $T$ be an algebraic torus acting on $A$. Suppose $E,\Lambda, s$ are $T$-equivariant, $q$ is $T$-invariant, and $M^T$ consists of isolated reduced points. Denote the fixed and moving part of a $T$-equivariant complex $V^\mdot$ at a fixed point $P$ by $(V^\mdot|_P)^f$ and $(V^\mdot|_P)^m$. Suppose $\dim( (T_{A} - \Lambda)|_P^f) = 0$ for all $\iota_P \colon \{P\} \subset M^T$, i.e.~all fixed points have virtual dimension 0. 
Then tracking the signs in the localization formula of \cite{OT}, we prove (Section \ref{sec:localfixisolated})
\begin{equation} \label{virloc1}
\widehat{\O}_M^{\vir} = \sum_{P \in M^T} (-1)^{n_P^\Lambda} \, \iota_{P*} \Bigg( \frac{1}{\widehat{\Lambda}^{\mdot} (\Omega_{A} - \Lambda^\vee)|_{P}} \Bigg)
\end{equation}
with
\begin{equation}
\label{eqn:CohomologicalFormulaForLocalisationSign}
n_P^\Lambda \equiv \dim  \mathrm{coker}(p_{\Lambda} \circ ds|_P)^f  \pmod 2,
\end{equation}
where $p_{\Lambda} \circ ds|_P$ is the composition
$$
T_{A}|_P \stackrel{ds|_P}{\to} E|_P  \stackrel{p_{\Lambda}}{\to} \Lambda|_P
$$
and $p_{\Lambda}$ is the projection onto $\Lambda|_P$ for any splitting $E|_P = \Lambda|_P \oplus \Lambda^\vee|_P$.

\subsection{Outline}

In Section \ref{sec:tools} we review and develop some of the general tools used in our proof, i.e.~factorizability, Clifford algebras, spin modules, and torus localization. In Section \ref{sec:global}, we show that $M=\Quot^n_r(\C^4)$ satisfies the ``standard model'' setup by embedding it into a non-commutative Quot scheme $A$. We define $K$-theoretic virtual invariants
$$
N_{r,n}^{\mathrm{glob}} = \chi(M, \widehat{\O}^{\vir}_M \otimes \widehat{\Lambda}^\mdot \hom(\cV,\cW)), 
$$
for a certain bundle $\hom(\cV,\cW)$ on $M$, and consider the generating function
\[
\mathsf{G}_{r} = \sum_{n=0}^{\infty} N_{r,n}^{\mathrm{glob}} \, q^n.
\]

For $r=1$, we use the Hilbert--Chow morphism 
$$
\nu_n \colon \Hilb^n(\C^4) \rightarrow \Sym^n(\C^4)
$$
and consider the $K$-theory classes 
\begin{equation} \label{KclassCh}
R\nu_{n*}( \widehat{\O}^{\vir} \otimes \widehat{\Lambda}^\mdot \hom(\cV,\cW) ).
\end{equation}
We refine these classes to actual equivariant $\Z/2$-graded coherent sheaves, and we show that these sheaves form a factorizable sequence in the sense of Okounkov (\cite[Sect.~5.3]{Oko}, \cite{Ren}). 
The refinement of \eqref{KclassCh} to a sheaf involves describing $\widehat{\O}^{\vir}_M$ in terms of spin modules. 
The fact that the sheaves refining \eqref{KclassCh} form a factorizable sequence implies, as in \cite{Oko}, that the generating function $\mathsf{G}_1$ has the form
\[
\mathrm{Exp}\Bigg(\frac{G_1}{[t_1][t_2][t_3][t_4]}\Bigg),
\]
where $G_1$ is a power series in $q$ whose coefficients are \emph{Laurent polynomials} in the variables $t_i, y^{\frac{1}{2}}$.

In Section \ref{sec:local}, we apply the Oh--Thomas localization formula \eqref{virloc1} to show that our global invariants recover Nekrasov--Piazzalunga's invariants i.e.~$\mathsf{G}_r = \sfZ_r^{\NP}$. The main difficulty is the computation of the parity of $n_P^\Lambda$ in \eqref{virloc1}, which we establish by determining the dimensions of certain Ext groups.

In Section \ref{sec:limits}, we apply Okounkov's rigidity principle to further examine the power series $G_1$ appearing above.
We first show that it is divisible by $[t_1t_2][t_2t_3][t_1t_3]$.
A simple formal argument allows us to then determine $\sfZ^{\NP}_1$ by using the fact that for $y=t_4$, $\sfZ_1^{\NP}$ reduces to the known generating series of $K$-theoretic Donaldson--Thomas invariants of $\C^3$ (Theorem \ref{Okounkovthm}). 
For general $r>1$, we take suitable limits of the framing parameters, which allows us to reduce to the $r=1$ case.

\subsection{Conventions}

The torus $(\C^*)^4 = \Spec \C[t_1^{\pm1}, t_2^{\pm1}, t_3^{\pm1}, t_4^{\pm1}]$ acts on closed points $(p_1,p_2,p_3,p_4) \in \C^4$ via
\[
(t_1, t_2, t_3, t_4) \cdot (p_1,p_2,p_3,p_4) = (t_1^{-1}p_1,t_2^{-1}p_2,t_3^{-1}p_3,t_4^{-1}p_4),
\]
so that the character of $H^0(\C^4, \O_{\C^4})$ is $\prod_{i=1}^4 (1-t_i)^{-1}$, and not $\prod_{i=1}^4 (1-t_i^{-1})^{-1}$.
This convention agrees with roughly half of the relevant literature.

For a quasi-projective scheme $Y$ with action by an algebraic torus $T$, we define $K_0^T(Y)_{\loc} := K_0^T(Y) \otimes_{K^T_0(\pt)} \Q(t^{\frac{1}{2}})$, where $K^T_0(\pt) = \Z[t^{\pm 1}]$ and  $t = (t_1, \ldots, t_{\dim T})$ is the tuple of $T$-equivariant parameters. By the $K$-theoretic localization formula \cite{Tho}, push-forward along the inclusion $Y^T \hookrightarrow Y$ gives $K^T_0(Y^T)_{\loc} \cong K_0^T(Y)_{\loc}$.

We use the phrase ``$\Z/2$-graded coherent sheaf'' to mean a sheaf $\F = \F_0 \oplus \F_1$ where the grading is to be interpreted as a cohomological grading.
In $K$-theory we have $[\F] = [\F_0] - [\F_1]$.
If $\F$ and $\G$ are $\Z/2$-graded sheaves, then in the isomorphism $\F \otimes \G \cong \G \otimes \F$ we apply the Koszul sign rule.

Given a 2-periodic complex $\F$, we write $\cH(\F) := H^{-1}(\F)[1] \oplus H^0(\F)$ for its $\Z/2$-graded cohomology sheaf.

\subsection{Acknowledgements} 

This work draws on essential conversations with many colleagues. We warmly thank D.~Maulik for outlining the importance of following the approach in \cite{Oko} early on, and J. Oh and R.P.~Thomas for crucial explanations of \cite{OT}. 
M.K. thanks Y.~Cao for many discussions on Donaldson--Thomas theory of Calabi--Yau 4-folds. 
We thank N.~Arbesfeld for explanations on $K$-theory, A.~Bojko for discussions on orientations, N.~Kuhn for helping us think about spin structures, and S.~Monavari for discussions on limits of framing parameters. 
We also thank F.~Thimm for important comments on an early version of this manuscript.
Finally, we are indebted to A.~Okounkov for writing his lecture notes \cite{Oko} and, of course, to N.~Nekrasov and N.~Piazzalunga for discovering their beautiful formula. M.K.~is supported by NWO grant VI.Vidi.192.012 and ERC Consolidator Grant
FourSurf 10108736. J.V.R.~is supported by Research Council of Norway grant no.~302277.

\section{Tools} \label{sec:tools}

In this section we review and develop the tools required in our proof. In particular, we discuss factorizable sequences of sheaves, Clifford algebras, spin modules, and torus localization. Some of these results may be of independent interest.

\subsection{Factorizability} \label{sec:factorizability}

In the work \cite{Oko}, Okounkov introduced the notion of a factorizable sequence of sheaves.
We here review the definition and the main result. Our formulation differs slightly from the one given in \cite{Oko}, see Remark \ref{rmk:diffoko}.

Let $X$ be a smooth, quasi-projective variety acted on by an algebraic torus $T$. Denote by $\Sym^n(X) = X^{\times n} / S_n$ the $n$th symmetric product of $X$. Let $n_1, \ldots, n_k >0$ and $N = \sum_i n_i$. Then there exists a natural morphism
$$
\sigma_{n_1, \ldots, n_k} \colon \prod_{i=1}^{k} \Sym^{n_i}(X) \to \Sym^N(X). 
$$
We define the open subset
$$
U_{n_1, \ldots, n_k} \subset \prod_{i=1}^{k} \Sym^{n_i}(X)
$$
of $(x_1, \ldots, x_k)$ such the $x_i \in \Sym^{n_i}(X)$ have pairwise disjoint supports. The composition $U_{n_1, \ldots, n_k}  \to \Sym^N(X)$ is \'etale of degree $\binom{N}{n_1,\dots,n_k}$.
The \'etale property implies that certain sheaves and schemes living over $\prod_i \Sym^{n_i}(X)$ can be related to the analogous objects living on $\Sym^N(X)$ provided one restricts to $U_{n_1, \ldots, n_k}$. As an example, consider the Hilbert--Chow morphisms 
$$
\nu_{n_1, \ldots, n_k} \colon \prod_{i=1}^{k} \Hilb^{n_i}(X) \rightarrow \prod_{i=1}^{k} \Sym^{n_i}(X).
$$
We do not have a well-defined ``union'' morphism $\prod_{i=1}^k \Hilb^{n_i}(X) \to \Hilb^N(X)$, since the union of $k$ schemes of lengths $n_i$ can have length less than $N$. 
However, if the subschemes have disjoint support, this does not happen.
Thus taking the union gives a well-defined morphism over the open subset $U_{n_1, \ldots, n_k}$, and in fact we have a cartesian diagram
\[
\begin{tikzcd}
 \prod_{i=1}^{k} \Hilb^{n_i}(X) |_{U_{n_1, \ldots, n_k}} \arrow[r, " "]  \arrow[d, "\nu_{n_1, \ldots, n_k}"] & \Hilb^N(X)  \arrow[d, "\nu_N"] \\
 U_{n_1, \ldots, n_k} \arrow[r, "\sigma_{n_1, \ldots, n_k}"] & \Sym^N(X)
\end{tikzcd}
\]
where the horizontal arrows are \'etale.

We record the following proposition for future reference. 
\begin{proposition}
\label{thm:torsionInPicardGroup}
Let $d \ge 2$, and let $n_1,\dots, n_k \ge 1$.
Let $G$ be a connected algebraic group acting on $Y := \prod_{i=1}^{k} \Hilb^{n_i}(\C^d) |_{U_{n_1, \ldots, n_k}}$.
Then the group $\Pic^G(Y)$ is torsion free.
\end{proposition}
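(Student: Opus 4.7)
The plan is first to reduce to showing that $\Pic(Y)$ is torsion-free, and then to analyze $\Pic(Y)$ using the description of $Y$ as an open subset of $Z := \prod_i \Hilb^{n_i}(\C^d)$.

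For the reduction, the forgetful map $\Pic^G(Y) \to \Pic(Y)$ has kernel consisting of $G$-linearizations of the trivial bundle, which (for $Y$ with only scalar global units, a property $Y$ inherits from $Z$ by Hartogs since the complement has codimension at least $2$ and the global regular functions on each $\Hilb^{n_i}(\C^d)$ form a polynomial ring) is identified with a subgroup of the character group $\chi(G) = \mathrm{Hom}(G,\mathbb{G}_m)$. Since $G$ is connected, any character $G \to \mathbb{G}_m$ whose image lies in the discrete subgroup $\mu_n$ must be trivial, so $\chi(G)$ and hence this kernel are torsion-free. Therefore, if $L \in \Pic^G(Y)$ is torsion and $\Pic(Y)$ is torsion-free, then the image of $L$ in $\Pic(Y)$ vanishes, placing $L$ in the kernel; torsion-freeness of the kernel then forces $L = 0$.

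For the second step, the complement of $Y$ in $Z$ is the union over pairs $i \neq j$ of the collision locus where $Z_i$ and $Z_j$ share a point in their supports; forcing $d$ coordinates to coincide makes this a closed subset of codimension at least $d \geq 2$. When $d = 2$, $Z$ is smooth, so restriction identifies $\Pic(Y) = \Pic(Z) = \bigoplus_i \Pic(\Hilb^{n_i}(\C^2))$, and Fogarty's theorem that $\Pic(\Hilb^n(\C^2)) = \Z$ concludes the proof. For $d \geq 3$ the Hilbert schemes are generally singular and may be reducible, but I would use the embedding of $\Hilb^n(\C^d)$ as a closed subvariety of the smooth non-commutative Hilbert scheme $\ncHilb^n(\C^d)$ (cf.\ Section \ref{sec:global}): this is a GIT-like quotient whose Picard group is a free abelian quotient of the character group of $GL_n$, and one transfers freeness to $\Pic(Y)$ through the embedding combined with the codimension-$\geq 2$ complement.

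The main obstacle is the $d \geq 3$ case: carefully controlling how $\Pic$ changes when passing from the smooth non-commutative ambient scheme to the singular (possibly reducible) closed subscheme $\Hilb^n(\C^d)$, and then to the open subset $Y$. A cleaner route would be a purely topological one, showing that $H^2(Y,\Z)$ is torsion-free via a Bialynicki-Birula decomposition of the ambient non-commutative Hilbert scheme under a generic $\C^\times$-subgroup of the torus action into affine cells; but verifying that the cells really are affine in full generality is itself a delicate point.
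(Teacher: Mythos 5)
Your proposal takes a genuinely different route from the paper's, but it has a gap that you yourself flag: the case $d \geq 3$ is left unresolved. The idea of reducing torsion-freeness of $\Pic^G(Y)$ to torsion-freeness of $\Pic(Y)$ via the character group of $G$ is reasonable for connected $G$, and the $d=2$ argument via $\Pic(\Hilb^n(\C^2)) \cong \Z$ and codimension-$\geq 2$ restriction is sound. But for $d \geq 3$ the Hilbert schemes $\Hilb^n(\C^d)$ are singular and reducible (Iarrobino), so computing or even bounding $\Pic(Y)$ is hard, and your proposed workaround through $\ncHilb^n(\C^d)$ is a sketch, not an argument — passing $\Pic$-information across a closed embedding into a smooth ambient space does not in general control torsion. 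There is a second, subtler issue in the reduction step: identifying the kernel of $\Pic^G(Y) \to \Pic(Y)$ with a subgroup of $\chi(G)$ requires $\cO^*(Y) = \C^*$, and your appeal to Hartogs for this requires $Z$ to be normal (or at least $S_2$), which again is not known for $\Hilb^n(\C^d)$ with $d \geq 3$.

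The paper sidesteps $\Pic(Y)$ entirely. It first proves that $Y$ is \emph{simply connected} (Lemma \ref{thm:YIsSimplyConnected}): the fibers of the Hilbert--Chow morphism $\nu_{n_1,\dots,n_k}$ are products of punctual Hilbert schemes $\Hilb^i(\C^d,0)$, which are simply connected by a Fogarty--Fujiki argument realizing them as fixed loci of a unipotent group acting on a Grassmannian (Lemma \ref{thm:HilbIsSimplyConnected}); a theorem of Smale then transfers $\pi_1$ along $\nu$, and $\pi_1(U_{n_1,\dots,n_k})=0$ follows from Armstrong's theorem applied to the quotient of the (simply connected, codim-$\geq 2$-complement) configuration space $V \subset \prod(\C^d)^{n_i}$ by $\prod S_{n_i}$. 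Then Lemma \ref{thm:simplyConnectedMeansTorsionFree} gives the result directly: an $n$-torsion $G$-equivariant line bundle on a simply connected $Y$ yields a $G$-invariant degree-$n$ \'etale cover, which must be a trivial cover, and connectedness of $G$ forces each sheet to be preserved, trivializing the equivariant line bundle. This is close in spirit to the ``purely topological'' alternative you floated at the end, but it avoids any Bia{\l}ynicki--Birula analysis or control of the singular Hilbert schemes: all the topology happens on the symmetric product side and in the simply connected punctual fibers, which is exactly what makes the argument work uniformly for all $d \geq 2$.
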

\begin{proof}
Combine Lemmas \ref{thm:YIsSimplyConnected} and \ref{thm:simplyConnectedMeansTorsionFree}.
\end{proof}

For $d, n \ge 1$, let $\Hilb^n(\C^d,0) \subseteq \Hilb^n(\C^d)$ be the punctual Hilbert scheme parametrising subschemes supported at $0 \in \C^d$.
\begin{lemma}
\label{thm:HilbIsSimplyConnected}
    For all $d, n \ge 1$, the scheme $\Hilb^n(\C^d,0)$ is simply connected.
\end{lemma}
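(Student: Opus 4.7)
The plan is to apply the Bialynicki--Birula decomposition of $Y := \Hilb^n(\C^d,0)$ with respect to a generic one-parameter subgroup of the natural $(\C^*)^d$-action, and to deduce simple connectedness of $Y$ by an inductive van Kampen argument on the resulting stratification.

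First, I would observe that $Y$ is projective, being the fibre over $n\cdot[0]$ of the proper Hilbert--Chow morphism $\Hilb^n(\C^d)\to\Sym^n(\C^d)$, and that it carries an action of $(\C^*)^d$ whose isolated fixed points are the finitely many monomial ideals of colength $n$. Choosing a generic one-parameter subgroup $\sigma\colon \C^*\to(\C^*)^d$ with strictly positive weights, the Bialynicki--Birula decomposition for projective schemes with a $\C^*$-action yields a locally closed stratification $Y = \bigsqcup_\pi Y_\pi^+$ indexed by the monomial ideals $\pi$ of colength $n$, where $Y_\pi^+ = \{I : \lim_{\lambda\to 0}\sigma(\lambda)\cdot I = \pi\}$. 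The $\C^*$-action on each $Y_\pi^+$ extends to an algebraic $\A^1$-action sending $0$ to $\pi$ (the limit map $Y_\pi^+\to\{\pi\}$ being trivially a morphism), so each $Y_\pi^+$ is contractible in the analytic topology.

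Next, I would totally order the monomial ideals $\pi_1,\ldots,\pi_N$ compatibly with the closure partial order on strata, so that $Z_k := \bigsqcup_{i\le k}Y_{\pi_i}^+$ is closed in $Y$ for each $k$, and proceed by induction on $k$. The base case $Z_1 = \{\pi_1\}$ is a single point; for the inductive step, $Z_{k+1}$ is obtained by attaching the contractible open stratum $Y_{\pi_{k+1}}^+$ to the simply connected closed $Z_k$ along the boundary $\partial := \overline{Y_{\pi_{k+1}}^+}\setminus Y_{\pi_{k+1}}^+ \subset Z_k$. Applying van Kampen to an open cover of $Z_{k+1}$ by $Y_{\pi_{k+1}}^+$ together with a tubular open neighborhood of $Z_k$ then yields $\pi_1(Z_{k+1}) = 1$, provided $\partial$ is path-connected.

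The hardest step is verifying path-connectedness of $\partial$ at each stage, which is a combinatorial question about the Bialynicki--Birula closure relations among the monomial-ideal strata of the punctual Hilbert scheme. For $d=2$ this is controlled by the classical combinatorics of Young diagrams, while for general $d\ge 3$ one needs a careful analysis of closure relations via the Gr\"obner-basis description of $\C^*$-degenerations of colength-$n$ ideals, which is the technical heart of the argument.
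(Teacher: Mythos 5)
Your proposal has a genuine gap, and you flag it yourself: the path-connectedness of the boundary $\partial = \overline{Y_{\pi_{k+1}}^+}\setminus Y_{\pi_{k+1}}^+$ at each stage of the induction is left unproven and described as ``the technical heart of the argument.'' For $d\ge 3$ the punctual Hilbert scheme is singular, reducible and not equidimensional, and the closure relations among Bialynicki--Birula strata are notoriously ill-behaved in this generality; there is no reason the boundaries should be connected, and establishing this would require substantial new work. Two further points are glossed over. First, it is not automatic that one can totally order the fixed points so that each partial union $Z_k$ is closed -- this requires the closure relation on strata to be acyclic, which can fail for BB decompositions. Second, even granting a connected boundary, the van Kampen step is not set up correctly: your cover $\{Y_{\pi_{k+1}}^+, \text{tubular neighborhood of }Z_k\}$ needs the latter to exist (it is not clear the singular pair $(Z_{k+1},Z_k)$ is a good pair) and to deformation retract onto $Z_k$, and the $\mathbb{A}^1$-flow on the stratum does not preserve $Z_k$, so the intuitive retraction is not available.

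The paper's proof avoids all of this. Adapting Horrocks, it writes $\Hilb^n(\C^d,0)\cong\Hilb^n(X)$ for $X=\Spec\,\C[x_1,\dots,x_d]/\mathfrak m^n$, invokes Fogarty's realization of $\Hilb^n(X)$ as the fixed-point locus of a unipotent group acting on a Grassmannian, and then applies Fujiki's theorem that a unipotent fixed-point locus has the same $\pi_0$ and $\pi_1$ as the ambient variety. Since the Grassmannian is simply connected, so is $\Hilb^n(\C^d,0)$. This completely sidesteps the combinatorics of strata closures that your approach would have to confront. If you want to salvage a BB-type argument, you should be aware that it would amount to reproving (a strong form of) connectedness of degeneration graphs of monomial ideals, which is a hard open-ended combinatorial problem, whereas the unipotent-fixed-locus route is a one-line deduction from known results.
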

\begin{proof}
We adapt the proof of the main result in \cite{Hor}.
Let $\mathfrak m =(x_1,\dots, x_d)\subseteq \C[x_1,\dots, x_d]$, let $R = \C[x_1,\dots, x_d]/\mathfrak m^n$, and let $X = \Spec R$. We then have that $\Hilb^n(\C^d,0) \cong \Hilb^n(X)$ (taking the reduced scheme structure for both).
By \cite{Fog}, $\Hilb^n(X)$ is the fix point locus of a unipotent group acting on a Grassmannian variety $\Gr$.
By \cite{Fuj}, it follows that $\pi_0(\Hilb^n(X)) = \pi_0(\Gr)$ and $\pi_1(\Hilb^n(X)) = \pi_1(\Gr)$, so $\Hilb^n(X)$ is simply connected.
\end{proof}

\begin{lemma}
\label{thm:YIsSimplyConnected}
With $Y$ as in Proposition \ref{thm:torsionInPicardGroup}, we have that $Y$ is simply connected.
\end{lemma}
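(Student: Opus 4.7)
The plan is to use the Hilbert--Chow morphism $\nu_Y \colon Y \to U_{n_1,\ldots,n_k}$ as a bridge, first establishing $\pi_1(U_{n_1,\ldots,n_k})=0$ and then transferring this to $Y$ via the fact that the fibers of $\nu_Y$ are products of punctual Hilbert schemes, hence simply connected by Lemma~\ref{thm:HilbIsSimplyConnected}.

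For the base, I would introduce the ordered auxiliary variety
$$
\widetilde U := \{(p_{ij})_{1 \le i \le k,\, 1 \le j \le n_i} \in (\C^d)^N : p_{ij} \neq p_{i'j'} \text{ whenever } i \neq i'\},
$$
obtained from $(\C^d)^N$ by removing all between-block diagonals. Each such diagonal has complex codimension $d \ge 2$ in the smooth simply connected variety $(\C^d)^N$, so $\widetilde U$ is itself simply connected. The finite group $G = S_{n_1} \times \cdots \times S_{n_k}$ acts on $\widetilde U$ by permuting coordinates within each block, and this identifies $U_{n_1,\ldots,n_k}$ with the quotient $\widetilde U / G$. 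Armstrong's theorem on fundamental groups of orbit spaces then gives $\pi_1(U_{n_1,\ldots,n_k}) \cong G/N$, where $N \trianglelefteq G$ is the normal subgroup generated by elements of $G$ with fixed points on $\widetilde U$. Each transposition in $S_{n_i}$ fixes any configuration with the corresponding within-block equality $p_{ij}=p_{ij'}$, and such configurations visibly lie in $\widetilde U$; hence $N$ contains all transpositions, so $N = G$ and $\pi_1(U_{n_1,\ldots,n_k})=0$.

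For the transfer step, a fiber of $\nu_Y$ over $(x_1,\ldots,x_k) \in U_{n_1,\ldots,n_k}$, with $x_i = \sum_a m_{ia}[p_{ia}]$ and the $p_{ia}$ pairwise disjoint across $i,a$, is the product $\prod_{i,a}\Hilb^{m_{ia}}(\C^d, p_{ia})$ of punctual Hilbert schemes. Each factor is connected and simply connected by Lemma~\ref{thm:HilbIsSimplyConnected} and its proof (as the fixed locus of a connected unipotent group on a Grassmannian). Since $\nu_Y$ is proper surjective onto a normal simply connected base with connected simply connected fibers, I would invoke a standard Koll\'ar-type theorem to conclude $\pi_1(Y)\cong\pi_1(U_{n_1,\ldots,n_k})=0$. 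The main obstacle is precisely this last step: justifying the fibration-like $\pi_1$-comparison in our proper, possibly singular setting. A fallback strategy is to argue directly by codimension, showing $Y$ is the complement in $\prod_i\Hilb^{n_i}(\C^d)$ of a closed subset of codimension $\ge d \ge 2$ and then establishing simple connectedness of each $\Hilb^{n_i}(\C^d)$ via a $\C^*$-retraction onto its punctual part; but that route has to contend with the singularities of $\Hilb^{n_i}(\C^d)$, where removing a codimension-$\ge 2$ closed subset can in general change $\pi_1$.
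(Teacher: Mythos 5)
Your approach mirrors the paper's almost exactly: the paper likewise passes through the ordered configuration space (called $V$ there, your $\widetilde U$), notes the complement has codimension $d\ge 2$, applies Armstrong's theorem to get $\pi_1(U_{n_1,\dots,n_k})=0$, and then transfers simple connectedness across the Hilbert--Chow morphism using the fact that fibers are products of punctual Hilbert schemes, which are simply connected by Lemma~\ref{thm:HilbIsSimplyConnected}. The citation you were missing for the transfer step is Smale's Vietoris mapping theorem (cited as \cite{Sma} in the paper), which gives that a proper surjective map with connected, simply connected (and suitably locally nice) fibers induces isomorphisms on $\pi_0$ and $\pi_1$; with that in hand your plan is complete and coincides with the paper's proof.
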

\begin{proof}
Each fibre of the morphism $\nu_{n_1,\dots, n_k}$ is homeomorphic to a product of schemes of the form $\Hilb^i(\C^d,0)$, which by Lemma \ref{thm:HilbIsSimplyConnected} is simply connected.
By \cite{Sma}, we find that $\nu_{n_1,\dots,n_k}$ induces isomorphisms between $\pi_0$ and $\pi_1$ of its domain and codomain.
The space $U_{n_1,\dots,n_k}$ is a quotient of an open subset $V \subseteq \prod_{i=1}^k(\C^d)^{n_i}$ by the group $\prod_{i=1}^k S_{n_i}$.
The complement of $V$ has codimension $d \ge 2$, and hence $V$ is simply connected.
Since every element of $\prod_{i=1}^k S_{n_i}$ fixes some point in $V$, we then have $\pi_1(U_{n_1,\dots, n_k}) = 0$ by \cite{Arm}.
Thus $\pi_1(Y)= 0$.
\end{proof}

\begin{lemma}
\label{thm:simplyConnectedMeansTorsionFree}
If $Y$ is a simply connected scheme and $G$ is a connected algebraic group acting on $Y$, then $\Pic^G(Y)$ is torsion free.
\end{lemma}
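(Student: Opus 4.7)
My plan is to realise $\Pic^G(Y)$ as an extension of two torsion-free abelian groups via the forgetful map $\varphi\colon \Pic^G(Y) \to \Pic(Y)$; an extension of torsion-free groups is itself torsion free by a one-line diagram chase (an $n$-torsion element maps to $0$ in the torsion-free quotient, hence lies in the torsion-free subgroup, hence is $0$).

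For the image of $\varphi$, I would argue $\Pic(Y)$ itself is torsion free. An $n$-torsion line bundle $L$ with $L^{\otimes n} \cong \O_Y$ produces a degree-$n$ finite \'etale cover $\underline{\mathrm{Spec}}_Y\!\bigl(\bigoplus_{i=0}^{n-1} L^{\otimes i}\bigr) \to Y$, which by Riemann existence corresponds to a finite quotient of $\pi_1^{\mathrm{top}}(Y)$. Simple connectedness of $Y$ forces this cover to be trivial, and hence $L \cong \O_Y$.

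For the kernel, an element of $\ker \varphi$ is a $G$-equivariant structure on $\O_Y$, i.e.~a cocycle $\alpha \in \O(G \times Y)^\times$ satisfying $\alpha(g_1 g_2, y) = \alpha(g_1, g_2 y)\alpha(g_2, y)$, modulo coboundaries $\delta f(g,y) = f(gy)/f(y)$ for $f \in \O(Y)^\times$. Since both $G$ and $Y$ are connected and reduced, Rosenlicht's theorem on units of $G \times Y$ lets me decompose $\alpha(g,y) = \beta(g)\gamma(y)$ up to a multiplicative constant; substituting into the cocycle relation forces $\gamma$ to be constant and $\beta \in X^{\ast}(G)$ a character of $G$. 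Thus $\ker \varphi$ is a quotient of $X^{\ast}(G)$, which is torsion free because any torsion character of the connected group $G$ would factor through the discrete subgroup $\mu_n \subset \mathbb{G}_m$ and therefore be trivial. A Kummer-type argument using $H^1(Y,\mu_n) = 0$ (which follows from $\pi_1(Y) = 0$) shows that the quotient by the subgroup of characters of $G$-semi-invariant units is still torsion free, since then every unit on $Y$ admits an $n$-th root, which is automatically $G$-semi-invariant with the required character.

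The main technical obstacle is the unit calculation in the kernel step, in particular tracking the coboundaries through the Rosenlicht decomposition. A cleaner, stack-theoretic alternative bypasses this entirely: using $\Pic^G(Y) = \Pic([Y/G])$, the homotopy fibration $Y \to [Y/G] \to BG$ together with $\pi_1(Y) = \pi_0(G) = 0$ yields $\pi_1([Y/G]) = 0$, so $H^1([Y/G],\Z) = 0$ and, by universal coefficients, $H^2([Y/G],\Z)$ is torsion free; the exponential exact sequence then exhibits $\Pic^G(Y)$ as an extension of a subgroup of this torsion-free group by the $\C$-vector space $H^1([Y/G],\O)$, and is therefore torsion free.
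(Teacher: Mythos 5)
Your primary proposal — the stack-theoretic shortcut — has a genuine gap. The exponential sequence $0 \to \Z \to \O^{\mathrm{an}} \to \O^{*,\mathrm{an}} \to 0$ is an analytic/topological tool: applied to the Borel construction $Y^{\mathrm{an}} \times_G EG$, it computes the \emph{analytic} equivariant Picard group, not the algebraic group $\Pic^G(Y)$ the lemma is about. For quasi-projective $Y$ these can differ badly. Already non-equivariantly, a smooth affine curve of positive genus has $\Pic^{\mathrm{alg}}$ a quotient of the Jacobian (hence with plenty of torsion), while, being Stein, it has $H^1(\O^{\mathrm{an}})=0$ and $H^2(\Z)=0$, so $\Pic^{\mathrm{an}}=0$. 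Your fibration computation of $\pi_1([Y/G])$ and the universal-coefficients step are fine, but they bound the wrong group; to conclude you would additionally need injectivity of the comparison map $\Pic^G(Y) \to \Pic^{G,\mathrm{an}}$ (at least on torsion), which is neither automatic nor argued. The paper stays purely algebraic throughout: from the chosen $G$-equivariant isomorphism $L^{\otimes n} \cong \O_Y$ it forms the degree-$n$ cyclic étale cover $Z \subset \mathbb V(L)$ of $Y$; Riemann existence ($\pi_1^{\mathrm{top}}(Y)=0$ forces every finite étale cover to split) makes $Z$ a disjoint union of $n$ copies of $Y$; and connectedness of $G$ preserves each sheet, so one sheet is a $G$-equivariant nowhere-vanishing section of $L$.

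Your first sketch — the forgetful-map decomposition $\varphi\colon\Pic^G(Y)\to\Pic(Y)$ — is a genuinely different route and, unlike the stack-theoretic one, could be pushed through. The image step is essentially the paper's cyclic-cover argument with the $G$-structure stripped away. The kernel step (Rosenlicht's unit theorem, characters of $G$, and $n$-th roots of semi-invariant units via $H^1(Y,\mu_n)=0$) is correct in outline, but it needs $Y$ geometrically integral for Rosenlicht, a hypothesis absent from the lemma statement, and the coboundary bookkeeping you flag is real work. The paper's version is shorter and more general precisely because it treats the equivariant structure in the same geometric step as the underlying torsion: the cyclic cover is built $G$-equivariantly from the outset, so $\pi_1(Y)=0$ and $\pi_0(G)=0$ are applied simultaneously rather than to separate pieces of the group.
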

\begin{proof}
Let $L$ be a $G$-equivariant line bundle on $Y$ which is $n$-torsion in $\Pic^G(Y)$, that is, such that $L^{\otimes n} \cong \cO_Y$ with the trivial $G$-equivariant structure.
With $\mathbb V(L)$ as the associated affine space bundle over $Y$, we have a generically $n$-to-1 morphism $\mathbb V(L) \to \mathbb V(\cO_Y) = Y \times \AA^1$.
The inverse image $Z$ of $Y \times \{1\}$ under this morphism gives a degree $n$ étale covering of $Y$, which is moreover $G$-invariant.
Since $Y$ is simply connected, $Z$ is the disjoint union of $n$ copies of $Y$, and since $G$ is connected, each of these components is preserved by $G$.
The isomorphism from $Y$ to one of these components defines a trivialization of $L$ as a $G$-equivariant line bundle.
\end{proof}

\begin{definition}
\label{def:factorisableSequence}
Let $\{\F_n\}_{n=1}^{\infty}$ be a sequence where each $\F_n$ is a $T$-equivariant $\Z/2$-graded coherent sheaves on $\Sym^n(X)$. Then \emph{factorization data} for $\{\F_n\}_{n=1}^{\infty}$ consists of a collection of $T$-equivariant isomorphisms
$$
\phi_{m,n} \colon \F_m \boxtimes \F_n|_{U_{m,n}} \stackrel{\cong}{\to} \F_{m+n}|_{U_{m,n}},
$$
for all $m,n \geq 1$, satisfying the following properties:
\begin{enumerate}
\item \textbf{(Associativity)} For any $m,n,p > 0$, we have $$\phi_{m,n+p} \circ (\id_{\F_m} \boxtimes \phi_{n,p}) = \phi_{m+n,p} \circ (\phi_{m,n} \boxtimes \id_{\F_p})$$ as isomorphisms on $U_{m,n,p}$.
\item \textbf{(Commutativity)} Let $m,n >0$ and denote by $$\tau \colon \Sym^m(X) \times \Sym^n(X) \to \Sym^n(X) \times \Sym^m(X)$$ the permutation map. Then the following diagram commutes
\begin{displaymath}
\xymatrix
{
\F_m \boxtimes \F_n |_{U_{m,n}} \ar^{\phi_{m,n}}[r] \ar_{\cong}^{v}[d] & \F_{m+n}|_{U_{m,n}} \ar_{\cong}^{w}[d] \\ 
\tau^*(\F_n \boxtimes \F_m |_{U_{n,m}}) \ar^{\tau^*\phi_{n,m}}[r] & \tau^*(\F_{n+m}|_{U_{n,m}})
}
\end{displaymath}
where the vertical arrows are canonical isomorphisms induced by the canonical isomorphism $\F_m \boxtimes \F_n \cong \tau^* (\F_n \boxtimes \F_m)$ and the commutative diagram
\begin{displaymath}
\xymatrix
{
U_{m,n} \ar^{\tau}[r] \ar_{\sigma_{m,n}}[dr] & U_{n,m} \ar^{\sigma_{n,m}}[d] \\
& \Sym^{m+n}(X)
}
\end{displaymath}
\end{enumerate}
We say that $\{\F_n\}_{n=1}^{\infty}$ is \emph{factorizable} if there exists factorization data for $\{\F_n\}_{n=1}^{\infty}$. We refer to $\{\F_n\}_{n=1}^{\infty}$ as a factorizable sequence of $\Z/2$-graded $T$-equivariant coherent sheaves. 
\end{definition}

\begin{remark} \label{rmk:diffoko}
Our definition relates to the one of Okounkov \cite{Oko} as follows:
Associativity implies that for any $n_1, \ldots, n_k > 0$ with $N = \sum_{i} n_i$, we have well-defined isomorphisms 
$$
\phi_{n_1, \ldots, n_k} \colon \bigboxtimes_{i=1}^k \F_{n_i} |_{U_{n_1, \ldots, n_k}} \stackrel{\cong}{\to} \F_{N}|_{U_{n_1, \ldots, n_k}}.
$$
Suppose we write $n_1, \ldots, n_k$ as a length $k$ partition $1^{m_1} 2^{m_2} \ldots$ of $N$, then the sheaf $\bigboxtimes_{i=1}^k \F_{n_i}$ descends to a sheaf 
$$
\bigboxtimes_{j} \Sym^{m_j}(\F_{j}) \quad \textrm{on} \quad \prod_j \Sym^{m_j} \Sym^j(X).
$$
The image  $U \subset \prod_j \Sym^{m_j} \Sym^j(X)$ of $U_{n_1, \ldots, n_k}$ is an open subset, and commutativity guarantees that the isomorphisms $\phi_{n_1, \ldots, n_k}$ descend to isomorphisms
$$
\bigboxtimes_j \Sym^{m_j}(\F_{j}) |_{U} \stackrel{\cong}{\to} \F_{N}|_{U}.
$$
\end{remark}

We have the following famous ``lemma'' of Okounkov \cite[Lem.~5.3.4]{Oko}:
\begin{lemma}[Okounkov] \label{prop:Oko}
	Let $\{\F_n\}_{n=1}^{\infty}$ be a factorizable sequence of $T$-equivariant $\Z/2$-graded coherent sheaves on $\Sym^{\mdot}(X)$. Then there exist classes $\G_n \in K_0^{T}(X)$, for all $n \geq 1$, such that
	$$
	1 + \sum_{n=1}^{\infty} \chi(\Sym^n(X),\F_n) \, q^n = \mathrm{Exp}\Bigg( \sum_{n=1}^{\infty} \chi(X,\G_n) \, q^n \Bigg).
	$$
\end{lemma}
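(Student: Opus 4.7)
The plan is to construct, inductively on $n$, classes $\G_n \in K_0^T(X)$ such that
\[
\chi(\Sym^N X, \F_N) \;=\; \sum_{\lambda \vdash N} \prod_{j} \chi\bigl(\Sym^{m_j(\lambda)} X,\ \Sym^{m_j(\lambda)} \G_j\bigr)
\]
for every $N \ge 1$, where we write a partition $\lambda$ of $N$ as $1^{m_1(\lambda)} 2^{m_2(\lambda)} \cdots$. Granting this identity, the lemma follows from the standard plethystic identity
\[
\mathrm{Exp}\Bigl(\sum_n \chi(X,\G_n)\, q^n\Bigr) \;=\; \prod_n \mathrm{Exp}\bigl(\chi(X,\G_n)\,q^n\bigr) \;=\; \prod_n \sum_{m \ge 0} \chi(\Sym^m X, \Sym^m \G_n)\, q^{mn},
\]
which, after expanding the product over multi-indices $(m_n)$ and regrouping by $N = \sum_n n m_n$, assembles precisely into the sum over partitions on the right-hand side above.

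For the inductive construction, set $\G_1 := [\F_1] \in K_0^T(X)$. For $N \ge 2$, stratify $\Sym^N X$ by partition type: let $Z_\lambda \subseteq \Sym^N X$ be the locally closed subset where the underlying multiset on $X$ has multiplicity type $\lambda$. Using additivity of the equivariant $K$-theoretic Euler characteristic on locally closed stratifications (a consequence of the torus localization theorem, under the compact-fixed-locus assumption implicit in the setting where $\chi$ is well-defined), we obtain $\chi(\Sym^N X, \F_N) = \sum_{\lambda} \chi(Z_\lambda, \F_N|_{Z_\lambda})$. For $\lambda \neq (N)$, the stratum $Z_\lambda$ is contained in the étale image of some $U_{n_1,\dots,n_k} \to \Sym^N X$ with $k \ge 2$, and the isomorphisms $\phi_{n_1,\dots,n_k}$ of Remark~\ref{rmk:diffoko} identify $\F_N|_{Z_\lambda}$ with the descent of $\boxtimes_i \F_{n_i}$, i.e.\ with the restriction of $\boxtimes_j \Sym^{m_j(\lambda)} \F_j$ to the ``distinct points'' locus inside $\prod_j \Sym^{m_j(\lambda)} X$. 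Iterating this decomposition inside each symmetric-product factor and using Möbius inversion on the partition lattice to replace each $\F_j$ by a modified class $\G_j$ pushed forward from the small diagonal, one expresses the total $\lambda \neq (N)$ contribution as $\sum_{\lambda \neq (N)} \prod_j \chi(\Sym^{m_j(\lambda)} X, \Sym^{m_j(\lambda)} \G_j)$. One then \emph{defines} $\G_N \in K_0^T(X)$ to be the class making the remaining $\lambda = (N)$ contribution match; this is legitimate because the residual is supported on $Z_{(N)} \cong X$.

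The main technical obstacle is executing the Möbius inversion cleanly. Each $Z_\lambda$ lives naturally as an open subset of $\prod_j \Sym^{m_j(\lambda)} X$, while the target expression uses Euler characteristics on the \emph{full} products, and converting between these requires a nested recursion descending through all finer partitions inside each $\Sym^{m_j} X$. The associativity and commutativity axioms of factorization data — the latter interpreted with Koszul signs for the $\Z/2$-grading — are precisely what ensure that the various descent isomorphisms at different levels of this recursion are mutually compatible, so that the bookkeeping closes. A final subtlety is that each $\G_N$ must live in $K_0^T(X)$ itself rather than in a localization; this is automatic because at every stage the residual is a genuine equivariant class supported on the small diagonal.
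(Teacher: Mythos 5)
Your inductive strategy hinges on the identity $\chi(\Sym^N X, \F_N) = \sum_\lambda \chi(Z_\lambda, \F_N|_{Z_\lambda})$, which you attribute to ``additivity of the equivariant $K$-theoretic Euler characteristic on locally closed stratifications'' as a consequence of torus localization. This is not a valid principle: coherent $K$-theoretic Euler characteristics (unlike topological or motivic ones) are not additive over a decomposition $Y = U \sqcup Z$ with $U$ open and $Z$ closed. A simple counterexample is $Y = \AA^1$ with its $\C^*$-action, $\F = \O_Y$, $Z = \{0\}$, $U = \mathbb{G}_m$: then $\chi(\AA^1,\O) = \tfrac{1}{1-t}$, $\chi(\{0\},\O) = 1$, and $\chi(\mathbb{G}_m,\O) = \tfrac{1}{1-t} + \tfrac{t^{-1}}{1-t^{-1}} = 0$, so additivity fails. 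More to the point, in the setting of the paper where $X = \C^4$ and $X^T$ is a single point, $(\Sym^N X)^T$ is the single point $N\cdot 0$, which lies in the small-diagonal stratum $Z_{(N)}$; every other stratum $Z_\lambda$ has empty fixed locus and therefore $\chi(Z_\lambda,\F_N|_{Z_\lambda}) = 0$ by localization. Your claimed decomposition would then reduce to $\chi(\Sym^N X, \F_N) = \chi(Z_{(N)}, \F_N|_{Z_{(N)}})$, which is false: both sides localize to a contribution at $N\cdot 0$, but the left side involves the normal data of the fixed point inside $\Sym^N X$ while the right side only sees the normal data inside $Z_{(N)}$, and these differ by the contribution of the normal directions of $Z_{(N)}$ in $\Sym^N X$. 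So the foundational step of your construction does not hold, and the subsequent M\"obius inversion (which you acknowledge is not carried out) has nothing to invert.

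The actual proof (Okounkov \cite{Oko} Lem.~5.3.4, worked out in detail in \cite{Ren}) does not attempt to sum Euler characteristics over strata. Instead it works at the level of $K_0^T(\Sym^N X)$: one constructs genuine identities of classes (equivalently, filtrations with identified graded pieces) expressing $[\F_N]$ in terms of pushforwards of symmetric powers of the $\G_j$ along the diagonal maps $\prod_j \Sym^{m_j} X \to \Sym^N X$, and the factorization data is exactly what supplies the comparison isomorphisms needed to match graded pieces. The ``primitive part'' extraction that produces $\G_N$ is a sheaf-level construction on $\Sym^N X$ rather than a numerical M\"obius inversion; this is also why $\G_N$ lands in $K_0^T(X)$ rather than merely in a localization. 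Your intuition that $\G_N$ should capture the ``new'' part of $\F_N$ near the small diagonal is right, but it has to be realized as an equality of $K$-theory classes (which then yields the Euler-characteristic identity by applying $\chi$), not assembled from a stratification of Euler characteristics.
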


A detailed and self-contained proof is provided in \cite{Ren}. For a recent application of the factorizability method to the setting of $K$-theoretic Donaldson--Thomas invariants of $[\C^2 / \mu_2] \times \C$, we refer to \cite{Thi}.

The following useful proposition states that the commutative diagrams in the associativity and commutativity axioms in the definition of a factorizable sequence only have to be checked up to non-zero constants.
\begin{proposition} \label{prop:uptocst}
Let $\{\F_n\}_{n=1}^{\infty}$ be a sequence of $T$-equivariant $\Z/2$-graded coherent sheaves on $X$ and assume we have $T$-equivariant isomorphisms
$$
\phi_{m,n} \colon \F_m \boxtimes \F_n|_{U_{m,n}} \stackrel{\cong}{\to} \F_{m+n}|_{U_{m,n}}
$$
for all $m,n > 0$. 
Assume the support of $\F_1$ has dimension $\geq 1$, and that the following hold:
\begin{enumerate}
\item For any $m,n,p > 0$, there exists a constant $c_{m,n,p} \in \C^*$ such that $$\phi_{m,n+p} \circ (\id_{\F_m} \boxtimes \phi_{n,p}) = c_{m,n,p} \cdot \phi_{m+n,p} \circ (\phi_{m,n} \boxtimes \id_{\F_p})$$ as isomorphisms on $U_{m,n,p}$.
\item For any $m,n >0$ there exist constants $d_{m,n }\in \C^*$ such that $w \circ \phi_{m,n}= d_{m,n} \cdot \tau^* \phi_{n,m} \circ v$, where $v$ and $w$ are as in Definition \ref{def:factorisableSequence}.
\end{enumerate}

Then
\begin{enumerate}[label=(\alph*)]
    \item We may find scalars $\lambda_{m,n}$ such that, by replacing $\phi_{m,n}$ with $\lambda_{m,n}\phi_{m,n}$ we may assume $c_{m,n,p} = 1$ for all $m,n,p$.
    \item After such a rescaling, we have either $d_{1,1} = 1$ or $d_{1,1} = -1$. 
If $d_{1,1} = 1$, the sequence $\{\F_n\}_{n=1}^{\infty}$ is factorizable.  
If $d_{1,1} = -1$, the sequence $\{\F_n[n]\}_{n=1}^{\infty}$ is factorizable, where $[n]$ denotes the shift in $\Z/2$-grading.
\end{enumerate}
\end{proposition}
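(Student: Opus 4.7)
The plan is to read the scalars $c_{m,n,p}$ as a cocycle that can be trivialized by a gauge transformation of the $\phi_{m,n}$, and then to analyze the residual commutativity scalars $\tilde d_{m,n}$ as a multiplicative function valued in $\{\pm 1\}$. For (a), I first derive a pentagon identity on $c$ by comparing the two natural ways of reducing a four-factor associator on $\F_m\boxtimes\F_n\boxtimes\F_p\boxtimes\F_q|_{U_{m,n,p,q}}$ to the normal-form composition $\phi_{m+n+p,q}\circ(\phi_{m+n,p}\boxtimes\id)\circ(\phi_{m,n}\boxtimes\id\boxtimes\id)$. Iterating assumption (1) in the two different orders yields
\[
c_{m,n,p+q}\,c_{m+n,p,q} \;=\; c_{n,p,q}\,c_{m,n+p,q}\,c_{m,n,p}.
\]
Under a rescaling $\phi_{m,n}\mapsto \lambda_{m,n}\phi_{m,n}$ the scalar transforms by the coboundary $c_{m,n,p}\mapsto c_{m,n,p}\cdot \lambda_{m+n,p}\lambda_{m,n}/(\lambda_{m,n+p}\lambda_{n,p})$. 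I would normalize by setting $\lambda_{1,n}=1$ for every $n\geq 1$ and defining $\lambda_{m+1,p}:=\lambda_{m,p}/c_{1,m,p}$ by induction on $m$; by construction the rescaled $\tilde c_{1,n,p}=1$, and substituting $m=1$ into the pentagon identity for $\tilde c$ gives $\tilde c_{n+1,p,q}=\tilde c_{n,p,q}$, so by induction on the first index $\tilde c_{m,n,p}=1$ for all $m,n,p$. This proves (a) with the explicit $\lambda_{m,n}$.

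For (b), after this rescaling the associator is strict and the only remaining defect is the scalar commutor $\tilde d_{m,n}=d_{m,n}\lambda_{m,n}/\lambda_{n,m}$. Two relations then pin $\tilde d$ down. Applying the commutativity axiom twice around the cycle $\F_m\boxtimes\F_n\to\tau^*(\F_n\boxtimes\F_m)\to\F_m\boxtimes\F_n$ forces $\tilde d_{m,n}\tilde d_{n,m}=1$. Moreover, on $\F_m\boxtimes\F_n\boxtimes\F_p|_{U_{m,n,p}}$, moving the last factor to the front of $\F_{m+n+p}$ in two different ways --- either commuting $\F_n$ past $\F_p$ and then $\F_m$ past $\F_p$ before fusing, or fusing $\F_m\boxtimes\F_n$ first and then commuting $\F_{m+n}$ with $\F_p$ --- gives a hexagon identity $\tilde d_{m+n,p}=\tilde d_{m,p}\tilde d_{n,p}$, and symmetrically $\tilde d_{m,n+p}=\tilde d_{m,n}\tilde d_{m,p}$. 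These multiplicativity relations together with $\tilde d_{m,n}\tilde d_{n,m}=1$ force $\tilde d_{m,n}=\tilde d_{1,1}^{mn}$ and $\tilde d_{1,1}^{2}=1$, so $\tilde d_{1,1}\in\{\pm 1\}$. If $\tilde d_{1,1}=1$, the $\tilde\phi_{m,n}$ are factorization data in the sense of Definition \ref{def:factorisableSequence}. If $\tilde d_{1,1}=-1$, then shifting $\F_n$ by $n$ modifies the canonical swap $v$ by the Koszul sign $(-1)^{mn}$, exactly cancelling the sign $(-1)^{mn}$ in $\tilde d_{m,n}$ and restoring strict commutativity.

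The principal obstacle is less the cocycle arithmetic, which is elementary, than the bookkeeping required to write down the pentagon and hexagon identities rigorously on the open subsets $U_{m,n,p}$ and $U_{m,n,p,q}$ and to track how the canonical isomorphisms $v$ and $w$ interact with successive rescalings. The assumption that $\mathrm{supp}\,\F_1$ has dimension at least $1$ is used in the final step to ensure that the sequence $\{\F_n[n]\}$ still has nonzero sheaves on which the shifted Koszul sign is nontrivially defined, so that the dichotomy $\tilde d_{1,1}=\pm 1$ is a genuine one and the shift trick produces a bona fide factorizable sequence rather than a vacuous one.
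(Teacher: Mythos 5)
Your approach is genuinely different from the paper's and is in principle correct, so let me compare the two and then point out where your writeup has gaps.

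\textbf{Comparison.} For (a), the paper does not invoke a pentagon relation at all: it defines the explicit nested compositions $\psi_{1^k} = \phi_{1,k-1}\circ(\id\boxtimes\phi_{1,k-2})\circ\cdots$, uses iterated applications of hypothesis (1) to show one can choose $\lambda_{m,n}$ with $\psi_{m,n}\circ(\psi_{1^m}\boxtimes\psi_{1^n}) = \psi_{1^{m+n}}$, and then gets the vanishing of the residual scalar $c'_{m,n,p}$ from the single equation $\psi_{1^{m+n+p}} = c'_{m,n,p}\psi_{1^{m+n+p}}$ together with the nonvanishing of $\psi_{1^{m+n+p}}$. For (b), the paper passes to fibres over generic points in $\Sym^n(X)\setminus\Delta$, assembles them into a $\ZZ/2$-graded associative $\CC$-algebra $A=\bigoplus A_n$, and reads off both $d_{m,n}d_{n,m}=1$ and $d_{m,n}=d_{1,1}^{mn}$ from the algebra structure; this neatly absorbs all the Koszul-sign bookkeeping that your hexagon argument would otherwise have to track by hand. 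Your route (pentagon cocycle, explicit inductive coboundary, hexagon identities) is a legitimate alternative in the spirit of Mac Lane coherence, and is perhaps more transparent to a categorically-minded reader; the paper's route is more elementary and makes the use of the dimension hypothesis visible at exactly the point where it matters.

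\textbf{Gaps and errors.} First, your coboundary formula is inverted: under $\phi_{m,n}\mapsto\lambda_{m,n}\phi_{m,n}$, the correct transformation is
\[
c_{m,n,p}\mapsto c_{m,n,p}\cdot\frac{\lambda_{m,n+p}\lambda_{n,p}}{\lambda_{m+n,p}\lambda_{m,n}},
\]
which is the reciprocal of what you wrote, so your recursion should read $\lambda_{m+1,p} := \lambda_{m,p}\cdot c_{1,m,p}$ rather than a quotient. The fix is immediate but it does matter. Second, and more substantively, you assert the pentagon identity (and later the hexagon identities) without deriving them, and your final paragraph attributes the dimension hypothesis to the ``final step'' involving the shift trick --- this is not where it enters. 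The pentagon identity for the $c_{m,n,p}$ is obtained by comparing the five parenthesized compositions $\Phi_i\colon \bigboxtimes\F_{n_j}|_{U_{m,n,p,q}}\to\F_{m+n+p+q}$; the two routes around the pentagon produce scalar multiples $c_{n,p,q}c_{m,n+p,q}c_{m,n,p}\,\Phi_1$ and $c_{m,n,p+q}c_{m+n,p,q}\,\Phi_1$ of the same map, and one can only cancel $\Phi_1$ to conclude the pentagon when $\Phi_1\neq 0$ --- which requires the restricted sheaves to be nonzero on $U_{m,n,p,q}$, i.e.\ precisely that $\supp\F_1$ has dimension at least $1$ so that configurations with disjoint supports exist. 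The same issue arises for $d_{m,n}d_{n,m}=1$ and the hexagon $\tilde d_{m+n,p}=\tilde d_{m,p}\tilde d_{n,p}$. So the hypothesis is not about the shift $\{\F_n[n]\}$ being ``nonvacuous''; it is the engine that makes the scalar identities hold at all, exactly as in the paper's use of ``$\psi_{1^k}$ is nonzero'' and ``the map $\F_1^{\otimes m}\to\F_m$ is non-$0$''. If you make the derivations of the pentagon and hexagon explicit, insert the nonvanishing lemma at those two points, and correct the sign in the recursion, your argument is complete.
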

\begin{proof}
We will define scalars $\lambda_{m,n}$ as in the statement, and let $\psi_{m,n} = \lambda_{m,n}\phi_{m,n}$.
First, let $\lambda_{1,n} = \lambda_{n,1} = 1$ for all $n > 0$.

For any $k>0$, define
$$
\psi_{1^k} = \phi_{1,k-1} \circ (\id_{\F_1} \boxtimes \phi_{1,k-2}) \circ \cdots \circ (\id_{\F_1} \boxtimes \cdots \boxtimes \id_{\F_1} \boxtimes \phi_{1,1})
$$
as an isomorphism from $\F_1 \boxtimes \cdots \boxtimes \F_1 |_{U_{1, \ldots, 1}} \to \F_k |_{U_{1, \ldots, 1}}$. 
By repeated use of (1), we find that there is a scalar $\lambda_{m,n}$ such that 
\[
\lambda_{m,n}\phi_{m,n} \circ (\psi_{1^m} \boxtimes \psi_{1^n}) = \psi_{1^{m+n}},
\]
which gives
\begin{equation} \label{eqn:redef1}
\psi_{m,n} \circ (\psi_{1^m} \boxtimes \psi_{1^n}) = \psi_{1^{m+n}}.
\end{equation}
Taking $c'_{m,n,p} = c_{m,n,p}\lambda_{m,n+p}\lambda_{n,p}\lambda_{m+n,p}^{-1}\lambda_{m,n}^{-1}$, we have 
\begin{equation} \label{eqn:redef2}
\psi_{m,n+p} \circ (\id_{\F_m} \boxtimes \psi_{n,p}) = c'_{m,n,p} \cdot \psi_{m+n,p} \circ (\psi_{m,n} \boxtimes \id_{\F_p})
\end{equation}
as isomorphisms on $U_{m,n,p}$ for all $m,n,p$. Precomposing \eqref{eqn:redef2} by $\psi_{1^m} \boxtimes \psi_{1^n} \boxtimes \psi_{1^p}$ and using \eqref{eqn:redef1}, we find
$$
\psi_{1^{m+n+p}} = c'_{m,n,p} \cdot \psi_{1^{m+n+p}}.
$$
Since $\F_1$ has dimension $\geq 1$, we have that $\psi_{1^k}$ is nonzero and $c'_{m,n,p}=1$ for all $m,n,p$.
This proves the claim \textit{(a)}.

For claim \textit{(b)}, we now assume that $\phi_{m,n}$ satisfies \textit{(1)} and \textit{(2)} with $c_{m,n,p}= 1$.
Define a graded $\C$-algebra $A = \bigoplus_{n \geq 0} A_n$ as follows. Let $A_0 = \C$ and 
$$
A_n = \bigoplus_{x \in \Sym^n(X) \setminus \Delta} \F_n|_x,
$$
where $\Delta$ denotes the big diagonal. The isomorphisms $\phi_{m,n}$ induce multiplication maps
$$
\mu_{x,y} \colon \F_m|_x \otimes \F_n|_y \stackrel{\cong}{\to} \F_{m+n}|_{x+y},
$$
for all $x \in \Sym^m(X)\setminus \Delta$ and $y \in \Sym^n(X) \setminus \Delta$ with disjoint support.
Define multiplication maps
$$
\mu_{m,n} \colon A_m \otimes_{\C} A_n \to A_{m+n}
$$
by summing the $\mu_{x,y}$ over all $x,y$ with disjoint support. Then property \textit{(1)} of the $\phi_{m,n}$ implies that $A$ is an associative $\C$-algebra. 

As $\cF_1$ has support of dimension 1, we can find an infinite sequence of distinct points $(p_i)_{i=1}^\infty$ in the support of $\cF_1$.
Taking $x = \sum_{i=1}^m p_i$, $y = \sum_{i=m+1}^{m+n}p_i$, we have $\mu_{x,y}$ is non-zero.
It follows that $\mu_{m,n}$ is non-zero for all $m,n$.
Property \textit{(2)} implies that $a \cdot b = d_{m,n} b \cdot a$ for all $a \in A_m$ and $b \in A_n$.
Therefore, we have $d_{m,n} d_{n,m} = 1$ and in particular $d_{1,1} = \pm 1$. Similarly, we obtain $d_{m,n} = (d_{1,1})^{mn}$ (again use that the support of $\F_1$ has dimension $\geq 1$ to ensure that the map $\F_1^{\otimes m} \to \F_m$ is non-0). 
For $d_{1,1} = 1$, we have obtained factorization data. 
For $d_{1,1} = -1$, we  obtain factorization data for $\{\F_n[n]\}_{n=1}^{\infty}$ because $d_{m,n}$ gets replaced by $(-1)^{mn} d_{m,n}$. This is due to the fact that 
\begin{displaymath}
\xymatrix
{
E[m] \otimes F[n] \ar[r] \ar[d] & (E \otimes F)[m+n] \ar[d] \\
F[n] \otimes E[m] \ar[r] & (F \otimes E)[m+n] \\
}
\end{displaymath}
only commutes up to the sign $(-1)^{mn}$ for any $T$-equivariant $\Z/2$-graded coherent sheaves $E,F$.
\end{proof}

\subsection{Clifford algebras and spin modules}

Let $X$ be a $\C$-scheme.
Throughout this section, let $E$ be an even rank $2n \geq 2$ vector bundle on $X$, with non-degenerate quadratic form $q \colon \Sym^2 E \to \cO_X$ and compatible orientation $o$. We say that $(E,q,o)$ is an oriented quadratic vector bundle. 

Let $C(E,q)$ be the Clifford algebra, that is the sheaf of $\O_X$-algebras on $X$ defined as the quotient of the tensor algebra $T(E)$ by the ideal generated (locally) by relations 
\[
s \otimes s - q(s,s),
\]
for $s$ a section of $E$ \cite{Bou, Knu}.
Going forward, we suppress the quadratic form from the notation and write $C(E)$ for $C(E,q)$

The Clifford algebra $C(E)$ is a locally free $\cO_X$-module of rank $2^{2n}$. 
The canonical homomorphism $E \to T(E) \to C(E)$ is injective, and we use this to consider sections of $E$ as sections of $C(E)$.
The algebra $C(E)$ is $\ZZ/2$-graded by declaring that $\prod_{i=1}^m s_i$ has degree $m \pmod 2$ for sections $s_1, \dots, s_m$ of $E$.

There is a unique ``order-reversing'' isomorphism $\rev \colon C(E) \to C(E)^{\op}$ of $\cO_X$-algebras mapping a section $s$ of $E$ to itself, so if $s_1,\dots, s_m$ are sections of $E$, we have
\[
\rev(s_1\cdots s_m) = s_m\cdots s_1.
\]

Let $M$ be a left $C(E)$-module, i.e.~a sheaf of $\cO_X$-modules equipped with a homomorphism\footnote{When we write $\otimes$, we mean $\otimes_{\O_X}$ and likewise $\sHom$ stands for $\sHom_{\cO_X}$.} 
\[
C(E) \otimes M \to M
\]
satisfying the usual module properties.
By the isomorphism $\rev$, we can consider $M$ as a $C(E)^{\mathrm{op}}$-module, or equivalently as a right $C(E)$-module.
We write $M^\dagger$ to denote $M$ considered as a right $C(E)$-module.

If $\Lambda$ is a maximal isotropic subbundle of $E$, then there is an injective homomorphism $i_\Lambda \colon \det \Lambda \to C(E)$ of $\cO_X$-modules which sends a local section $e_1 \wedge \cdots \wedge e_n$ of $\det \Lambda$ to $e_1\cdots e_n$.
We write $C(E)\Lambda$ and $\Lambda C(E)$ for the left and right ideals of $C(E)$ generated by $\det \Lambda$.
The isomorphism $\rev$ sends $C(E)\Lambda$ to $\Lambda C(E)$, and so gives an isomorphism of right $C(E)$-modules
\begin{equation}
	\label{eqn:lambdaOpposite}
\rev \colon (C(E)\Lambda)^\dagger \cong \Lambda C(E).
\end{equation}

The algebra $C(E)$ is an Azumaya algebra on $X$, that is it is étale locally on $X$ isomorphic to $\sHom(F, F)$, where $F$ is a locally free $\cO_X$-module.
If $\Lambda$ is a maximal isotropic subbundle, then we may take $F = C(E)\Lambda$, more precisely the homomorphism of $\cO_X$-algebras 
\[
C(E) \to \sHom(C(E)\Lambda, C(E)\Lambda)
\]
induced by the multiplication in $C(E)$ is an isomorphism of $\cO_X$-algebras. In this case, $C(E)$ is Zariski locally $\sHom(F, F)$, where $F$ is a locally free $\cO_X$-module. 
Note that after restriction to a closed point, $C(E)\Lambda$ is the unique (up to isomorphism) simple $C(E)$-module.
\begin{lemma}
	\label{thm:detIsomorphism}
	Let $\Lambda$ be a maximal isotropic subbundle of $E$.
	There is an isomorphism of line bundles
	\[
	\det \Lambda \cong \Lambda C(E) \otimes_{C(E)} C(E)\Lambda,
	\]
	defined locally as follows.
	Let $e_1,\dots, e_n$ be a local frame of $\Lambda$, and let $f_1,\dots, f_n$ be sections of $E$ such that $q(e_i,e_j) = q(f_i,f_j) = 0$ and $q(e_i,f_j) = \delta_{ij}$ for all $i$ and $j$.
	The isomorphism is defined by
	\[
	e_1 \wedge \dots \wedge e_n \mapsto e_1\cdots e_n f_1\cdots f_n \otimes e_1 \cdots e_n.
	\]
\end{lemma}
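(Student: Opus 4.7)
The plan is to show that the local formula $e_1\wedge\dots\wedge e_n\mapsto \omega\omega^*\otimes\omega$, where $\omega := e_1\cdots e_n$ and $\omega^* := f_1\cdots f_n$, defines a nowhere-vanishing section of $\Lambda C(E)\otimes_{C(E)} C(E)\Lambda$, that it is independent of the auxiliary choice of $f_j$'s, and that it rescales correctly under a change of frame of $\Lambda$. This will force both sides to be line bundles, canonically identified by the formula.

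Working Zariski-locally on $X$, I first show that $\Lambda C(E)\otimes_{C(E)} C(E)\Lambda$ is generated as an $\O_X$-module by $\omega\otimes\omega^*\omega = \omega\omega^*\otimes\omega$. Using the $\O_X$-bases $\{\omega f_I\}_{I\subseteq\{1,\dots,n\}}$ of $\Lambda C(E)$ and $\{f_J\omega\}_{J\subseteq\{1,\dots,n\}}$ of $C(E)\Lambda$ (each a consequence of $e_j^2=0$ and anticommutativity of the $e_i$'s), and sliding factors across the tensor product via $\omega f_I\otimes f_J\omega = \omega\otimes f_I f_J\omega$, every element reduces to a sum of $\omega\otimes f_K\omega$ for $K\subseteq\{1,\dots,n\}$. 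For each $K\subsetneq\{1,\dots,n\}$ I pick $i\notin K$; then $e_if_K = (-1)^{|K|}f_K e_i$ and $e_if_i = 2 - f_ie_i$ give $e_if_Kf_i\omega = 2(-1)^{|K|}f_K\omega$ (the second summand is killed by $e_i\omega=0$), so
\[
0 = (\omega e_i)\otimes f_K f_i\omega = \omega\otimes e_i f_K f_i\omega = 2(-1)^{|K|}\,\omega\otimes f_K\omega,
\]
hence $\omega\otimes f_K\omega = 0$ and the tensor product is generated by the single element $\omega\otimes\omega^*\omega$.

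Next, I would apply the $\O_X$-linear multiplication map $\mu\colon \Lambda C(E)\otimes_{C(E)} C(E)\Lambda \to C(E)$, $x\otimes y\mapsto xy$, to show that this generator is nowhere vanishing. A direct Clifford computation (commuting the $f_i$'s past the $e_j$'s and using $\omega e_j = 0$) yields $\omega\omega^*\omega = c_n\omega$ with $c_n\in\{\pm 2^n\}$, for example $c_1 = 2$ and $c_2 = -4$. Since $\omega$ trivializes $i_\Lambda(\det\Lambda)\subseteq C(E)$, the image is nowhere zero, and together with the generation statement this forces $\Lambda C(E)\otimes_{C(E)} C(E)\Lambda$ to be a line bundle locally trivialized by $\omega\otimes\omega^*\omega$, so the formula is locally an $\O_X$-linear isomorphism of line bundles.

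To finish, I would check independence from $\{f_j\}$ and glue. Any other admissible dual basis has the form $g_j = f_j + \sum_k a_{jk}e_k$ with $(a_{jk})$ skew-symmetric, forced by $q(g_i,g_j) = 0$; expanding $g_1\cdots g_n$ in normal form $\sum_{I,J}c_{I,J}f_I e_J$ yields $f_1\cdots f_n$ plus terms with either $|J|>0$, which die against $\omega$ on the right because $e_J\omega = 0$, or of the form $f_K$ with $K\subsetneq\{1,\dots,n\}$, which give $\omega\otimes f_K\omega = 0$ by the previous step. Thus $\omega g_1\cdots g_n\otimes\omega = \omega\omega^*\otimes\omega$ depends only on the frame of $\Lambda$ and rescales by $\det M$ under $e'_k = \sum_l M_{kl}e_l$, so the local maps glue to a global isomorphism $\det\Lambda\cong \Lambda C(E)\otimes_{C(E)} C(E)\Lambda$. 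The main obstacle is the vanishing $\omega\otimes f_K\omega = 0$ for proper $K$, which requires carefully combining the Clifford relations with the tensor-product relations; the nonvanishing of $\mu(\omega\otimes\omega^*\omega)$ and independence from the isotropic complement then follow by routine Clifford manipulations.
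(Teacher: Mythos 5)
Your proof is correct and its skeleton matches the paper's: establish the key vanishing of $\omega f_K\otimes\omega$ (equivalently $\omega\otimes f_K\omega$) for $K\subsetneq[n]$ by pushing an $e_i$ across the tensor sign so that it annihilates $\omega$, then check independence from the auxiliary isotropic complement $\{f_j\}$ by expanding $g_1\cdots g_n$ and killing all non-leading terms against that same vanishing, and finally check equivariance under change of frame of $\Lambda$. Where you genuinely diverge is in how you conclude that the surjection $\O_X\to\Lambda C(E)\otimes_{C(E)} C(E)\Lambda$, $1\mapsto\omega\omega^*\otimes\omega$, is injective. The paper invokes the Azumaya structure of $C(E)$: since $\Lambda C(E)$ and $C(E)\Lambda$ are the rank-$2^n$ right and left ``spinor'' modules, Morita theory immediately makes the tensor product a line bundle, so the surjection must be an isomorphism. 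You instead post-compose with the $\O_X$-linear multiplication map $\mu\colon x\otimes y\mapsto xy$ and compute $\omega\omega^*\omega = c_n\omega$ with $c_n=\pm 2^n\neq 0$, which exhibits a nowhere-vanishing image and hence forces injectivity directly. Your route is more elementary and self-contained (no Morita/Azumaya input), at the cost of one extra Clifford computation; the paper's route is shorter given that the Azumaya property of $C(E)$ has already been recorded. One small point worth making explicit in a final write-up: when expanding $g_1\cdots g_n$ in normal form, the coefficient of $f_{[n]}$ is exactly $1$ because the $f_j$ mutually anticommute and no normal-ordering of a monomial with fewer than $n$ of the $f_j$ can ever produce an $f_{[n]}$ term; as written this is only implicit.
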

\begin{proof}
	We first show that the local homomorphism in the statement is an isomorphism.
	As $\Lambda C(E)$ and $C(E)\Lambda$ are locally free right and left modules of rank $2^n$ for the Azumaya algebra $C(E)$, it is clear that $\Lambda C(E) \otimes_{C(E)} C(E)\Lambda$ is a line bundle.
	So it is enough to check that the homomorphism is surjective.
	For $I \subseteq [n]:=\{1, \ldots, n\}$, let $e_I = \prod_{i \in I} e_i$ and $f_I = \prod_{i \in I} f_i$, the product taken in ascending order of $i$.
		
	Then $\Lambda C(E) \otimes_{C(E)} C(E)\Lambda$ is spanned as an $\cO_X$-module by sections
	\[
	e_{[n]} f_I \otimes f_J e_{[n]} = e_{[n]} f_If_J \otimes e_{[n]} = \begin{cases} 0 \text{ if }I \cap J \not= \varnothing \\
		\pm e_{[n]} f_{I \cup J} \otimes e_{[n]} \text{ if } I \cap J = \varnothing.	\end{cases}
	\]
	If $I \subsetneq [n]$, then
	\begin{equation}
		\label{eqn:smallIVanish}
	e_{[n]}f_I \otimes e_{[n]} = \pm 2^{|I| - n} e_{[n]}f_{[n]}e_{[n] \setminus I} \otimes e_{[n]} = \pm 2^{|I| - n}e_{[n]}f_{[n]} \otimes e_{[n] \setminus I}e_{[n]} = 0.
	\end{equation}
	It follows that $\Lambda C(E) \otimes_{C(E)} C(E)\Lambda$ is locally spanned by $e_{[n]}f_{[n]} \otimes e_{[n]}$, so our homomorphism is surjective.

	We next show that the local homomorphism defined above is independent of the choice of $f_i$.
	Fixing the $e_i$, if $f_1,\dots, f_n$ and $f_1', \dots, f_n'$ are two valid choices of $f_i$, then since $f_i - f'_i$ is orthogonal to $e_1,\dots, e_n$, we can find functions $a_{ij}$ on $X$ such that $f_i' = f_i + \sum_j a_{ij}e_j$.
	It is then easy to see that
	\[
	e_{[n]}f_{[n]} - e_{[n]}f'_{[n]}
	\]
	is a sum of terms $a_Ie_{[n]}f_{I}$ with $I \subsetneq [n]$.
	By \eqref{eqn:smallIVanish}, we get that 
	\[
	e_{[n]}f_{[n]}\otimes e_{[n]} = e_{[n]}f_{[n]}' \otimes e_{[n]}.
	\]
	Finally, two bases $e_1,\dots, e_n$ and $e'_1,\dots, e_n'$ of $\Lambda$ can be related by a sequence of rescalings $e_i \mapsto ae_i$ and transformations $e_i \mapsto e_i + e_j$, correspondingly changing $f_i \mapsto a^{-1}f_i$ and $f_j \mapsto f_j - f_i$.
	Each of these give rise to the same morphism.
	Since the homomorphism is independent both of the choice of $e_i$ and $f_i$, it is well-defined globally.
\end{proof}

The relevance of spin geometry in the Donaldson--Thomas theory of Calabi--Yau 4-folds was recently worked out by N.~Kuhn \cite{Kuh}.
\begin{definition}
	A \emph{spin module} for $(E,q)$ is a left $C(E)$-module $S$, locally free of finite rank as $\O_X$-module, such that there exists an isomorphism of $\cO_X$-modules
	\begin{equation}
		\label{eqn:spinModuleStructure}
		\phi \colon S^\dagger \otimes_{C(E)} S \cong \cO_X.
	\end{equation}
\end{definition}
For given $(E,q)$, the natural category of pairs $(S,\phi)$, where $S$ is a spin module and $\phi$ an isomorphism, is equivalent to the category of $\text{Spin}(2n)$-structures on the $SO(2n)$-bundle $E$; see \cite[Sec.~A.6]{Kuh} for a parallel discussion of $\text{Spin}^{\mathbb C}(2n)$-structures.

Let $S$ be a spin module, and let $S^\vee = \sHom(S, \cO_X)$, which is a right $C(E)$-module.
We have that the homomorphism 
\[
C(E) \to \sHom(S, S) = S \otimes S^\vee
\]
is an isomorphism of $C(E)$-bimodules.
The isomorphism \eqref{eqn:spinModuleStructure} induces an isomorphism of right $C(E)$-modules $S^\dagger \cong S^\vee$, and so we get an isomorphism of $C(E)$-bimodules
\[
S \otimes S^\dagger \cong C(E).
\]

\begin{example} \label{ex:sqrtdetgivesspin}
	Assume that $E$ has a maximal isotropic subbundle $\Lambda$, and that $L$ is a square root of $\det \Lambda$.
	Then $C(E)\Lambda \otimes L^{-1}$ is a spin module for $E$, since
	\begin{align*}
	(C(E)\Lambda \otimes L^{-1})^\dagger \otimes_{C(E)} (C(E)\Lambda \otimes L^{-1}) &\cong L^{-2} \otimes \Lambda C(E) \otimes_{C(E)} C(E)\Lambda \\
	&\cong \det(\Lambda)^{-1} \otimes \det(\Lambda) \cong \cO_X,
	\end{align*}
using the isomorphisms from \eqref{eqn:lambdaOpposite} and Lemma \ref{thm:detIsomorphism}.
\end{example}

Conversely, if $\Lambda$ is a maximal isotropic subbundle of $E$ and $S$ is a spin module, then we have an isomorphism
\begin{align*}
	(S^\dagger \otimes_{C(E)} C(E)\Lambda)^{\dagger} \otimes S^\dagger \otimes_{C(E)} C(E)\Lambda &\cong \Lambda C(E) \otimes_{C(E)} S \otimes S^\dagger \otimes_{C(E)} C(E)\Lambda \\
	&\cong \Lambda C(E) \otimes_{C(E)} C(E) \Lambda \cong \det \Lambda.
\end{align*}
It follows that
\begin{equation}
	\label{eqn:squarerootDefinition}
(\det \Lambda)_S^{1/2} :=  \Lambda C(E) \otimes_{C(E)} S
\end{equation}
is a square root of $\det \Lambda$.
We have have an inclusion of $\cO_X$-modules
\begin{equation}
	\label{eqn:inclusionOfSquareRoot}
(\det \Lambda)^{1/2}_S = \Lambda C(E) \otimes_{C(E)} S \to C(E) \otimes_{C(E)} S \to S.
\end{equation}
\begin{remark}
	\label{rmk:imageOfLambdaSquareRoot}
If $S = C(E)\Lambda \otimes L^{-1}$, with $L$ a square root of $\det(\Lambda)$, then a simple computation shows that the image of $(\det \Lambda)^{1/2}_S \to S$ is the submodule spanned by $e_1\cdots e_n \otimes t$, for some local basis $e_1,\dots, e_n$ of $\Lambda$ and trivializing section $t$ of $L^{-1}$. We note that \'etale locally, any spin module $S$ can be expressed as $S \cong C(E)\Lambda$ for some maximal isotropic subbundle $\Lambda$. Indeed, étale locally such a $\Lambda$ exists, and the Picard group of $X$ is trivial. The Morita equivalence between $C(E)$ and $\cO_X$ must then send both $S$ and $C(E)\Lambda$ to $\cO_X$.
\end{remark}

So far, we have not used the orientation $o$ on $(E,q)$. The orientation allows us to endow each left $C(E)$-module with a $\Z/2$-grading.
\begin{definition}
	The orientation $o$ of $E$ induces a \textit{volume element} $\omega_o$ in the Clifford algebra, see \cite[Def.~A.11]{Kuh}.
	Working \'etale locally, we may choose $\Lambda$ and $\Lambda'$ to be dual maximal isotropic subbundles of $E$, with $e_1,\dots, e_n$ a basis of $\Lambda$ and $f_1, \dots f_n$ a dual basis of $\Lambda'$ such that $q(e_i,e_j) = q(f_i,f_j) = 0$ and $q(e_i,f_j) = \delta_{ij}$ for all $i,j=1, \ldots, n$, and moreover such that $\Lambda$ is positive.
	Then
	\[
	\omega_o = (e_1f_1 -1)\cdots(e_n f_n -1).
	\]
\end{definition}
We have $\omega_o^2 = 1$, and for every section $s$ of $E$, that 
\begin{equation}
	\label{eqn:commutatorOmega}
	\omega_o s + s\omega_o = 0.
\end{equation}
If $M$ is a left $C(E)$-module, then the action of $\omega_o$ on $M$ induces a splitting $M = M_0 \oplus M_1$ of $\cO_X$-modules, where sections of $M_j$ satisfy $\omega_o m = (-1)^jm$.
By \eqref{eqn:commutatorOmega}, if $m \in M$ is such that $\omega_o m = \pm m$ and $s$ is a section of $E$, then $\omega_o sm = \mp sm$.
It follows that the splitting $M_0 \oplus M_1$ gives $M$ a structure of $\ZZ/2$-graded $C(E)$-module with respect to the $\ZZ/2$-grading of $C(E)$.

\begin{definition} \label{def:complex}
	Let $s$ be an isotropic section of $E$, and let $S$ be a spin module.
	Define $(S,s)$ to be the $\ZZ/2$-graded complex whose underlying sheaf is the $\ZZ/2$-graded $\cO_X$-module $S$, and with differential given by $d(m) = sm$.
\end{definition}
It follows from the fact that $s$ is isotropic that $s^2 = 0$, so that $d^2(m) = s^2m = 0$, i.e., $d$ is a differential.

\begin{example} \label{ex:reltoperiodcx}
	Suppose $\Lambda \subset E$ is positive maximal isotropic, $E = \Lambda \oplus \Lambda^\vee$, and $s$ is an isotropic section. Suppose furthermore that $L$ is a square root of $\det(\Lambda)$. Then $S := L^{-1} \otimes C(E) \Lambda $ is a spin module (Example \ref{ex:sqrtdetgivesspin}) and
	\[
	L \otimes \Lambda^\mdot \Lambda^\vee \cong S
	\]
	as $\Z/2$-graded $C(E)$-modules, where the map, in the usual local basis, and with $t$ a section of $L$ squaring to $e_1 \wedge \dots \wedge e_n$, is induced by 
    \[
    t \otimes f_{i_1} \wedge \cdots \wedge f_{i_k} \mapsto t^\vee \otimes f_{i_1} \cdots f_{i_k} e_1 \cdots e_n.
    \]
    The differential $s = (s_{\Lambda},s_{\Lambda^\vee})$ then corresponds to $s_{\Lambda^\vee} \wedge (-) + s_{\Lambda}\intprod (-)$ on $L \otimes \Lambda^\mdot \Lambda^\vee$ and we obtain a 2-periodic complex
	\[
	\cdots \stackrel{s_{\Lambda^\vee} \wedge + s_{\Lambda}\intprod}{\xrightarrow{\hspace*{1.5cm}}} L \otimes \Lambda^{\mathrm{even}} \Lambda^\vee \stackrel{s_{\Lambda^\vee} \wedge + s_{\Lambda}\intprod}{\xrightarrow{\hspace*{1.5cm}}} L \otimes \Lambda^{\mathrm{odd}} \Lambda^\vee \stackrel{s_{\Lambda^\vee} \wedge + s_{\Lambda}\intprod}{\xrightarrow{\hspace*{1.5cm}}} \cdots 
	\]
\end{example}
	
It is well-known that the cohomology sheaves of the 2-periodic complex of the previous example are supported on $Z(s) = Z(s_{\Lambda}) \cap Z(s_{\Lambda^\vee})$ \cite[Rem.~3.2]{OS}. Since any spin module is \'etale locally of the above form, we deduce the following lemma.

\begin{lemma} \label{lem:suppZ(s)}
For any spin module $S$ and isotropic section $s$ of $(E,q)$, the sheaf $\mathcal{H}(S,s) = H^{-1}(S,s)[1] \oplus H^{0}(S,s)$ is supported on $Z(s)$.
\end{lemma}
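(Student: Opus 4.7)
The plan is to show that away from $Z(s)$ the complex $(S,s)$ is acyclic, by reducing to the concrete Koszul-type model of Example \ref{ex:reltoperiodcx} and writing down an explicit null-homotopy. Since the support of a sheaf is \'etale local, it is enough to prove acyclicity after passing to an arbitrary \'etale neighborhood of any point $x \in X \setminus Z(s)$.

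The first step is the local reduction. By Remark \ref{rmk:imageOfLambdaSquareRoot}, after shrinking $X$ in the \'etale topology we may assume that $E$ admits a maximal isotropic subbundle $\Lambda$, that $\det \Lambda$ admits a square root $L$, and that $S \cong C(E) \Lambda \otimes L^{-1}$ as $C(E)$-modules. Choose a complementary maximal isotropic subbundle so that $E = \Lambda \oplus \Lambda^\vee$ and write $s = s_{\Lambda} + s_{\Lambda^\vee}$. Example \ref{ex:reltoperiodcx} then identifies $(S,s)$ with $(L \otimes \Lambda^\mdot \Lambda^\vee, d)$ where $d = s_{\Lambda^\vee} \wedge (-) + s_{\Lambda} \intprod (-)$.

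The second step is the homotopy. At the point $x$, at least one of $s_{\Lambda}(x)$ and $s_{\Lambda^\vee}(x)$ is nonzero. In the first case, pick a local section $\sigma$ of $\Lambda^\vee$ with $\sigma(s_{\Lambda})$ a unit near $x$, and set $h = \sigma \wedge (-)$; in the second case, pick $\xi \in \Lambda$ with $s_{\Lambda^\vee}(\xi)$ a unit near $x$, and set $h = \xi \intprod (-)$. The standard Koszul identity $\{v \wedge, \alpha \intprod\} = \alpha(v) \cdot \id$ together with the anticommutativity of two wedges (resp.\ two contractions) makes the cross-terms in $dh + hd$ vanish and yields $dh + hd = \sigma(s_{\Lambda}) \cdot \id$ (resp.\ $s_{\Lambda^\vee}(\xi) \cdot \id$). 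Hence $d$ is null-homotopic in a Zariski neighborhood of $x$, which shows $\cH(S,s)$ vanishes there.

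No serious obstacle is expected: all choices are made pointwise, and because the statement concerns only the support of $\cH(S,s)$, no compatibility between these local identifications needs to be checked. The only subtlety is to verify the sign conventions so that the cross-terms in $dh + hd$ cancel, but this follows from the standard Koszul/Clifford algebra of $\Lambda \oplus \Lambda^\vee$.
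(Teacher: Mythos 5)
Your proof is correct and follows essentially the same route as the paper: reduce étale-locally (via Remark~\ref{rmk:imageOfLambdaSquareRoot} and Example~\ref{ex:reltoperiodcx}) to the 2-periodic Koszul-type complex $(L \otimes \Lambda^\mdot \Lambda^\vee,\ s_{\Lambda^\vee}\wedge + s_\Lambda \intprod)$, then observe that this complex is acyclic away from $Z(s) = Z(s_\Lambda) \cap Z(s_{\Lambda^\vee})$. The only difference is that where the paper cites Oh--Sreedhar (\cite[Rem.~3.2]{OS}) for this last acyclicity statement, you spell out the underlying null-homotopy directly; your case analysis and the identity $\{\sigma\wedge,\ s_\Lambda\intprod\} = \sigma(s_\Lambda)\cdot\id$ (and its dual) are exactly the standard Koszul contracting homotopy, so the computation is sound.
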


We now turn our attention to tensor products of spin modules.
	Given oriented quadratic vector bundles $(E_1,q_1,o_1), (E_2,q_2,o_2)$ of even rank on $X$, the quadratic bundle $(E_1 \oplus E_2,q_1 \oplus q_2)$ has an induced orientation $o_1 \otimes o_2$. 

\begin{definition}
	If $A, B$ are two $\ZZ/2$-graded sheaves of $\O_X$-algebras, then we write $A \, \widehat{\otimes} \, B$ for the algebra $A \otimes B$, equipped with the natural tensor product grading, and with the product for homogeneous sections $a,a'$ of $A$ and $b, b'$ of $B$ determined by
	\[
	(a \otimes b)(a' \otimes b') = (-1)^{|b||a'|}aa' \otimes bb'.
	\]
\end{definition}

The following lemma is straightforward.
\begin{lemma} \label{lem:tensorCliff}
	Given oriented quadratic bundles $(E_1,q_1,o_1), (E_2,q_2,o_2)$ on $X$, we have an isomorphism
	\[
	C(E_1) \, \widehat{\otimes} \, C(E_2) \cong C(E_1 \oplus E_2),
	\]
	defined by
	\[
	s_1 \otimes 1 \mapsto (s_1,0)
	\]
	and 
	\[
	1 \otimes s_2 \mapsto (0,s_2).
	\]
	This satisfies
	\[
	\omega_{o_1} \otimes \omega_{o_2} \mapsto \omega_{o},
	\]
	where $o$ is the orientation $o_1 \otimes o_2$ of $(E_1 \oplus E_2,q_1 \oplus q_2)$.
\end{lemma}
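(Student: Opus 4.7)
The plan is to construct the isomorphism via the universal property of Clifford algebras, then verify the two required structural facts (it's an isomorphism, and it sends the tensor product of volume elements to the total volume element) by reducing to local computations on compatible bases. Since the lemma is labelled as straightforward, I would keep all verifications at the level of explicit local checks.

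First, I would use the inclusions $E_i \to C(E_1 \oplus E_2)$ defined by $s_1 \mapsto (s_1,0)$ and $s_2 \mapsto (0,s_2)$. Since $(s_i,0)^2 = q_i(s_i,s_i)$ in $C(E_1 \oplus E_2)$, the universal property gives $\O_X$-algebra homomorphisms $\varphi_i \colon C(E_i) \to C(E_1 \oplus E_2)$. The key point is that these images super-commute. Indeed, expanding $((s_1,0)+(0,s_2))^2 = q_1(s_1,s_1) + q_2(s_2,s_2)$ in $C(E_1 \oplus E_2)$ yields the anticommutation relation $(s_1,0)(0,s_2) + (0,s_2)(s_1,0) = 0$ for sections $s_i$ of $E_i$. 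By induction on the (word-length) filtration on each $C(E_i)$, this propagates to the super-commutation rule $\varphi_2(b)\varphi_1(a) = (-1)^{|a||b|} \varphi_1(a)\varphi_2(b)$ for homogeneous elements $a \in C(E_1)$, $b \in C(E_2)$. This is precisely the condition needed for the bilinear map $(a,b) \mapsto \varphi_1(a)\varphi_2(b)$ to factor through the super-tensor product and define an $\O_X$-algebra homomorphism
$$
\Phi \colon C(E_1) \, \widehat{\otimes} \, C(E_2) \to C(E_1 \oplus E_2).
$$

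Second, to check that $\Phi$ is an isomorphism, I would argue locally. Both sides are locally free $\O_X$-modules of rank $2^{2n_1 + 2n_2} = 2^{2(n_1+n_2)}$ (where $\mathrm{rk}\,E_i = 2n_i$), so it suffices to show surjectivity on stalks, or equivalently that $\Phi$ is surjective after restriction to closed points. But after picking local bases of $E_1$ and $E_2$, the obvious monomial basis of $C(E_1) \, \widehat{\otimes} \, C(E_2)$ maps to the corresponding monomial basis of $C(E_1 \oplus E_2)$ under $\Phi$ (up to signs arising from rearrangement), and this is visibly an isomorphism.

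Finally, for the volume element assertion, I would work étale locally and choose dual bases $e_j^{(i)}, f_j^{(i)}$ of maximal isotropics $\Lambda_i, \Lambda_i^\vee \subset E_i$ such that each $\Lambda_i$ is positive for $o_i$; then $\Lambda_1 \oplus \Lambda_2$ is positive for $o = o_1 \otimes o_2$. Using the formula $\omega_{o_i} = \prod_j(e_j^{(i)} f_j^{(i)} - 1)$, each factor $(e_j^{(i)} f_j^{(i)} - 1)$ has even $\Z/2$-degree in $C(E_i)$, so in $C(E_1) \, \widehat{\otimes} \, C(E_2)$ all factors of $\omega_{o_1} \otimes \omega_{o_2}$ commute with each other without Koszul signs. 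Applying $\Phi$ factorwise produces exactly $\prod_j(e_j^{(1)}f_j^{(1)} - 1) \prod_k (e_k^{(2)}f_k^{(2)} - 1) = \omega_o$ in $C(E_1 \oplus E_2)$.

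The only genuine subtlety is the inductive propagation of the super-commutation relation from sections of $E_i$ to general homogeneous elements of $C(E_i)$; all other steps reduce to direct basis-level computation. Once super-commutation is established, constructing $\Phi$ and verifying both that it is an isomorphism and that it preserves volume elements are essentially bookkeeping.
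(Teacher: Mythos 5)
Your proof is correct and is the standard argument: build the homomorphism via the universal property of the Clifford algebra, propagate the anticommutation relation $(s_1,0)(0,s_2)+(0,s_2)(s_1,0)=0$ to super-commutation of the two images by induction on word length, conclude bijectivity by matching monomial bases of locally free sheaves of equal rank, and verify the volume-element identity directly using that each factor $(e_jf_j-1)$ is even so no Koszul signs appear. The paper marks this lemma as straightforward and gives no proof; your write-up supplies precisely the expected details.
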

Going forward, we will implicitly identify $C(E_1 \oplus E_2)$ and $C(E_1) \, \widehat{\otimes} \, C(E_2)$.
If $M_1$ and $M_2$ are $\ZZ/2$-graded modules for $C(E_1)$ and $C(E_2)$ respectively, the sheaf $M_1 \otimes M_2$ is a $C(E_1) \, \widehat{\otimes} \, C(E_2)$-module by
\[
(a \otimes b)(m \otimes m') = (-1)^{|b||m|} am \otimes bm'.
\]

\begin{lemma}
	\label{thm:tensorProduct}
	Let $M_1, N_1, M_2, N_2$ be $\ZZ/2$-graded modules of $C(E_1)$ and $C(E_2)$ respectively.
	We have an isomorphism of $\cO_X$-modules
	\[
	(M_1 \otimes M_2)^\dagger \otimes_{C(E_1 \oplus E_2)} (N_1 \otimes N_2) \cong (M_1^\dagger \otimes_{C(E_1)} N_1) \otimes (M_2^\dagger \otimes_{C(E_2)} N_2).
	\]
\end{lemma}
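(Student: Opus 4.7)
The plan is to exhibit an explicit homomorphism of $\cO_X$-modules from the right-hand side to the left, verify it descends from an evident isomorphism of the unreduced tensor products, and read off that it is therefore an isomorphism.

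Concretely, on homogeneous simple tensors I would put
\[
\phi\bigl((m_1 \otimes n_1) \otimes (m_2 \otimes n_2)\bigr) = (-1)^{|m_2|(|m_1|+|n_1|)}\,(m_1 \otimes m_2) \otimes (n_1 \otimes n_2),
\]
going from $(M_1^\dagger \otimes_{C(E_1)} N_1) \otimes (M_2^\dagger \otimes_{C(E_2)} N_2)$ to $(M_1 \otimes M_2)^\dagger \otimes_{C(E_1 \oplus E_2)} (N_1 \otimes N_2)$. The exponent is not the naive Koszul sign $|n_1||m_2|$ for interchanging $n_1$ and $m_2$; it contains an additional correction $|m_1||m_2|$. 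This correction reflects the fact that the right $C(E_1)\,\widehat{\otimes}\,C(E_2)$-action on $(M_1 \otimes M_2)^\dagger$ obtained through $\rev$ differs from the naive super-tensor-product right action on $M_1^\dagger \otimes M_2^\dagger$ by the twist $(-1)^{|m_1||m_2|}$. In turn, this twist is forced by the formula $\rev(a_1 \otimes a_2) = (-1)^{|a_1||a_2|}\,\rev(a_1) \otimes \rev(a_2)$, a direct consequence of Lemma \ref{lem:tensorCliff} and the fact that $\rev$ is an anti-homomorphism of $C(E_1 \oplus E_2)$.

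For well-definedness, by Lemma \ref{lem:tensorCliff} the relations in the target balanced tensor product are generated by elements of the form $a_1 \otimes 1$ and $1 \otimes a_2$ with $a_i \in C(E_i)$. The source relation $m_1 a_1 \otimes n_1 = m_1 \otimes a_1 n_1$ involves no Koszul signs, since both the right action of $a_1 \otimes 1$ on $(M_1 \otimes M_2)^\dagger$ and its left action on $N_1 \otimes N_2$ are sign-free; the overall factor $(-1)^{|m_2|(|m_1|+|n_1|+|a_1|)}$ in $\phi$ is then the same on both sides, and the relation maps precisely to the target relation for $a_1 \otimes 1$. For the second source relation $m_2 a_2 \otimes n_2 = m_2 \otimes a_2 n_2$, the actions of $1 \otimes a_2$ produce the signs $(-1)^{|a_2||m_1|}$ and $(-1)^{|a_2||n_1|}$ respectively, whose product is exactly the change in the $\phi$-sign under $m_2 \mapsto m_2 a_2$; everything cancels and the source relation descends to the target relation for $1 \otimes a_2$.

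Since the unreduced tensor products $M_1 \otimes N_1 \otimes M_2 \otimes N_2$ and $M_1 \otimes M_2 \otimes N_1 \otimes N_2$ are $\cO_X$-isomorphic via the signed permutation underlying $\phi$, and the verification above identifies the source relations with the full set of target relations under this permutation, $\phi$ descends to an isomorphism of $\cO_X$-modules on the quotients. The main obstacle is isolating the correct sign exponent: the naive Koszul sign fails, and the correction $|m_1||m_2|$ accounts for the non-trivial super twist relating $(M_1 \otimes M_2)^\dagger$ and $M_1^\dagger \otimes M_2^\dagger$; once this is fixed, the remaining checks are bookkeeping with the Koszul rule.
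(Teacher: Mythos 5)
Your proof is correct and follows essentially the same approach as the paper's: you write down the same signed shuffle permutation, just expressed as a map from the right-hand side to the left rather than vice versa, and your sign $(-1)^{|m_2|(|m_1|+|n_1|)}$ matches the paper's $(-1)^{(|a|+|c|)|b|}$ exactly. The paper leaves the well-definedness check as "straightforward"; your verification against the generating relations for $a_1\otimes 1$ and $1\otimes a_2$, and your observation that $\rev(a_1 \otimes a_2) = (-1)^{|a_1||a_2|}\rev(a_1)\otimes\rev(a_2)$ accounts for the extra sign beyond the naive Koszul swap, correctly fill in those details.
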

\begin{proof}
	The isomorphism is given on $\ZZ/2$-homogeneous sections by
	\[
	a \otimes b \otimes c \otimes d \mapsto (-1)^{(|a| + |c|)|b|} a\otimes c \otimes b \otimes d.
	\]
	It is straightforward to check that this gives a well-defined homomorphism (that is, respects the relations defining the tensor product), and that the obvious inverse is also well-defined.
\end{proof}

\begin{corollary}
	Given oriented quadratic vector bundles $E_1, E_2$ on $X$, with spin modules $S_1, S_2$ for $E_1, E_2$ respectively, the $C(E_1) \, \widehat{\otimes} \, C(E_2)$-module $S_1 \otimes S_2$ is a spin module for $E_1 \oplus E_2$.\footnote{Note that the tensor product grading of $S_1 \otimes S_2$ coincides with the one coming from the orientation on $E_1 \oplus E_2$ by Lemma \ref{lem:tensorCliff}.}
\end{corollary}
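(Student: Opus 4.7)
The plan is to assemble the corollary from the preceding two lemmas with essentially no additional work, so I would present it as a verification of the three conditions in the definition of a spin module, together with a quick compatibility check for the footnote.

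First, I would observe that $S_1 \otimes S_2$ is manifestly locally free of finite rank as an $\O_X$-module, being the tensor product of two such modules. Using Lemma \ref{lem:tensorCliff} to identify $C(E_1 \oplus E_2)$ with $C(E_1) \, \widehat{\otimes} \, C(E_2)$, and the $\Z/2$-graded module structure of $S_1 \otimes S_2$ over that algebra described right after the lemma, this realizes $S_1 \otimes S_2$ as a left $C(E_1 \oplus E_2)$-module.

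Second, I would produce the required isomorphism \eqref{eqn:spinModuleStructure}. Let $\phi_i \colon S_i^\dagger \otimes_{C(E_i)} S_i \cong \O_X$ be the spin structures of $S_1$ and $S_2$. Applying Lemma \ref{thm:tensorProduct} with $M_i = N_i = S_i$ gives a canonical isomorphism
\[
(S_1 \otimes S_2)^\dagger \otimes_{C(E_1 \oplus E_2)} (S_1 \otimes S_2) \cong (S_1^\dagger \otimes_{C(E_1)} S_1) \otimes (S_2^\dagger \otimes_{C(E_2)} S_2),
\]
which composed with $\phi_1 \otimes \phi_2$ lands in $\O_X \otimes \O_X \cong \O_X$. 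This is the desired trivialization, proving that $S_1 \otimes S_2$ is a spin module for $E_1 \oplus E_2$.

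Third, for the footnote I would check that the tensor-product $\Z/2$-grading on $S_1 \otimes S_2$ agrees with the grading coming from the orientation $o = o_1 \otimes o_2$. By Lemma \ref{lem:tensorCliff}, the volume element $\omega_o$ corresponds to $\omega_{o_1} \otimes \omega_{o_2}$ in $C(E_1) \, \widehat{\otimes} \, C(E_2)$. For $\Z/2$-homogeneous sections $m_i \in (S_i)_{j_i}$, the definition of the tensor-product module action gives $(\omega_{o_1} \otimes \omega_{o_2})(m_1 \otimes m_2) = (-1)^{j_1 + j_2} m_1 \otimes m_2$, which matches the tensor-product grading on $S_1 \otimes S_2$ in which $m_1 \otimes m_2$ has degree $j_1 + j_2 \pmod 2$. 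No step here is an obstacle; the substantive content has already been absorbed into Lemmas \ref{lem:tensorCliff} and \ref{thm:tensorProduct}.
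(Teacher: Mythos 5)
Your proof is correct and follows the paper's argument exactly: the substance is the single application of Lemma \ref{thm:tensorProduct} with $M_i = N_i = S_i$, composed with the given trivializations $\phi_1, \phi_2$. The explicit check of the footnote via the volume element $\omega_{o_1} \otimes \omega_{o_2}$ is a nice extra detail that the paper leaves implicit.
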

\begin{proof}
	By Lemma \ref{thm:tensorProduct}, we get
	\[
	(S_1 \otimes S_2)^\dagger \otimes_{C(E_1 \oplus E_2)} (S_1 \otimes S_2) \cong (S_1^\dagger \otimes_{C(E_1)} S_1) \otimes (S_2^\dagger \otimes_{C(E_2)} S_2) \cong \cO_X \otimes \cO_X \cong \cO_X.
	\]
\end{proof}

Let $E_1, E_2$ be oriented quadratic vector bundles on $X$, let $S$ be a spin module for $E_1 \oplus E_2$ and $S_1$ a spin module for $E_1$.
Give $S_1^\vee$ a $\ZZ/2$-grading as a right $C(E_1)$-module such that the homomorphism $m \colon S_1 \otimes S_1^\vee \to C(E_1)$ is homogeneous.
Let 
\[
S \setminus S_1 = S_1^\vee \otimes_{C(E_1)} S,
\]
considered as a $C(E_2)$-module by, for a section $s_2$ of $E_2$ and homogeneous sections $a$ of $S_1^\vee$ and $b$ of $S$
\[
s_2(a \otimes b) = (-1)^{|a|} a \otimes s_2b.
\]
\begin{lemma}
	\label{thm:subtractingSpinModules}
	The $C(E_2)$-module $S \setminus S_1$ defined above is a spin module for $E_2$.
\end{lemma}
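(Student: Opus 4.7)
The plan is to verify the two properties that define a spin module: local freeness of finite rank as an $\cO_X$-module, and the existence of an isomorphism $(S \setminus S_1)^\dagger \otimes_{C(E_2)} (S \setminus S_1) \cong \cO_X$.

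First I would address local freeness by working étale locally, where (as noted in Remark \ref{rmk:imageOfLambdaSquareRoot}) any spin module is of the form $C(E)\Lambda$ for some maximal isotropic $\Lambda$, and $C(E_1)$ is an Azumaya algebra with a unique simple module (up to isomorphism) given by $S_1$. In this local setting, $S_1^\vee \otimes_{C(E_1)} S$ is the $\cO_X$-module of right-$C(E_1)$-linear morphisms $S_1 \to S$, which is locally free of rank $\rk(S)/\rk(S_1) = 2^{n_2}$, where $2n_i = \rk(E_i)$. This handles the size condition, and commutes with étale localisation.

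The heart of the argument is the identity $S_1 \otimes (S \setminus S_1) \cong S$ as $\ZZ/2$-graded $C(E_1 \oplus E_2)$-modules. Using the Morita-type isomorphism $S_1 \otimes S_1^\vee \cong C(E_1)$ induced by $m$, this follows from
\[
S_1 \otimes (S \setminus S_1) = S_1 \otimes S_1^\vee \otimes_{C(E_1)} S \cong C(E_1) \otimes_{C(E_1)} S \cong S.
\]
Here the grading on $S_1^\vee$ (and hence on $S \setminus S_1$) is chosen precisely so that $m$ is homogeneous, which ensures the above isomorphism is grading-preserving; by Lemma \ref{lem:tensorCliff} the tensor-product grading coming from $\omega_{o_1}\otimes\omega_{o_2}$ on the left matches the $\omega_o$-grading on $S$.

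Now I would apply Lemma \ref{thm:tensorProduct} to the pair $(M_1,N_1) = (M_2,N_2) = (S_1, S \setminus S_1)$, yielding
\[
(S_1 \otimes (S\setminus S_1))^\dagger \otimes_{C(E_1\oplus E_2)} (S_1 \otimes (S\setminus S_1)) \cong (S_1^\dagger \otimes_{C(E_1)} S_1) \otimes ((S\setminus S_1)^\dagger \otimes_{C(E_2)} (S\setminus S_1)).
\]
Substituting $S$ on the left via step~2 and using the spin structures on $S$ and $S_1$ gives $\cO_X \cong \cO_X \otimes ((S\setminus S_1)^\dagger \otimes_{C(E_2)} (S\setminus S_1))$, which is the desired isomorphism. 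The main obstacle I anticipate is bookkeeping for the Koszul signs: checking that the $C(E_2)$-action on $S\setminus S_1$ defined in the statement (with the sign $(-1)^{|a|}$) is exactly the one under which the isomorphism $S_1\otimes (S\setminus S_1)\cong S$ is $C(E_1\oplus E_2)$-linear, and that the identifications in Lemma \ref{thm:tensorProduct} respect these sign conventions throughout.
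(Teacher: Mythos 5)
Your proposal matches the paper's proof: both establish the isomorphism $S_1 \otimes (S \setminus S_1) \cong S$ of $C(E_1 \oplus E_2)$-modules via $m \otimes \id_S$, then feed this into Lemma \ref{thm:tensorProduct} to deduce $(S \setminus S_1)^\dagger \otimes_{C(E_2)} (S \setminus S_1) \cong \cO_X$. The Koszul-sign verification you flag as the main obstacle is exactly the computation the paper carries out (checking $m \otimes \id_S$ commutes with the $s_2$-action using $|m(a \otimes b)| = |a| + |b|$); your additional étale-local argument for the finite local-free rank of $S \setminus S_1$ is a correct complement that the paper leaves implicit.
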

\begin{proof}
	We have an isomorphism of $C(E_1)$-modules
	\[
	S_1 \otimes (S \setminus S_1) = S_1 \otimes S_1^\vee \otimes_{C(E_1)} S \overset{m \otimes \id_S}{\cong} C(E_1) \otimes_{C(E_1)} S \cong S,
	\]
	and we claim this is also an isomorphism of $C(E_1) \, \widehat{\otimes} \, C(E_2)$-modules.
	It is enough to check that the homomorphism commutes with multiplication by a section $s_2$ of $E_2$.
	So let $a, b, c$, be homogeneous sections of $S_1, S_1^\vee$ and $S$, respectively, then we have
	\[
	s_2(a \otimes (b \otimes c)) = (-1)^{|a|} a \otimes s_2(b\otimes c)  = (-1)^{|a| + |b|} a \otimes b \otimes s_2c.
	\]
	It follows that
	\begin{align*}
	m \otimes \id_S(s_2(a \otimes b \otimes c)) &= (-1)^{|a| + |b|} m(a \otimes b) \otimes s_2 c \\
	&= (-1)^{|m(a \otimes b)|} m(a \otimes b) \otimes s_2 c = s_2(m(a \otimes b) \otimes c).
	\end{align*}
We now have, by Lemma \ref{thm:tensorProduct}, that
\begin{align*}
\cO_X &\cong S^{\dagger} \otimes_{C(E_1 \oplus E_2)} S \cong (S_1 \otimes (S\setminus S_1))^\dagger \otimes_{C(E_1 \oplus E_2)} (S_1 \otimes (S\setminus S_1)) \\
&\cong (S_1^\dagger \otimes_{C(E_1)} S_1) \otimes ((S\setminus S_1)^\dagger \otimes_{C(E_2)} (S\setminus S_1)) \cong (S\setminus S_1)^\dagger \otimes_{C(E_2)} (S\setminus S_1). \qedhere
\end{align*}
\end{proof}
If $\Lambda_1$ and $\Lambda_2$ are maximal isotropic subbundles of quadratic bundles $E_1$ and $E_2$ on $X$, then from \eqref{eqn:inclusionOfSquareRoot} we get inclusions
\begin{gather*}
\det(\Lambda_1)^{1/2}_{S_1} \otimes \det(\Lambda_2)^{1/2}_{S_2} \to S_1 \otimes S_2 \\
\det(\Lambda_1 \oplus \Lambda_2)^{1/2}_{S_1 \otimes S_2} \to S_1 \otimes S_2.
\end{gather*}
These have the same image, and so give an isomorphism 
\begin{equation} \label{eqn:sqrtdetdirectsum}
\det(\Lambda_1)^{1/2}_{S_1} \otimes \det(\Lambda_2)^{1/2}_{S_2} \cong \det(\Lambda_1 \oplus \Lambda_2)^{1/2}_{S_1 \otimes S_2}.
\end{equation}

\subsection{Pull-back of spin modules} \label{sec:pullbackspin}

In this section, we consider the following setup.
Let $A$ be a smooth variety of dimension $d$, let $E$ be an oriented quadratic vector bundle of rank $2n$ on $A$ with quadratic form $q$, let $s$ be an isotropic section of $E$, and let $S$ be a spin module for $C(E)$.
Let $A_0$ be a smooth variety of dimension $d_0$, and let $f \colon A_0 \to A$ be an unramified morphism.
Let $E_0$ be an oriented quadratic vector bundle of rank $2n_0$, with quadratic form $q_0$, which embeds as a subbundle of $E|_{A_0}$ compatibly with the quadratic forms. 
Let $S_0$ be a spin module for $C(E_0)$.
Assume that the section $s$ lies in $E_0$ after restriction to $A_0$, and let $s_0 = s|_{A_0}$ be the resulting section of $E_0$.
We further assume that $d - d_0 = n - n_0$.
As $E_0$ is a quadratic subbundle of $E|_{A_0}$, we have a quadratic bundle $E' = E_0^\perp$ of rank $2n - 2n_0$ with quadratic form $q'$ such that $(E|_{A_0},q) = (E_0 \oplus E', q_0 + q')$. 

We are interested in relating $f^*(S,s)$ (or rather its cohomology sheaves) to $(S_0,s_0)$.
This will be possible under the following assumption, which will be satisfied in the setting we explore in Section \ref{sec:global}.
Let $N = N_{A_0/A} = f^*T_{A}/T_{A_0}$.
\begin{assumption}
\label{assume:LocalForm}
Working on $Z(s_0)$, define $\phi$ by the commutative diagram
\[
\begin{tikzcd}
    0\arrow[r]&T_{A_0}\arrow[r]\arrow[d,"ds_0"] &f^*T_A\arrow[r]\arrow[d, "ds"] &N\arrow[r]\arrow[d, "\phi"] &0 \\
    0\arrow[r]&E_0\arrow[r] &f^*E\arrow[r] &E'\arrow[r] &0
\end{tikzcd}
\]
Then $\phi$ is an injective map of vector bundles, embedding $N|_{Z(s_0)}$ as a maximal isotropic subbundle of $E'|_{Z(s_0)}$.

Let $p \in Z(s_0)$, and let $\widehat{\cO}_{A,f(p)}$ be the completion of the local ring at $f(p)$, isomorphic to a formal power series ring in $d$ variables.
We assume that formally locally, i.e.~after base changing the whole setup along $\Spec \widehat{\cO}_{A,f(p)} \to A$, the following hold:
\begin{itemize}
	\item We have an isomorphism $\widehat{\cO}_{A,f(p)} \cong \CC[[x_1,\dots, x_{d_0}, y_1,\dots y_{d-d_0}]]$, and $A_0$ is defined by the vanishing of the $y_i$.
	\item We have an isomorphism $E \cong \cO_A^{2n}$, with quadratic form
	\begin{equation} \label{eqn:standardquadraticform}
q(a_1,\dots, a_{2n_0}, b_1, \dots, b_{2n-2n_0}) = \sum_{i=1}^{n_0}a_ia_{i+n_0} + \sum_{i=1}^{n-n_0}b_ib_{i+n-n_0}.
	\end{equation}
	\item The subbundle $E_0 \to E|_{A_0}$ is identified with the inclusion $\cO_{A_0}^{2n_0} \to \cO_A^{2n}|_{A_0}$ in the first $2n_0$ coordinates.
	\item The section $s$ of $E$ has the form
	\begin{equation}
		\label{eqn:specialFormOfS}
	(g_1,\dots, g_{2n_0}, h_1,\dots, h_{2d-2d_0}),
	\end{equation}
	where each $g_i$ lies in $(x_1,\dots, x_{d_0}) + (y_1,\dots, y_{d-d_0})^2$ and each $h_i$ lies in $(y_1,\dots, y_{d-d_0})$.
\end{itemize}
\end{assumption}

With the above normalizations, we (formally locally) have $E' \cong \cO_{A_0}^{2n-2n_0} \subseteq \cO_{A_0}^{2n}$, consisting of the last $2n - 2n_0$ factors.
In the above choice of coordinates, $N$ embeds in $E' \cong \cO_{A_0}^{2n-2n_0}$ as the subbundle spanned by the first $n-n_0$ unit vectors.

\begin{lemma}
\label{thm:strongerLocalForm}
If Assumption \ref{assume:LocalForm} holds, we may find local coordinates $x_i, y_i$ and a trivialization of $E$ such that the section $s$ has the form
\[
(g_1,\dots, g_{2n_0}, y_1,\dots, y_{d-d_0},\overbrace{0,\dots, 0}^{d-d_0}),
\]
where each $g_i$ lies in $\C[[x_1,\dots, x_{d_0}]]$, that is, it is independent of the $y_j$.
\end{lemma}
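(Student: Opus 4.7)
The plan is to proceed in two stages: first, use the formal implicit function theorem to bring the $\Lambda_1$-component of $s$ into the standard form $y$; second, apply orthogonal transformations of $E$ together with further changes of the $x$-coordinates to kill the remaining $\Lambda_2$-component of $s$ and the $y$-dependence of the $E_0$-component. Throughout, I write $E' = \Lambda_1 \oplus \Lambda_2$ for the split of $E'$ into its first and last $d-d_0$ coordinates.

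For the first stage, the hypothesis that $\phi$ embeds $N|_{Z(s_0)}$ as $\Lambda_1|_{Z(s_0)}$ says, in the given coordinates, that the Jacobian $(\partial h_j/\partial y_i)_{i,j=1,\dots,d-d_0}$ is invertible at $p$ and that $\partial h_j/\partial y_i|_{Z(s_0)} = 0$ for $j > d-d_0$. The formal implicit function theorem then lets me introduce $y'_j := h_j(x,y)$, $j=1,\dots,d-d_0$, as new $y$-coordinates. Because each $h_j$ lies in $(y_1,\dots,y_{d-d_0})$ and the change is a formal diffeomorphism, the ideals $(y'_1,\dots,y'_{d-d_0})$ and $(y_1,\dots,y_{d-d_0})$ coincide, so $A_0$ is still cut out by $y'=0$. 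After renaming $y' \to y$, the section becomes $s = (g, y_1,\dots,y_{d-d_0}, h'_1,\dots, h'_{d-d_0})$ with $g_i \in (x)+(y)^2$, $h'_j \in (y)$, and $\partial h'_j/\partial y_i|_{Z(s_0)} = 0$ for all $i,j$.

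For the second stage, the parabolic subgroup of $O(E)$ fixing $\Lambda_1$ pointwise is generated by shears mixing $E_0$ with $\Lambda_2$ and by antisymmetric shears $\Lambda_1 \to \Lambda_2$, and is rich enough for what we need. I would construct the desired transformation inductively on the order of vanishing in $y$: assuming $g - g(x,0) \in (y)^k$ and $h' \in (y)^k$, the isotropy identity
\[
\sum_{i=1}^{n_0} g_i g_{i+n_0} + \sum_{j=1}^{d-d_0} y_j h'_j = 0,
\]
combined with $\partial h'_j/\partial y_i|_{Z(s_0)} = 0$, lets me express the order-$k$ obstruction in a form that can be cancelled by a suitable order-$k$ element of the parabolic together with an order-$k$ change of the $x$-coordinates. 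Passing to the formal limit yields $h'=0$ and $g = g(x,0)$; the resulting $g(x,0)$ is automatically isotropic in $E_0$ as the restriction of an isotropic section. The main obstacle I anticipate is the inductive bookkeeping: verifying at each order that the obstruction really lies in the image of the infinitesimal action of the parabolic subgroup, which is exactly where the two conditions above come in. Conceptually, this is a Moser-type argument saying that, formally locally, $s$ must equal $\pi^* s_0 + y_\Lambda$ for a formal retraction $\pi \colon A \to A_0$ fixing $p$ and the tautological section $y_\Lambda \in \Gamma(A, \Lambda_1)$ coming from $N \cong \Lambda_1$.
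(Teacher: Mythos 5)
Your stage 1 (formal implicit function theorem) is a reasonable alternative to the paper's change of variables $y_i \mapsto y_i + h_i''$, but it rests on a stronger reading of Assumption \ref{assume:LocalForm} than is actually given: the assumption says $\phi$ embeds $N$ as \emph{some} maximal isotropic subbundle of $E'$, not necessarily as the coordinate subbundle spanned by the first $d-d_0$ unit vectors. Before the Jacobian block you invoke is even guaranteed to be the invertible one, you must first re-trivialize $E'$ so that the constant vectors $\partial_{y_i}s'|_p$ (the degree-$(0,1)$ part of $s$) become the first $d-d_0$ unit vectors; the paper does this explicitly, using that these vectors are isotropic and pairwise orthogonal. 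More substantively, the consequence you assert after stage 1, namely $\partial h'_j/\partial y_i|_{Z(s_0)} = 0$ for all $i,j$, is stronger than what is available: a constant re-trivialization arranges this only at the closed point $p$. On $Z(s_0)$ generically, isotropy of $\phi(N)$ only forces the matrix $(\partial h'_j/\partial y_i)|_{Z(s_0)}$ to be \emph{antisymmetric}, not zero, and since your stage 2 explicitly uses this vanishing this is a real gap.

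Stage 2 has further problems. The group you name, ``the parabolic subgroup of $O(E)$ fixing $\Lambda_1$ pointwise'', is not the one that acts here: the paper's shears send $w_i \mapsto w_i + F_i v'_1$, so they do not fix $\Lambda_1$ pointwise; what they do is fix $\Lambda_2$ pointwise, and more precisely lie in the unipotent radical of the parabolic stabilizing $\Lambda_2$, which is exactly the condition needed so that the $\Lambda_1$-coordinates of $s$ remain $(y_1,\dots,y_{d-d_0})$ after the frame change. Even granting the correct group, the Moser-type induction is not carried out and the bookkeeping does not obviously close: to remove the $y$-order-$k$ part of $g$ via a shear you need the shear at order $k-1$, but that same shear pushes contributions of order $k-1$ into $h'$, so the hypotheses $g - g(x,0) \in (y)^k$ and $h' \in (y)^k$ are not preserved symmetrically. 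The paper sidesteps this entirely by being exact rather than order-by-order: each $g_i$ has its full $y$-dependence removed in one step by a shear built from the complete power series $F_i(x,y)$, and afterwards the isotropy identity $\sum_i g_i g_{i+n_0} + \sum_j y_j h'_j = 0$, with the $g_i$ now $y$-independent, forces $h'_j = 0$ by isolating the terms divisible by $y_j$. Your extra ``order-$k$ change of the $x$-coordinates'' is not used in the paper and its role is left unexplained, so at present this part of the argument is a sketch with a genuine gap rather than a proof.
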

\begin{proof}
    Consider the bigrading on monomials of $R = \C[[x_1,\dots, x_{d_0}, y_1,\dots y_{d-d_0}]]$ where $x_i^ay_j^b$ has degree $(a,b)$.
    Write $I_{a,b}$ for the ideal of $R$ generated by all monomials of degree $(a,b)$.

    Let $s'$ be the section of $E$ defined by the terms of $s$ of degree $(0,1)$.
	Since $s$ has no constant term, the degree $(0,2)$ part of the relation $q(s,s) = 0$ is the same as that in $q(s',s')$.
    This implies that $s'$ is also isotropic.
    By assumption, the $g_i$ have no terms of degree $(0,1)$, so $s'$ is of the form
    \[
    (0,\dots, 0, h_1',\dots, h_{2d-2d_0}').
    \]
    The constant sections $\frac{\partial}{\partial y_i}(s')$ of $E'$ are isotropic and mutually orthogonal since $s'$ is, and they are all non-zero by the assumption that $\phi$ is injective.
    
    We may therefore choose a new trivialization of $E'$ where the first $d-d_0$ component vectors are the vectors $\frac{\partial}{\partial y_i}(s')$.
	It follows that $s$ has the form
	\[
	(g_1,\dots, g_{2n_0}, y_1 + h_1'', \dots, y_{d-d_0} + h_{d-d_0}'', h_{d-d_0+1}'', \dots, h_{2d-2d_0}''),
	\]
	where each $h_i'' \in I_{1,1} + I_{0,2}$.
    We make the variable change setting $y_i$ to $y_i + h_i''$, and so $s$ has the form
	\[
	(g_1',\dots, g_{2n_0}', y_1, \dots, y_{d-d_0}, h_{d-d_0+1}''', \dots, h_{2d-2d_0}'''),
	\]
    where $g_i' \in I_{1,0} + I_{0,2}$ and $h_i''' \in I_{1,1} + I_{0,2}$.
    
	Let us write the local frame for $E$ as 
    \[
    (v_1,\dots, v_{n_0}, v'_1, \dots, v'_{n_0},w_1,\dots, w_{d-d_0}, w_1',\dots, w_{d-d_0}').
    \]
	We now iteratively change these trivial sections to obtain each of the conditions claimed.

    By assumption, $q(s,v_1) = g'_{n_0+1}$ has no terms of degree $(0,1)$, so we can write
	\[
	q(s,v_1) = F(x) + \sum_{i=1}^{d-d_0}y_iF_i(x,y),
	\]
	where $F_i(x,y) \in I_{0,1}$ and $F(x)$ has no $y_i$-dependence.
	Now replace $v_1$ by $v_1 - \sum_{i=1}^{d-d_0} F_iw'_i$ and replace each $w_i$ by $w_i + F_iv'_1$.
    We then have that $q(s, v_1)$ has no $y_i$-dependence.
	Iterating this over $v_2,\dots, v_{n_0}$, and next similarly over $v'_1, \dots, v'_{n_0}$, we end up with no $y_i$-dependence in $q(s,v_i)$ and $q(s,v'_i)$.
    In other words, with respect to the new trivialization of $E$, the section $s$ has the form
    \[
    (g_1'',\dots, g_{2n_0}'', y_1,\dots y_{d-d_0},h_{d-d_0+1}'''',\dots, h_{2d-2d_0}''''),
    \]
    where the $g_i''$ have no $y_i$-dependence and the $h_i''''$ lie in $I_{1,1} + I_{0,2}$.
	Note that as the frame is unchanged modulo the $y_i$, it is still true that $E_0$ is identified with the subbundle spanned by the first $n_0$ factors of $E$ over $A_0$.

    Now for $i \ge 2$, write $h_i'''' = y_1G_i(x,y) + F_i(x,y)$, where $F_i$ has no $y_1$-dependence.
    Replacing $w_1$ by $w_1 + \sum_{i=2}^{d-d_0} G_iw'_i$, and for all $i \ge 2$, replacing $w_i$ by $w_i - G_iw'_1$, we get instead that for all $i \ge 2$
    \[
    q(s,w_i) = F_i(x,y),
    \]
    so has no $y_1$-dependence.
    Since $s$ is isotropic, we then have
    \[
    0 = q(s,s) = \sum_{i=1}^{n_0} g_i''g_{i + n_0}'' + \sum_{i=1}^{d-d_0}y_iq(s,w_i).
    \]
    Since the only term of this sum which has $y_1$-dependence is $y_1q(s,w_1)$, we must have $q(s,w_1) = 0$.

    Iterating this argument to redefine $w_i$ for all $i \ge 2$, we find a normalization such that $q(s,w_i) = 0$ for all $i$, which is what we wanted.
\end{proof}

Let $S' = S_0^{\vee} \otimes_{C(E_0)} S|_{A_0}$, the spin module of $C(E')$ from Lemma \ref{thm:subtractingSpinModules}.
Recall that from \eqref{eqn:inclusionOfSquareRoot} we have an inclusion of $\cO_{Z(s_0)}$-modules $(\det N)^{1/2}_{S'}|_{Z(s_0)} \to S'|_{Z(s_0)}$.
\begin{lemma}
	\label{thm:comparisonHomomorphismExists}
	Suppose $N$ is positive. Under Assumption \ref{assume:LocalForm}, the homomorphisms of $\ZZ/2$-graded sheaves
	\[
	f^*(\cH(S,s)) \to \cH(f^*(S,s)) \cong \cH((S_0,s_0) \otimes (S',0)) \cong \cH(S_0,s_0) \otimes (S',0)|_{Z(s_0)}
	\]
	and
	\[
	\cH(S_0,s_0) \otimes (\det N)^{1/2}_{S'} \to \cH(S_0,s_0) \otimes (S',0)|_{Z(s_0)}
	\]
    are both injective and have the same image. In particular, we have an induced isomorphism of $\Z/2$-graded sheaves
	\[
	f^*(\cH(S,s)) \stackrel{\cong}{\to} \cH(S_0,s_0) \otimes (\det N )_{S'}^{1/2}.
	\]
\end{lemma}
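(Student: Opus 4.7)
The statement is local, and injectivity and agreement of images may be checked on completed stalks, so my plan is to fix $p \in Z(s_0)$ and work on $\hat A = \Spec \hat\cO_{A,f(p)}$.  Lemma \ref{thm:strongerLocalForm} supplies coordinates $x_1,\dots,x_{d_0},y_1,\dots,y_{d-d_0}$ and a trivialisation of $E$ in which $A_0 = \{y_i = 0\}$, $E_0$ is the span of the first $2n_0$ coordinate vectors, the quadratic form is the standard one from \eqref{eqn:standardquadraticform}, and
\[
s = \bigl(g_1(x),\dots,g_{2n_0}(x),\,y_1,\dots,y_{d-d_0},\,0,\dots,0\bigr),
\]
with the $g_i$ independent of $y$.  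In this form $s = s_0+s'$ where $s_0 \in E_0$ is $y$-independent and $s' \in N \subset E'$, with $N$ the maximal isotropic subbundle spanned by the first $n-n_0$ unit vectors of $E'$.

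Because $C(E)$ is Azumaya and the residue field of $\hat A$ is $\C$, spin modules over $\hat A$ are unique up to isomorphism.  I would therefore pick spin modules $\widetilde S_0, \widetilde S'$ for the trivial extensions of $E_0, E'$ to $\hat A$, giving an isomorphism $S|_{\hat A} \cong \widetilde S_0 \otimes \widetilde S'$ of $C(E)$-modules; restricting to $\hat A_0$ yields $\widetilde S_0|_{\hat A_0} \cong S_0$ and $\widetilde S'|_{\hat A_0} \cong S'$.  Via Example \ref{ex:reltoperiodcx}, write $\widetilde S' \cong L' \otimes \Lambda^\bullet N^\vee$ with $(L')^{\otimes 2} \cong \det N$; the positivity hypothesis on $N$ is exactly what aligns the Clifford $\Z/2$-grading on $\widetilde S'$ with the parity of the exterior-algebra degree.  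Then Remark \ref{rmk:imageOfLambdaSquareRoot} identifies the inclusion $(\det N)^{1/2}_{S'} \hookrightarrow S'$ with the canonical inclusion $\iota \colon L'|_{\hat A_0} = L' \otimes \Lambda^0 N^\vee \hookrightarrow L' \otimes \Lambda^\bullet N^\vee = S'$.

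Under these identifications, $(S,s)|_{\hat A}$ factors as the $\Z/2$-graded tensor product $(\widetilde S_0, s_0) \otimes (\widetilde S', s')$.  The key observation is that, up to the twist by $L'$, the complex $(\widetilde S', s')$ is the Koszul complex of the regular sequence $(y_1,\dots,y_{d-d_0})$, and hence is quasi-isomorphic to $L'|_{\hat A_0}$ concentrated in even $\Z/2$-degree via the augmentation $\widetilde S' \twoheadrightarrow L' \otimes \Lambda^0 N^\vee \twoheadrightarrow L'|_{\hat A_0}$.  Since $s_0$ is $y$-independent, $\cH(\widetilde S_0,s_0)$ is $\C[[y]]$-flat and the sequence $(y_i)$ remains regular on it, so all higher Tor terms vanish in Künneth and I obtain
\[
\cH(S,s) \;\cong\; \cH(\widetilde S_0, s_0) \otimes_{\cO_{\hat A}} L'|_{\hat A_0} \;\cong\; \cH(S_0,s_0) \otimes (\det N)^{1/2}_{S'},
\]
as a sheaf supported on $Z(s_0) \cap \hat A_0$; applying $f^*$ leaves this description unchanged.

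It remains to match the two morphisms in the statement.  Tracing an $s$-cocycle representative $c_0 \otimes \ell$ (with $c_0 \in \widetilde S_0$ an $s_0$-cocycle and $\ell$ a local section of $L' \otimes \Lambda^0 N^\vee$) through the base change map $f^*\cH(S,s) \to \cH(f^*(S,s)) = \cH(S_0, s_0) \otimes S'|_{Z(s_0)}$ shows that it is $\id \otimes \iota$, and by the previous paragraph the second morphism of the statement is also $\id \otimes \iota$.  Both morphisms therefore coincide, are injective, and have image $\cH(S_0, s_0) \otimes (\det N)^{1/2}_{S'}$, proving the lemma.  The main technical obstacle I anticipate is the careful $\Z/2$-grading bookkeeping—specifically, verifying that positivity of $N$ is what forces the Clifford grading on $\widetilde S'$ to match the parity of the exterior-algebra degree—together with the functorial identification of the base change map with the Koszul augmentation inside the Künneth isomorphism.
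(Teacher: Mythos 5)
Your proposal is correct and follows essentially the same route as the paper: you reduce to the formal local model of Lemma \ref{thm:strongerLocalForm}, decompose the spin module as a tensor product of an $x$-part and a $y$-part (using uniqueness of spin modules on the formal neighbourhood, which is precisely what the paper invokes via Remark \ref{rmk:imageOfLambdaSquareRoot} when it "chooses" $S\cong C(E)\Lambda$ and factors $C(E)\Lambda\cong C(\overline E_0)\Lambda_0\otimes C(\overline E')\Lambda'$), identify the $y$-part with a (twisted) Koszul complex of the regular sequence $(y_1,\dots,y_{d-d_0})$, and then match the induced map against $\id\otimes\iota$ on $\cH(S_0,s_0)\otimes (\det N)^{1/2}_{S'}$, with positivity of $N$ ensuring the $\Z/2$-gradings line up. The only cosmetic difference is how you justify the K\"unneth step: the paper observes directly that the two factors carry differentials depending on disjoint sets of variables, whereas you argue via flatness of $\cH(\widetilde S_0,s_0)$ over $\C[[y]]$ to conclude that $(y_i)$ remains a regular sequence on it; both yield the same vanishing of higher Koszul homology.
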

\begin{proof}
	The claim can be verified formally locally at a point $p \in Z(s_0)$, so we may assume that we have made a choice of coordinates $x_i, y_i$ and trivialization $E \cong \cO_A^{2n}$ satisfying Assumption \ref{assume:LocalForm}, as well as the stronger assumption on $s$ granted by Lemma \ref{thm:strongerLocalForm}.
	We let $\overline E_0$ and $\overline E'$ be the subbundle of $\cO_A^{2n}$ spanned by the first $2n_0$ and last $2n - 2n_0$ unit vectors respectively, and so $E = \overline E_0 \oplus \overline E'$, while $\overline E_0|_{A_0} = E_0$ and $\overline E'|_{A_0} = E'$.
	
	Let $\Lambda_0 \subseteq \overline E_0$ and $\Lambda' \subseteq \overline E'$ be the maximal isotropic subbundles spanned by the first $n_0$ and first $n - n_0$ unit vectors, respectively, and let $\Lambda = \Lambda_0 \oplus \Lambda' \subseteq E$. Accordingly, we can split the section $s = (\overline{s}_0, \overline{s}')$.
    Note that $\Lambda'|_{A_0} = N$ as subbundles of $E'$.
	
	We may choose isomorphisms $S \cong C(E)\Lambda$ and $S_0 \cong C(E_0) \Lambda_0|_{A_0}$ (see Remark \ref{rmk:imageOfLambdaSquareRoot}).
	We also have an isomorphism
	\[
	S \cong C(E)\Lambda \cong C(\overline E_0)\Lambda_0 \otimes C(\overline E')\Lambda',
	\]
	which moreover agrees with differentials, so we have an isomorphism of complexes
	\[
	(S,s) \cong (C(\overline E_0)\Lambda_0, \overline s_0) \otimes (C(\overline E')\Lambda', \overline s').
	\]

	By the assumption on the $g_i$ in \eqref{eqn:specialFormOfS}, the complex $(C(\overline E_0), \overline s_0)$ is a free $\cO_A$-module with differential only depending on the $x_i$ and $(C(\overline E'), \overline s')$ is a free $\cO_A$-module with differential only depending only on the $y_i$.
	
	The first homomorphism in the lemma factors through a sequence of isomorphisms (we suppress differentials from the notation)
	\begin{align*}
	&f^*\cH(S) \cong f^*(\cH(\overline S_0 \otimes \overline S')) \overset{(1)}{\cong}f^*(\cH(\overline S_0) \otimes \cH(\overline S')) \cong f^*(\cH(\overline S_0))\otimes f^*(\cH(\overline S')) \\ &\overset{(2)}{\cong} \cH(f^*\overline S_0) \otimes f^*(\cH(\overline S')) \cong \cH(S_0) \otimes f^*(\cH(\overline S')) \to \cH(S_0) \otimes \cH(f^*(\overline S')) \cong \cH(S_0) \otimes S'.
	\end{align*}
	Here the isomorphism (1) follows from the fact that $\overline S_0$ and $\overline S'$ have differentials depending on $x_i$ and $y_i$ respectively.
	The isomorphism (2) follows from the differential of $\overline S_0$ depending only on $x_i$, and $f \colon A_0 \to A$ being the inclusion of the vanishing locus of the $y_i$.
	
	Our claim now follows from showing that the homomorphisms $f^*(\cH(\overline S')) \to \cH(f^*(\overline S')) \cong S'$ and $(\det N)^{1/2}_{S'} \to S'$ are injective with the same image.
	Under the isomorphisms $\overline S' = C(\overline E')\Lambda'$ and $S' = C(E')\Lambda'|_{A_0}$, we find that $\overline S'$ is the Koszul complex associated with $y_1,\dots, y_{d-d_0}$, and that the first homomorphism is injective with image $\det (\Lambda')|_{A_0} \subseteq C(E')\Lambda'|_{A_0}$.
	As $N = \Lambda'|_{A_0}$, the image of the second homomorphism is the same $\det(\Lambda')|_{A_0}$, by Remark \ref{rmk:imageOfLambdaSquareRoot}. Finally, since $N$ is positive, the isomorphism obtained this way is $\Z/2$-graded.
\end{proof}
From Lemma \ref{thm:comparisonHomomorphismExists}, we get an isomorphism 
\begin{equation}
	\label{eqn:comparisonHomomorphism}
f^*(\cH(S,s)) \overset{\cong}{\to} \cH(S_0,s_0) \otimes (\det N)_{S'}^{1/2}.
\end{equation}

\subsubsection*{Associativity} 

We now assume that $A_0 \overset{f_1}{\to} A_1 \overset{f_2}{\to} A_2$ are unramified morphisms of smooth varieties, and that for $i = 0, 1, 2$, we have an oriented quadratic vector bundle $E_i$ on $A_i$ of rank $2n_i$ with an isotropic section $s_i$ and a spin module $S_i$ for $E_i$.
We further assume that for $i=0, 1$, we have an inclusion $E_i \subseteq f_{i+1}^*E_{i+1}$, such that the assumptions above Lemma \ref{thm:comparisonHomomorphismExists} hold.
It follows from this that we also have an inclusion $E_0 \subseteq (f_2 \circ f_1)^*E_2$ such that the assumptions hold.
For $0 \le i < j \le 2$, write $N_{ij}$ for the normal bundle of $A_i$ in $A_j$, and write 
\[
S_{j} \setminus S_i = S_i^\vee \otimes_{C(E_i)} S_j|_{A_i},
\]
for the spin module of $C(E_j|_{A_i}/E_i)$ from Lemma \ref{thm:subtractingSpinModules}.
Suppose that, over $Z(s_i)$, the maximal isotropic subbundles $N_{ij}$ of $E_{j}|_{A_i}/E_i$ are positive. By \eqref{eqn:inclusionOfSquareRoot} we have an inclusion
\[
\det(N_{ij})_{S_{j} \setminus S_i}^{1/2} |_{Z(s_i)} \to S_{j} \setminus S_i |_{Z(s_i)}.
\]
After restricting all modules to $A_0$, we have an isomorphism
\begin{align*}
(S_1 \setminus S_0) \otimes (S_2 \setminus S_1) &= S_0^\vee \otimes_{C(E_0)} S_1 \otimes S_1^\vee \otimes_{C(E_1)} S_2 \cong S_0^\vee \otimes_{C(E_0)} C(E_1) \otimes_{C(E_1)} S_2 \\ 
&\cong S_0^\vee \otimes_{C(E_0)} S_2 = S_2 \setminus S_0,
\end{align*}
and this gives an isomorphism on $A_0$ of 
\begin{equation} \label{eqn:eqn:sqrtdetdirectsumN}
(\det N_{01})_{S_1 \setminus S_0}^{1/2} \otimes (\det N_{12})_{S_2 \setminus S_1}^{1/2}|_{Z(s_0)} \cong (\det N_{02})_{S_2 \setminus S_0}^{1/2}|_{Z(s_0)}
\end{equation}
by \eqref{eqn:sqrtdetdirectsum}.
\begin{lemma} \label{lem:associativity}
	The isomorphisms
	\[
	f_1^*(\cH(S_1,s_1)) \to \cH(S_0,s_0) \otimes (\det N_{01})^{1/2}_{S_1\setminus S_0}
	\]
	and
	\[
	f_2^*(\cH(S_2,s_2)) \to \cH(S_1,s_1) \otimes (\det N_{12})^{1/2}_{S_2 \setminus S_1}
	\]
	induce an isomorphism
	\[
	\cH(S_2,s_2)|_{A_0} \cong \cH(S_0,s_0) \otimes (\det N_{01})^{1/2}_{S_1 \setminus S_0} \otimes (\det N_{12})^{1/2}_{S_2 \setminus S_1}|_{A_0}.
	\]
	Under the isomorphism \eqref{eqn:eqn:sqrtdetdirectsumN}, this agrees with the isomorphism
	\[
	\cH(S_2,s_2)|_{A_0} \to \cH(S_0,s_0) \otimes (\det N_{02})^{1/2}_{S_2\setminus S_0}.
	\]
\end{lemma}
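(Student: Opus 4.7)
The statement is formal-local on $A_0$ at a point $p \in Z(s_0)$, so I reduce to that setting and use the coordinate and trivialization normalizations from Assumption \ref{assume:LocalForm} and Lemma \ref{thm:strongerLocalForm}, applied successively to $f_1 \colon A_0 \to A_1$ and $f_2 \colon A_1 \to A_2$. Both routes in the lemma produce isomorphisms landing in the common ambient module $\cH(S_0,s_0) \otimes (S_2 \setminus S_0)|_{Z(s_0)}$; the plan is to show each route picks out the same submodule via the same underlying map, and then to match the two natural identifications of that submodule, one as $\cH(S_0,s_0) \otimes (\det N_{01})^{1/2}_{S_1 \setminus S_0} \otimes (\det N_{12})^{1/2}_{S_2 \setminus S_1}$ and the other as $\cH(S_0,s_0) \otimes (\det N_{02})^{1/2}_{S_2 \setminus S_0}$, using the isomorphism \eqref{eqn:eqn:sqrtdetdirectsumN}.

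\textbf{Reduction to an explicit Koszul model.} Formally at $p$, I choose coordinates on $A_2$ split as $(x,z,y)$ so that $A_1 = \{y = 0\}$ and $A_0 = \{y = z = 0\}$, and trivialize $E_2$ in the split form \eqref{eqn:standardquadraticform} such that $E_1 \subseteq E_2|_{A_1}$ is the orthogonal complement of the $y$-normal directions and $E_0 \subseteq E_1|_{A_0}$ is the orthogonal complement of the $z$-normal directions. Applying Lemma \ref{thm:strongerLocalForm} in turn to each step, I may assume $s_2$ has the form whose normal $y$-components are $(y_1,\dots,y_{d-d_0},0,\dots,0)$, whose remaining components are independent of $y$, and such that the restriction to $A_1$ gives $s_1$ already in the corresponding normalized form with respect to $z$. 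I pick compatible maximal isotropic subbundles $\Lambda_2 = \Lambda_0 \oplus \Lambda_1' \oplus \Lambda_2'$ with $\Lambda_0 \subset E_0$, $N_{01} = \Lambda_1' \subset E_1|_{A_0}/E_0$ and $N_{12} = \Lambda_2' \subset E_2|_{A_1}/E_1$ the standard half-dimensional subbundles, and realize each $S_i$ as (a square-root-of-line-bundle twist of) $C(E_i)\Lambda_i$.

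\textbf{Verification.} In these models the proof of Lemma \ref{thm:comparisonHomomorphismExists} identifies the isomorphism for $f_i$ as the inclusion of $\det(\Lambda_i')|_{A_{i-1}}$ into the relevant Clifford-module factor, coming from the Koszul resolution of $A_{i-1} \subset A_i$ by the regular sequence of normal coordinates; by Remark \ref{rmk:imageOfLambdaSquareRoot} these are precisely the inclusions of the $(\det N_{i-1,i})^{1/2}_{S_i \setminus S_{i-1}}$ factors into $S_i \setminus S_{i-1}$. Consequently, the two-step composition lands on $\cH(S_0,s_0) \otimes \det(\Lambda_1') \otimes \det(\Lambda_2')|_{A_0}$ inside $\cH(S_0,s_0) \otimes (S_1 \setminus S_0) \otimes (S_2 \setminus S_1)|_{A_0}$, while the one-step map lands on $\cH(S_0,s_0) \otimes \det(\Lambda_1' \oplus \Lambda_2')|_{A_0}$ inside $\cH(S_0,s_0) \otimes (S_2 \setminus S_0)|_{A_0}$. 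Under the canonical associator $\det(\Lambda_1') \otimes \det(\Lambda_2') \cong \det(\Lambda_1' \oplus \Lambda_2')$ and the identification $(S_1 \setminus S_0) \otimes (S_2 \setminus S_1) \cong S_2 \setminus S_0$ coming from $C(E_1) \otimes_{C(E_1)} (-) = \id$, these two images correspond: indeed \eqref{eqn:sqrtdetdirectsum}, and hence \eqref{eqn:eqn:sqrtdetdirectsumN}, is defined precisely by matching the common image under the two natural inclusions from \eqref{eqn:inclusionOfSquareRoot}.

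\textbf{Main obstacle.} The principal bookkeeping difficulty is maintaining the $\Z/2$-gradings and Koszul sign conventions throughout the repeated manipulations of $S_j \setminus S_i = S_i^\vee \otimes_{C(E_i)} S_j|_{A_i}$, in particular in the chain of isomorphisms $(S_1 \setminus S_0) \otimes (S_2 \setminus S_1) \cong S_2 \setminus S_0$ via Lemma \ref{thm:tensorProduct}. Provided one adheres to the homogeneous sign rule established in that lemma, no unexpected signs appear, since both routes factor through the same underlying associativity of tensor products over the tower $C(E_0) \subset C(E_1) \subset C(E_2)$ acting on $S_2|_{A_0}$.
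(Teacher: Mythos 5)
Your proof takes a genuinely different route from the paper's. The paper writes down a single commutative diagram of $C(E)$-module maps on $Z(s_0)$, with $S_2$ at the top, and reads the claim off from commutativity; no local coordinates appear, since the diagram commutes by naturality of the tensor-product identifications $S_1 \otimes (S_2 \setminus S_1) \cong S_2$ and $(S_1 \setminus S_0) \otimes (S_2 \setminus S_1) \cong S_2 \setminus S_0$ together with the inclusions \eqref{eqn:inclusionOfSquareRoot}. Your proposal instead reduces to formal-local Koszul models via Lemma \ref{thm:strongerLocalForm} and verifies the equality explicitly there. Both rest on the same crux, which you identify correctly in your Verification paragraph: each comparison isomorphism from Lemma \ref{thm:comparisonHomomorphismExists} is characterized by having the same image as the inclusion of the $(\det N)^{1/2}$ factor, and \eqref{eqn:eqn:sqrtdetdirectsumN} (via \eqref{eqn:sqrtdetdirectsum}) is defined precisely to match these images. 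The paper's diagram chase buys brevity and sidesteps coordinates entirely; your route is more explicit but incurs extra bookkeeping.

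The concrete gap in your proposal is the claim that Lemma \ref{thm:strongerLocalForm} applies ``in turn to each step'' to produce a simultaneous normalization of $s_2$ adapted to both $A_1 \subset A_2$ (the $y$-coordinates) and $A_0 \subset A_1$ (the $z$-coordinates). That is not automatic: normalizing $s_1 = s_2|_{A_1}$ with respect to $z$ requires coordinate changes on $A_1$ and a re-trivialization of $E_1$, both a priori defined only over $A_1$; these must be extended to $A_2$ and $E_2$, and unless the extensions are chosen $y$-independently they can reintroduce $y$-dependence in the remaining components of $s_2$ and spoil the normalization already obtained for $f_2$. The compatibility can in fact be arranged (extend the changes as functions of $(x,z)$ alone), but your proof depends on it and should say so. This is precisely the kind of local bookkeeping the paper's module-level commutative diagram makes unnecessary.
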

\begin{proof}
	Working on $Z(s_0) \subset A_0$, we have a commutative diagram
	\[
	\begin{tikzcd}
		S_0 \otimes (S_2 \setminus S_0) \arrow[rrr, bend left] & \arrow[l] S_2 \arrow[r] & \arrow[r] S_1 \otimes (S_2 \setminus S_1) & S_0 \otimes (S_1 \setminus S_0) \otimes (S_2 \setminus S_1) \\
		S_0 \otimes (\det N_{02})^{1/2}_{S_2 \setminus S_0} \arrow[u] \arrow[rrrd] & & \arrow[u] S_1 \otimes (\det N_{12})^{1/2}_{S_2 \setminus S_1} \arrow[r] & \arrow[u] S_0 \otimes (S_1 \setminus S_0) \otimes (\det N_{12})^{1/2}_{S_2 \setminus S_1} \\
		& & & \arrow[u] S_0 \otimes (\det N_{01})^{1/2}_{S_1 \setminus S_0} \otimes (\det N_{12})^{1/2}_{S_2 \setminus S_1}
	\end{tikzcd}
	\]
	Up to taking cohomology sheaves and inserting pullbacks, the two maps to compare are obtained starting from $S_2$ and going ``left, down'' and ``right, down, right, down'', respectively.
	Commutativity of the diagram gives the claim.
\end{proof}

\subsubsection{Commutativity}

Assume now that we have an unramified morphism of smooth varieties $f \colon A_0 \to A$, with oriented quadratic vector bundles $E_0$ and $E$, and isotropic sections as at the start of this section.
Suppose that, on $Z(s_0)$, the normal bundle $N$ of $A_0$ in $A$ is positive.
Assume further that we have oriented quadratic vector bundles $F_1, F_2$ on $A_0$ equipped with corresponding spin modules $S_1, S_2$, such that $E_0 = F_1 \oplus F_2$.
Accordingly, we write $s_0 = (s_1,s_2)$ for the isotropic section. Then $S_1 \otimes S_2$ and $S_2 \otimes S_1$ are both spin modules for $C(E_0)$ with corresponding differentials $s_1 \otimes 1 + 1 \otimes s_2$ and $s_2 \otimes 1 + 1 \otimes s_1$, and by Lemma \ref{thm:comparisonHomomorphismExists}, we get isomorphisms
\[
\cH(S,s)|_{A_0} \cong \cH((S_1,s_1) \otimes (S_2,s_2)) \otimes (\det N)^{1/2}_{S \setminus S_1 \otimes S_2}
\]
and
\[
\cH(S,s)|_{A_0} \cong \cH((S_2,s_2) \otimes (S_1,s_1)) \otimes (\det N)^{1/2}_{S \setminus S_2 \otimes S_1}.
\]
The isomorphism
\[
S_1 \otimes S_2 \cong S_2 \otimes S_1
\]
given by $a \otimes b \mapsto (-1)^{|a||b|} b \otimes a$ of $C(E_0)$-modules induces an isomorphism $S \setminus (S_1 \otimes S_2) \cong S \setminus (S_2 \otimes S_1)$ of $C(E|_{A_0}/E_0)$-modules, and so an isomorphism
\[
(\det N)^{1/2}_{S \setminus S_1 \otimes S_2}|_{Z(s_0)} \cong (\det N)^{1/2}_{S \setminus S_2 \otimes S_1}|_{Z(s_0)}.
\]
\begin{lemma} \label{lem:commutativity}
The diagram of isomorphisms
\[
\begin{tikzcd}
\cH(S,s)|_{A_0} \arrow[r] \arrow[dr] & \cH((S_1,s_1) \otimes (S_2,s_2)) \otimes (\det N)^{1/2}_{S \setminus S_1 \otimes S_2} \arrow[d] \\
& \cH((S_2,s_2) \otimes (S_1,s_1)) \otimes (\det N)^{1/2}_{S \setminus S_2 \otimes S_1}
\end{tikzcd}
\]
commutes.
\end{lemma}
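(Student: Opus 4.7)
The plan is to imitate the approach used for the associativity statement in Lemma \ref{lem:associativity}, that is, to produce a single commutative diagram of $C(E)$-module isomorphisms on $A_0$, from which the claim follows by taking cohomology sheaves and restricting to the $(\det N)^{1/2}$-subsheaves.

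First, since the assertion is local on $Z(s_0) \subseteq A_0$, I would reduce to the formal neighborhood of a point $p \in Z(s_0)$ and invoke Assumption \ref{assume:LocalForm} together with the normal form from Lemma \ref{thm:strongerLocalForm}. In this setting we have a canonical identification $S|_{A_0} \cong S_0 \otimes (S \setminus S_0)$ as $\Z/2$-graded $C(E_0) \,\widehat\otimes\, C(E'_0)$-modules, where $E'_0$ is the orthogonal complement of $E_0$ in $E|_{A_0}$, and this is precisely what is used in Lemma \ref{thm:comparisonHomomorphismExists} (after further identifying $N \subseteq E'_0$ and passing to the $(\det N)^{1/2}$-subsheaf via \eqref{eqn:inclusionOfSquareRoot}).

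The central observation is then the following. Write $\sigma \colon S_1 \otimes S_2 \to S_2 \otimes S_1$ for the Koszul-sign swap; this is an isomorphism of $C(E_0)$-modules commuting with the differential $s_1 \otimes 1 + 1 \otimes s_2 = s_0$. Since the functor $(-)^\vee \otimes_{C(E_0)} S$ is applied to both sides to form $S \setminus S_0$, the swap $\sigma$ induces a compatible isomorphism $\sigma' \colon S \setminus (S_1 \otimes S_2) \to S \setminus (S_2 \otimes S_1)$ of $C(E'_0)$-modules, and the diagram
\[
\begin{tikzcd}
S|_{A_0} \arrow[r,"\cong"] \arrow[d,equal] & (S_1 \otimes S_2) \otimes (S \setminus (S_1 \otimes S_2)) \arrow[d,"\sigma \otimes \sigma'"] \\
S|_{A_0} \arrow[r,"\cong"] & (S_2 \otimes S_1) \otimes (S \setminus (S_2 \otimes S_1))
\end{tikzcd}
\]
commutes as $C(E)$-module isomorphisms; indeed, both horizontal maps factor through the canonical $S_0 \otimes (S_0^\vee \otimes_{C(E_0)} S) \cong S$, and inserting $\sigma$ on the left factor of the middle term induces $\sigma^{-1}$ (and hence $\sigma'$ after the Morita swap) on the right factor. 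Passing to cohomology sheaves and restricting to the $(\det N)^{1/2}$-subsheaves via \eqref{eqn:inclusionOfSquareRoot}, $\sigma'$ restricts exactly to the isomorphism $(\det N)^{1/2}_{S \setminus S_1 \otimes S_2} \cong (\det N)^{1/2}_{S \setminus S_2 \otimes S_1}$ appearing in the statement of the lemma, which is all that is required.

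The main obstacle I expect is bookkeeping of Koszul signs: one needs to verify that under the $C(E_0)$-module swap $\sigma$, the induced identification of $S \setminus (S_1 \otimes S_2)$ with $S \setminus (S_2 \otimes S_1)$ respects both the $\Z/2$-gradings (coming from the orientation via the volume element $\omega_o$) and the inclusion \eqref{eqn:inclusionOfSquareRoot} of the square root, on the nose rather than up to a sign. This can be checked in the explicit local model of Lemma \ref{thm:strongerLocalForm}, where $S \cong C(E)\Lambda$ with $\Lambda = \Lambda_1 \oplus \Lambda_2 \oplus \Lambda'$ splits compatibly with $E_0 = F_1 \oplus F_2$, so that the swap $\sigma$ acts trivially on the $\det \Lambda'$-component of $S$ through which the square root factors (cf.~Remark \ref{rmk:imageOfLambdaSquareRoot}).
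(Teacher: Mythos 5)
Your proposal is correct and takes essentially the same approach as the paper: both produce a commutative module-level diagram in which the Koszul swap $\sigma \colon S_1 \otimes S_2 \to S_2 \otimes S_1$ induces the isomorphism $\sigma'$ on $S \setminus(-)$, so that $\sigma \otimes \sigma'$ commutes with the canonical identifications back to $S|_{A_0}$ and restricts compatibly to the $(\det N)^{1/2}$-subsheaves via \eqref{eqn:inclusionOfSquareRoot}, after which taking cohomology sheaves gives the claim. The only (harmless) difference is that you frame part of the argument in the formal local model of Lemma \ref{thm:strongerLocalForm}, whereas the paper works globally over $Z(s_0)$ since the diagram already commutes at that level.
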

\begin{proof}
	Let $E' = E|_{A_0}/E_0$.
	Working over $Z(s_0)$, the claim follows from the commutativity of the diagram
	\[
	\begin{tikzcd}
		S|_{A_0} \arrow[d] \arrow[dr] & \\
		(S_1 \otimes S_2) \otimes (S \setminus (S_1 \otimes S_2)) \arrow[r] & (S_2 \otimes S_1) \otimes (S \setminus (S_2 \otimes S_1)) \\
		(S_1 \otimes S_2) \otimes C(E') \otimes_{C(E')} S \setminus (S_1 \otimes S_2) \arrow[r] \arrow[u] & (S_2 \otimes S_1) \otimes C(E') \otimes_{C(E')} S \setminus (S_2 \otimes S_1) \arrow[u] \\
				(S_1 \otimes S_2) \otimes (\det N)^{1/2}_{S \setminus (S_1 \otimes S_2)}\arrow[r] \arrow[u] & (S_2 \otimes S_1) \otimes (\det N)^{1/2}_{S \setminus (S_2 \otimes S_1)}  \arrow[u]
	\end{tikzcd}
	\]
\end{proof}

\subsection{Torus localization} \label{sec:localfixisolated}

Let $M$ be a quasi-projective scheme with 3-term symmetric obstruction theory $\phi \colon E^\mdot \to \tau^{\geq -1} \LL_M$ coming from a $(-2)$-shifted derived scheme (so that the intrinsic normal cone is isotropic \cite{OT}). Then, by \cite[Prop.~4.1]{OT}, $E^\mdot$ is quasi-isomorphic to a self-dual complex 
\begin{equation} \label{eqn:sdres}
[F \stackrel{a}{\to} E \cong E^\vee \stackrel{a^\vee}{\to} F^\vee]
\end{equation}
in degrees $-2,-1,0$ where $F$ is a vector bundle and $(E,q)$ is a quadratic vector bundle, which we assume to be of even rank. Suppose $M$ has an action by an algebraic torus $T$, $\phi$ is $T$-equivariant, and $E^\mdot$ is $T$-equivariantly symmetric. 
Then the above self-dual complex can be chosen $T$-equivariantly \cite[Sec.~7]{OT}. On the fixed locus $\iota \colon M^T \subset M$, which we assume to be compact, we can decompose $E^\mdot|_{M^T}$ into a fixed and moving part 
$$
E^\mdot|_{M^T} = E^\mdot|_{M^T}^f \oplus E^\mdot|_{M^T}^m.
$$
Then $N^{\vir} := (E^\mdot|_{M^T}^m)^\vee$ is the virtual normal bundle. The complex $E^\mdot|_{M^T}^f$ maps to $(\tau^{\geq -1} \mathbb{L}_{M})
|_{M^T}^f$ and we can compose with the natural map $(\tau^{\geq -1} \mathbb{L}_{M})|_{M^T}^f \to \tau^{\geq -1} \mathbb{L}_{M^T}$.
This gives a 3-term symmetric obstruction theory on $M^T$. Suppose we have orientations $o \colon \O_M \cong \det(E^\mdot)$, $o^f \colon \O_{M^T} \cong \det(E^\mdot|_{M^T}^f)$, and $o^m \colon \O_{M^T} \cong \det(E^\mdot|_{M^T}^m)$. Then there are classes
\begin{align*}
[M]^{\vir} &\in A_*^T(M, \Z[\tfrac{1}{2}]), \quad \widehat{\O}_{M}^{\vir} \in K_0^T(M)_{\loc}, \\
[M^T]^{\vir} &\in A_*(M^T, \Z[\tfrac{1}{2}]), \quad \widehat{\O}_{M^T}^{\vir} \in K_0(M^T,\Z[\tfrac{1}{2}]).
\end{align*}
Supposing that the orientations are compatible, i.e., $o|_{M^T} = o^f \otimes o^m$, the Oh--Thomas localization formulae are \cite[Sect.~7]{OT}
\begin{equation} \label{eqn:OTlocgeneral}
[M]^{\vir} = \iota_* \Bigg( \frac{1}{\sqrt{e}(N^{\vir})} \cap [M^T]^{\vir} \Bigg), \quad \widehat{\O}_{M}^{\vir} = \iota_* \Bigg(\frac{1}{\sqrt{\mathfrak{e}}(N^{\vir})} \cdot \widehat{\O}_{M^T}^{\vir} \Bigg),
\end{equation}
where the square root Euler class $\sqrt{e}(-)$ and its $K$-theoretic analog $\sqrt{\mathfrak{e}}(-)$ were defined in \cite{OT}. Suppose we have a $T$-equivariant maximal isotropic subbundle $\Lambda \subset E$ of the self-dual resolution \eqref{eqn:sdres}. Then there are induced maximal isotropic subbundles
$$
\Lambda|_{M^T}^m \subset E|_{M^T}^m, \quad \Lambda|_{M^T}^f \subset E|_{M^T}^f,
$$
which induce (compatible) orientations on the complexes $E^\mdot|_{M^T}^f, N^{\vir}$. We are interested in making \eqref{eqn:OTlocgeneral} explicit under the following assumption: \\

\noindent \textbf{Assumption.} The fixed locus $M^T$ is 0-dimensional and reduced. \\

With this assumption, on each fixed point $P \in M^T$ we get a 3-term symmetric obstruction theory $E^\bullet|_P^f$.
The invariants associated with such are described by the following lemma, which follows easily from the definitions in \cite{OT}.
\begin{lemma}
Let $P = \Spec \CC$ be endowed with a 3-term symmetric obstruction theory $E^\mdot$ with self-dual resolution $[F \stackrel{a}{\to} E \cong E^\vee \stackrel{a^\vee}{\to} F^\vee]$ with $\rk(E)$ even. Then $a$ is injective and $\vd P \leq 0$. 

We have $\vd P = 0$ if and only if $E^\mdot$ is acyclic if and only if $a(F) \subset E$ is maximal isotropic. 
If this holds, then with the orientation induced by $a(F)$, we have $[P]^{\vir} = [P]$ and $\widehat{\O}_{P}^{\vir} = \O_P$.

If $\vd P < 0$, $[P]^{\vir} = 0$ and $\widehat{\O}^{\vir}_P = 0$.
\end{lemma}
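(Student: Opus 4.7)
The plan is to separate the linear-algebraic content from the invariant computations, and to handle the latter by realising $P$ as a standard model in the sense of Section~\ref{sec:OTtheory}. The obstruction theory axiom demands that $h^0(\phi) \colon h^0(E^\mdot) \to h^0(\tau^{\geq -1}\mathbb{L}_P) = 0$ be an isomorphism, so $\coker(a^\vee) = 0$, whence $a^\vee$ is surjective. Dualising through $E \cong E^\vee$ shows $a$ is injective, so $h^{-2}(E^\mdot) = 0$. The Euler characteristic of $[F \xrightarrow{a} E \xrightarrow{a^\vee} F^\vee]$ then reads $2\,\mathrm{rk}\,F - \mathrm{rk}\,E = -\dim h^{-1}(E^\mdot) \le 0$, i.e., $\vd P \le 0$, with equality iff $E^\mdot$ is acyclic. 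Since $a^\vee$ is identified with the $q$-dual of $a$, we have $\ker a^\vee = a(F)^\perp$, so acyclicity is precisely $a(F) = a(F)^\perp$, i.e., $a(F)$ is maximal isotropic. This disposes of all the equivalences.

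Next, when $\vd P = 0$ I realise the data as a standard model by taking $A$ to be the affine space whose tangent space at the origin is $F$, viewing $E$ as a constant orthogonal bundle on $A$, and setting $s \colon A \to E$ to be the linear map $f \mapsto a(f)$. Isotropicity of $a(F)$ makes $s$ isotropic, injectivity of $a$ forces $Z(s) = \{0\} = P$, and the induced 3-term obstruction theory reproduces $E^\mdot$. Taking $\Lambda := a(F)$ as the positive maximal isotropic, the formula of Section~\ref{sec:OTtheory} yields
\[
\iota_* \widehat{\O}_P^{\vir} = \widehat{\Lambda}^\mdot \Lambda^\vee \otimes \det(T_A)^{-\tfrac12} = \Lambda^\mdot F^\vee,
\]
since the combined twist $\det(\Lambda^\vee)^{-1/2} \otimes \det(F)^{-1/2}$ is canonically trivial via $\Lambda \cong F$. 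The right hand side is the Koszul resolution of $\iota_* \O_P$, so $\widehat{\O}_P^{\vir} = \O_P$, and the analogous Chow computation yields $[P]^{\vir} = [P]$.

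Finally, for $\vd P < 0$ the Chow statement is automatic because $A_{\vd P}(P, \Z[\tfrac12]) = 0$. For the $K$-theoretic statement I enlarge $a(F)$ to a maximal isotropic $\Lambda = a(F) \oplus W$ with $\dim W = -\vd P > 0$, apply the same formula in the same standard model, and obtain
\[
\iota_* \widehat{\O}_P^{\vir} = (\iota_* \O_P) \otimes \widehat{\Lambda}^\mdot W^\vee,
\]
the $F$-factor cancelling exactly as before. But $\widehat{\Lambda}^\mdot W^\vee$ reduces, up to the trivial twist $\det(W^\vee)^{-1/2}$, to $\sum_i (-1)^i \binom{\dim W}{i} = 0$ in $K_0(P,\Z[\tfrac12])$, forcing $\widehat{\O}_P^{\vir} = 0$. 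The main obstacle I foresee is bookkeeping the sign and orientation data in the Oh--Thomas formalism, specifically verifying that $\Lambda = a(F)$ is positive for the orientation prescribed in the statement so that the formula of Section~\ref{sec:OTtheory} applies without residual signs; once this is settled, everything else amounts to the Koszul resolution and a binomial identity.
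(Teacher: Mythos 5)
Your proof is correct and matches the approach the paper implies — the authors state only that the lemma ``follows easily from the definitions in \cite{OT}'' and do not spell it out, so your argument (linear algebra from the obstruction-theory axioms, then realisation of $P$ as a standard model on $A=F$ with $s=a$ and $\Lambda$ an isotropic containing $a(F)$, then the Koszul/cosection-localised formula from Section \ref{sec:OTtheory}) is exactly the expected expansion of that remark. Two small clarifications: the worry you raise at the end about positivity of $\Lambda = a(F)$ is a non-issue, since the lemma \emph{defines} the orientation to be $o_{a(F)}$, which is precisely the orientation for which $\Lambda = a(F)$ is positive by construction, so the quoted formula applies with no residual sign; and the identity $\iota_*\widehat{\O}^{\vir}_P = \widehat{\Lambda}^\mdot\Lambda^\vee\otimes\det(T_A)^{-1/2}$ should be read at the level of the $\Z/2$-graded cohomology sheaf $\cH(S,s)$ as in Proposition \ref{Kcompare} (rather than as a bare $K$-theory class on the affine space $A$, where both sides become trivially zero), after which taking cohomology of the Koszul complex gives $\O_P\otimes\Lambda^\mdot W^\vee$ with $W=0$ (resp.\ $\dim W>0$) in the two cases, exactly as you computed.
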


Denoting the inclusion of $P$ in $M^T$ by $\iota_P$, from the lemma we obtain
$$
[M]^{\vir} = \sum_{P \in M^T \atop \vd P = 0} (-1)^{n_P^\Lambda} \iota_{P*} \Bigg(\frac{1}{e((F-\Lambda)|_P)} \Bigg), \quad \widehat{\O}_{M}^{\vir} = \sum_{P \in M^T \atop \vd P = 0} (-1)^{n_P^\Lambda} \iota_{P*} \Bigg(\frac{1}{ \widehat{\Lambda}^{\mdot} (F^\vee-\Lambda^\vee)|_P} \Bigg),
$$
where 
\begin{equation} \label{defn}
(-1)^{n^\Lambda_P} = \left\{ \begin{array}{cc} 1 & \textrm{if } a(F|_P^f) \textrm{ is positively oriented \, } \\ -1 & \textrm{otherwise.} \end{array}\right.
\end{equation}

We want to make this sign more explicit. We will repeatedly use the following fact \cite[Sect.~6.1, p.~735]{GH}: for any even-dimensional quadratic vector space $(E,q)$ and maximal isotropic subspaces $\Lambda, \Lambda' \subset E$, we have that $\Lambda, \Lambda'$ induce the same orientation if and only if 
$$
\dim(\Lambda \cap \Lambda') \equiv \tfrac{1}{2} \dim(E) \pmod 2.
$$

We start with the following lemma.
\begin{lemma} \label{lem:nnprime}
Let $P \in M^T$ have $\vd P= 0$, and let $E|_P = \Lambda|_P \oplus  \Lambda^\vee|_P$ be a splitting into maximal isotropic subspaces with projection maps $p_\Lambda$ and $p_{\Lambda^\vee}$.
We then have
 $$
\dim \coker (p_{\Lambda} \circ a|_P)^f \equiv  \dim \coker (p_{\Lambda^\vee} \circ a|_P)^f + \dim(\Lambda^\vee|_P^f) \pmod 2.
 $$
\end{lemma}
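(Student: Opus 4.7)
The plan is to reduce the statement to a linear-algebra identity about three maximal isotropic subspaces of the fixed-part quadratic space $E|_P^f$, and then read it off from the orientation criterion recalled just above the lemma.

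First I would exploit reductivity of $T$ to choose the splitting $E|_P = \Lambda|_P \oplus \Lambda^\vee|_P$ to be $T$-equivariant. Then $p_\Lambda$ and $p_{\Lambda^\vee}$ restrict to projections on the fixed part $E|_P^f = \Lambda|_P^f \oplus \Lambda^\vee|_P^f$, and $\Lambda|_P^f, \Lambda^\vee|_P^f$ are complementary maximal isotropic subspaces of $E|_P^f$; in particular they have equal dimension $m := \tfrac{1}{2}\dim E|_P^f$. Since $\vd P = 0$, the preceding lemma applied to the fixed-part 3-term symmetric obstruction theory $E^\mdot|_P^f$ shows that $a|_P^f \colon F|_P^f \hookrightarrow E|_P^f$ is injective with maximal isotropic image $\Gamma := a(F|_P^f)$. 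The setup thereby reduces to studying the three maximal isotropic subspaces $\Gamma$, $\Lambda|_P^f$, $\Lambda^\vee|_P^f$ of the $2m$-dimensional quadratic space $E|_P^f$.

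Next I would carry out the standard cokernel computation. Since $\ker p_\Lambda^f = \Lambda^\vee|_P^f$ and $a|_P^f$ is injective, the image of $p_\Lambda^f \circ a|_P^f$ has dimension $\dim \Gamma - \dim(\Gamma \cap \Lambda^\vee|_P^f)$, which combined with $\dim \Gamma = \dim \Lambda|_P^f = m$ yields
\[
\dim \coker(p_\Lambda \circ a|_P)^f = \dim(\Gamma \cap \Lambda^\vee|_P^f),
\]
and symmetrically $\dim \coker(p_{\Lambda^\vee} \circ a|_P)^f = \dim(\Gamma \cap \Lambda|_P^f)$. The lemma thus reduces to the congruence
\[
\dim(\Gamma \cap \Lambda^\vee|_P^f) \equiv \dim(\Gamma \cap \Lambda|_P^f) + m \pmod 2.
\]

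Finally, I would invoke the orientation criterion stated above the lemma: for any two maximal isotropic subspaces $\Lambda_1, \Lambda_2$ of a $2m$-dimensional quadratic space, the quantity $\epsilon(\Lambda_1,\Lambda_2) := \dim(\Lambda_1 \cap \Lambda_2) + m \pmod 2$ vanishes precisely when $\Lambda_1$ and $\Lambda_2$ induce the same orientation. Since ``same orientation'' is an equivalence relation, $\epsilon$ is a $\Z/2$-valued ``distance'' satisfying the cocycle relation $\epsilon(A,B) + \epsilon(B,C) \equiv \epsilon(A,C) \pmod 2$. Applying this to $(A,B,C) = (\Gamma, \Lambda|_P^f, \Lambda^\vee|_P^f)$ and using $\dim(\Lambda|_P^f \cap \Lambda^\vee|_P^f) = 0$, so that $\epsilon(\Lambda|_P^f, \Lambda^\vee|_P^f) \equiv m$, yields the required congruence at once. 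The main obstacle is really only notational bookkeeping around the equivariance of the splitting and the identification of fixed parts of the various projections; once that is in place, the argument is a formal consequence of the orientation criterion already recalled in the paper.
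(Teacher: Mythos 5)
Your proposal is correct and follows essentially the same route as the paper: compute the cokernel dimensions as $\dim(a(F|_P^f) \cap \Lambda^\vee|_P^f)$ and $\dim(a(F|_P^f) \cap \Lambda|_P^f)$, then invoke the Griffiths--Harris criterion relating intersections of maximal isotropic subspaces to orientations. You phrase the final step as a single $\Z/2$-cocycle identity whereas the paper unwinds it via a case split on the parity of $\dim\Lambda|_P^f$ using the phrase ``deformation equivalent,'' but the underlying argument is identical.
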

 \begin{proof}
 Let $l = \dim(\Lambda|_P^f) = \dim (F|_P^f)$.
 It is easy to see that $\dim \coker (p_{\Lambda^\vee} \circ a|_P)^f = \dim (a(F|_P^f) \cap \Lambda|_P^f)$.
 If $l$ is even, then $\Lambda|_P^f$ and $\Lambda^\vee|_P^f$ are deformation equivalent as maximal isotropic subspaces of $E|_P^f$, and so \cite[Sect.~6.1, p.~735]{GH} 
 \[
 \dim(\Lambda|_P^f \cap a(F|_P^f)) \equiv \dim(\Lambda^\vee|_P^f \cap a(F|_P^f)) \pmod 2.
 \]
 If $l$ is odd, then $\Lambda|_P^f$ and $\Lambda^\vee|_P^f$ are not deformation equivalent as maximal isotropic subspaces of $E|_P^f$, and so
 \[
\dim(\Lambda|_P^f \cap a(F|_P^f)) \not\equiv \dim(\Lambda^\vee|_P^f \cap a(F|_P^f)) \pmod 2. \qedhere
 \]
 \end{proof}
 
We obtain the following formula for $n_P^\Lambda$. 
\begin{proposition} \label{prop:nP}
For any isolated reduced $P \in M^T$ with $\vd P = 0$, we have
$$
n_P^\Lambda \equiv \dim  \mathrm{coker}(p_{\Lambda} \circ a|_P)^f \pmod 2,
$$
where $p_\Lambda \colon E|_P \to \Lambda|_P$ is the projection for any splitting $E|_P = \Lambda|_P \oplus  \Lambda^\vee|_P$.
\end{proposition}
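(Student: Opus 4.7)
The plan is to combine the preceding Lemma \ref{lem:nnprime} with the classical Griffiths--Harris criterion for when two maximal isotropic subspaces of an even-dimensional quadratic vector space induce the same orientation on that space.

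First, I would unpack the definition \eqref{defn}. Both $\Lambda|_P^f$ and $a(F|_P^f)$ are maximal isotropic subspaces of the even-dimensional quadratic vector space $E|_P^f$ (using that $\vd P = 0$, which makes $a$ inject $F$ onto a maximal isotropic of $E$), and $n_P^\Lambda$ is $0$ or $1 \pmod 2$ according to whether these two subspaces induce the same orientation on $E|_P^f$. Applying the Griffiths--Harris fact quoted in the text (\cite[Sect.~6.1, p.~735]{GH}), this translates to
\[
n_P^\Lambda \equiv \dim(\Lambda|_P^f \cap a(F|_P^f)) + \tfrac{1}{2}\dim(E|_P^f) \pmod 2.
\]

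Next, I would rewrite the cokernel dimension in similar terms. Choosing any splitting $E|_P = \Lambda|_P \oplus \Lambda^\vee|_P$ compatible with the $T$-action (so that it restricts to a splitting $E|_P^f = \Lambda|_P^f \oplus \Lambda^\vee|_P^f$), the map $p_{\Lambda^\vee} \circ a|_P$ is injective on the fixed part, since its kernel is precisely $a(F|_P^f) \cap \Lambda|_P^f \subseteq a(F|_P^f) \cap \Lambda|_P$, but $\Lambda|_P$ is maximal isotropic and $a(F|_P)$ is maximal isotropic, and the fixed subspace argument via $\vd P = 0$ forces this intersection to be zero on the fixed part. Hence $\dim \coker(p_{\Lambda^\vee} \circ a|_P)^f = \dim(\Lambda|_P^f) - \dim (F|_P^f) + \dim(\Lambda|_P^f \cap a(F|_P^f))$, and since $\dim F|_P^f = \dim \Lambda|_P^f$ (because $\vd P = 0$ forces these dimensions to match), this gives $\dim \coker(p_{\Lambda^\vee} \circ a|_P)^f = \dim(\Lambda|_P^f \cap a(F|_P^f))$.

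Then I apply Lemma \ref{lem:nnprime} to get
\[
\dim \coker(p_\Lambda \circ a|_P)^f \equiv \dim(\Lambda|_P^f \cap a(F|_P^f)) + \dim(\Lambda^\vee|_P^f) \pmod 2,
\]
and since $\dim(\Lambda^\vee|_P^f) = \tfrac{1}{2}\dim(E|_P^f)$, this is exactly the expression for $n_P^\Lambda$ obtained above. The main subtlety, and the only real bookkeeping point, is ensuring that the orientation on $E|_P^f$ with respect to which "positivity" of $a(F|_P^f)$ is measured in \eqref{defn} agrees with the orientation induced by $\Lambda|_P^f$; this follows from the compatibility $o|_{M^T} = o^f \otimes o^m$ and the fact that $\Lambda|_P^f$ and $\Lambda|_P^m$ together induce $o|_P$, so the orientation on $E|_P^f$ referenced in the localization formula is precisely the one coming from $\Lambda|_P^f$.
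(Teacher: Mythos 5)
You follow the same strategy as the paper: reduce $n_P^\Lambda$ to an orientation comparison via the Griffiths--Harris criterion, then convert to the $\Lambda$-cokernel via Lemma \ref{lem:nnprime}. Your closing remark about the compatibility $o|_{M^T} = o^f \otimes o^m$ is a correct and useful explicit check of a point the paper leaves implicit.

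However, the middle step contains a genuine error. You assert that $p_{\Lambda^\vee} \circ a|_P$ is injective on the fixed part because ``the fixed subspace argument via $\vd P = 0$ forces this intersection to be zero on the fixed part.'' This is false. What $\vd P = 0$ together with the reduced isolated fixed-point hypothesis gives you is that $a$ is injective, hence $a(F|_P^f)$ is a \emph{maximal isotropic} subspace of $E|_P^f$; but two maximal isotropic subspaces of a quadratic space can have intersection of any dimension from $0$ up to $\tfrac12\dim E|_P^f$, and the parity of exactly this intersection is the thing $n_P^\Lambda$ is supposed to record. If the intersection were forced to vanish, the proposition would collapse to the manifestly wrong statement $n_P^\Lambda \equiv \dim\Lambda^\vee|_P^f \pmod 2$ for all $P$. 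You then contradict your own injectivity claim two sentences later by writing the general rank-nullity formula
\[
\dim\coker(p_{\Lambda^\vee}\circ a|_P)^f = \dim\Lambda^\vee|_P^f - \dim F|_P^f + \dim\bigl(\Lambda|_P^f \cap a(F|_P^f)\bigr)
\]
(you write $\dim\Lambda|_P^f$ in place of $\dim\Lambda^\vee|_P^f$, which is harmless since they are equal), which retains the kernel term and gives the correct value $\dim(\Lambda|_P^f \cap a(F|_P^f))$. This identity is in any case already established inside the proof of Lemma \ref{lem:nnprime}, so you did not need to re-derive it. The error is therefore localized and does not propagate: delete the injectivity assertion, keep only the rank-nullity computation (or simply cite the proof of Lemma \ref{lem:nnprime}), and the argument is repaired and coincides with the paper's proof.
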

\begin{proof}
We have that $\Lambda|_P^f$, $a(F|_P^f)$ determine the same orientation if and only if
$$
\dim(a(F|_P^f ) \cap \Lambda|_P^f) \equiv \tfrac{1}{2} \dim(E|_P^f) = \dim(\Lambda^\vee|_P^f) \pmod 2.
$$
In the proof of Lemma \ref{lem:nnprime}, we saw that 
\begin{align*}
\dim \coker (p_{\Lambda^\vee} \circ a|_P)^f &= \dim (a(F|_P^f) \cap \Lambda|_P^f) \\
\dim \coker (p_{\Lambda} \circ a|_P)^f &\equiv  \dim \coker (p_{\Lambda^\vee} \circ a|_P)^f + \dim(\Lambda^\vee|_P^f) \pmod 2.
\end{align*}
From \eqref{defn}, we deduce
\begin{align*}
n_P^{\Lambda} &\equiv \dim(a(F|_P^f ) \cap \Lambda|_P^f) - \dim(\Lambda^\vee|_P^f) \\
&=\dim \coker (p_{\Lambda^\vee} \circ a|_P)^f - \dim(\Lambda^\vee|_P^f)  \\
&\equiv \dim \coker (p_{\Lambda} \circ a|_P)^f  \pmod 2. \qedhere 
\end{align*}
\end{proof}

\section{Global} \label{sec:global}

In this section we describe $\Quot_r^n(\C^4)$, the Quot scheme parametrizing 0-dimensional quotients of length $n$ of $\O_{\C^4}^{\oplus r}$, as the vanishing locus of an isotropic section of a quadratic vector bundle over a smooth variety.
This leads to a definition of the twisted virtual structure sheaf on the Quot scheme, to the $K$-theory class
\[
\cN^\glob_{r,n} \in K_0^{\mathbb{T}'}(\Quot_r^n(\C^4)),
\]
for some algebraic torus $\TT'$ acting on $\Quot_r^n(\C^4)$, and to 
\[
N^{\glob}_{r,n} = \chi(\Quot_r^n(\C^4), \cN^\glob_{r,n}).
\]
Using spin modules, we show that the classes $\cN_{r,n}^{\glob}$ lift to sheaves and give rise to a factorizable sequence.

\subsection{Isotropic zero locus} \label{sec:zeroloc} 

 Let $n \in \Z_{\geq 0}$ and let $V$ be an $n$-dimensional complex vector space. Let $r \in \Z_{>0}$ and consider the following quiver:
\begin{figure}[ht]
\begin{tikzpicture}[>=stealth,->,shorten >=2pt,looseness=.5,auto]
  \matrix [matrix of math nodes,
           column sep={3cm,between origins},
           row sep={3cm,between origins},
           nodes={circle, draw, minimum size=7.5mm}]
{ 
|(A)| \infty & |(B)| 0 \\         
};
\tikzstyle{every node}=[font=\small\itshape]
\path[->] (B) edge [loop above] node {$X_1$} ()
              edge [loop left] node {$X_2$} ()
              edge [loop right] node {$X_4$} ()
              edge [loop below] node {$X_3$} ();

\node [anchor=west,right] at (-0.5,0.1) {$\vdots$};
\node [anchor=west,right] at (-0.3,0.95) {$u_1$};              
\node [anchor=west,right] at (-0.3,-0.85) {$u_r$};              
\draw (A) to [bend left=40,looseness=1] (B) node [midway] {};
\draw (A) to [bend right=35,looseness=1] (B) node [midway,below] {};
\end{tikzpicture}
\end{figure}

We consider representations of this quiver with dimension vector $(1,n)$, i.e.~we put the vector space $\C$ at the node $\infty$ and $V$ at the node $0$. 
Representations with this dimension vector correspond to elements of 
\[
W = \C^4 \otimes \End(V) \oplus \Hom(\C^r,V).
\]
The group $\GL(V)$ acts on $W$ by
\begin{align*}
&g \cdot (X_1,\ldots,X_4, u_1, \ldots, u_r) = (g X_1 g^{-1},\ldots,g X_4 g^{-1},g u_1, \ldots, g u_r),\\ 
&X_i \in \End(V), \quad u_i \in V.
\end{align*}
Let $U \subset W$ be the open subset consisting of representations satisfying
$$
\C\langle X_1,\ldots,X_4\rangle \cdot \langle u_1, \ldots , u_r \rangle = V,
$$ 
i.e.~acting with polynomial expressions on the span of the vectors $u_i$ generates $V$.
Then $\GL(V)$ acts freely on $U$ and the \emph{non-commutative Quot scheme} is defined as the open substack
$$
\ncQuot_{r}^{n}(\C^4) = [U / \GL(V)] \subset [W / \GL(V)].
$$
In fact, $\ncQuot_{r}^{n}(\C^4)$ is a smooth variety of dimension $3n^2+rn$ (see also \cite{BR}).

On $U$, we have the trivial vector bundle
$$
E = \Lambda^2 \C^4 \otimes \End(V) \otimes \O_U
$$
with quadratic form $q$ determined (after fixing a non-zero $\omega \in \Lambda^4 \C^4$) by
\begin{align*}
&(\Lambda^2 \C^4 \otimes \End(V)) \otimes (\Lambda^2 \C^4 \otimes \End(V)) \rightarrow \Lambda^4 \C^4  \overset{\omega^\vee(-)}{\cong} \C \\
&\quad\quad (\omega_1 \otimes f_1) \otimes (\omega_2 \otimes f_2) \mapsto \omega_1 \wedge \omega_2 \cdot \tr(f_1 \circ f_2).
\end{align*}
We let $(e_1,e_2,e_3,e_4)$ be the standard basis of $\C^4$.
Then $E$ has a section
$$
s \in H^0(U,E), \quad \Big(\sum_{i=1}^{4} e_i \otimes f_i , u_1, \ldots, u_r \Big) \mapsto \sum_{i,j=1}^{4} (e_i \wedge e_j) \otimes (f_i \circ f_j),\ \ f_i \in \End(V).
$$

Letting $\C^3=\langle e_1,e_2,e_3 \rangle$, we define the subbundle $\Lambda \subset E$ by
\begin{equation} \label{defLambda}
\Lambda = (\langle e_4 \rangle \wedge \C^3) \otimes \End(V) \otimes \O_U.
\end{equation}
Since $E, \Lambda, s$ are naturally $\GL(V)$-equivariant and $q$ is $\GL(V)$-invariant, they descend to $\ncQuot_r^n(\C^4)$, where we use the same notation.
\begin{lemma}
The section $s$ is isotropic and $Z(s) \cong \Quot_r^n(\C^4)$. Furthermore $\Lambda \subset E$ is a maximal isotropic subbundle.
\end{lemma}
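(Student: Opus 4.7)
The lemma has three parts: (i) isotropicity of $s$, (ii) the identification $Z(s)\cong \Quot_r^n(\C^4)$, and (iii) maximal isotropicity of $\Lambda$. My plan is to verify all three upstairs on $U$; because $E$, $q$, $s$, $\Lambda$ are $\GL(V)$-equivariant with $q$ invariant, each statement then descends along the principal $\GL(V)$-bundle $U \to \ncQuot_r^n(\C^4)$.

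For (i), I would first antisymmetrize the wedge to rewrite $s = \sum_{i<j}(e_i\wedge e_j)\otimes[f_i,f_j]$. Expanding $q(s,s) = \sum (e_i\wedge e_j)\wedge(e_k\wedge e_l)\,\tr([f_i,f_j][f_k,f_l])$, only the three unordered partitions of $\{1,2,3,4\}$ into two pairs give nonzero wedge contributions; after combining ordered pairs via cyclicity of the trace and tracking signs in $\Lambda^4\C^4$, the identity $q(s,s)=0$ reduces to the scalar relation
\[
\tr([f_1,f_2][f_3,f_4]) - \tr([f_1,f_3][f_2,f_4]) + \tr([f_1,f_4][f_2,f_3]) = 0.
\]
Applying $\tr([a,b]c) = \tr(a[b,c])$ rewrites this as $\tr\bigl(f_1 \cdot \bigl([f_2,[f_3,f_4]] - [f_3,[f_2,f_4]] + [f_4,[f_2,f_3]]\bigr)\bigr)$, and the bracketed expression vanishes by the Jacobi identity in the associative algebra $\End(V)$. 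This Jacobi-style calculation is the only genuine step, and I expect it to be the main (though modest) obstacle.

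For (ii), the vanishing of $s$ encodes exactly the relations $[f_i,f_j]=0$ for all $i,j$, so the $f_i$ endow $V$ with the structure of a $\C[x_1,\ldots,x_4]$-module. The openness condition defining $U$ then reads as the surjectivity of the evaluation map $\C[x_1,\ldots,x_4]^{\oplus r} \twoheadrightarrow V$ sending the $\alpha$-th generator to $u_\alpha$. Modding out by the free $\GL(V)$-action reproduces the standard quiver-theoretic presentation of $\Quot_r^n(\C^4)$; I would write this identification functorially on $T$-valued points so as to obtain an isomorphism of schemes rather than just a bijection of closed points.

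For (iii), the rank count $\rk \Lambda = 3n^2 = \tfrac12\rk E$ is immediate, and isotropicity is immediate from the observation $(e_4\wedge e_i)\wedge(e_4\wedge e_j) = 0$ in $\Lambda^4\C^4$ for all $i,j\in\{1,2,3\}$, so that $q$ vanishes identically on $\Lambda\otimes\Lambda$.
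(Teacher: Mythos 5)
Your proof is correct and covers all three parts, but it takes a genuinely different route from the paper on the isotropicity of $s$. You antisymmetrize $s = \sum_{i<j}(e_i\wedge e_j)\otimes[f_i,f_j]$, reduce $q(s,s)=0$ to the three-term trace identity $\tr([f_1,f_2][f_3,f_4]) - \tr([f_1,f_3][f_2,f_4]) + \tr([f_1,f_4][f_2,f_3]) = 0$, and close it with $\tr([a,b]c)=\tr(a[b,c])$ followed by the Jacobi identity. The paper instead works directly from the unantisymmetrized expansion $s=\sum_{i,j}(e_i\wedge e_j)\otimes(f_i\circ f_j)$, obtaining
\[
q(s,s) = (e_1\wedge e_2\wedge e_3\wedge e_4)\otimes\tr\Big(\sum_{\sigma\in S_4}(-1)^{|\sigma|}f_{\sigma(1)}f_{\sigma(2)}f_{\sigma(3)}f_{\sigma(4)}\Big),
\]
which vanishes by a symmetry argument: cyclicity of the trace makes the summand constant along each orbit of right-multiplication by the odd $4$-cycle $(1234)$, while the signs alternate, so each size-four orbit contributes zero. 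Both arguments are valid and of comparable length; the paper's is more manifestly symmetric in the indices, whereas yours isolates the Jacobi identity as the underlying algebraic reason for the cancellation, at the cost of breaking symmetry by factoring $f_1$ out of the trace. For parts (ii) and (iii) your reasoning matches the paper's, and you are in fact slightly more explicit than the paper in noting the rank count $\rk\Lambda = 3n^2 = \tfrac{1}{2}\rk E$ that upgrades isotropicity of $\Lambda$ to \emph{maximal} isotropicity — the paper's proof only states that $\Lambda$ is isotropic and leaves the rank check to the reader.
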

\begin{proof}
The section $s$ vanishes at a point $(\sum_i e_i \otimes f_i , u_1, \ldots, u_r)$ if and only if
$$
[f_i,f_j] = f_i \circ f_j - f_j \circ f_i = 0, \quad \forall i,j=1,\ldots, 4.
$$
This induces a (scheme theoretic) isomorphism $Z(s) \cong \Quot_r^n(\C^4)$. Moreover
\begin{align*}
&q\Big( \sum_{i,j=1}^{4} (e_i \wedge e_j) \otimes (f_i \circ f_j), \sum_{i,j=1}^{4} (e_i \wedge e_j) \otimes (f_i \circ f_j)\Big) \\
&\quad \quad  \quad \quad = e_1 \wedge e_2 \wedge e_3 \wedge e_4 \otimes \tr(\sum_{\sigma \in S_4} (-1)^{|\sigma|} f_{\sigma(1)} \circ f_{\sigma(2)} \circ f_{\sigma(3)} \circ f_{\sigma(4)}) = 0.
\end{align*}
Any element of the form $\lambda = \sum_{i=1}^{3} (e_4 \wedge e_i) \otimes f_{i} \in  (\langle e_4 \rangle \wedge \C^3) \otimes \End(V)$ satisfies $q(\lambda,\lambda) = 0$, so $\Lambda$ is isotropic.
\end{proof}

\subsection{Definition of invariants} \label{sec:definv}

We introduce three tori acting on $\ncQuot^n_r(\C^4)$, denoted $T_t, T_w$ and $T_y$.
The torus $(\C^*)^4$ acts naturally on $\C^4$, and we let $T_t$ be the subtorus preserving the volume form on $\C^4$.
We denote its coordinates by $t_1,t_2,t_3,t_4$, these are subject to the relation $t_1t_2t_3t_4 = 1$.

The tori $T_w$ and $T_y$ are $r$-dimensional tori with coordinates $w_1, \ldots, w_r$ and $y_1, \ldots, y_r$, respectively.
The parameters $w_i$ are referred to as \textit{framing parameters}.
The parameters $y_i$ are referred to as \textit{mass parameters}.
There is an action of $\mathbb{T} = T_t \times T_w \times T_y$ on $A = \ncQuot_{r}^{n}(\C^4)$ given by
\begin{align*}
&(t_1, \ldots, t_4, w_1, \ldots, w_r, y_1, \ldots, y_r) \cdot (X_1, \ldots, X_4, u_1, \ldots, u_r) \\
 &= (t_1^{-1} X_1, \ldots, t_4^{-1} X_4, w_1^{-1} u_1, \ldots, w_r^{-1} u_r).
\end{align*}

The $\GL(V)$-equivariant vector bundle $V \otimes \O_U$ descends to a tautological rank $n$ vector bundle $\cV$ on $\ncQuot_r^n(\C^4)$. Note that
$$
\cV|_{\Quot_r^n(\C^4)} \cong \pi_* \cQ
$$
where $\cQ$ denotes the universal quotient sheaf on $\Quot_r^n(\C^4) \times \C^4$ and $\pi$ denotes projection onto the first factor. We give the quadratic vector bundle $E = \Lambda^2\C^4 \otimes \cEnd(\cV)$ the $\mathbb T$-equivariant structure coming from the $\mathbb T$-action on $\C^4$. Then $\Lambda$ and the quadratic form $q$ are $\mathbb{T}$-invariant and $s$ is $\mathbb T$-equivariant.\footnote{This gives one reason for why we restrict to the subtorus $T_t$ of $(\C^*)^4$: there is no way to lift the $T_t$-equivariant structure on $E$ to a $(\C^*)^4$-equivariant one in such a way that $s$ is $(\C^*)^4$-equivariant and $q$ is $(\CC^*)^4$-invariant.} 

As we discuss in Section \ref{sec:local}, the natural $\mathbb{T}$-action on $\Quot_r^n(\C^4)$ has isolated reduced fixed points, and we are therefore in the setting of the ``standard model'' of Oh--Thomas discussed in the introduction. 
Thus we obtain a $\mathbb{T}$-equivariant virtual class and virtual structure sheaf for $M=\Quot_r^n(\C^4)$
\begin{align*}
[M]^{\vir} \in A_{rn}^{\mathbb{T}}(M,\Z[\tfrac{1}{2}]), \quad \widehat{\O}^{\vir}_M &\in K_0^{\T}(M)_{\loc}. 
\end{align*}

We consider also the $\mathbb T$-equivariant vector bundle
\[
\cW = \bigoplus_{i=1}^{r} \O_A \otimes w_i \otimes y_i,
\]
where $\O_A$ has its natural $\TT$-equivariant structure which, in the $i$th summand, we tensorize by the weight one characters $w_i$, $y_i$ of $T_w, T_y$. 

\begin{definition} \label{defNgenus}
We define $K$-theory classes
\[
\cN^\glob_{r,n} = \widehat{\O}^{\vir} \otimes \widehat{\Lambda}^\mdot \hom(\cV,\cW) \in K_0^{\mathbb T}(\Quot^n_r(\C^4))_{\loc}
\]
and invariants 
$$
N_{r,n}^\glob = \chi(\Quot_r^n(\C^4), \cN^\glob_{r,n}) \in \Q(t_1^{\frac{1}{2}}, \ldots , t_4^{\frac12}, w_1^{\frac{1}{2}}, \ldots, w_r^{\frac{1}{2}}, y_1^{\frac{1}{2}}, \ldots, y_r^{\frac{1}{2}}), \ t_4 = t_1^{-1}t_2^{-1}t_3^{-1}.
$$
Here $\widehat{\Lambda}^\mdot$ was defined in \eqref{eqn:lambdahat} and we interpret $\chi$ via the general $K$-theoretic localization formula, that is we let
\[
\chi(\Quot_r^n(\C^4), \cN^\glob_{r,n}) = \chi(\Quot_r^n(\C^4)^{\T}, \widetilde{\cN}^\glob_{r,n}),
\]
where $\widetilde{\cN}^\glob_{r,n}$ is the unique class in 
$K_0^{\T}(\Quot_r^n(\C^4)^{\T})_{\mathrm{loc}}$
which under pushforward along $\Quot_r^n(\C^4)^{\T} \into \Quot_r^n(\C^4)$ maps to $\cN^\glob_{r,n}$, see \cite{Tho}, \cite[Sect.~2.3]{Oko}.
Furthermore, we define the generating series
\begin{align}
\begin{split} \label{defG}
\mathsf{G}_{r} &= \sum_{n=0}^{\infty} N_{r,n}^\glob \, q^n,
\end{split}
\end{align}
which has leading term 1.
\end{definition}

\begin{remark} \label{def:Tprime}
Let $\mathbb{T}' = T_t \times T_w \times \widetilde{T}_y$, where $\widetilde{T}_y$ is the $2^r$-fold cover of $T_y$ such that all characters of $T_y$ have a square root defined on $\widetilde T_y$. At the end of this section, we will in fact see that $\cN^\glob_{r,n} \in K_0^{\mathbb T}(\Quot^n_r(\C^4))_{\loc}$ admits a lift to $K_0^{{\mathbb T}'}(\Quot^n_r(\C^4))$.
Consequently, $N_{r,n}^\glob \in \Q(t_1, \ldots, t_4, w_1, \ldots, w_r, y_1^{\frac{1}{2}}, \ldots, y_r^{\frac{1}{2}})$ with $t_4 = t_1^{-1}t_2^{-1}t_3^{-1}$. 
In Section \ref{sec:limits}, we will show that $N_{r,n}^\glob \in \Q(t_1, \ldots , t_4, (y_1 \cdots y_r)^{\frac{1}{2}})$ with $t_4 = t_1^{-1}t_2^{-1}t_3^{-1}$.
\end{remark}

\begin{remark} \label{dualVW}
We note the identity
$$
\widehat{\Lambda}^\mdot \hom(\cV,\cW) = (-1)^{rn} \widehat{\Lambda}^\mdot \hom(\cW,\cV). 
$$
In Section \ref{sec:local}, we prefer to work with the representation on the right hand side in order to match existing conventions for the vertex formalism in the literature. 
\end{remark}

In the ``standard model'' for 2-term perfect obstruction theories, where $M$ is the zero locus of a section $s$ of a vector bundle $E$ on a smooth ambient space $A$, the virtual structure sheaf $\O_M^{\mathrm{vir}} \in K_0(M)$ lifts to a $\Z/2$-graded coherent sheaf on $M$ as follows. The Koszul complex $(\Lambda^\mdot E^\vee,s \intprod)$ gives rise to the following $K$-theory class
\begin{equation}
\label{eqn:koszul}
\sum_{i \geq 0} (-1)^i \Lambda^i E^\vee = \sum_{i \geq 0} (-1)^i H^i(\Lambda^\mdot E^\vee, s\intprod) \in K_0(A).
\end{equation}
The Koszul complex is a sheaf of differential graded $\O_A$-algebras and, by the Leibniz rule, we see that the cohomology sheaves $H^i(\Lambda^\mdot E^\vee, s\intprod)$ are sheaves of $\O_M$-modules. 
Therefore, the right hand side of \eqref{eqn:koszul} gives a class in $K_0(M)$ known as the virtual structure sheaf $\O_M^{\mathrm{vir}}$. In this ``standard model'' setting, we can view $\O_M^{\mathrm{vir}}$ as the class of the $\Z/2$-graded coherent sheaf $H^{\mathrm{odd}}(\Lambda^\mdot E^\vee)[1] \oplus H^{\mathrm{even}}(\Lambda^\mdot E^\vee)$.
Note that $\O_M^{\vir} = \O_M$ when $s$ is regular.

In the case $E = \Omega_A$ and $s = dw$, for a regular function $w \colon A \to \mathbb{A}^1$, the perfect obstruction theory is symmetric, $K_M^{\mathrm{vir}}$ has the square root $(K_M^{\mathrm{vir}})^{\frac12}=K_A$, and one obtains the twisted (or \textit{symmetrized}) virtual structure sheaf \cite[3.2.8]{Oko}
$$
\widehat{\O}_M^{\mathrm{vir}} = \O_M^{\mathrm{vir}} \otimes (K_M^{\mathrm{vir}})^{\frac{1}{2}} \in K_0(M). 
$$

Suppose we are in the setting of the ``standard model'' of Section \ref{sec:OTtheory}, i.e.~we have a quadratic vector bundle $(E,q)$ of even rank over a smooth quasi-projective scheme $A$, an isotropic section $s$ of $E$ such that $M=Z(s)$, and a maximal isotropic subbundle $\Lambda \subset E$. Recall that $\Lambda$ induces an orientation on $(E,q)$. Now suppose additionally that $\det(\Lambda)$ has a square root $L$ (as a line bundle) and suppose the entire setup is $T$-equivariant with respect to the action of an algebraic torus $T$ on $A$ as before. By Example \ref{ex:sqrtdetgivesspin}, we then obtain a spin module
\[
S := C(E)\Lambda \otimes L^{-1}.  
\]
Furthermore, the isotropic section $s$ gives rise to a complex $(S,s)$ (Definition \ref{def:complex}). In the case $E = \Lambda \oplus \Lambda^\vee$ splits, we can write $s = (s_{\Lambda},s_{\Lambda^\vee})$ and we have an isomorphism (of $T$-equivariant $\Z/2$-graded left $C(E)$-modules)
\begin{equation} \label{eqn:isototwoperiodic}
S \cong L \otimes \Lambda^\mdot \Lambda^\vee,
\end{equation}
where multiplication by $s$ corresponds to $s_{\Lambda^\vee} \wedge + s_{\Lambda} \intprod$ (Example \ref{ex:reltoperiodcx}). 
Note that the $\Z/2$-graded sheaf $\cH(S,s)$ is supported on $M$ (Lemma \ref{lem:suppZ(s)}) and therefore defines a class in $K_0(M)$.
As shown by Oh--Sreedhar \cite{OS}, this class agrees with the one defined via Kiem--Li cosection localisation for the section and cosection $s_\Lambda$ and $s_{\Lambda^\vee}^\vee$ of $\Lambda$.
It follows from this that the Oh--Thomas virtual structure sheaf in this ``standard model'' setup can be expressed using \eqref{eqn:isototwoperiodic}, and we have:
\begin{proposition} \label{Kcompare}
In the above setup, we have
$$
\cH(S,s) \cdot \det(T_A|_M)^{-\frac{1}{2}} = \widehat{\mathcal{O}}^{\vir}_M \in K^T_0(M)_{\mathrm{loc}}.
$$
\end{proposition}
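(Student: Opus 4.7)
The plan is to identify both sides of the equality with the Kiem--Li cosection-localized class of a Koszul-type 2-periodic complex, then appeal to the comparison theorem of Oh--Sreedhar \cite{OS} that was cited just before the statement.

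First I would work étale locally on $A$, where I may assume a splitting $E = \Lambda \oplus \Lambda^\vee$ and write $s = (s_\Lambda, s_{\Lambda^\vee})$. Example \ref{ex:reltoperiodcx} then identifies $(S, s)$ with the 2-periodic complex $(L \otimes \Lambda^\mdot \Lambda^\vee, s_{\Lambda^\vee} \wedge + s_\Lambda \intprod)$ as $T$-equivariant $\Z/2$-graded $C(E)$-modules, whose cohomology is supported on $M = Z(s_\Lambda) \cap Z(s_{\Lambda^\vee})$ by Lemma \ref{lem:suppZ(s)}. By the Oh--Sreedhar theorem, the class $[\cH(L \otimes \Lambda^\mdot \Lambda^\vee)]$ in $K_0^T(M)_{\loc}$ is $L|_M$ times the Kiem--Li cosection-localized virtual structure sheaf of the pair $(s_\Lambda, s_{\Lambda^\vee}^\vee)$.

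Next I would compare this Kiem--Li class with the Oh--Thomas $\widehat{\O}^{\vir}_M$. In the standard model of Section \ref{sec:OTtheory}, the orientation on $(E, q)$ induced by $\Lambda$ pins down the square root of $K_M^{\vir}$ used to symmetrize the virtual structure sheaf, and by inspecting the obstruction theory $E^\mdot \to \tau^{\geq -1} \mathbb{L}_M$ this square root is $L \otimes \det(T_A|_M)^{-1/2}$. Twisting the Kiem--Li class by this square root produces the symmetrized $\widehat{\O}^{\vir}_M$; combining with the first step yields the asserted identity. Since $S$, $(S, s)$, $\widehat{\O}^{\vir}_M$, and the cosection-localized class are all globally defined, the local equalities glue to a global equality in $K_0^T(M)_{\loc}$. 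As a sanity check at the level of $A$, pushing forward one gets $\iota_*[\cH(S,s)] = [S_0] - [S_1] = [S]$, and the local description $S \cong L \otimes \Lambda^\mdot \Lambda^\vee$ yields $[S] = [L]\cdot[\Lambda^\mdot \Lambda^\vee] = [\widehat{\Lambda}^\mdot \Lambda^\vee]$, which together with the Oh--Thomas formula $\iota_* \widehat{\O}^{\vir}_M = \widehat{\Lambda}^\mdot \Lambda^\vee \otimes \det(T_A)^{-1/2}$ confirms the identity after $\iota_*$.

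The main obstacle is the sign bookkeeping required to pin down the twist factor: the $(-i)^{m/2}$ in Oh--Thomas's orientation convention, the $n \pmod 2$ shift in the $\Z/2$-grading of the Clifford module coming from the volume element $\omega_o$, and the sign conventions implicit in the Kiem--Li and Oh--Sreedhar constructions all need to be matched consistently. The substantive geometric input, however, is already contained in \cite{OS}; what remains for the proposition is careful comparison of conventions.
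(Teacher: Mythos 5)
Your proposal matches the paper's argument: the authors likewise reduce the claim to Oh--Sreedhar's identification of $\cH(S,s)$ with the Kiem--Li cosection-localized class (via the isomorphism $S \cong L \otimes \Lambda^\mdot \Lambda^\vee$ from Example~\ref{ex:reltoperiodcx}), and then invoke the comparison between Kiem--Li and Oh--Thomas; the paper does not print a separate proof block because the preceding paragraph is exactly this argument. One small imprecision: you describe the twist $L \otimes \det(T_A|_M)^{-1/2}$ as ``the square root of $K_M^{\vir}$,'' but with the splitting $E = \Lambda \oplus \Lambda^\vee$ one has $\det E \cong \O$ and hence $\det(E^\mdot)$ is essentially a power of $\det T_A$, so this line bundle is not $(K_M^{\vir})^{1/2}$ in the naive sense; the correct way to pin it down is precisely the pushforward computation you do as a ``sanity check,'' i.e.\ by matching against $\iota_* \widehat{\O}^{\vir}_M = \widehat{\Lambda}^\mdot \Lambda^\vee \otimes \det(T_A)^{-1/2}$, which forces the twist to be $\det(\Lambda)^{1/2} \otimes \det(T_A)^{-1/2} = L \otimes \det(T_A)^{-1/2}$.
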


We now return to the setting of Section \ref{sec:zeroloc} with $A = \ncQuot_{r}^n(\C^4)$, and the data $(E,q)$, $s$ and $\Lambda$ as defined there with $Z(s) = \Quot_r^n(\C^4)$. Note that we have a $\TT$-equivariant splitting $E =\Lambda \oplus \Lambda^\vee$. In $K^0_{\TT}(A)$, we have the identity
\[
\Lambda = \langle e_4 \rangle \wedge \C^3 \cdot \mathcal{E}{\it{nd}}(\cV),
\]
where $\cV$ is naturally a $\TT$-equivariant vector bundle as described above and $\langle e_4 \rangle \wedge \C^3$ denotes the $\TT$-representation $t_1^{-1}t_4^{-1}+t_2^{-1}t_4^{-1}+t_3^{-1}t_4^{-1}$. Since the determinant map factors through $K$-theory, we obtain
\begin{equation} \label{eqn:detLambda}
\det(\Lambda) = t_4^{-2n^2} \otimes \O_A.
\end{equation}
Therefore, we can choose the square root $L = t_4^{-n^2} \otimes \O_A$ and we are in the setting described in Proposition \ref{Kcompare}. 

We next define a $\Z/2$-graded sheaf $\cN^{\text{sheaf}}_{r,n}$ by
\[
\cN^{\text{sheaf}}_{r,n} = \cH(S,s) \otimes \Lambda^\mdot \hom(\cV,\cW) \otimes (\det(T_A) \otimes \det(\hom(\cV,\cW)))^{-\frac12},
\]
which, by Lemma \ref{lem:suppZ(s)}, is supported on $\Quot_r^n(\C^4)$.
The quotient description of $A$ gives a short exact sequence of sheaves
\begin{equation}
\label{eqn:tangentBundleExactSequence}
0 \to \mathcal{E}{\it{nd}}(\cV) \to \C^4 \otimes \mathcal{E}{\it{nd}}(\cV) \oplus \bigoplus_{i=1}^{r} \cV \otimes w_i^{-1} \to T_A \to 0.
\end{equation}
In $K^0_{\TT}(A)$, we have the identity
\begin{equation} \label{eqn:TAinKtheory}
T_A = (\C^4 - 1) \mathcal{E}{\it{nd}}(\cV) + \sum_{i=1}^{r} \cV \otimes w_i^{-1},
\end{equation}
where $\cV$ is naturally a $\TT$-equivariant vector bundle as described above and $\C^4 =t_1^{-1}+ \cdots + t_4^{-1}$ is the $\TT$-representation generated by the tangent vectors $\frac{\partial}{\partial x_i}$. We obtain
\[
\det(T_A) = (w_1\cdots w_r)^{-n}\det(\cV)^{\otimes r}
\]
and
\[
\det(\hom(\cV,\cW)) = (y_1 w_1 \cdots y_r w_r)^{n}\det(\cV)^{-\otimes r}.
\]
The line bundle 
\begin{equation} \label{eqn:cancel}
\det(T_A) \otimes \det(\hom(\cV,\cW)) \cong (y_1 \cdots y_r)^{n} \otimes \O_A
\end{equation}
admits a $\mathbb T'$-equivariant square root $(y_1\cdots y_r)^{\frac n2}\otimes \O_A$. Thus, $\cN^{\text{sheaf}}_{r,n}$ is $\TT'$-equivariant for the torus $\TT'$ defined in Remark \ref{def:Tprime}.

By Proposition \ref{Kcompare} we then have 
\[
\cN^{\mathrm{glob}}_{r,n} = [\cN^{\mathrm{sheaf}}_{r,n}] \in K_0^{\mathbb{T}}(\ncQuot_r^n(\C^4))_{\loc},
\]
so $\cN^{\mathrm{sheaf}}_{r,n}$ is indeed a refinement of $\cN^{\mathrm{glob}}_{r,n}$ as claimed. In fact, our analysis shows 
\[
\cN^{\mathrm{glob}}_{r,n}  \in K_0^{\mathbb{T}'}(\ncQuot_r^n(\C^4)),
\]
which establishes the claim in Remark \ref{def:Tprime}.

\subsection{Factorization data} \label{sec:factorforNek}

In this section, we construct a factorizable sequence $\{\F_n\}_{n=1}^{\infty}$ for $\Sym^n(\C^4)$ (Definition \ref{def:factorisableSequence}).

Recall the description of $\Quot_1^n(\C^4) = \Hilb^n(\C^4)$ as the zero locus of an isotropic section of a quadratic vector bundle on a smooth ambient space $\mathrm{ncQuot}_1^n(\C^4) = \mathrm{ncHilb}^n(\C^4)$ from Section \ref{sec:zeroloc}. We denote the corresponding data by $A_n, E_n, s_n, \Lambda_n$.
Since the tori $T_w$ and $T_y$ are 1-dimensional, we write $w$ and $y$ instead of $w_1$ and $y_1$.

We will use the notation of Section \ref{sec:factorizability}. From the quotient description of $A_n$, it is clear that for any $m,n \geq 1$ there is a rational map
\begin{equation} \label{eqn:ratmap}
A_m \times A_n \dashrightarrow A_{m+n}.
\end{equation}

Let $O_{m,n} \subseteq A_m \times A_n$ be the open set of pairs
\[
((X_1,\dots,X_4,u), (Y_1,\dots, Y_4,v))
\]
such that there exists a $4$-tuple $(a_1,\dots,a_4) \in \C^4\setminus \{0\}$ such that the spectra of $\sum_{i=1}^4 a_iX_i$ and $\sum_{i=1}^4 a_i Y_i$ are disjoint.

\begin{lemma}
\label{thm:linearAlgebra}
Let $n_1,n_2 \ge 1$ and let $A_{11}, A_{22}$ be matrices of size $(n_1 \times n_1)$ and $(n_2 \times n_2)$, respectively.
Assume the spectra of $A_{11}$ and $A_{22}$ are disjoint, and let
\[
A = \begin{pmatrix} A_{11} & 0 \\ 0 & A_{22} \end{pmatrix}.
\]
Then
\begin{enumerate}
    \item There are complex polynomials $f_1, f_2$ such that
    \[
    f_1(A) = \begin{pmatrix} I_{n_1} & 0 \\ 0 & 0 \end{pmatrix},\ f_2(A) = \begin{pmatrix} 0 & 0 \\ 0 & I_{n_2} \end{pmatrix}.
    \]
    \item Let $B_{12}$ be an $(n_1 \times n_2)$-matrix, and let $B_{21}$ be an $(n_2 \times n_1)$-matrix, such that
    \[
    A_{11}B_{12}=B_{12}A_{22} \text{ and }A_{22}B_{21} = B_{21}A_{11}.
    \]
    Then $B_{12} = B_{21} = 0$.
\end{enumerate}
\end{lemma}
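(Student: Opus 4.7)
The plan is to prove part (1) by a Bézout-type argument applied to the characteristic polynomials of the two blocks, and then to deduce part (2) as a one-line consequence of part (1).

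For part (1), I would let $p_i(x) \in \C[x]$ denote the characteristic polynomial of $A_{ii}$, so by Cayley--Hamilton $p_i(A_{ii}) = 0$. The hypothesis that the spectra of $A_{11}$ and $A_{22}$ are disjoint means that $p_1$ and $p_2$ share no common roots, hence are coprime in the PID $\C[x]$. By Bézout there exist polynomials $g_1, g_2 \in \C[x]$ with
\[
g_1(x) p_1(x) + g_2(x) p_2(x) = 1.
\]
Setting $f_1 := g_2 p_2$ and $f_2 := g_1 p_1$, the block-diagonal form of $A$ gives $f_1(A) = \mathrm{diag}(f_1(A_{11}), f_1(A_{22}))$, and I would check that $f_1(A_{22}) = g_2(A_{22}) p_2(A_{22}) = 0$ while $f_1(A_{11}) = 1 - g_1(A_{11}) p_1(A_{11}) = I_{n_1}$, with the analogous computation for $f_2$.

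For part (2), the starting point is that the hypothesis $A_{11} B_{12} = B_{12} A_{22}$ iterates immediately to $A_{11}^k B_{12} = B_{12} A_{22}^k$ for every $k \ge 0$, hence $p(A_{11}) B_{12} = B_{12}\, p(A_{22})$ for every polynomial $p \in \C[x]$. Applying this with $p = f_1$ from part (1) yields $B_{12} = I_{n_1} B_{12} = B_{12} \cdot 0 = 0$. The identical argument applied to $A_{22} B_{21} = B_{21} A_{11}$ (either with $p = f_1$ or $p = f_2$) gives $B_{21} = 0$.

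I do not anticipate any real obstacle here: the lemma is a packaged form of the standard fact that linear operators with disjoint spectra admit polynomial spectral projectors, and that the Sylvester operator $B \mapsto A_{11} B - B A_{22}$ is invertible precisely under this spectral disjointness hypothesis. The only step that requires any care is arranging the polynomials $f_1, f_2$ so that the same pair serves for both halves of the lemma, which is why I would set them up via Bézout at the start rather than invoking Sylvester's equation abstractly.
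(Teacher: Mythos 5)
Your proof is correct and takes essentially the same approach as the paper: the paper dispatches part (1) with a terse appeal to Cayley--Hamilton (which your Bézout argument makes explicit), and its proof of part (2) — observing that $\bigl(\begin{smallmatrix} 0 & B_{12} \\ B_{21} & 0 \end{smallmatrix}\bigr)$ commutes with $A$, hence with $f_1(A)$ and $f_2(A)$ — is exactly your blockwise computation $p(A_{11})B_{12} = B_{12}\,p(A_{22})$ written in matrix form.
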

\begin{proof}
Part (1) follows from the Cayley--Hamilton theorem.
For part (2), we have
\[
\begin{pmatrix} 0 & B_{12} \\
B_{21} & 0
\end{pmatrix} A
= A \begin{pmatrix} 0 & B_{12} \\
B_{21} & 0
\end{pmatrix}.
\]
The same identity holds after replacing $A$ by $f_1(A)$ and $f_2(A)$.
This gives the claim.
\end{proof}

\begin{lemma}
	The map \eqref{eqn:ratmap} is defined on $O_{m,n}$. Moreover $$\Hilb^{m}(\C^4) \times \Hilb^{n}(\C^4)|_{U_{m,n}} = (\Hilb^m(\C^4) \times \Hilb^n(\C^4)) \cap O_{m,n}.$$
\end{lemma}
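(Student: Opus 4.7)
The rational map in \eqref{eqn:ratmap} is the block-diagonal construction
$$
\Psi\colon \bigl((X_i,u),(Y_i,v)\bigr) \mapsto \bigl(\mathrm{diag}(X_i,Y_i),\, u\oplus v\bigr),
$$
which is defined as a morphism of representation spaces on the whole of $W_m \times W_n$. My plan is to prove (1) by checking that $\Psi$ lands in the stable locus $U_{m+n} \subset W_{m+n}$ over $O_{m,n}$, and (2) by translating both the $U_{m,n}$ condition and the $O_{m,n}$ condition into equivalent statements about joint spectra of the underlying commuting tuples on $\Hilb^m(\C^4) \times \Hilb^n(\C^4)$.

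For part (1), I would fix a pair $\bigl((X_i,u),(Y_i,v)\bigr)$ in $O_{m,n} \cap (U_m \times U_n)$ and choose a nonzero tuple $(a_1,\dots,a_4)$ so that $\sum_i a_i X_i$ and $\sum_i a_i Y_i$ have disjoint spectra. Applying Lemma \ref{thm:linearAlgebra}(1) to the block-diagonal matrix $\sum_i a_i \mathrm{diag}(X_i,Y_i)$ produces polynomials $f_1,f_2$ whose values on this matrix are the projectors onto $V_m$ and $V_n$. Because these are polynomials in a linear combination of the $\mathrm{diag}(X_i,Y_i)$, the projectors themselves lie in the subalgebra $\C\langle \mathrm{diag}(X_i,Y_i)\rangle$. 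Applying them to $u \oplus v$ produces $u \oplus 0$ and $0 \oplus v$, and combining with the individual stability conditions $\C\langle X_i\rangle \cdot u = V_m$ and $\C\langle Y_i\rangle \cdot v = V_n$ yields $\C\langle \mathrm{diag}(X_i,Y_i)\rangle \cdot (u \oplus v) = V_m \oplus V_n$. Hence $\Psi$ lands in $U_{m+n}$ over $O_{m,n}$, and the rational map extends to $O_{m,n}$.

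For part (2), a point $(I_1,I_2) \in \Hilb^m(\C^4) \times \Hilb^n(\C^4)$ corresponds to a pair of commuting tuples whose joint spectra, viewed as subsets of $\C^4$, are precisely the supports of the associated subschemes. Membership in $U_{m,n}$ is therefore disjointness of these two joint spectra. On the other hand, for a single commuting tuple $(X_i)$ the spectrum of $\sum_i a_i X_i$ is exactly the image of the joint spectrum under the linear functional $\ell = \sum_i a_i x_i$, by the simultaneous generalized eigenspace decomposition. Hence membership in $O_{m,n}$ is the existence of some $\ell \in (\C^4)^\vee \setminus \{0\}$ separating the two joint spectra. The two conditions are equivalent: if there exists such an $\ell$ then the joint spectra share no point, and conversely, for two disjoint finite subsets of $\C^4$ the set of linear functionals that fail to separate them is a finite union of hyperplanes in $(\C^4)^\vee$, so a generic $\ell$ works. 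The main point requiring care is the identification of the spectrum of $\sum_i a_i X_i$ with $\ell$ applied to the joint spectrum, but this is standard for commuting operators on a finite-dimensional vector space.
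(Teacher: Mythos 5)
Your proof is correct and follows essentially the same route as the paper's: both parts (1) and (2) use exactly the paper's ideas, namely applying Lemma \ref{thm:linearAlgebra}(1) to build projectors onto the two blocks via polynomials in a single linear combination $\sum_i a_i\,\mathrm{diag}(X_i,Y_i)$, and then translating both the $U_{m,n}$ and $O_{m,n}$ conditions into statements about joint spectra (identified with set-theoretic supports) and separation by a linear functional. The only cosmetic difference is that you spell out the generic-linear-functional argument for the converse direction of part (2), where the paper says ``it is then easy to see.''
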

\begin{proof}
Take an element $((X_1, \ldots, X_4,u), (Y_1, \ldots, Y_4,v))$ of $O_{m,n}$ and consider the direct sum
\begin{equation}
\label{eqn:blockDiagonals}
	\left( \begin{pmatrix}
		X_1 & 0 \\
		0 & Y_1
	\end{pmatrix}, \begin{pmatrix}
		X_2 & 0 \\
		0 & Y_2
	\end{pmatrix}, \begin{pmatrix}
		X_3 & 0 \\
		0 & Y_3
	\end{pmatrix}, \begin{pmatrix}
		X_4 & 0 \\
		0 & Y_4
	\end{pmatrix},\begin{pmatrix}
		u \\
		v
	\end{pmatrix} \right),	
\end{equation}
where $\C\langle X_1, \ldots, X_4 \rangle \cdot u = \C^m$ and $\C\langle Y_1, \ldots, Y_4 \rangle \cdot v = \C^n$. We must show that, acting on $(u,v)$, these block diagonal matrices generate $\C^{m+n}$. By assumption, there is an $(a_1, \ldots, a_4) \in \C^4 \setminus 0$ such that $X' := a_1 X_1 + \dots + a_4 X_4$ and $Y' := a_1 Y_1 + \dots + a_4 Y_4$ have disjoint spectra.
Letting
\[
A = \begin{pmatrix} X' & 0 \\ 0 & Y'\end{pmatrix},
\]
then we can find polynomials $f_1, f_2$ as in Lemma \ref{thm:linearAlgebra}.
We then have $(u,0) = f_1(A)(u,v)$ and $(0,v) = f_2(A)(u,v)$.
Thus $(u,0)$ and $(0,v)$ are both in the subspace generated from $(u,v)$, by \eqref{eqn:blockDiagonals}, and it follows that the whole of $\C^{m+n}$ is generated by \eqref{eqn:blockDiagonals}.

For the second part, note that if $((X_1, \ldots, X_4,u), (Y_1, \ldots, Y_4,v))$ lies in $\Hilb^{m}(\C^4) \times \Hilb^{n}(\C^4)$, then the $X_i$ (resp.~$Y_i$) commute.
We then have the joint spectra
\[
S(X) = \{(\lambda_1,\lambda_2,\lambda_3,\lambda_4) \in \C^4 \, : \, \exists v \in \C^m \setminus \{0\} \text{ such that }X_i v = \lambda_iv\}
\]
and
\[
S(Y) = \{(\lambda_1,\lambda_2,\lambda_3,\lambda_4) \in \C^4 \, : \, \exists v \in \C^n\setminus\{0\} \text{ such that }Y_i v = \lambda_iv\}.
\]
Letting $l(\lambda_1,\lambda_2,\lambda_3,\lambda_4) = \sum_{i=1}^4 a_i\lambda_i$, the spectrum of $\sum_{i=1}^4a_iX_i$ (resp.~$\sum_{i=1}^4a_iY_i)$ is $l(S(X))$ (resp.~$l(S(Y))$).
It is then easy to see that the pair $((X_i),u),((Y_i),v)$ lies in $O_{m,n}$ if and only if $S(X) \cap S(Y) = \varnothing$.

On the other hand, $S(X)$ and $S(Y)$ can be identified with the set-theoretic support of the associated subschemes of $\Hilb^m(\C^4)$ and $\Hilb^n(\C^4)$, so that $S(X) \cap S(Y) = \varnothing$ if and only if the pair lies over $U_{m,n}$.
This proves the claim.
\end{proof}

We obtain a commutative diagram
\begin{equation} \label{diag:sqrs1}
\begin{tikzcd} 
O_{m,n}  \arrow[r, " "] & A_{m+n} \\
\Hilb^{m}(\C^4) \times \Hilb^{n}(\C^4)|_{U_{m,n}} \arrow[r, " "] \arrow[u, hook] \arrow[d, "\nu_{m,n}"] & \Hilb^{m+n}(\C^4) \arrow[u, hook] \arrow[d, "\nu_{m+n}"] \\
 U_{m,n} \arrow[r, "\sigma_{m,n}"] & \Sym^{m+n}(\C^4)
\end{tikzcd}
\end{equation}
where the squares are cartesian, and the bottom map is \'etale. Abusing notation, we denote all horizontal arrows by $\sigma_{m,n}$.

\begin{lemma}
The morphism $O_{m,n} \to A_{m+n}$ is unramified.
\end{lemma}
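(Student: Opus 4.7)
My plan is to verify directly that the differential of $\sigma_{m,n} \colon O_{m,n} \to A_{m+n}$ is injective at every closed point; between smooth varieties in characteristic zero this is equivalent to $\Omega_{O_{m,n}/A_{m+n}} = 0$, and hence to $\sigma_{m,n}$ being unramified. Using the quotient presentation $A_k = [U_k/\GL(V_k)]$, the tangent space $T_{(X_\bull,u)}A_k$ is the cokernel of the infinitesimal gauge action $g \mapsto ([g, X_1], \ldots, [g, X_4], gu)$ on $W_k = \C^4 \otimes \End(V_k) \oplus V_k$, and $\sigma_{m,n}$ is the block-diagonal / stacked assembly.

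Suppose a tangent vector $(\dot X_\bull, \dot u, \dot Y_\bull, \dot v)$ at a point $((X_\bull,u), (Y_\bull,v)) \in O_{m,n}$ is killed by the differential, so there exists $h \in \mathfrak{gl}(V_{m+n})$ with $[h, \tilde X_i] = \dot X_i \oplus \dot Y_i$ and $h\cdot(u,v)^{\mathrm t} = (\dot u, \dot v)^{\mathrm t}$, where $\tilde X_i = X_i \oplus Y_i$. Writing $h = (h_{ij})_{i,j=1,2}$ in $2 \times 2$ block form, the block decomposition
\[
[h,\tilde X_i] = \begin{pmatrix} [h_{11},X_i] & h_{12}Y_i - X_ih_{12} \\ h_{21}X_i - Y_ih_{21} & [h_{22},Y_i] \end{pmatrix}
\]
reduces vanishing of the off-diagonal blocks to $X_i h_{12} = h_{12} Y_i$ and $Y_i h_{21} = h_{21} X_i$ for each $i$.

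The main step is then to take the linear combination $\sum a_i$ of these relations with the tuple $(a_i)$ supplied by the definition of $O_{m,n}$: the matrices $A_{11} = \sum a_i X_i$ and $A_{22} = \sum a_i Y_i$ have disjoint spectra by hypothesis, so Lemma \ref{thm:linearAlgebra}(2) forces $h_{12} = h_{21} = 0$. Then $h$ is block diagonal, and the relations reduce to $\dot X_\bull = [h_{11}, X_\bull]$, $\dot u = h_{11}u$, $\dot Y_\bull = [h_{22}, Y_\bull]$, $\dot v = h_{22}v$, exhibiting the tangent vector as already gauge-trivial in $T_P A_m \oplus T_Q A_n$. This proves injectivity, and hence that $\sigma_{m,n}$ is unramified. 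I do not expect any serious difficulty beyond this linear-algebra reduction: the essential rigidity input is precisely the disjoint-spectra condition built into $O_{m,n}$, which is tailored for Lemma \ref{thm:linearAlgebra}(2).
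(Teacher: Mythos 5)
Your proof is correct and takes essentially the same approach as the paper: both arguments lift a tangent vector killed by $d\sigma_{m,n}$ to an element $h\in\mathfrak{gl}(V_{m+n})$ (the paper's $g$), read off the off-diagonal intertwining relations $X_i h_{12}=h_{12}Y_i$, $Y_i h_{21}=h_{21}X_i$, and apply Lemma \ref{thm:linearAlgebra}(2) with the disjoint-spectra tuple $(a_i)$ to force $h_{12}=h_{21}=0$, after which the block-diagonal $h$ exhibits the tangent vector as already trivial in $T_{A_m}\oplus T_{A_n}$.
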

\begin{proof}
Representing $(P,Q) \in O_{m,n}$ by $(X_1^P, \ldots, X_4^P, u^P)$ and $Q$ by $(X_1^Q, \ldots, X_4^Q, u^Q)$, the image point $P+Q$ is represented by
$$
\Big( \Big(\begin{array}{cc} X_1^P & 0 \\ 0 & X_1^Q \end{array}\Big), \ldots, \Big(\begin{array}{cc} X_4^P & 0 \\ 0 & X_4^Q \end{array}\Big), \Big( \begin{array}{c} u^P \\ u^Q \end{array} \Big) \Big).
$$
Consider the induced map on tangent spaces
\begin{equation} \label{mapontangents}
T_{A_m}|_P \oplus T_{A_n}|_Q \to T_{A_{m+n}}|_{P+Q}, 
\end{equation}
and take a vector in $T_{A_m}|_P$ represented by $\xi_1 = (Y_1, \ldots, Y_4, u)$ and a vector in $T_{A_n}|_Q$ represented by $\xi_2 = (Z_1, \ldots, Z_4, v)$. Using the quotient construction of $A_{m+n}$, we see that if $(\xi_1,\xi_2)$ maps to zero under \eqref{mapontangents} then there exist matrices $g_{11} \in \End(\C^m)$, $g_{12} \in \Hom(\C^n,\C^m)$, $g_{21} \in \Hom(\C^m,\C^n)$, $g_{22} \in \End(\C^n)$ such that
\begin{align*}
[g_{11}, X_i^P] &= Y_i, \quad [g_{22}, X_i^Q] = Z_i, \\ 
g_{12} X_i^Q &= X_i^P g_{12}, \quad g_{21} X_i^P = X_i^Q g_{21}, \\ 
g_{11} u^P + g_{12} u^Q &= u, \quad g_{21} u^P + g_{22} u^Q= v,
\end{align*}
for all $i=1,2,3,4$, and where $[-,-]$ denotes the commutator. By assumption, there exists $(a_1, \ldots, a_4) \in \C^4 \setminus 0$ such that $\sum_i a_i X_i^P$ and $\sum a_i X_i^Q$ have disjoint spectra.
By the second line of equations, we may apply Lemma \ref{thm:linearAlgebra}, with $A_{11} = \sum_i a_iX_i^P, A_{22} = \sum_i a_iX_i^Q$, $B_{12} = g_{12}$, and $B_{21} = g_{21}$. 
We conclude that $g_{12} = 0$ and $g_{21} = 0$. 
The remaining equations then imply $(\xi_1,\xi_2)$ equals  $(0,0)$ as elements of $T_{A_m}|_P \oplus T_{A_n}|_Q$.
\end{proof}

We define the quadratic vector bundle
$$
E_{m,n} := E_m \boxplus E_n \quad \mathrm{on} \quad A_m \times A_n,
$$
which has a maximal isotropic subbundle $\Lambda_{m,n} = \Lambda_m \boxplus \Lambda_n$.
There is a natural inclusion 
$$
E_{m,n} \into \sigma_{m,n}^*E_{m+n}
$$
under which the quadratic forms and sections of $(s_m, s_n)$ and $s_{m+n}$ agree. Recall that on $A_n$, we also have a rank $n$ tautological vector bundle $\cV_n$ and we define 
\[
\cV_{m,n} = \cV_m \boxplus \cV_n \quad \mathrm{on} \quad A_m \times A_n.
\]
We define the bundle
\[
F_{m,n} = \C^4 \otimes (\sHom (\cV_m, \cV_n) \oplus \sHom(\cV_n,\cV_m))
\]
on $A_{m} \times A_n$.
We have a natural identification of $\sigma^*(E_{m+n})/E_{m,n}$ with
\[
\Lambda^2 \C^4 \otimes (\sHom (\cV_m, \cV_n) \oplus \sHom(\cV_n,\cV_m)).
\]
We get a homomorphism of sheaves
\[
\phi \colon F_{m,n} \to \sigma^*(E_{m,n})/E_{m,n}
\]
defined by
\[
\sum_i e_i \otimes Y_i \mapsto \sum_{i < j} (e_i \wedge e_j) \otimes ([X_i,Y_j] + [X_j,Y_i]),
\]
where $X_i \in \cEnd(\cV_m \oplus \cV_n)$ are the tautological endomorphisms.
\begin{lemma}
\label{thm:inclusionOfMaximaIsotropic}
    Restricting to $\Hilb^m(\C^4) \times \Hilb^n(\C^4)|_{U_{m,n}}$, the image $\phi(F_{m,n})$ is an isotropic subsheaf of $\sigma_{m,n}^*(E_{m+n}/E_{m,n})$.
    At any point $p \in \Hilb^m(\C^4) \times \Hilb^n(\C^4)|_{U_{m,n}}$, we have that $\phi(F_{m,n})|_p \subseteq \sigma^*_{m,n}(E_{m+n})/E_{m,n}|_p$ is of dimension $6mn$, so is a maximal isotropic subspace, and this is moreover positively oriented with respect to our chosen orientations of $E_m, E_n$ and $E_{m+n}$.
\end{lemma}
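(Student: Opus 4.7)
The starting point is to interpret $\phi$ as the natural map induced by the derivative of the defining section $s_{m+n}$ on the ambient $\ncQuot$. Concretely, from the tangent bundle presentation \eqref{eqn:tangentBundleExactSequence}, the off-diagonal part of the middle term surjects onto the normal bundle $N = \sigma^*T_{A_{m+n}}/T_{A_m \times A_n}$, and since $s_{m+n}$ restricted to $A_m \times A_n$ (where the $X_i$ are block-diagonal) lies in $E_{m,n}$, the map $ds_{m+n}$ composed with projection to $E_{m,n}^\perp \cong \sigma^*E_{m+n}/E_{m,n}$ descends to a map $\bar\phi \colon N \to E_{m,n}^\perp$, and composing with the surjection $F_{m,n} \twoheadrightarrow N$ recovers $\phi$. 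Isotropy of $\phi(F_{m,n})$ then follows by differentiating $q(s_{m+n}, s_{m+n}) = 0$ twice and specialising to $p \in Z(s_{m+n}) = \Hilb^{m+n}(\C^4)$, which contains the image of $\sigma_{m,n}|_{\Hilb^m\times\Hilb^n|_{U_{m,n}}}$; the orthogonal splitting $\sigma^*E_{m+n}|_{A_m \times A_n} = E_{m,n} \oplus E_{m,n}^\perp$ ensures the isotropy descends to the quotient.

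For the fibrewise dimension $6mn$, I would combine two ingredients. First, the surjection $F_{m,n} \twoheadrightarrow N$ has fibre corank $2mn$ at $p \in \Hilb^m \times \Hilb^n|_{U_{m,n}}$: its kernel is the image of the gauge map $g \mapsto \sum_i e_i \otimes [X_i,g]$, whose injectivity at $p$ reduces to showing that no nonzero $g \in \sHom(\cV_m,\cV_n)|_p \oplus \sHom(\cV_n,\cV_m)|_p$ intertwines the two commuting tuples $(X_i^{(m)})$ and $(X_i^{(n)})$; this follows from Lemma \ref{thm:linearAlgebra}(2) applied to a linear combination $\sum_i a_i X_i^{(m)}$ and $\sum_i a_i X_i^{(n)}$ with disjoint spectra, which exists since $p \in U_{m,n}$. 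Second, $\bar\phi|_p \colon N|_p \to E_{m,n}^\perp|_p$ is injective: by the cartesian diagram \eqref{diag:sqrs1}, the map $\Hilb^m \times \Hilb^n|_{U_{m,n}} \to \Hilb^{m+n}(\C^4)$ is étale, so $d\sigma$ identifies Zariski tangent spaces; since $T_{\sigma(p)}^{\mathrm{Zar}}\Hilb^{m+n}(\C^4) = \ker ds_{m+n}|_{\sigma(p)}$, this kernel lies in $d\sigma(T_p(A_m \times A_n))$ and so projects to zero in $N|_p$. Combining these, $\dim \phi(F_{m,n})|_p = 8mn - 2mn = 6mn$; together with isotropy and $\rk E_{m,n}^\perp = 12mn$, this is maximal isotropic.

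The main subtlety is the positive orientation. The orientation of $E_{m+n}$ is induced by $\Lambda_{m+n}$, whose restriction to $A_m \times A_n$ splits orthogonally as $\Lambda_{m,n} \oplus \Lambda^\perp$ with $\Lambda^\perp := (\langle e_4\rangle \wedge \C^3) \otimes (\sHom(\cV_m,\cV_n) \oplus \sHom(\cV_n,\cV_m))$; so the induced quotient orientation on $E_{m,n}^\perp$ agrees with the orientation from $\Lambda^\perp$, which is positive by construction. Whether two maximal isotropic subbundles give the same orientation is measured by the mod-$2$ parity of the dimension of their fibrewise intersection, a quantity locally constant on the connected base $\Hilb^m \times \Hilb^n|_{U_{m,n}}$ (indeed simply connected, by Lemma \ref{thm:YIsSimplyConnected}). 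It therefore suffices to exhibit a single point at which $\phi(F_{m,n}) = \Lambda^\perp$. I would take the point $p$ given by $X_1 = X_2 = X_3 = 0$, $X_4 = \mathrm{diag}(1,\dots,m,-1,\dots,-n)$, and cyclic vector $u = (1,\dots,1)$: this lies in $\Hilb^m \times \Hilb^n|_{U_{m,n}}$ since the $X_i$ commute and the $X_4$-blocks have disjoint spectra in $\C$. At $p$, every coefficient $\phi_{ij}$ with $i,j \le 3$ vanishes because $X_1=X_2=X_3=0$, so $\phi(F_{m,n})|_p \subseteq \Lambda^\perp|_p$; by the $6mn$-dimension count we conclude $\phi(F_{m,n})|_p = \Lambda^\perp|_p$, and so the two subbundles coincide at $p$ and are globally positively oriented.
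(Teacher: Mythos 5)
Your proof is correct, but it takes a genuinely different route from the paper, so it is worth comparing the two.

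\textbf{Where you differ.} The paper proves the lemma by direct linear algebra, without invoking $ds_{m+n}$ at all: isotropy is checked by the explicit vanishing of a symmetrized trace, and the dimension bound $\geq 6mn$ is obtained by composing $\phi$ with the projection onto $e_1\otimes\C^3\otimes W$ and observing that the resulting map $(Y_2,Y_3,Y_4)\mapsto([Y_2,X_1],[Y_3,X_1],[Y_4,X_1])$ is injective by Lemma~\ref{thm:linearAlgebra}; combined with isotropy this forces equality. You instead identify $\phi$ from the start as the quotient of $ds_{m+n}$ along the normal bundle $N$ (a fact the paper only records afterwards, in Lemma~\ref{lem:N}, and there \emph{deduces} from the present lemma rather than using to prove it). This lets you get isotropy for free from the Hessian of $q(s_{m+n},s_{m+n})=0$, and it turns the dimension count into a kernel computation: the gauge directions account for exactly $2mn$ in $\ker(F_{m,n}\to N)$ (again via Lemma~\ref{thm:linearAlgebra}(2)), and the étale property of $\sigma_{m,n}$ at the Hilbert-scheme level supplies $\ker ds_{m+n}\subseteq\operatorname{im}d\sigma$. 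Your approach is more conceptual and makes the geometric meaning of $\phi$ transparent; the paper's is more self-contained and avoids the need to compare Zariski tangent spaces of the two Hilbert schemes. For the orientation both proofs use connectedness and evaluate at a point on the $x_4$-axis; your computation at the concrete point with $X_1=X_2=X_3=0$ is a valid instance of the paper's ``simple computation''.

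\textbf{One step that is stated too tersely.} The claim that $\bar\phi$ is injective does not follow \emph{only} from ``$\ker ds_{m+n}$ projects to zero in $N$''. In the snake-lemma picture for the map of short exact sequences $(T_{A_m\times A_n}\to T_{A_{m+n}}\to N)\to(E_{m,n}\to E_{m+n}\to E_{m,n}^\perp)$, knowing $\ker ds_{m,n}\xrightarrow{\sim}\ker ds_{m+n}$ yields only $\ker\bar\phi\hookrightarrow\operatorname{coker}(ds_{m,n})$, which is not a priori zero. What closes the gap is the additional observation (implicit in your isotropy paragraph) that at a point of $\Hilb^m\times\Hilb^n|_{U_{m,n}}$ the composite $ds_{m+n}\circ\psi\colon F_{m,n}\to E_{m+n}$ already lands in the off-diagonal summand $E_{m,n}^\perp$ (commutators of block-diagonal $X_i$ with off-diagonal $Y_j$ are off-diagonal), so that $\phi=ds_{m+n}\circ\psi$ with no projection needed. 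Then $\ker\phi=\psi^{-1}(\ker ds_{m+n})\subseteq\psi^{-1}(\operatorname{im}d\sigma)=\ker(F_{m,n}\to N)$, while the reverse inclusion follows because $ds_{m+n}(\operatorname{im}d\sigma)\subseteq E_{m,n}$ and $\phi$ has image in $E_{m,n}^\perp$. This gives $\ker\phi=\ker(F_{m,n}\to N)$, hence $\dim\phi(F_{m,n})|_p=8mn-2mn=6mn$, which is the statement you actually need. With that clarification your argument is complete.
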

\begin{proof}
The fact that $\phi(F_{m,n})|_p \subseteq \sigma_{m,n}^*(E_{m+n})/E_{m,n}|_p$ is an isotropic subspace follows from the fact that (since the $X_i$ commute),
\[
\sum_{\sigma \in S_4}(-1)^{|\sigma|}\tr(([X_{\sigma(1)}, Y_{\sigma(2)}] + [X_{\sigma(2)}, Y_{\sigma(1)}])([
X_{\sigma(3)},Y_{\sigma(4)}] + [X_{\sigma(4)}, Y_{\sigma(3)}])) = 0.
\]
We now show that $\phi(F_{m,n})|_p$ has the maximal dimension $6mn$.
Let us write $V_i$ for $\cV_i|_p$, and let $W = \Hom(V_m, V_n) \oplus \Hom(V_n, V_m)$.
The image of $\phi|_p$ is identified with the image of a map
\[
\C^4 \otimes W \to \Lambda^2 \C^4 \otimes W.
\]
Letting $\C^3 = \langle e_2,e_3,e_4\rangle$, it is enough to show that the composed homomorphism
\[
\C^3 \otimes W \to \C^4 \otimes W \to \Lambda^2 \C^4 \otimes W \to e_1 \otimes \C^3 \otimes W
\]
has rank $\ge 6mn$.
This homomorphism is explicitly described by
\begin{equation}
\label{eqn:explicitForm}
(Y_2,Y_3,Y_4) \mapsto ([Y_2,X_1], [Y_3,X_1], [Y_4,X_1]).
\end{equation}
After acting by $\mathrm{GL}(4,\C)$, we may assume, since $p \in O_{m,n}$, that the spectrum of $X_1$ acting on $V_m$ is disjoint from the spectrum of $X_1$ acting on $V_n$.
It then follows from Lemma \ref{thm:linearAlgebra} that \eqref{eqn:explicitForm} is injective, i.e.~has rank $6mn$.

We have shown that, on $\Hilb^{m}(\C^4) \times \Hilb^{n}(\C^4)|_{U_{m,n}}$, $\im(\phi)$ is an isotropic subbundle of maximal rank. 
It remains to see that it is positively oriented.
By connectedness of $\Hilb^{m}(\C^4) \times \Hilb^{n}(\C^4)|_{U_{m,n}}$, this can be checked at any given point, and we may choose a point $(Z_1,Z_2)$ contained in the line $\{x_1 = x_2 = x_3 = 0\} \subset \C^4$.
For our choice of $\Lambda$ in \eqref{defLambda}, here a simple computation shows that in fact $\im(\phi) = \Lambda_{m+n}/ \Lambda_{m,n}$. 
\end{proof}

\begin{lemma} \label{lem:N}
On $\Hilb^{m}(\C^4) \times \Hilb^{n}(\C^4)|_{U_{m,n}}$, the section $s_{m+n}$ of $E_{m+n}$ induces an inclusion of vector bundles.
\[
ds_{m+n} \colon N_{A_m \times A_n/A_{m+n}} \into \sigma_{m,n}^*(E_{m+n})/E_{m,n}.
\]
With respect to the orientations on $E_{m,n}$ and $E_{m+n}$ given by $\Lambda_{m,n}$ and $\Lambda_{m+n}$, the bundle $ds_{m+n}(N_{A_m \times A_n/A_{m+n}})$ is a positive maximal isotropic subbundle of the quadratic vector bundle $\sigma^*_{m,n}(E_{m+n})/E_{m,n}$.
\end{lemma}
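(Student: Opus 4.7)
The plan is to identify $N = N_{A_m \times A_n/A_{m+n}}$ as the cokernel of the off-diagonal infinitesimal gauge action, and then to verify by direct differentiation that $ds_{m+n}$ descends from $F_{m,n}$ to a bundle inclusion $N \hookrightarrow \sigma_{m,n}^*E_{m+n}/E_{m,n}$ whose image is a positive maximal isotropic subbundle, running parallel to the proof of Lemma~\ref{thm:inclusionOfMaximaIsotropic}. Setting $\cH_{\mathrm{off}} = \hom(\cV_m,\cV_n) \oplus \hom(\cV_n,\cV_m)$ so that $F_{m,n} = \C^4 \otimes \cH_{\mathrm{off}}$, I would apply the tangent bundle sequence \eqref{eqn:tangentBundleExactSequence} on $A_{m+n}$ and on $A_m \times A_n$. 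Since $T_{A_m\times A_n}$ already absorbs the full $\cV_{m+n}\otimes w^{-1}$ summand together with the block-diagonal part of $\C^4 \otimes \mathcal{E}{\it{nd}}(\cV_{m+n})$, the normal bundle is identified with the cokernel of
\[
\alpha \colon \cH_{\mathrm{off}} \to F_{m,n}, \qquad g \mapsto ([X_1,g],[X_2,g],[X_3,g],[X_4,g]).
\]
By Lemma~\ref{thm:linearAlgebra}, $\alpha$ is pointwise injective on $O_{m,n}$, so $N$ is locally free of rank $6mn$.

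Since $s_m$ and $s_n$ vanish on $\Hilb^m(\C^4) \times \Hilb^n(\C^4)|_{U_{m,n}}$, so does $s_{m+n}$ along the image in $A_{m+n}$, making $ds_{m+n}$ well-defined there. Differentiating the formula for $s_{m+n}$ gives
\[
ds_{m+n}(Y_1,\ldots,Y_4,u') = \sum_{1 \leq i < j \leq 4} (e_i \wedge e_j) \otimes ([Y_i, X_j] + [X_i, Y_j]).
\]
On block-diagonal directions from $T_{A_m\times A_n}$, this lies in $E_{m,n}$, so $ds_{m+n}$ induces a morphism $N \to \sigma_{m,n}^*E_{m+n}/E_{m,n}$. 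The Jacobi identity gives $[[X_i,g], X_j] + [X_i, [X_j, g]] = [[X_i,X_j],g]$, which vanishes on Hilbert scheme points, so $ds_{m+n}$ vanishes on $\alpha(\cH_{\mathrm{off}}) \subseteq F_{m,n}$ and therefore factors through $F_{m,n} \twoheadrightarrow N$.

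To conclude, the image of this induced map is isotropic by differentiating $q(s_{m+n},s_{m+n})=0$ at a zero of $s_{m+n}$. The rank argument from Lemma~\ref{thm:inclusionOfMaximaIsotropic} (restrict to $Y_1 = 0$ and apply Lemma~\ref{thm:linearAlgebra} to $X_1$, whose block spectra are separated on $O_{m,n}$) gives injectivity, so $ds_{m+n}(N)$ is a maximal isotropic subbundle of rank $6mn$. For positivity, by connectedness it suffices to check at a point $(Z_1, Z_2)$ supported on the line $\{x_1=x_2=x_3=0\}$, where $X_1=X_2=X_3=0$; the formula collapses to $\sum_{i=1}^3(e_i \wedge e_4)\otimes [Y_i, X_4]$, so the image is exactly $\Lambda_{m+n}/\Lambda_{m,n}$, which is positive by construction of the orientation on $\sigma_{m,n}^*E_{m+n}/E_{m,n}$. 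I expect the main bookkeeping to be the identification $N \cong F_{m,n}/\alpha(\cH_{\mathrm{off}})$; the remaining computations are short and mirror the proof of Lemma~\ref{thm:inclusionOfMaximaIsotropic}.
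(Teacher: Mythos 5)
Your proof is correct and follows essentially the same route as the paper: identify $N_{A_m\times A_n/A_{m+n}}$ with the cokernel $F_{m,n}/\alpha(\cH_{\mathrm{off}})$ via the snake lemma applied to the two tangent-bundle exact sequences, verify that $ds_{m+n}$ descends, and then defer to Lemma \ref{thm:inclusionOfMaximaIsotropic} for the rank, isotropy and positivity of the image. The only small imprecision is that isotropy comes from the \emph{second}-order term of $q(s_{m+n},s_{m+n})=0$ at a zero of $s_{m+n}$ (the first derivative is trivially zero there) together with the observation that the image lies in $\Lambda^2\C^4\otimes\cH_{\mathrm{off}}=E_{m,n}^\perp\cong E_{m+n}/E_{m,n}$, whereas the paper checks isotropy of $\phi(F_{m,n})$ by the explicit trace identity inside Lemma \ref{thm:inclusionOfMaximaIsotropic}.
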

\begin{proof}
We work on $\Hilb^{m}(\C^4) \times \Hilb^{n}(\C^4)|_{U_{m,n}}$, and suppress all pullbacks from the notation.
We first show that the homomorphism $ds_{m+n} \colon N_{A_m \times A_n/A_{m+n}} \to \sigma^*_{m,n}(E_{m+n})/E_{m,n}$ is injective as a map of vector bundles.
Since the sections $s_m,s_n$ and $s_{m+n}$ vanish on $U_{m,n}$, we have a commutative diagram
\begin{equation}
\label{eqn:commDiagT}
\begin{tikzcd}
    T_{A_m \times A_n} \arrow[r] \arrow[d, "ds_m \oplus ds_n"] & T_{A_{m + n}} \arrow[d, "ds_{m+n}"] \\
    E_m \oplus E_n \arrow[r] & E_{m+n}
\end{tikzcd}
\end{equation}
and taking cokernels gives the homomorphism 
\[
N_{A_m \times A_n/A_{m+n}} \to E_{m+n}/(E_m \oplus E_n).
\]
We have a short exact sequence
\[
0 \to \cEnd(\cV_{m+n}) \to \C^4 \otimes \cEnd(\cV_{m+n}) \to T_{A_{m+n}} \to 0.
\]
There is moreover a natural homomorphism $F_{m,n} \to \C^4 \otimes \cEnd(\cV_{m+n})$.
The homomorphism $\phi$ agrees with the composed homomorphism
\[
F_{m,n} \to \C^4 \otimes \cEnd(\cV_{m+n}) \to T_{A_{m+n}} \to N_{A_m \times A_n/A_{m+n}} \to E_{m+n}/(E_m \oplus E_n).
\]
By Lemma \ref{thm:inclusionOfMaximaIsotropic}, our claims follow.
\end{proof}

\begin{lemma} \label{lem:detN}
There is a $\TT'$-equivariant isomorphism $\det(N_{A_m \times A_n / A_{m+n}}) \cong \O$.
\end{lemma}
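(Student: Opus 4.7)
The plan is to compute $\det(N_{A_m \times A_n/A_{m+n}})$ as a $\mathbb{T}'$-equivariant class in $K$-theory, using the tangent bundle description of the non-commutative Hilbert schemes and the fact that $\sigma_{m,n}$ is unramified.

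Since $\sigma_{m,n} \colon A_m \times A_n \dashrightarrow A_{m+n}$ is unramified on $O_{m,n}$ (as shown in the preceding discussion), we have a short exact sequence of $\mathbb{T}'$-equivariant vector bundles
\[
0 \to T_{A_m \times A_n} \to \sigma_{m,n}^* T_{A_{m+n}} \to N_{A_m \times A_n/A_{m+n}} \to 0
\]
on $O_{m,n}$.
Taking determinants reduces the question to comparing $\det(T_{A_{m+n}})$ pulled back and $\det(T_{A_m \times A_n})$. Using the $\mathbb{T}'$-equivariant identity \eqref{eqn:TAinKtheory} for the tangent bundles, together with the fact that $\sigma_{m,n}^* \cV_{m+n} \cong \cV_m \boxplus \cV_n$ as $\mathbb{T}'$-equivariant sheaves (the tautological bundle of a point in the image splits as a direct sum over the two factors), one obtains the identity
\[
N_{A_m \times A_n/A_{m+n}} = (\CC^4 - 1) \cdot H \quad \text{in } K_0^{\mathbb{T}'}(O_{m,n}),
\]
where $H := \sHom(\cV_m,\cV_n) \oplus \sHom(\cV_n,\cV_m)$ has rank $2mn$.

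Taking determinants, we get
\[
\det(N_{A_m \times A_n/A_{m+n}}) = \det(\CC^4 \otimes H) \otimes \det(H)^{-1} = (t_1t_2t_3t_4)^{-2mn} \det(H)^4 \otimes \det(H)^{-1}.
\]
The factor $(t_1t_2t_3t_4)^{-2mn}$ is the trivial character of $T_t$ by the defining relation $t_1t_2t_3t_4=1$, so this reduces to $\det(H)^{3}$.
Finally, using $\det(\sHom(V,W)) \cong \det(W)^{\rk V} \otimes \det(V)^{-\rk W}$ for any two locally free sheaves, we compute
\[
\det(H) = \bigl( \det(\cV_n)^{\otimes m} \otimes \det(\cV_m)^{-\otimes n} \bigr) \otimes \bigl( \det(\cV_m)^{\otimes n} \otimes \det(\cV_n)^{-\otimes m} \bigr) \cong \cO
\]
as $\mathbb{T}'$-equivariant line bundles, and hence $\det(N_{A_m \times A_n/A_{m+n}}) \cong \cO$ as desired.

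The computation is essentially mechanical; the one place to be careful is to track that all identifications are $\mathbb{T}'$-equivariant, and in particular that the crucial cancellation $(t_1t_2t_3t_4)^{-2mn} = 1$ uses the defining constraint of $T_t$. Note that $T_y$ acts trivially on both $A_m \times A_n$ and $A_{m+n}$, so the $\widetilde{T}_y$-equivariant structure on $\det(N)$ is automatically trivial.
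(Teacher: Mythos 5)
Your proof is correct and follows the same strategy as the paper's: express $N$ as $T_{A_{m+n}} - (T_{A_m} \boxplus T_{A_n})$ in equivariant $K$-theory and use the $\TT'$-equivariant tangent bundle identity \eqref{eqn:TAinKtheory} to compute the determinant. The paper compresses the final step to "we immediately find $\det(N)\cong\O$," and you simply spell out the explicit cancellation (in fact $\det(H)\cong\O$ directly, so the cube is not even needed), including the role of $t_1t_2t_3t_4=1$ — all of which is accurate.
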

\begin{proof}
Let $N := N_{A_m \times A_n/A_{m+n}}$. In $K_{\TT'}^0(O_{m,n})$, we have the identity $$N = T_{A_{m+n}}|_{O_{m,n}} - (T_{A_m} \boxplus T_{A_n})|_{O_{m,n}}.$$ Since the determinant map factors through $K$-theory, using \eqref{eqn:TAinKtheory}, we immediately find $\det(N) \cong \O$ ($\TT'$-equivariantly). 
\end{proof}

We now have an unramified morphism $O_{m,n} \to A_{m+n}$ and quadratic vector bundles $E_{m,n}$, $E_{m+n}$ with maximal isotropic subbundles $\Lambda_{m,n}$, $\Lambda_{m+n}$ (and hence orientations) and compatible sections $(s_m,s_n)$, $s_{m+n}$. By Example \ref{ex:sqrtdetgivesspin}, using the square roots of $\det(\Lambda_{m+n})$, $\det(\Lambda_m)$, $\det(\Lambda_n)$ in Section \ref{sec:definv} (see \eqref{eqn:detLambda}), we thus have spin modules
\[
S_m \otimes S_n, \quad S_{m+n}
\]
for $C(E_{m,n}) \cong C(E_m) \, \widehat{\otimes} \, C(E_n)$ and $C(E_{m+n})$. It remains to verify the local structure described in \eqref{eqn:specialFormOfS}.

\begin{proposition} \label{prop:localstruccheck}
The isotropic section $s_{m+n}$ satisfies \eqref{eqn:specialFormOfS} in suitable local formal coordinates. As a consequence, we have an induced $\TT'$-equivariant isomorphism
\begin{align*}
\sigma_{m,n}^* (\cH(S_{m+n},s_{m+n})) &\cong \cH((S_m,s_m) \otimes (S_n,s_n))|_{O_{m,n}} \\
&\cong \cH(S_m,s_m) \otimes \cH(S_n,s_n) |_{O_{m,n}}.
\end{align*}
\end{proposition}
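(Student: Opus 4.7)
The plan is to verify Assumption \ref{assume:LocalForm} for the unramified morphism $\sigma_{m,n} \colon O_{m,n} \to A_{m+n}$ equipped with the oriented quadratic bundles $E_{m,n} \subseteq \sigma_{m,n}^* E_{m+n}$, the compatible isotropic sections $(s_m,s_n)$ and $s_{m+n}$, and the spin modules $S_m \otimes S_n$ for $C(E_{m,n})$ and $S_{m+n}$ for $C(E_{m+n})$. The first requirement of the assumption --- that the induced map $\phi \colon N_{A_m \times A_n/A_{m+n}} \to \sigma_{m,n}^*(E_{m+n})/E_{m,n}$ be an injection onto a positive maximal isotropic subbundle over $Z((s_m,s_n))$ --- is precisely what Lemmas \ref{thm:inclusionOfMaximaIsotropic} and \ref{lem:N} establish. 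The numerical match $d - d_0 = 6mn = n - n_0$ follows from the dimensions of $\ncQuot^n_r(\C^4)$ and the ranks of the relevant quadratic bundles. What remains is to exhibit local formal coordinates in which $s_{m+n}$ takes the shape \eqref{eqn:specialFormOfS}.

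Fix $p \in Z((s_m,s_n))|_{U_{m,n}}$, represented by $(X^P_i, u^P)$ and $(X^Q_i, u^Q)$. Using the quiver-quotient description of $A_{m+n}$, parametrize a formal neighborhood of $\sigma_{m,n}(p)$ by block matrices
\[
X_i = \begin{pmatrix} X_i^P + \delta X^P_i & Y_i \\ Z_i & X_i^Q + \delta X^Q_i \end{pmatrix}, \qquad u = \begin{pmatrix} u^P + \delta u^P \\ u^Q + \delta u^Q \end{pmatrix}.
\]
The block-diagonal deformations $(\delta X^{P,Q}, \delta u^{P,Q})$ play the role of the $x$-coordinates and the off-diagonal entries $Y_i, Z_i$ the role of the $y$-coordinates; the subvariety $A_m \times A_n$ is cut out by $y=0$. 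Over $A_m \times A_n$ we have $\cEnd(\cV_{m+n}) = \cEnd(\cV_m) \oplus \cEnd(\cV_n) \oplus \sHom(\cV_m,\cV_n) \oplus \sHom(\cV_n,\cV_m)$, and the trace pairing is block-diagonal (pairing each $\cEnd$ with itself and $\sHom(\cV_m,\cV_n)$ with $\sHom(\cV_n,\cV_m)$). Choosing dual bases on each summand yields a trivialization of $E_{m+n}$ near $p$ in which $q$ takes the standard form \eqref{eqn:standardquadraticform} and $E_{m,n}$ is identified with the first $2n_0$ coordinate factors.

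A direct calculation shows that the diagonal block of $[X_i,X_j]$ on $V_m$ equals
\[
[X^P_i + \delta X^P_i,\, X^P_j + \delta X^P_j] + Y_iZ_j - Y_jZ_i,
\]
with the analogous expression on $V_n$. The first summand is the $A_m$-component of $(s_m,s_n)$, which lies in the ideal $(x_1,\ldots,x_{d_0})$ since $(s_m,s_n)$ vanishes at $p$, while the second summand lies in $(y_1,\ldots,y_{d-d_0})^2$; this matches the condition on $g_i$. The off-diagonal block of $[X_i,X_j]$ is
\[
(X_i^P + \delta X_i^P)Y_j - Y_j(X_i^Q + \delta X_i^Q) - (X_j^P + \delta X_j^P)Y_i + Y_i(X_j^Q + \delta X_j^Q),
\]
with the mirror expression in the $Z_i$; both are linear combinations of the $Y_\bullet, Z_\bullet$ and so lie in $(y_1,\ldots,y_{d-d_0})$, matching the condition on $h_i$. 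This verifies \eqref{eqn:specialFormOfS} and completes the check of Assumption \ref{assume:LocalForm}.

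Lemma \ref{thm:comparisonHomomorphismExists} now supplies a $\mathbb T'$-equivariant isomorphism
\[
\sigma_{m,n}^* \cH(S_{m+n}, s_{m+n}) \cong \cH((S_m,s_m) \otimes (S_n,s_n)) \otimes (\det N)^{1/2}_{S'},
\]
where $S' = (S_m \otimes S_n)^\vee \otimes_{C(E_{m,n})} S_{m+n}|_{O_{m,n}}$ and $N = N_{A_m \times A_n/A_{m+n}}$. By Lemma \ref{lem:detN}, $\det N \cong \cO$ as a $\mathbb T'$-equivariant line bundle, so $(\det N)^{1/2}_{S'}$ is $\mathbb T'$-equivariant $2$-torsion in $\Pic^{\mathbb T'}\bigl((\Hilb^m(\C^4) \times \Hilb^n(\C^4))|_{U_{m,n}}\bigr)$. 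Since $\mathbb T'$ is connected and this Picard group is torsion-free by Proposition \ref{thm:torsionInPicardGroup}, we conclude $(\det N)^{1/2}_{S'} \cong \cO$ equivariantly, yielding the first stated isomorphism. The second isomorphism is the Künneth formula for the $\Z/2$-graded complex $(S_m \boxtimes S_n,\, s_m \boxtimes 1 + 1 \boxtimes s_n)$ on $A_m \times A_n$: the two differentials act on independent factors, with $S_m$ and $S_n$ locally free, so the cohomology of the total complex factors as the external tensor product of the cohomologies. The main technical content lies in the block-matrix calculation verifying \eqref{eqn:specialFormOfS}; everything else is a formal consequence of the setup already in place.
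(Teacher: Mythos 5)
Your outline follows the paper closely: verify Assumption \ref{assume:LocalForm}, apply Lemma \ref{thm:comparisonHomomorphismExists}, and trivialize $(\det N)^{1/2}_{S'}$ via Lemma \ref{lem:detN} and Proposition \ref{thm:torsionInPicardGroup}. The block-matrix computation of $[X_i,X_j]$ --- diagonal blocks in $(x) + (y)^2$, off-diagonal blocks in $(y)$ --- is also the paper's core calculation. But there is a genuine gap in the passage from $Z_{m+n}$ to $A_{m+n}$. Your ``$x$ plus $y$'' variables are coordinates on the prequotient $Z_{m+n}$, not on $A_{m+n} = Z_{m+n}/\GL(m+n,\CC)$, and the count is off: the off-diagonal $Y_i, Z_i$ give $8mn$ coordinates while $N_{A_m \times A_n/A_{m+n}}$ has rank $6mn$, and the block-diagonal deformations give $4(m^2+n^2) + (m+n)$ coordinates while $\dim(A_m \times A_n) = 3(m^2+n^2) + (m+n)$. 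The total excess $(m+n)^2$ is exactly $\dim \GL(m+n,\CC)$ and must be sliced away. Assumption \ref{assume:LocalForm} demands genuine formal local coordinates on $\widehat{\cO}_{A_{m+n},f(p)}$, and \eqref{eqn:specialFormOfS} is a statement about $s$ in \emph{those} coordinates; a generic slice need not preserve the form you computed upstairs, and you have not shown that it does.

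The paper resolves this in two deliberate steps. First, after acting by $\GL(4,\CC)$ it arranges that the spectra of $X_1^P$ and $X_1^Q$ are disjoint, and then restricts to the locus $\widetilde Z_{m+n}$ where $Y_1 = Z_1 = 0$; by Lemma \ref{thm:linearAlgebra} the off-block-diagonal directions in $\mathfrak{gl}_{m+n}$ are transversal to $\widetilde Z_{m+n}$ at the point, so this kills precisely the $2mn$ excess $y$-coordinates while leaving $T' \cap \mathfrak{gl}_{m+n} \subseteq \mathfrak{gl}_m \times \mathfrak{gl}_n$. Second, $m^2 + n^2$ generic linear conditions are imposed on the block-diagonal variables, eliminating the remaining orbit directions among the $x$'s. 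This specific choice of slice is what makes the $(x,y)$ split into honest coordinates on $A_{m+n}$, with $A_m \times A_n$ cut out by the $y$'s, and keeps your $(x)+(y)^2$ versus $(y)$ bookkeeping intact after the slice. Without the disjoint-spectra reduction and this two-step slicing, \eqref{eqn:specialFormOfS} has not been established for $A_{m+n}$, so the appeal to Lemma \ref{thm:comparisonHomomorphismExists} is not yet licensed. The rest of your argument (the $(\det N)^{1/2}$ trivialization via Proposition \ref{thm:torsionInPicardGroup} and the K\"unneth factoring of $\cH$) is correct and matches the paper.
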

\begin{proof}
	We may write $A_m$ as a GIT quotient $Z_m/\GL(m,\CC)$, where $Z_m$ is the open semistable locus of $\C^{4m^2 + m}$ denoted $U$ in Section \ref{sec:zeroloc}.
	The map $A_m \times A_n \dashrightarrow A_{m+n}$ arises from the natural map $Z_m \times Z_n \dashrightarrow Z_{m + n}$ along with the group inclusion $\GL(m,\CC) \times \GL(n, \CC) \to \GL(m+n,\CC)$.
	
	We fix a point $(P,Q) \in \Hilb^m(\C^4) \times \Hilb^n(\C^4)|_{U_{m,n}}$ at which we want to prove the claim.
    Say $(P,Q)$ is the image of $(\widetilde P, \widetilde Q) \in Z_m \times Z_n$, with  $\widetilde P = (\mathbf{A}_1, \ldots, \mathbf{A}_4,u)$ and $\widetilde Q = (\mathbf{D}_1, \ldots, \mathbf{D}_4,v)$.
	After acting by $\GL(4,\CC)$, we may assume that the spectra of $\mathbf{A}_1$ and $\mathbf{D}_1$ are disjoint.
	
    The map $Z_m \times Z_n \dashrightarrow Z_{m +n}$ sends $(\widetilde{P},\widetilde{Q})$ to 
	\[
	\left( \begin{pmatrix}
		\mathbf{A}_1 & 0 \\
		0 & \mathbf{D}_1
	\end{pmatrix}, \begin{pmatrix}
		\mathbf{A}_2 & 0 \\
		0 & \mathbf{D}_2
	\end{pmatrix}, \begin{pmatrix}
		\mathbf{A}_3 & 0 \\
		0 & \mathbf{D}_3
	\end{pmatrix}, \begin{pmatrix}
		\mathbf{A}_4 & 0 \\
		0 & \mathbf{D}_4
	\end{pmatrix},\begin{pmatrix}
		u \\
		v
	\end{pmatrix} \right).	\]
	Here $\mathbf{A}_i$ has size $m \times m$, $\mathbf{D}_i$ has size $n \times n$, $\C \langle \mathbf{A}_1, \ldots, \mathbf{A}_4 \rangle \cdot u = \C^m$, $\C \langle \mathbf{D}_1, \ldots, \mathbf{D}_4 \rangle \cdot v = \C^n$, and $\mathbf{A}_1, \mathbf{D}_1$ have disjoint spectra.
	We define $\widetilde{Z}_{m,n}$ as the locally closed subvariety of $Z_{m+n}$ of all such tuples satisfying these constraints.
	We next define $\widetilde Z_{m+n}$ to be the closed subvariety of $Z_{m+n}$ of tuples of the form
	\[
	\left( \begin{pmatrix}
		\mathbf{A}_1 & 0 \\
		0 & \mathbf{D}_1
	\end{pmatrix}, \begin{pmatrix}
		\mathbf{A}_2 & \mathbf{B}_2 \\
		\mathbf{C}_2 & \mathbf{D}_2
	\end{pmatrix}, \begin{pmatrix}
		\mathbf{A}_3 & \mathbf{B}_3 \\
		\mathbf{C}_3 & \mathbf{D}_3
	\end{pmatrix}, \begin{pmatrix}
		\mathbf{A}_4 & \mathbf{B}_4 \\
		\mathbf{C}_4 & \mathbf{D}_4
	\end{pmatrix} ,\begin{pmatrix}
		u \\
		v
	\end{pmatrix} \right).	 
	\]
	 The diagram
	\begin{equation}
    \label{eqn:ZTildeDiagram}
	\begin{tikzcd}
		\widetilde{Z}_{m,n} \arrow[hookrightarrow]{r} \arrow[d] & \widetilde Z_{m+n} \arrow[d]\\
		A_{m} \times A_n \arrow[dashed,r] & A_{m + n}
	\end{tikzcd}
	\end{equation}
	is commutative.
	We claim both vertical arrows are submersions locally near $(\widetilde{P},\widetilde{Q})$.
	The left hand morphism is locally a free group quotient, hence a submersion.
	For the right hand morphism, we may describe the tangent space of $A_{m+n}$ at a point by the cokernel in the short exact sequence
	\[
	0 \to \mathfrak{gl}_{m+n} \to \CC^4 \otimes \End(\CC^{m+n}) \oplus \C^{m+n} \to T \to 0.
	\]
	The vertical arrow maps the tangent space $T'$ of $\widetilde Z_{m+n}$ at a point $(\widetilde{P},\widetilde{Q})$ to the corresponding subspace of $\CC^4 \otimes \End(\CC^{m+n}) \oplus \C^{m+n}$, and the claim to be verified is that the composition of this to $T$ is surjective.
	
	We claim that $T' \cap \mathfrak{gl}_{m+n} \subseteq \mathfrak{gl}_m \times \mathfrak{gl}_n$.
	To see this, note the embedding of $\mathfrak{gl}_{m+n}$ in $\CC^4 \otimes \End(\CC^{m+n}) \oplus \C^{m+n}$ at a point $(X_1, \ldots, X_4,u)$ is given by commutators $g \mapsto ([g,X_1], \dots, [g,X_4],g u)$.
	Writing
	\[
	g = \begin{pmatrix} g_{11} & g_{12} \\ g_{21} & g_{22} \end{pmatrix}, \quad X_1 = \begin{pmatrix} \mathbf{A}_1 & 0 \\ 0 & \mathbf{D}_1 \end{pmatrix},
	\]
	the condition that $[g, X_1]$ is block diagonal means that $\begin{pmatrix} 0 & g_{12} \\ g_{21} & 0 \end{pmatrix}$ commutes with $X_1$.
	Since the spectra of $\mathbf{A}_1$ and $\mathbf{D}_1$ are assumed disjoint, this implies $g_{12}$ and $g_{21}$ are both 0 by Lemma \ref{thm:linearAlgebra}. 
	A dimension count now implies the surjectivity we want.
	
	Working around the point $(\widetilde{P},\widetilde{Q})  \in \widetilde{Z}_{m,n}$, imposing a general system of $m^2 + n^2$ linear equations on the $\mathbf{A}_i$ and $\mathbf{D}_i$, and $(u,v) \in \CC^{m+n}$, we get transversal slices of the vertical maps in \eqref{eqn:ZTildeDiagram}.
	We therefore obtain a formal local model of $A_m \times A_n \dashrightarrow A_{m +n}$ in this way.
	Letting the $x_i$ be the coordinates arising from the $\mathbf{A}_j$, $\mathbf{D}_j$, $u,v$ and the $y_i$ be the coordinates arising from the $\mathbf{B}_j$ and $\mathbf{C}_j$, we find that $A_{m} \times A_n$ is cut out by the vanishing of the $y_i$. We furthermore shift the coordinates $x_i$ so that the point defined by putting all $x_i$ and $y_i$ equal to zero corresponds to $(\widetilde{P},\widetilde{Q})$.
	
	On $\widetilde Z_{m + n}$, the bundle $E_{m + n}$ is identified with the trivial vector bundle with fibre $\Lambda^2 \CC^4 \otimes \End(\CC^{m+n})$, and similarly the bundles $E_{m}, E_n$ are identified with the trivial vector bundles with fibre $\Lambda^2 \CC^4 \otimes \End(\CC^m)$ and $\Lambda^2 \CC^4 \otimes \End(\CC^n)$ respectively.
	We let $E_{m,n}'$ correspond to the trivial vector bundle with fibre  $\Lambda^2 \CC^4 \otimes (\Hom(\CC^m, \CC^n) \oplus \Hom(\CC^n, \CC^m))$. 
    We then get a split exact sequence of quadratic vector bundles
	\[
	0 \to E_m \oplus E_n \to E_{m+n} \to E_{m,n}' \to 0.
	\]
	 	
	The canonical sections of $E_{m+n}$, $E_m$, and $E_n$ are quadratic in the $x_i$ and $y_i$, have no constant term, and have the following properties:
    \begin{itemize}
    \item the terms of the components of the section in $E_m \oplus E_n$ are of the form $x_i, x_ix_j$ or $y_iy_j$;
    \item the terms of the components of the section in $E_{m,n}'$ are of the form $x_i y_j$ or $y_i$;
    \item the degree 1 terms in the $y_i$ define an inclusion of the normal bundle $N_{\widetilde Z_{m,n}/\widetilde Z_{m+n}}$ in $E_{m,n}'$ (Lemma \ref{lem:N}).
    \end{itemize} 
	
This means that Assumption \ref{assume:LocalForm} holds.
We may therefore apply Lemma \ref{thm:comparisonHomomorphismExists}. 
By Lemma \ref{lem:detN} combined with Proposition \ref{thm:torsionInPicardGroup}, after restricting to $\Hilb^m(\C^4) \times \Hilb^n(\C^4)|_{U_{m,n}}$, any square root of $\det(N_{A_m \times A_n/A_{m+n}})$ is $\TT'$-equivariantly trivial, so we obtain a $\TT'$-equivariant isomorphism
\[
\sigma_{m,n}^* (\cH(S_{m+n},s_{m+n})) \cong \cH((S_m,s_m) \otimes (S_n,s_n))|_{O_{m,n}}.
\]
Finally, since the factors of $(S_m,s_m) \otimes (S_n,s_n)$ pull-back from $A_m$ and $A_n$, they depend on different variables and we have $\cH((S_m,s_m) \otimes (S_n,s_n)) \cong \cH(S_m,s_m) \otimes \cH(S_n,s_n)$ from which the result follows.
\end{proof}

We can now define our factorizable sequences.
\begin{lemma}
	\label{thm:compareSqrtK}
	Let $\F_n := \cH(R \nu_{n*} \cH(S_n,s_n))$ for all $n>0$.\footnote{To a bounded complex $\mathcal{C}\udot$, we associate the $\Z/2$-graded coherent sheaf $\cH(\mathcal{C}\udot) :=  H^{\mathrm{odd}}(\mathcal{C}\udot)[1] \oplus  H^{\mathrm{even}}(\mathcal{C}\udot)$ where $H^i(-)$ denotes the $i$th cohomology sheaf.} Then, for each $m,n >0$, we have a $\TT'$-equivariant isomorphism 
	\[
	\phi_{m,n} \colon \F_m \boxtimes \F_n|_{U_{m,n}} \cong \sigma_{m,n}^* \F_{m+n}.
	\]
\end{lemma}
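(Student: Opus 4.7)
My plan is to chain together three ingredients: Proposition \ref{prop:localstruccheck} (the local structure of the pulled-back spin complex), \'etale base change along the cartesian square in \eqref{diag:sqrs1}, and the K\"unneth formula for the product Hilbert--Chow morphism $\nu_{m,n} = \nu_m \times \nu_n$.

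First I would invoke Proposition \ref{prop:localstruccheck} to obtain a $\TT'$-equivariant isomorphism
$$
\sigma_{m,n}^* \cH(S_{m+n},s_{m+n}) \;\cong\; \cH(S_m,s_m)\boxtimes\cH(S_n,s_n)
$$
on $O_{m,n}$. Since both sides are supported on the closed subscheme $\Hilb^m(\C^4)\times\Hilb^n(\C^4)|_{U_{m,n}}$, they descend canonically to coherent sheaves there. Next I would push both sides forward under $R\nu_{m,n*}$. On the right, the K\"unneth formula for the proper product morphism $\nu_{m,n}=\nu_m\times\nu_n$ splits the direct image as a box product. On the left, \'etale base change in the middle/bottom cartesian square of \eqref{diag:sqrs1} identifies $R\nu_{m,n*}\sigma_{m,n}^*$ with $\sigma_{m,n}^* R\nu_{(m+n)*}$. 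Combining, I obtain a $\TT'$-equivariant isomorphism of complexes
$$
\sigma_{m,n}^* R\nu_{(m+n)*}\cH(S_{m+n},s_{m+n}) \;\cong\; R\nu_{m*}\cH(S_m,s_m)\boxtimes R\nu_{n*}\cH(S_n,s_n).
$$

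Finally I would pass to the $\Z/2$-graded cohomology sheaf $\cH$ of both sides. Since $\sigma_{m,n}$ is \'etale, $\sigma_{m,n}^*$ is exact and commutes with $\cH$, so the left hand side becomes $\sigma_{m,n}^*\F_{m+n}$. For the right hand side, the K\"unneth formula over the field $\C$ (where $\boxtimes$ has no higher Tors, since the projections from a product are flat) shows that $\cH$ of a box product equals the box product of the $\cH$'s, with the usual $\Z/2$-graded sign convention. This yields the desired $\TT'$-equivariant isomorphism $\phi_{m,n}\colon \F_m\boxtimes\F_n|_{U_{m,n}}\cong\sigma_{m,n}^*\F_{m+n}$.

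The main obstacle I anticipate is the bookkeeping needed to ensure that the various K\"unneth and base change isomorphisms are compatible with the $\Z/2$-grading used to define $\cH$, so that parities are tracked correctly through each step. However no deep technical difficulty arises: we work with coherent sheaves on smooth varieties over the field $\C$, the morphism $\nu_{m,n}$ is proper, and $\sigma_{m,n}$ is \'etale, so K\"unneth and base change both apply without any derived corrections.
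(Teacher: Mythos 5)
Your proposal is correct and follows essentially the same route as the paper's proof: start from Proposition \ref{prop:localstruccheck}, apply $R\nu_{m,n*}$, use flat base change across the cartesian square in \eqref{diag:sqrs1} together with the K\"unneth formula for derived pushforward, and finally apply $\cH(-)$ using flatness of $\sigma_{m,n}$ and the K\"unneth formula for cohomology sheaves.
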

\begin{proof}
	Pulling back the isomorphism of Proposition \ref{prop:localstruccheck} to $\Hilb^m(\C^4)\times \Hilb^n(\C^4)|_{U_{m,n}}$, applying $R \nu_{m,n*}(-)$, using the fact that the bottom square in \eqref{diag:sqrs1} is cartesian, $\sigma_{m,n}$ is \'etale so in particular flat, and applying the K\"unneth formula for derived push-forward, we obtain 
	\begin{align*}
	\sigma_{m,n}^* R \nu_{m+n*}  \cH(S_{m+n},s_{m+n}) &\cong R \nu_{m,n*}  (\cH(S_m,s_m) \otimes  \cH(S_n,s_n))|_{U_{m,n}} \\
	& \cong R \nu_{m*} \cH(S_m,s_m) \boxtimes^{L} R \nu_{n*} \cH(S_n,s_n)|_{U_{m,n}}.
	\end{align*}
	Applying $\cH(-)$, using again that $\sigma_{m,n}$ is flat, and applying the K\"unneth formula for cohomology sheaves, the result follows. 
\end{proof}

\begin{proposition} \label{prop:factor}
The following are factorizable sequences of $\mathbb T'$-equivariant $\Z/2$-graded coherent sheaves on $\Sym^\mdot(\C^4)$ 
\begin{align*}
\Big\{ \cH(R \nu_{n*}  ( \cH(S_n,s_n) ) \Big\}_{n=1}^{\infty}, \quad \Big\{ \cH(R \nu_{n*} (\Lambda^\mdot (\cV^\vee_n \otimes y))) \Big\}_{n=1}^{\infty}, \quad \Big\{ \cH(R \nu_{n*} (\cN^\sheaf_{1,n})) \Big\}_{n=1}^{\infty}.
\end{align*}
\end{proposition}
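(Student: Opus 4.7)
The plan is to apply Proposition \ref{prop:uptocst} to each of the three sequences, reducing the problem to constructing factorization isomorphisms and checking associativity and commutativity only up to non-zero scalars. In each case, $\F^{(i)}_1$ is supported on $\Hilb^1(\C^4) = \C^4 = \Sym^1(\C^4)$, so has support of dimension $\geq 1$, and the hypothesis on the support in Proposition \ref{prop:uptocst} is satisfied.

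\textbf{First sequence.} The factorization isomorphism $\phi^{(1)}_{m,n}$ for $\F^{(1)}_n := \cH(R\nu_{n*}\cH(S_n,s_n))$ is supplied directly by Lemma \ref{thm:compareSqrtK}. To verify associativity up to constants, I would apply Lemma \ref{lem:associativity} to the composed unramified morphism
\[
A_m \times A_n \times A_p \to A_{m+n} \times A_p \to A_{m+n+p}
\]
(restricted to the locus where all three subschemes have pairwise disjoint support), giving two \emph{a priori} different descriptions of $\cH(S_{m+n+p}, s_{m+n+p})$ on this open set. They differ only by the choice of square root of the determinant of a relative normal bundle; by Lemma \ref{lem:detN} that line bundle is $\TT'$-equivariantly trivial, and by Proposition \ref{thm:torsionInPicardGroup} any two such square roots differ by a sign. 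The same argument with Lemma \ref{lem:commutativity} settles the commutativity axiom up to a sign. Pushing forward along the Hilbert--Chow morphisms, using flat base change along the \'etale map $\sigma_{m,n}$ and K\"unneth, and finally taking cohomology, the needed compatibility descends to $\F^{(1)}_n$ up to scalars.

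\textbf{Second sequence.} On $U_{m,n}$ the union morphism gives a canonical $\TT'$-equivariant identification $\cV_{m+n}|_{U_{m,n}} \cong \cV_m \boxplus \cV_n$ (a 0-dimensional subscheme with disjoint support is the disjoint union of its connected components). This yields
\[
\Lambda^\mdot(\cV_{m+n}^\vee \otimes y)|_{U_{m,n}} \cong \Lambda^\mdot(\cV_m^\vee \otimes y) \boxtimes \Lambda^\mdot(\cV_n^\vee \otimes y),
\]
and applying $R\nu_{m,n*}$, flat base change along $\sigma_{m,n}$, and K\"unneth gives $\phi^{(2)}_{m,n}$. Associativity and commutativity here hold strictly, from the corresponding properties of exterior algebras on direct sums.

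\textbf{Third sequence.} For $r=1$, unpacking Section \ref{sec:definv} gives
\[
\cN^{\sheaf}_{1,n} \;=\; \cH(S_n,s_n) \otimes \Lambda^\mdot(\cV_n^\vee \otimes yw) \otimes y^{-n/2}\otimes \O_{A_n},
\]
where the $y^{-n/2}$ factor is a $\TT'$-equivariant trivial line bundle, supplied by \eqref{eqn:cancel} and the construction in Section \ref{sec:definv}. Tensoring the factorization data from the first two sequences (the second applied with $y$ replaced by $yw$) with the tautological identity $y^{-(m+n)/2} = y^{-m/2} \cdot y^{-n/2}$ produces $\phi^{(3)}_{m,n}$. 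Its associativity and commutativity (up to constants) follow from those of the tensor factors.

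The main obstacle is the bookkeeping for the first sequence: each application of Lemma \ref{lem:associativity} or \ref{lem:commutativity} introduces auxiliary square-root line bundles on the normal bundles of the various diagonal-like embeddings, and different compositions produce different such square roots. The triviality of $\TT'$-torsion in the Picard group, granted by Proposition \ref{thm:torsionInPicardGroup}, is exactly what is needed to identify these up to the sign ambiguity Proposition \ref{prop:uptocst} tolerates; after invoking part (b) of that proposition one may need to shift the $\Z/2$-grading, but the resulting (shifted) sequences are factorizable.
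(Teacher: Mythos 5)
Your proposal follows the paper's approach for constructing the factorization data and reducing to checking associativity and commutativity up to scalars via Proposition \ref{prop:uptocst}, and the bookkeeping with Lemma \ref{lem:detN} and Proposition \ref{thm:torsionInPicardGroup} is correct. But there is a real gap at the very end: you conclude by saying \emph{``one may need to shift the $\Z/2$-grading, but the resulting (shifted) sequences are factorizable,''} which does not establish the claim. The proposition asserts that the \emph{unshifted} sequences $\{\cH(R\nu_{n*}\cH(S_n,s_n))\}$ and $\{\cH(R\nu_{n*}(\cN^{\sheaf}_{1,n}))\}$ are factorizable, not the potentially shifted ones. Invoking Proposition \ref{prop:uptocst}(b) leaves the alternative $d_{1,1}=1$ (case A, no shift) versus $d_{1,1}=-1$ (case B, shift by $[n]$) entirely open, and this dichotomy matters downstream: a shift changes $\chi(\Sym^n(\C^4),\F_n)$ by $(-1)^n$, i.e.\ it is exactly a substitution $q \mapsto -q$ in the generating series, and Corollary \ref{Exp} and the limit argument in Section \ref{sec:limits} depend on which of the two holds.

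The paper closes this gap with a concrete argument that you have not supplied. Assuming case B for the third sequence, the series $\sum_n \chi(\Sym^n(\C^4), R\nu_{n*}\cN^{\sheaf}_{1,n}[n])q^n$ equals $\mathsf G_1(-q) = \mathsf Z_1^{\NP}(-q)$ by the localization results of Section \ref{sec:local} (Proposition \ref{signprop}), and an explicit computation modulo $q^3$ shows that the $q^2$-coefficient of the putative $G_1$ in Lemma \ref{prop:Oko} would be
\[
-[t_1t_2][t_1t_3][t_2t_3][y^2] - \frac{[t_1^2t_2^2][t_1^2t_3^2][t_2^2t_3^2][y^2]}{(1+t_1)(1+t_2)(1+t_3)(1+t_4)},
\]
which is not a Laurent polynomial, contradicting Lemma \ref{prop:Oko}. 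Hence case A holds. Without such a computation (or some other argument pinning down the sign), your proof only establishes ``factorizable up to a global grading shift'' and does not prove the stated proposition.

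Two minor remarks: (i) your unpacking of $\cN^{\sheaf}_{1,n}$ carries an extra $w$ in the exterior-power factor; the $w$-weight cancels against the framing part of $\det T_A$ and the correct statement, consistent with \eqref{eqn:cancel}, uses $\cV_n^\vee \otimes y$, not $yw$ — this is a notational slip but worth fixing. (ii) For the second sequence there is in fact no ambiguity at all (the isomorphisms are canonical and associative/commutative on the nose), so the sign issue concerns only the first and third sequences, and the third inherits whichever case the first satisfies.
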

\begin{proof}
Let us start with factorizability of the first sequence. Denote by $O_{m,n,p} \subset A_m \times A_n \times A_p$ the open locus representing triples $$((X_1, \ldots, X_4,u), (Y_1, \ldots, Y_4,v), (Z_1, \ldots, Z_4,w),$$ for which there exists an $(a_1, \ldots, a_4) \in \C^4 \setminus 0$ such that $\sum_i a_i X_i$, $\sum_i a_i Y_i$, $\sum_i a_i Z_i$ have mutually disjoint spectra. We then have a commutative diagram
\begin{equation} \label{diag:sqrs}
\begin{tikzcd} 
O_{m,n,p}  \arrow[r, " "] & A_{m+n+p} \\
\Hilb^{m}(\C^4) \times \Hilb^{n}(\C^4) \times \Hilb^p(\C^4) |_{U_{m,n,p}} \arrow[r, " "] \arrow[u, hook] \arrow[d, "\nu_{m,n,p}"] & \Hilb^{m+n+p}(\C^4) \arrow[u, hook] \arrow[d, "\nu_{m+n+p}"] \\
 U_{m,n,p} \arrow[r, "\sigma_{m,n,p}"] & \Sym^{m+n+p}(\C^4)
\end{tikzcd}
\end{equation}
in which the squares are cartesian, the top horizontal arrow is unramified and the bottom two horizontal arrows are \'etale. We apply Lemma \ref{lem:associativity} to the two ways of decomposing $\sigma_{m,n,p}$
\begin{displaymath}
\xymatrix
{
& O_{m+n,p} \ar[rd]^{\sigma_{m+n,p}} & \\
O_{m,n,p} \ar[ru]^{\sigma_{m,n} \times \mathrm{id}} \ar[rd]_{\mathrm{id} \times \sigma_{n,p}} \ar^{\sigma_{m,n,p}}[rr] & & A_{m+n+p} \\
& O_{m,n+p} \ar_{\sigma_{m,n+p}}[ru] &
}
\end{displaymath}
We then obtain a diagram (suppressing some obvious pull-backs to open subsets)
\begin{displaymath}
\label{eqn:actualCommutativity}
\xymatrix
{
& \sigma_{m,n}^* \cH(S_{m+n}) \otimes \cH(S_p) \ar[rd] & \\
\sigma_{m,n,p}^* \cH(S_{m+n+p}) \ar[ru] \ar[rd] & & \cH(S_m) \otimes \cH(S_n) \otimes \cH(S_p) \\
& \cH(S_m) \otimes \sigma_{n,p}^* \cH(S_{n+p}) \ar[ru] &
}
\end{displaymath}
however this diagram may \emph{not} commute. The reason for this is that, for each pair $(m,n) \in \Z_{>0}^2$, we pick a $\TT'$-equivariant isomorphism of a square root of $\det(N)$ with $\O$ at the end of the proof of Proposition \ref{prop:localstruccheck}. In particular, going through the previous diagram via the top two arrows requires picking $\TT'$-equivariant isomorphisms of square roots of $\det(N_{A_m \times A_n \times A_p/A_{m+n} \times A_p})$ and $\det(N_{A_{m+n} \times A_p / A_{m+n+p}})$ with $\O$, whereas going through the diagram via the bottom two arrows requires picking $\TT'$-equivariant isomorphisms of square roots of $\det(N_{A_m \times A_n \times A_p/A_{m} \times A_{n+p}})$ and $\det(N_{A_m \times A_{n+p} / A_{m+n+p}})$ with $\O$. Therefore, the diagram is only guaranteed to commute up to an element of 
\[
H^0_{\TT'}(\Hilb^{m}(\C^4) \times \Hilb^{n}(\C^4) \times \Hilb^p(\C^4) |_{U_{m,n,p}}, \O^*).
\]

The trivializations of $\det(N_\bullet)$ from Lemma \ref{lem:detN} can be made explicit using the exact sequence \eqref{eqn:tangentBundleExactSequence}.
They are compatible up to a global scalar,\footnote{The authors have not ruled out the possibility that this scalar is in fact 1.} in the sense that for all $m,n,p$, the composed homomorphism
\begin{align*}
\O = \O \otimes \O &\cong \det(N_{A_m \times A_n \times A_p/A_{m+n} \times A_p}) \otimes \det(N_{A_{m+n} \times A_p/A_{m+n+p}}) \\
&\cong \det(N_{A_m \times A_n \times A_p/A_{m+n+p}}) \cong \O
\end{align*}
is constant.

Now for each square root line bundle $(\det(N_\bullet))^{1/2}$, we may choose a trivialization such that $1 \otimes 1$ maps to $1$.
As this is done on a connected scheme, there are precisely two such trivializations, differing by a sign.
It follows that the trivializations of the square roots are compatible with the tensor products up to a global scalar, and so the diagram \eqref{eqn:actualCommutativity} commutes up to a scalar.
This is enough by Proposition \ref{prop:uptocst}. 

Similarly, applying Lemma \ref{lem:commutativity}, one verifies the commutativity property of Definition \ref{def:factorisableSequence}. 
Again, commutativity may only hold up to a non-zero constant due to our choices of trivialization of $\det(N_{A_m \times A_n/A_{m + n}})^{1/2}$, arguing as above.
Using Proposition \ref{prop:uptocst}, this constant equals either $1$ or $(-1)^{mn}$, implying that either (case A) $\{\cH(R \nu_{n*}  ( \cH(S_n,s_n))\}$ or (case B) $\{\cH(R \nu_{n*}  ( \cH(S_n,s_n))[n]\}$ is a factorizable sequence.

Leaving the question of whether case A or B holds aside for the moment, note that factorizability of the second sequence is straight-forward by using the fact that $\cV_{m+n}|_{O_{m,n}} \cong \cV_m \oplus \cV_n$ and therefore
\[
\Lambda^\mdot(\cV_{m+n}^\vee \otimes y)|_{O_{m,n}} \cong  \Lambda^\mdot(\cV_m^\vee \otimes y) \otimes \Lambda^\mdot(\cV_n^\vee \otimes y).
\]

Taking the tensor product of the isomorphisms on $\Hilb^m(\C^4) \times \Hilb^n(\C^4)|_{U_{m,n}}$ used to show factorizability of the first two sequences, we get factorizability of either (case A) $\{\cH(R \nu_{n*} (\cN^\sheaf_{1,n}))\}$ or (case B) $\{\cH(R \nu_{n*} (\cN^\sheaf_{1,n}))[n])\}$.
We now prove that case B cannot hold.
Assume for a contradiction that it did.
In Section \ref{sec:local}, we determine $\mathsf{G}_1$ by $\TT'$-localization and in particular, by Proposition \ref{signprop}, we find
\[
\sum_{n=0}^\infty \chi(\Sym^n(\C^4), R \nu_{n*} (\cN^\sheaf_{1,n})[n])q^n = \mathsf{G}_1(-q) = \mathsf{Z}_1^{\NP}(-q).
\]
Moreover, the methods of Section \ref{sec:local} readily determine this generating series explicitly modulo $q^3$. By Lemma \ref{prop:Oko}, we also have
\[
\mathsf{Z}_1^{\NP}(-q) = \Exp\left(\frac{G_1}{[t_1][t_2][t_3][t_4]}\right)
\]
for some $G_1 \in qK_0^{\mathbb T'}(\pt)[[q]]$.
But taking the plethystic logarithm of $\mathsf{Z}_1^{\NP}(-q)$ shows that the $q^2$-term of $G_1$ must then be
\[
-[t_1t_2][t_1t_3][t_2t_3][y^2] - \frac{[t_1^2t_2^2][t_1^2t_3^2][t_2^2t_3^2][y^2]}{(1+t_1)(1+t_2)(1+t_3)(1+t_4)},
\]
which is not a Laurent polynomial.
This gives a contradiction, and so case A holds, proving our claim.
\end{proof}

Recall that $\mathsf{G}_1 = \sum_n N_{1,n}^{\glob} q^n$.
\begin{corollary} \label{Exp}
There exists a unique $G_1 \in q K_0^{\T'}(\pt) [[q]]$ such that
\begin{align*}
\mathsf{G}_{1} = \mathrm{Exp}\Bigg(\frac{G_1}{[t_1][t_2][t_3][t_4]}\Bigg).
\end{align*}
\end{corollary}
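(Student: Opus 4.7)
The plan is to combine the factorizability established in Proposition \ref{prop:factor} with Okounkov's lemma (Lemma \ref{prop:Oko}) and then evaluate the resulting Euler characteristics on $\C^4$ by $K$-theoretic localization at the origin.

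First, Proposition \ref{prop:factor} supplies a factorizable sequence $\{\cH(R\nu_{n*} \cN^\sheaf_{1,n})\}_{n \geq 1}$ of $\T'$-equivariant $\Z/2$-graded coherent sheaves on $\Sym^\mdot(\C^4)$, so applying Lemma \ref{prop:Oko} with $X = \C^4$ and $T = \T'$ yields classes $\G_n \in K_0^{\T'}(\C^4)$ such that
$$
1 + \sum_{n \geq 1} \chi\bigl(\Sym^n(\C^4), \cH(R\nu_{n*} \cN^\sheaf_{1,n})\bigr)\, q^n = \mathrm{Exp}\Bigg(\sum_{n \geq 1} \chi(\C^4, \G_n)\, q^n\Bigg).
$$
Since pushforward and passage to the cohomology sheaf of a bounded complex preserve Euler characteristics, the coefficient of $q^n$ on the left equals $\chi(\Hilb^n(\C^4), \cN^\sheaf_{1,n}) = N^\glob_{1,n}$ (using Proposition \ref{Kcompare} and Definition \ref{defNgenus}), so the left-hand side is $\mathsf{G}_1$.

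Next, I would compute $\chi(\C^4, \G_n)$ by equivariant localization. The tori $T_w$ and $\widetilde T_y$ act trivially on $\C^4$, so $\T'$ acts only through $T_t$, and the $T_t$-fixed locus is the reduced point $\{0\}$ with cotangent weights $t_1, t_2, t_3, t_4$. Thomason's localization theorem then gives
$$
\chi(\C^4, \G_n) = \frac{\G_n|_0}{(1-t_1)(1-t_2)(1-t_3)(1-t_4)}.
$$
Using the identity $1 - t_i = -t_i^{1/2}[t_i]$ together with the Calabi--Yau relation $t_1 t_2 t_3 t_4 = 1$, the denominator simplifies to $[t_1][t_2][t_3][t_4]$. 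Setting $G_1 := \sum_{n \geq 1} \G_n|_0\, q^n$, which manifestly lies in $q K_0^{\T'}(\pt)[[q]]$ since each $\G_n|_0$ is a Laurent polynomial in the equivariant parameters, establishes the existence statement. Uniqueness is immediate: the plethystic exponential admits a formal inverse $\mathrm{Log}$, so any $G_1$ satisfying the identity must equal $[t_1][t_2][t_3][t_4] \cdot \mathrm{Log}(\mathsf{G}_1)$.

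The substantive obstacle has already been overcome in Proposition \ref{prop:factor}, where the factorization data was constructed via the spin module formalism; the present corollary is a nearly mechanical consequence. The only subtlety worth flagging is the Calabi--Yau cancellation that produces exactly $[t_1][t_2][t_3][t_4]$ in the denominator, which crucially depends on the fact that $T_t$ was defined as the subtorus of $(\C^*)^4$ preserving the volume form.
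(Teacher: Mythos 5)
Your proof is correct and follows exactly the paper's route: factorizability of $\{\cH(R\nu_{n*}\cN^{\mathrm{sheaf}}_{1,n})\}$ from Proposition~\ref{prop:factor}, Okounkov's Lemma~\ref{prop:Oko}, and $K$-theoretic localization at the origin of $\C^4$. You additionally spell out the Calabi--Yau cancellation turning $(1-t_1)(1-t_2)(1-t_3)(1-t_4)$ into $[t_1][t_2][t_3][t_4]$ and the uniqueness via plethystic logarithm, which the paper leaves implicit.
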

\begin{proof}
The coefficients of $\mathsf{G}_1$ are
\[
N^\glob_{1,n} = \chi(\Hilb^n(\C^4),\cN^\sheaf_{1,n}) = \chi(\Sym^n(\C^4), R \nu_{n*} (\cN^\sheaf_{1,n})), 
\]
and by Proposition \ref{prop:factor}, the sequence $\{\cH(R \nu_{n*} (\cN^\sheaf_{1,n}))\}$ is factorizable.
Hence, by Lemma \ref{prop:Oko}, there exist $\mathbb T'$-equivariant $K$-theory classes $\G_{n}$ on $\C^4$ such that
\[
\mathsf{G}_{1} = \mathrm{Exp}\Bigg(\sum_{n=1}^\infty \chi(\C^4, \G_ n) \, q^n\Bigg).
\]
Since $\C^4$ has $0$ as its only fixed point, the $K$-theoretic localization formula implies
$$
\G_n = \frac{\iota_* (\G_n|_0)}{(1-t_1)(1-t_2)(1-t_3)(1-t_4)},
$$
where $\G_n|_0 \in  K_0^{\T'}(\pt)$ and $\iota \colon \{0\} \hookrightarrow \C^4$ is the inclusion of the origin.
\end{proof}

\begin{remark}
Using the localization techniques discussed in Section \ref{sec:local}, one can compute $\chi(\Hilb^n(\C^4),\widehat{\O}^{\vir})$ for $n=1,2$. This reveals that 
$$
\sum_{n=0}^{\infty} \chi(\Hilb^n(\C^4),\widehat{\O}^{\vir}) \, q^n
$$
is \emph{not} of the form stated in Lemma \ref{prop:Oko}. This was observed by Arbesfeld. 
Indeed, in this section, we do \emph{not} take square roots of the line bundles $\det(T_{A_n})$, but only of the product $\det(T_{A_n}) \otimes  \det (\cV_n^\vee \otimes y)$. 
In particular, the  $K$-theory classes $R \nu_{n*} \widehat{\O}^{\vir}$ cannot come from a factorizable sequence.
On the other hand, by Proposition \ref{prop:factor}, factorizability \emph{does} apply to the classes $R \nu_{n*} (\widehat{\O}^{\vir} \otimes  \det(T_{A_n}|_{M_n} )^{\frac{1}{2}} )$.
\end{remark}

\section{Local} \label{sec:local}

In this section, we apply virtual localization (specifically, Proposition \ref{prop:nP}) to show that $\mathsf{G}_r = \mathsf{Z}_r^{\mathrm{NP}}$, i.e., the globally defined invariants on $\Quot_r^n(\C^4)$ from Section \ref{sec:definv} indeed give the weighted generating function of $r$-tuples of solid partitions introduced by Nekrasov--Piazzalunga. This section does not require the factorizability results from Section \ref{sec:factorforNek}.

\subsection{Localization for Quot schemes} \label{sec:vertex}

As before, consider $\Quot_r^n(\C^4)$ on which we have an action of $\T'$ (recall the definition of $\T'$ from Remark \ref{def:Tprime}). 
By \cite{Bif} and \cite[Lem.~3.6]{CK}, the $\T'$-fixed locus of $\Quot_r^n(\C^4)$ is scheme theoretically isomorphic to
$$
\bigsqcup_{n = n_1 + \cdots +n_r} \prod_{\alpha=1}^{r} \Hilb^{n_\alpha}(\C^4)^{(\C^*)^4},
$$
where we use the standard action of $(\C^*)^4$ on $\C^4$ lifted to $\Hilb^m(\C^4)$. Moreover, $\Hilb^{m}(\C^4)^{(\C^*)^4}$ is 0-dimensional reduced and identified with the set of monomial ideals of colength $m$, or equivalently solid partitions of size $m$. 
Therefore, we can index the points $P_{\vec{\pi}}$  of $\Quot_r^n(\C^4)^{\T'}$ by $r$-tuples of solid partitions $\vec{\pi} = (\pi_1, \ldots, \pi_r)$ with total size $n$.

Consider a $\T'$-fixed point
$$
P_{\vec{\pi}} =[\O_{\C^4}^{\oplus r} \twoheadrightarrow Q], \quad  Q = \bigoplus_{\alpha=1}^{r} \O_{Z_\alpha},
$$ 
where $Z_\alpha \subset \C^4$ corresponds to the solid partition $\pi_\alpha$. By the Oh--Thomas localization formula, as discussed in the Section \ref{sec:localfixisolated}, we are interested in 
$$
(T_{A_n} - \Lambda)|_{P_{\vec{\pi}}} \in K_0^{\T'}(\pt),
$$
where $A_n:=\ncQ_{r}^{n}(\C^4)$ denotes the ambient non-commutative Quot scheme. 
Recall our choice of $\Lambda$ \eqref{defLambda}.
For computational reasons, and for easy comparison to Nekrasov--Piazzalunga's work \cite{NP}, it will be more convenient to work with $(T_{A_n} - \Lambda^\vee)|_{P_{\vec{\pi}}}$.
Recall that $A_n$ was described as a quotient of a smooth space by a free action (Section \ref{sec:zeroloc}), from which we obtain a short exact sequence
$$
0 \rightarrow \End(V) \rightarrow \C^4 \otimes \End(V) \oplus \Hom(\C^r,V) \rightarrow T_{A_n}|_{P_{\vec{\pi}}} \rightarrow 0.
$$
Hence, in $K_0^{\T'}(\mathrm{pt})$, we have
\begin{align*}
T_{A_n} |_{P_{\vec{\pi}}} = (\C^4 - 1) \cdot \End(V) + \Hom(\C^r,V), \\  
\Lambda^\vee|_{P_{\vec{\pi}}} = (\langle e_4 \rangle \wedge \C^3)^\vee \cdot \End(V) \cong \Lambda^2 \C^3 \otimes \End(V),
\end{align*}
where
\begin{align*}
\C^4 &= t_1^{-1}+t_2^{-1}+t_3^{-1}+t_4^{-1},  \\
\C^r &= w_1 + \cdots + w_r, \\
V &= Z_1 w_1 + \cdots + Z_r w_r,  \\
\Lambda^2 \C^3 &= t_1^{-1}t_2^{-1} +t_1^{-1} t_3^{-1} + t_2^{-1} t_3^{-1}, \\
Z_\alpha &= \sum_{(i,j,k,l) \in \pi_\alpha} t_1^{i} t_2^j t_3^k t_4^l. 
\end{align*}
Therefore 
\begin{align*}
&(T_{A_n} - \Lambda^\vee)|_{P_{\vec{\pi}}} = \\
&\sum_{\alpha,\beta=1}^{r} \frac{w_\beta}{w_\alpha}\Big( Z_\beta - (1- t_1^{-1} - t_2^{-1} - t_3^{-1} - t_4^{-1} + t_1^{-1}t_2^{-1} + t_1^{-1}t_3^{-1} + t_2^{-1}t_3^{-1}) Z^*_\alpha Z_\beta \Big),
\end{align*}
where $(-)^\vee$ denotes the dual of a class in $K$-theory. 
We add to this the representation 
$$
- \hom(\cW,\cV)|_{P_{\vec{\pi}}}^\vee = \sum_{\alpha,\beta=1}^{r} \frac{w_\beta}{w_\alpha}\Big( - y_\beta Z_\alpha^\vee \Big).
$$
For the class of a line bundle $L = t^a w^b y^c \in K_0^{\T'}(\textrm{pt})$, we define
$$
[t^a w^b y^c] = t^{\frac{a}{2}} w^{\frac{b}{2}} y^{\frac{c}{2}} - t^{-\frac{a}{2}} w^{-\frac{b}{2}} y^{-\frac{c}{2}},
$$
where we use multi-index notation for $t=(t_1,t_2,t_3, t_4)$, $w = (w_1, \ldots, w_r)$, and $y = (y_1, \ldots, y_r)$. This operation extends to any class $V \in K_0^{\T'}(\textrm{pt})$ with $V^f \geq 0$ by
$$
[L_1+L_2] = [L_1][L_2], \quad [L_1-L_2] = \frac{[L_1]}{[L_2]}.
$$

\begin{definition} \label{def:NPweight}
Define
\begin{align*}
\mathsf{v}_{\vec{\pi}, \alpha\beta}^{\textrm{pre}} &= \frac{w_\beta}{w_\alpha}   \Big( Z_\beta  - (1-t_1^{-1})(1-t_2^{-1})(1-t_3^{-1}) Z^\vee_\alpha Z_\beta \Big), \quad \mathsf{v}_{\vec{\pi}}^{\textrm{pre}} = \sum_{\alpha,\beta=1}^{r} \mathsf{v}_{\vec{\pi},\alpha\beta}^{\textrm{pre}}, \\
\mathsf{v}_{\vec{\pi}, \alpha\beta} &= \mathsf{v}_{\vec{\pi}, \alpha\beta}^{\textrm{pre}} + \frac{w_\beta}{w_\alpha}   \Big( - y_\beta Z^\vee_\alpha  \Big), \quad \mathsf{v}_{\vec{\pi}} = \sum_{\alpha,\beta=1}^{r} \mathsf{v}_{\vec{\pi},\alpha\beta}.
\end{align*}
The weight assigned to $\vec{\pi}$ by Nekrasov--Piazzalunga \cite{NP} equals $[-\mathsf{v}_{\vec{\pi}}]$. 
\end{definition}

By \cite[Sect.~2.4.1]{NP}, $\mathsf{v}_{\vec{\pi}, \alpha\beta}^{\textrm{pre}}$ and $\mathsf{v}_{\vec{\pi}}$ have no fixed terms (with respect to the torus $\T'$). By the proposition below, this also implies 
$$
\vd P_{\vec{\pi}} = 0.
$$

\begin{proposition} \label{Ktheorycalc}
We have the following equality in $K_0^{\T'}(\mathrm{pt})$
\begin{align*}
\mathsf{v}_{\vec{\pi}}^{\mathrm{pre}} + (\mathsf{v}_{\vec{\pi}}^{\mathrm{pre}})^\vee = \Big((T_{A_n} - \Lambda^\vee) + (T_{A_n} - \Lambda^\vee)^\vee \Big) \Big|_{P_{\vec{\pi}}}.
\end{align*}
In particular, $(T_{A_n} - \Lambda^\vee)|_{P_{\vec{\pi}}}$ has no fixed term. Denoting the kernel of the quotient $P_{\vec{\pi}} =[\O_{\C^4}^{\oplus r} \twoheadrightarrow Q]$ by $\F$, we have
$$
\mathsf{v}_{\vec{\pi}}^{\mathrm{pre}} + (\mathsf{v}_{\vec{\pi}}^{\mathrm{pre}})^\vee = R\Hom(\O_{\C^4}^{\oplus r}, \O_{\C^4}^{\oplus r}) - R\Hom(\F,\F).
$$
Finally, we have
\begin{align*}
&\Big[ \Big( \hom(\cW,\cV)^\vee + \Lambda^\vee- T_{A_n} \Big) \Big|_{P_{\vec{\pi}}} \Big] = (-1)^{n + k_{\vec{\pi}}} [-\mathsf{v}_{\vec{\pi}}], \\
&k_{\vec{\pi}} = \sum_{\alpha=1}^{r} |\{(i,j,k,l) \in \pi_\alpha \, : \, l \neq \min(i,j,k) \}|.
\end{align*}
\end{proposition}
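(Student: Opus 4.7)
The plan is to prove the four statements in turn: (1) and (2) follow from a single algebraic identity, (3) from a derived $R\Hom$ computation, and (4), where the main obstacle lies, from a careful sign analysis at the character level.

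I would begin with the identity
\[
P + Q_3 = t_4^{-1} - t_4, \qquad P := \C^4 - 1 - \Lambda^2\C^3, \quad Q_3 := (1-t_1^{-1})(1-t_2^{-1})(1-t_3^{-1}),
\]
verified by direct expansion using $t_1t_2t_3t_4 = 1$; adding the dual gives $P + P^\vee + Q_3 + Q_3^\vee = 0$. Substituting the explicit formulas for $T_{A_n}|_{P_{\vec\pi}}$ and $\Lambda^\vee|_{P_{\vec\pi}}$ computed just before the proposition, and comparing with Definition~\ref{def:NPweight}, this yields (1) after rearrangement. Claim (2) is then immediate: $\mathsf{v}^{\mathrm{pre}}_{\vec\pi}$ has no $\T'$-fixed term by \cite[Sect.~2.4.1]{NP}, hence neither does $\mathsf{v}^{\mathrm{pre}} + (\mathsf{v}^{\mathrm{pre}})^\vee$, and by (1) neither does $(T_{A_n}-\Lambda^\vee)|_{P_{\vec\pi}} + \mathrm{dual}$; a class $V$ has no fixed part iff $V + V^\vee$ does.

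For (3), I would use $[\O_{\C^4}^{\oplus r}] = [\F] + [Q]$ to rewrite
\[
R\Hom(\O^r,\O^r) - R\Hom(\F,\F) = R\Hom(\O^r,Q) + R\Hom(Q,\O^r) - R\Hom(Q,Q),
\]
and compute each term. The first is $\sum_{\alpha,\beta}\frac{w_\beta}{w_\alpha}Z_\beta$ by direct sections; the second is $\sum_{\alpha,\beta}\frac{w_\beta}{w_\alpha}Z^*_\alpha$ via a free $\O_{\C^4}$-resolution of $\O_{Z_\alpha}$ and the cancellation $\prod(1-t_i^{-1})/\prod(1-t_i) = 1$ (using $t_1t_2t_3t_4=1$, equivalently the fact that $\omega_{\C^4}$ is equivariantly trivial); the third yields $\sum_{\alpha,\beta}\frac{w_\beta}{w_\alpha}Z^*_\alpha Z_\beta \prod_{i=1}^4(1-t_i^{-1})$. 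Invoking $\prod_{i=1}^4(1-t_i^{-1}) = Q_3 + Q_3^\vee$ (a reformulation of the identity from the previous paragraph) matches the total with $\mathsf{v}^{\mathrm{pre}} + (\mathsf{v}^{\mathrm{pre}})^\vee$ from (1).

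The last claim is the most delicate. A direct computation using (1) gives
\[
\bigl(\hom(\cW,\cV)^\vee + \Lambda^\vee - T_{A_n}\bigr)\bigl|_{P_{\vec\pi}} = -\mathsf{v}_{\vec\pi} + C,\qquad C := (t_4-t_4^{-1})\,\mathrm{End}(V)\bigl|_{P_{\vec\pi}},
\]
so by multiplicativity of $[\cdot]$ it suffices to prove $[C] = (-1)^{n+k_{\vec\pi}}$. Since $\mathrm{End}(V)$ is self-dual, $C$ is anti-self-dual and $[C]=\pm 1$. Decomposing the $\T'$-representation $\mathrm{End}(V) = c_1\cdot\mathbf{1} + \sum_{\{M,M^{-1}\}: M\neq M^{-1}} c_M(M+M^{-1})$, I check directly that $[(t_4-t_4^{-1})(M+M^{-1})] = +1$ for $M\notin\{t_4^{\pm 1}\}$ and $-1$ for $M = t_4^{\pm 1}$, while the trivial-character contribution is $(-1)^{c_1}$, giving $[C] = (-1)^{c_1+c_{t_4}}$. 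Since the $w_\alpha$ are independent characters of $\T'$, only the $\alpha=\beta$ summands $Z^*_\alpha Z_\alpha$ contribute to $c_1$ and $c_{t_4}$, so the problem reduces to a statement per solid partition. The involution $(b,b')\leftrightarrow(b',b)$ on $\pi_\alpha\times\pi_\alpha$ shows $c_1\equiv|\pi_\alpha|\pmod 2$, hence $c_1 \equiv n\pmod 2$ in total.

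The main obstacle is then to prove $c_{t_4}\equiv k_{\vec\pi}\pmod 2$, which I would establish by induction on $|\vec\pi|$. Adding an addable box $b$ to $\pi_\alpha$ changes $c_{t_4}$ by $\mu(\hat b-(1,1,1)) + \mu(\hat b+(1,1,1))\pmod 2$, where $\hat b=(b_1-b_4,b_2-b_4,b_3-b_4)\in\Z^3$ and $\mu(\hat u)$ counts boxes of $\pi_\alpha$ on the ``character diagonal'' $\{(\hat u_1+s,\hat u_2+s,\hat u_3+s,s):s\in\Z\}$; these diagonals arise from $t_1t_2t_3t_4=1$ and collapse multiple lattice points to the same $\T'$-character. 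The key parity lemma is
\[
\mu(\hat b - (1,1,1)) + \mu(\hat b + (1,1,1)) \equiv \mathbf{1}[b_4 \neq \min(b_1,b_2,b_3)] \pmod 2,
\]
which matches $\Delta k_{\pi_\alpha}$ precisely. It is proved by a case analysis on the sign of $\min(\hat b)$: since $b+(1,1,1,0)\notin\pi_\alpha$ (as $b\notin\pi_\alpha$) and $b-e_i\in\pi_\alpha$ for $b_i\geq 1$ (by addability), one pins down $s_{\max}$ and $s_{\min}$ on each of the two diagonals and computes $\mu$ exactly in each regime, yielding the required parity in every case. Combining all these steps, $[C] = (-1)^{n + k_{\vec\pi}}$, completing (4).
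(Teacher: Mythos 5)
Your argument is correct, and parts (1)--(3) run parallel to the paper's reasoning; your $R\Hom$ computation is spelled out from scratch rather than cited from \cite{MNOP1}, \cite{FMR}, \cite{CK}, but the identity $\prod_{i=1}^4(1-t_i^{-1}) = Q_3 + Q_3^\vee$ and the resulting match with $\mathsf{v}^{\mathrm{pre}} + (\mathsf{v}^{\mathrm{pre}})^\vee$ are the same. The reduction of claim (4) to a parity statement is likewise essentially the paper's: writing $C = (t_4 - t_4^{-1})\mathrm{End}(V)$, your formula $[C] = (-1)^{c_1 + c_{t_4}}$ (with $c_1$, $c_{t_4}$ the multiplicities of the trivial character and of $t_4$ in $\mathrm{End}(V)$) agrees with the paper's implicit $[C] = (-1)^{\dim(t_4^{-1}Q^\vee Q)^m}$, since $\dim(t_4^{-1}Q^\vee Q)^m = n^2 - c_{t_4}$ and $n^2 \equiv c_1 \pmod 2$ because $\mathrm{End}(V)$ is self-dual. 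Your claim $c_{t_4} \equiv k_{\vec\pi} \pmod 2$ is exactly the paper's Lemma \ref{induct}, as $c_{t_4}$ per summand is $\dim(t_1t_2t_3\,Z_\alpha^\vee Z_\alpha)^f$.

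Where you genuinely diverge is in proving that combinatorial lemma. The paper removes the lexicographically maximal box with respect to the order $x_4 > x_3 > x_2 > x_1$; lex-maximality forces every box on the two relevant diagonals to be coordinatewise $\leq$ the removed box, so by solidity one may assume $\pi$ is a rectangular block $\prod_i[0,m_i-1]$, where the count is a short explicit computation. You instead add an arbitrary addable box $b$ and determine the two diagonal counts $\mu(\hat b \pm (1,1,1))$ exactly, using $b + (0,0,0,1),\ b + (1,1,1,0) \notin \pi$ and $b - e_i \in \pi$ whenever $b_i \geq 1$. I checked the cases: with $m = \min(b_1,b_2,b_3,b_4)$ one gets $\mu(\hat b - (1,1,1)) = m + \mathbf{1}[\min\hat b > 0]$ and $\mu(\hat b + (1,1,1)) = m + \mathbf{1}[\min\hat b < 0]$, so the sum is $\equiv \mathbf{1}[\min\hat b \neq 0] = \mathbf{1}[b_4 \neq \min(b_1,b_2,b_3)]$ modulo $2$, as required. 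Both inductions are valid: the paper's reduction to a block is short once the lex-maximal removal is justified, while yours has the virtue of working for any addable box (hence along any filtration of $\pi$), at the cost of a more detailed case analysis. There is no gap.
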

\begin{proof}
The summand corresponding to $\alpha,\beta$ in $(T_{A_n} - \Lambda^\vee)|_{P_{\vec{\pi}}}$ differs from $\mathsf{v}_{\vec{\pi}, \alpha\beta}^{\textrm{pre}}$ by
\begin{equation} \label{diff}
\frac{w_\beta}{w_\alpha} (t_4^{-1} - t_4) Z_\alpha^\vee Z_\beta,
\end{equation}
from which the first equality follows. 

The class $R\Hom(\O_{\C^4}^{\oplus r}, \O_{\C^4}^{\oplus r}) - R\Hom(\F,\F)$ can be determined using a \v{C}ech calculation similar to \cite{MNOP1} and was worked out for $\Quot_r^n(\C^3)$ in \cite{FMR} and for $\Hilb^n(\C^4)$ in \cite{CK}. The result is
$$
\sum_{\alpha,\beta=1}^{r} \frac{w_\beta}{w_\alpha}   \Big( Z_\beta + \frac{Z^\vee_\alpha}{t_1t_2t_3t_4}  - (1-t_1^{-1})(1-t_2^{-1})(1-t_3^{-1})(1-t_4^{-1}) Z^\vee_\alpha Z_\beta \Big).
$$ 
The second equality of the proposition now follows from direct calculation. 

Let $Q = \sum_{\alpha} Z_\alpha w_\alpha$, then 
$$
\dim (t_4^{-1} Q^\vee Q)^m = n^2 - \dim (t_4^{-1} Q^\vee Q)^f = n^2 - \sum_{\alpha = 1}^{r} \dim (t_4^{-1} Z_\alpha^\vee Z_\alpha)^f,
$$
where $(-)^m$ denotes the moving part and $(-)^f$ denotes the fixed part. Thus, by \eqref{diff}, the final equality of the proposition follows from the following combinatorial lemma.
\end{proof}

\begin{lemma} \label{induct}
For any solid partition $\pi$ and corresponding 0-dimensional subscheme $Z$ we have
$$
\dim (t_1 t_2 t_3 Z^\vee Z)^f \equiv |\{(i,j,k,l) \in \pi \, : \, l \neq \min(i,j,k) \}| \pmod 2.
$$
\end{lemma}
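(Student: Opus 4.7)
The plan is to argue by induction on $|\pi|$, after a preliminary reformulation. For $y = (y_1, y_2, y_3) \in \Z^3$, define
$$
\pi_y = \{(a_1, a_2, a_3, a_4) \in \pi : a_i - a_4 = y_i \text{ for } i = 1, 2, 3\}
$$
and $m_y(\pi) = |\pi_y|$. Using $t_4 = (t_1 t_2 t_3)^{-1}$, we write $Z = \sum_y m_y(\pi)\, t^y$ with $t^y := t_1^{y_1} t_2^{y_2} t_3^{y_3}$, and a direct expansion gives
$$
\dim(t_1 t_2 t_3 Z^\vee Z)^f = \sum_{y \in \Z^3} m_y(\pi)\, m_{y - (1,1,1)}(\pi).
$$
Since $a_4 = \min(a_1, a_2, a_3)$ for $A \in \pi_y$ is equivalent to $\min_i y_i = 0$, the right-hand side of the lemma equals $\sum_{y\,:\,\min_i y_i \neq 0} m_y(\pi)$. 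The base case $\pi = \emptyset$ is trivial.

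For the inductive step, set $\pi' = \pi \cup \{B\}$ where $B = (b_1, b_2, b_3, b_4)$ is an addable corner (i.e.\ $B \notin \pi$ and $\pi \cup \{B\}$ is itself a solid partition), and let $y = (b_1 - b_4, b_2 - b_4, b_3 - b_4)$. Expanding $Z_{\pi'}^\vee Z_{\pi'} = Z_\pi^\vee Z_\pi + t^{-y} Z_\pi + t^y Z_\pi^\vee + 1$ and using $\dim(t_1 t_2 t_3)^f = 0$, the increment on the left-hand side works out to
$$
\dim(t_1 t_2 t_3 Z_{\pi'}^\vee Z_{\pi'})^f - \dim(t_1 t_2 t_3 Z_\pi^\vee Z_\pi)^f = m_{y - (1,1,1)}(\pi) + m_{y + (1,1,1)}(\pi),
$$
while the right-hand side increases by $\mathbf{1}[\min_i y_i \neq 0]$. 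Hence the inductive step reduces to the congruence
$$
m_{y - (1,1,1)}(\pi) + m_{y + (1,1,1)}(\pi) \equiv \mathbf{1}[\min_i y_i \neq 0] \pmod 2.
$$

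To verify it, we use two structural facts. First, by downward closedness, for any $z \in \Z^3$ the set $\pi_z$ is parametrized by an initial interval $[t_{\min}(z), t_{\max}(z)]$ of the diagonal ray $\{(z + t, t) \in \Z_{\geq 0}^4\}$, with $t_{\min}(z) = \max(0, -\min_i z_i)$. Second, $B$ being an addable corner implies that every $A \in \Z_{\geq 0}^4$ with $A \leq B$ componentwise and $A \neq B$ lies in $\pi$, while every $A \in \Z_{\geq 0}^4$ with $A \geq B$ is not in $\pi$. Combining these gives explicit formulas for $m_{y - (1,1,1)}(\pi)$ and $m_{y + (1,1,1)}(\pi)$ in terms of $b_4$ and $\beta := \min(b_1, b_2, b_3)$.

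The main obstacle is then a finite case analysis, organized by the signs of $\beta$ and $\mu := \beta - b_4 = \min_i y_i$. When $\beta \geq 1$ both diagonal rays lie inside $\Z_{\geq 0}^4$, and a short computation gives $m_{y - (1,1,1)}(\pi) + m_{y + (1,1,1)}(\pi) = 2 b_4 + 1$ when $\mu \geq 1$, $= 2 b_4$ when $\mu = 0$, and $= 2 b_4 + 2 \mu + 1$ when $\mu \leq -1$; in each case the parity is $\mathbf{1}[\mu \neq 0]$. The degenerate case $\beta = 0$ is more delicate: one of the two diagonal rays partly leaves the positive orthant and contributes $0$, and one checks separately that the surviving term equals $0$ if $b_4 = 0$ and $1$ if $b_4 \geq 1$---again matching the required parity. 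This closes the induction.
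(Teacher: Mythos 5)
The proposal is correct and takes essentially the same approach as the paper: induction on $|\pi|$ by removing/adding an extremal box $B$, reducing to the parity of a count of boxes on two diagonal rays through $B$, which by downward-closedness (and $B$ being addable) form initial intervals bounded by $B$. Your notation in terms of the displacement variable $y$ and the coefficients $m_y(\pi)$ is just a reformulation of the paper's count of pairs, and the paper's reduction to ``$\pi$ of the form $[0,A-1]\times\cdots\times[0,D-1]$'' is the same observation you make about $\pi_z$ being an interval on the diagonal ray; you merely carry out the final parity check more explicitly than the paper's ``easily follows.''
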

\begin{proof}
Writing $Z = \sum_{(i,j,k,l) \in \pi} t_1^{i-l} t_2^{j-l} t_3^{k-l}$, the desired dimension equals
$$
|\{((i,j,k,l),(i',j',k',l')) \in \pi \times \pi \, : \, i'-i = j' - j = k' - k = l' - l-1\}|.
$$
The lemma can now be proved by induction on $|\pi|$. We order the elements $(i,j,k,l) \in \pi$ of a solid partition $\pi$ using the lexicographic order $x_4 > x_3 > x_2 > x_1$. Let $\pi$ be a solid partition of size $n$. Then $\pi$ is obtained from a solid partition $\xi$ of size $n-1$ by adding a box $b$ with bigger order than any box in $\xi$. Suppose the coordinates of $b$ are $(i_0,j_0,k_0,l_0)$. By the induction hypothesis, it suffices to show that 
$$
|\{ (i,j,k,l) \in \xi \, : \, i_0 - i = j_0 - j = k_0 - k = l_0 - l \pm 1\}| \equiv \left\{ \begin{array}{cc} 1 \pmod 2 & \mathrm{if \, }  l_0 \neq \min(i_0,j_0,k_0) \\ 0 \pmod 2 & \mathrm{otherwise.} \end{array} \right.
$$
Since $b$ has maximal order, we only need to compare it to boxes $(i,j,k,l) \in \xi$ with $i \leq i_0$, $j \leq j_0$, $k \leq k_0$, and $l \leq l_0$. In other words, we may assume $\pi$ is of the form $[0,A-1] \times [0,B-1] \times [0,C-1] \times [0,D-1]$ in which case the above claim easily follows.
\end{proof}

We will now apply the Oh--Thomas localization formula as discussed in Section \ref{sec:localfixisolated} and Proposition \ref{prop:nP}
\begin{align}
\begin{split} \label{virloc2} 
\widehat{\O}_M^{\vir} &= \sum_{P \in M^T} (-1)^{n_P^\Lambda} \, \iota_{P*} \Bigg( \frac{1}{\widehat{\Lambda}^{\mdot} (\Omega_{A} - \Lambda^\vee)|_P } \Bigg), \\
n_P^\Lambda &\equiv  \dim  \mathrm{coker}(p_{\Lambda} \circ ds|_P)^f  \pmod 2,
\end{split}
\end{align}
where $s$ is the isotropic section in Section \ref{sec:zeroloc}.
Note that
$$
\widehat{\Lambda}^{\mdot} \Lambda^\vee|_{P}^m = (-1)^{\rk \Lambda|_{P}^m} \widehat{\Lambda}^{\mdot} \Lambda|_{P}^m.
$$
By Lemma \ref{lem:nnprime}, equation \eqref{virloc2} can be rewritten as (keeping the orientation on $(E,q)$ induced by $\Lambda$)
\begin{align}
\begin{split} \label{virloc3} 
\widehat{\O}_M^{\vir} &= (-1)^{\rk \Lambda} \sum_{P \in M^T} (-1)^{n_P^{\Lambda^\vee}} \,  \iota_{P*} \Bigg( \frac{1}{\widehat{\Lambda}^{\mdot} (\Omega_{A} - \Lambda)|_P } \Bigg), \\
n_P^{\Lambda^\vee} &\equiv  \dim  \mathrm{coker}(p_{\Lambda^\vee} \circ ds|_P)^f  \pmod 2.
\end{split}
\end{align}
We prefer using \eqref{virloc3} over \eqref{virloc2} in order to match existing conventions for the vertex formalism in the literature. Moreover, with our current choice of $\Lambda$ in \eqref{defLambda}, it is easier to calculate the parity of $n_{P}^{\Lambda^\vee}$ as opposed to $n_{P}^{\Lambda}$. The calculation of the parity of $n_{P}^{\Lambda^\vee}$ will occupy the next section. 
 
\begin{proposition} \label{globaltolocal}
We have
\begin{align*}
N_{r,n}^\glob = (-1)^{rn} \sum_{\vec{\pi}} (-1)^{n_{P_{\vec{\pi}}}^{\Lambda^\vee} + k_{\vec{\pi}}} [-\mathsf{v}_{\vec{\pi}}],
\end{align*}
where the sum is over all $r$-tuples $\vec{\pi} = (\pi_1, \ldots, \pi_r)$ of total size $n$.
\end{proposition}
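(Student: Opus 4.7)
The plan is to apply the Oh--Thomas $K$-theoretic virtual localization formula \eqref{virloc3} to $N_{r,n}^{\glob} = \chi(\Quot_r^n(\C^4), \widehat{\O}^{\vir} \otimes \widehat{\Lambda}^\mdot \hom(\cV,\cW))$ and match each fixed-point contribution with the summand $(-1)^{rn+k_{\vec\pi}+n_{P_{\vec\pi}}^{\Lambda^\vee}}[-\mathsf{v}_{\vec\pi}]$ on the right hand side.

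As recalled at the start of Section \ref{sec:vertex}, the $\TT'$-fixed locus of $\Quot_r^n(\C^4)$ consists of isolated reduced points $P_{\vec\pi}$ indexed by $r$-tuples $\vec\pi$ of solid partitions with $|\vec\pi|=n$, and Proposition \ref{Ktheorycalc} shows that $(T_{A_n} - \Lambda^\vee)|_{P_{\vec\pi}}$ has no fixed weights, so $\vd P_{\vec\pi} = 0$ and \eqref{virloc3} applies. Substituting \eqref{virloc3} into the definition of $N_{r,n}^\glob$ and using the projection formula for $\iota_{P_{\vec\pi}}$, the contribution of $P_{\vec\pi}$ is
\[
(-1)^{\rk \Lambda}(-1)^{n_{P_{\vec\pi}}^{\Lambda^\vee}}\,\widehat{\Lambda}^\mdot\bigl(\hom(\cV,\cW) + \Lambda - \Omega_{A_n}\bigr)\bigr|_{P_{\vec\pi}}.
\]

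The second step is to repackage the $\widehat{\Lambda}^\mdot$-expression using the elementary identity $\widehat{\Lambda}^\mdot V = [V^\vee]$, valid for any virtual $\TT'$-character $V$ without fixed weights (and immediate from $\widehat{\Lambda}^\mdot L = L^{-1/2}-L^{1/2} = -[L]$ on each line bundle). Combined with $\hom(\cV,\cW)^\vee = \hom(\cW,\cV)$, this rewrites the contribution as
\[
(-1)^{\rk \Lambda}(-1)^{n_{P_{\vec\pi}}^{\Lambda^\vee}}\,\bigl[\hom(\cW,\cV) + \Lambda^\vee - T_{A_n}\bigr]\bigr|_{P_{\vec\pi}}.
\]
Since $[V^\vee] = (-1)^{\rk V}[V]$, applying this to the $\hom(\cW,\cV)$-term alone yields $[\hom(\cW,\cV)]|_{P_{\vec\pi}} = (-1)^{rn}[\hom(\cW,\cV)^\vee]|_{P_{\vec\pi}}$, so the bracket becomes $(-1)^{rn}[\hom(\cW,\cV)^\vee + \Lambda^\vee - T_{A_n}]|_{P_{\vec\pi}}$, at which point Proposition \ref{Ktheorycalc} applies directly and produces $(-1)^{rn}(-1)^{n+k_{\vec\pi}}[-\mathsf{v}_{\vec\pi}]$.

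Finally, the sign count: $\rk \Lambda = 3n^2 \equiv n \pmod 2$, so $(-1)^{\rk \Lambda}(-1)^n = 1$, and the remaining factors combine to a total sign of $(-1)^{rn+k_{\vec\pi}+n_{P_{\vec\pi}}^{\Lambda^\vee}}$ at each $P_{\vec\pi}$. Factoring out $(-1)^{rn}$ and summing over all $\vec\pi$ with $|\vec\pi|=n$ gives the stated identity. The only genuine obstacle is the careful bookkeeping of duality signs in the successive applications of $[V^\vee] = (-1)^{\rk V}[V]$; no new geometric input beyond \eqref{virloc3}, Proposition \ref{Ktheorycalc}, and Remark \ref{dualVW} is required.
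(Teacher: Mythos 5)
Your argument is correct and follows essentially the same route as the paper: apply the localization formula \eqref{virloc3}, convert $\widehat{\Lambda}^\mdot$-factors to brackets $[\,\cdot\,]$, invoke Proposition \ref{Ktheorycalc}, and track the duality signs. The paper packages the $\hom$-dualization as Remark \ref{dualVW} on the $\widehat{\Lambda}^\mdot$-level before passing to brackets, whereas you dualize inside the bracket afterwards, but this is just a reordering of the same sign bookkeeping and lands on the identical total sign $(-1)^{rn + k_{\vec\pi} + n_{P_{\vec\pi}}^{\Lambda^\vee}}$.
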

\begin{proof}
By the virtual localization formula, Proposition \ref{Ktheorycalc} (and Remark \ref{dualVW}), the desired invariant equals 
\begin{align*}
N_{r,n}^\glob &= \sum_{\vec{\pi}} (-1)^{n^{\Lambda}_{P_{\vec{\pi}}}} \cdot \widehat{\Lambda}^{\mdot} \hom(\cV,\cW)|_{P_{\vec{\pi}}} \cdot \widehat{\Lambda}^{\mdot} (\Lambda^\vee - \Omega_{A_n})|_{P_{\vec{\pi}}} \\
&=(-1)^{(r-1)n} \sum_{\vec{\pi}} (-1)^{n^{\Lambda^\vee}_{P_{\vec{\pi}}}} \cdot \widehat{\Lambda}^{\mdot} \hom(\cW,\cV)|_{P_{\vec{\pi}}} \cdot \widehat{\Lambda}^{\mdot} (\Lambda - \Omega_{A_n})|_{P_{\vec{\pi}}},
\end{align*}
where we suppress equivariant push-forward to a point. For the class of a line bundle $L$ in $K_0^{\T'}(\mathrm{pt})$, we have $\ch(\widehat{\Lambda}^{\mdot} L^\vee) = [L]$. Since $\widehat{\Lambda}^{\mdot} (V_1 + V_2) = \widehat{\Lambda}^{\mdot} V_1 \cdot  \widehat{\Lambda}^{\mdot} V_2$, the above reduces to
$$
(-1)^{(r-1)n} \sum_{\vec{\pi}} (-1)^{n^{\Lambda^\vee}_{P_{\vec{\pi}}}}   \Big[ \Big( \hom(\cW,\cV)^\vee + \Lambda^\vee - T_{A_n} \Big) \Big|_{P_{\vec{\pi}}} \Big].
$$
The result now follows from Proposition \ref{Ktheorycalc}.
\end{proof}

\subsection{Sign rule}

As in the previous section, we consider a $\T'$-fixed point of $\Quot_r^n(\C^4)$
$$
P_{\vec{\pi}}=[\O_{\C^4}^{\oplus r} \twoheadrightarrow Q], \quad  Q = \bigoplus_{\alpha=1}^{r} \O_{Z_\alpha},
$$ 
where $Z_\alpha \subset \C^4$ corresponds to a solid partition $\pi_\alpha$. In terms of the ``standard model'' $M:=Z(s) \subset A$, $s \in \Gamma(A,E)$ discussed in the introduction and Section \ref{sec:global}, we want to determine the sign \eqref{virloc3}
$$
(-1)^{n^{\Lambda^\vee}_{P_{\vec{\pi}}}}, \quad n_P^{\Lambda^\vee} \equiv  \dim  \mathrm{coker}(p_{\Lambda^\vee} \circ ds|_P)^f  \pmod 2.
$$

Recall the definitions of $\mathsf{G}_r$ and  $\sfZ_r^{\NP}$ in \eqref{defG} and \eqref{NPZ}.

\begin{proposition} \label{signprop}
We have
$$
n^{\Lambda^\vee}_{P_{\vec{\pi}}} + k_{\vec{\pi}} \equiv \mu_{\vec{\pi}} \pmod 2, \quad \mu_{\vec{\pi}} := \sum_{\alpha=1}^{r}  |\{(i,i,i,j) \in \pi_\alpha \, : \, j > i\}|.
$$
In particular, $\mathsf{G}_r = \sfZ_r^{\NP}$.
\end{proposition}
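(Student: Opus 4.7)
The second claim $\mathsf{G}_r = \mathsf{Z}_r^{\NP}$ follows from the first via Proposition~\ref{globaltolocal} together with the definitions of the two generating series, so the real task is to establish the parity $n^{\Lambda^\vee}_{P_{\vec\pi}} + k_{\vec\pi} \equiv \mu_{\vec\pi} \pmod 2$. I would begin by reducing to $r = 1$: all three quantities split as sums over the framing index $\alpha$. For $\mu_{\vec\pi}$ and $k_{\vec\pi}$ this holds by definition; for $n^{\Lambda^\vee}_{P_{\vec\pi}}$ it follows because the $\mathbb{T}'$-fixed part of any representation with weight $w_\alpha^{-1}w_\beta\chi$ vanishes for $\alpha \neq \beta$, so both $\ker(p_{\Lambda^\vee}\circ ds|_P)^f$ and $\coker(p_{\Lambda^\vee}\circ ds|_P)^f$ decompose as direct sums indexed by $\alpha$.

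Write $\psi := p_{\Lambda^\vee} \circ ds|_P$ and assume $r = 1$. A direct inspection shows that $\psi$ annihilates the $e_4 \otimes \End(V)$ and $\Hom(\C^r, V)$ summands of $\mathcal{T}_A := \C^4 \otimes \End(V) \oplus \Hom(\C^r, V)$, and on the remaining factor $\C^3 \otimes \End(V)$ coincides with the Koszul differential $D_1$ for the commuting operators $[X_1,-], [X_2,-], [X_3,-]$. These operators endow $V$ with the structure of a module over $R_3 := \C[x_1, x_2, x_3]$, and the Koszul complex in question computes $\Ext^\bullet_{R_3}(V, V)$. Unraveling the quotient $T_A|_P = \mathcal{T}_A / \End(V)$ and using that $\End(V) \hookrightarrow \mathcal{T}_A$ is injective (from the free $\mathrm{GL}(V)$-action), one obtains in $K_0^{\mathbb{T}'}(\mathrm{pt})$
\[
[\ker \psi] \;=\; [V] + [\kappa_3 \End(V)] - [\Ext^0_{R_3}(V, V)] + [\Ext^1_{R_3}(V, V)],
\]
where $\kappa_3 = t_1 t_2 t_3 = t_4^{-1}$.

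Since $\vd P_{\vec\pi} = 0$ by Proposition~\ref{Ktheorycalc}, $\dim [\coker \psi]^f = \dim [\ker \psi]^f$, so $n^{\Lambda^\vee}_{P_{\vec\pi}} \equiv \dim[\ker\psi]^f \pmod 2$. Moreover Lemma~\ref{induct} gives $\dim[\kappa_3 \End(V)]^f \equiv k_\pi \pmod 2$, and $\dim V^f$ equals $D_\pi := |\{i : (i,i,i,i) \in \pi\}|$, since $t_1^a t_2^b t_3^c t_4^d$ is trivial only when $a = b = c = d$. The desired congruence therefore reduces to the combinatorial identity
\[
\mu_\pi \;\equiv\; D_\pi + \dim [\Ext^0_{R_3}(V,V)]^f + \dim [\Ext^1_{R_3}(V,V)]^f \pmod 2.
\]

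To establish this identity I would use the $x_4$-grading decomposition $V = \bigoplus_l \O_{\sigma_l} \cdot t_4^l$ of $V$ as an $R_3$-module, where $\sigma_l \subset \Z^3_{\geq 0}$ is the plane-partition slice $\{(i,j,k) : (i,j,k,l) \in \pi\}$. Then each $\Ext^i_{R_3}(V, V)$ is a double sum over slice pairs $(l, l')$, and on each summand equivariant Serre duality on $\C^3$ gives $\Ext^i(\O_{\sigma_l}, \O_{\sigma_{l'}}) \cong \Ext^{3-i}(\O_{\sigma_{l'}}, \O_{\sigma_l})^\vee \otimes \kappa_3^{-1}$. Combined with the Koszul Euler identity $\sum_i (-1)^i [\Ext^i_{R_3}(V,V)] = [\End(V)]\prod_{i=1}^3(1-t_i^{-1})$, this lets one expand $\dim [\Ext^0_{R_3}]^f + \dim [\Ext^1_{R_3}]^f$ as a sum over pairs of boxes in $\pi$. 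The main obstacle is then the combinatorial matching: one must show that a swap involution $(b_1, b_2) \leftrightarrow (b_2, b_1)$ on pairs cancels almost all contributions modulo $2$, with the uncancelled contributions running precisely over the diagonal boxes counted by $D_\pi$ and the boxes $(i,i,i,j) \in \pi$ with $j > i$ counted by $\mu_\pi$. This delicate case analysis refines the parity argument of Lemma~\ref{induct} and constitutes the bulk of the work.
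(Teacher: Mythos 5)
Your overall strategy is sound and takes a genuinely different route from the paper's, but the final step is only sketched and the sketch as stated does not quite close. Your setup — reducing to $r=1$, identifying $p_{\Lambda^\vee}\circ ds|_P$ with the Koszul differential on $\C^3\otimes\End(V)$, and passing from $\coker$ to $\ker$ via $\vd P_{\vec\pi}=0$ — is all correct. The K-theory bookkeeping $[\ker\psi]=[Z]+[\kappa_3\End(V)]-[\Ext^0]+[\Ext^1]$ is also right (your ``$[V]$'' should strictly be $[\Hom(\C^1,V)]=[Z]$, but the fixed-part dimension is the same, namely $D_\pi$). Combining with Lemma~\ref{induct} you correctly reduce the proposition to the parity identity
\[
\mu_\pi \equiv D_\pi + \dim[\Ext^0_{R_3}(V,V)]^f + \dim[\Ext^1_{R_3}(V,V)]^f \pmod 2.
\]
Where the paper works instead with $\coker(\theta_1)^f$ directly, using $\Ext^2=0$ on fixed parts and computing $\dim\Ext^3{}^f$; your argument is the ``kernel-side'' dual of this, using $\Ext^0$ and $\Ext^1$. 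Both approaches ultimately rest on the same Ext computation, which the paper isolates in the lemma immediately following Proposition~\ref{signprop}: it proves $\Ext^1(p_*\O_Z,p_*\O_Z)^f=\Ext^2(p_*\O_Z,p_*\O_Z)^f=0$, $\dim\Hom(p_*\O_Z,p_*\O_Z)^f = |\{(i,i,i,j)\in\pi:j\geq i\}| = D_\pi + \mu_\pi$, and $\dim\Ext^3(p_*\O_Z,p_*\O_Z)^f=\mu_\pi$. Plugging the $\Ext^0$ and $\Ext^1$ values into your reduced identity gives $D_\pi + (D_\pi+\mu_\pi) + 0 = \mu_\pi \pmod 2$, which is the claim.

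The gap in your proposal is precisely this last computation, which you acknowledge as ``the bulk of the work'' but do not carry out. Moreover, the tools you name — the Koszul Euler identity $\sum_i(-1)^i[\Ext^i]=[\End(V)]\prod_{i=1}^3(1-t_i^{-1})$ combined with Serre duality — do not directly produce $\dim[\Ext^0]^f+\dim[\Ext^1]^f$: Serre duality relates $\Ext^0$ to $\Ext^3\otimes\kappa_3^{\pm1}$ and $\Ext^1$ to $\Ext^2\otimes\kappa_3^{\pm1}$ (with a twist that changes the fixed part), so the Euler characteristic gives a mixed combination that does not isolate the quantity you need. The ``swap involution'' cancellation is gestured at but not specified. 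What actually works, and what the paper does, is to exploit the slice filtration $p_*\O_Z = \bigoplus_l \O_{Z_l}\otimes t_4^l$ together with an explicit resolution (a Taylor resolution of each $\O_{Z_l}$) to determine the fixed parts of each $\Ext^i$ one at a time, in particular establishing the vanishing $\Ext^1{}^f=\Ext^2{}^f=0$ that neither your Euler identity nor Serre duality can produce on its own. Your slice decomposition intuition is the right starting point, but you would need to supply a resolution-based argument (or an equivalent) rather than an involution on pairs of boxes.
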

\begin{proof}
For any $P = [\O_{\C^4}^{\oplus r} \twoheadrightarrow Q] \in A_n:=\ncQ_{r}^{n}(\C^4)$, we have a short exact sequence (Section \ref{sec:zeroloc})
$$
0 \rightarrow \End(V) \rightarrow \C^4 \otimes \End(V) \oplus \Hom(\C^r,V) \rightarrow T_{A_n}|_P \rightarrow 0.
$$
Consider the composition 
$$
\C^4 \otimes \End(V) \oplus \Hom(\C^r,V) \twoheadrightarrow T_{A_n}|_P \stackrel{ds}{\rightarrow} E|_P = \Lambda^2 \C^4 \otimes \End(V).
$$
Let $(\sum_i e_i \otimes f_i^P, u_1^P, \ldots, u_r^P)$ be a representative of $P \in A_n = U / \GL(V)$. Then the above composition is given by
$$
 \Big(\sum_{i=1}^{4} e_i \otimes f_i , u_1, \ldots, u_r \Big) \mapsto \sum_{i,j=1}^{4} e_i \wedge e_j \cdot (f^P_i \circ f_j - f_j \circ f^P_i).
$$
Recall that $\Lambda^\vee|_P  = (\langle e_4 \rangle \wedge \C^3)^\vee \otimes \End(V) \cong \Lambda^2 \C^3 \otimes \End(V)$, where $\C^3 = \langle e_1, e_2, e_3 \rangle \subset \C^4$, so the image of $T_{A_n}|_{P} \to  \Lambda^\vee|_{P}$ equals the image of 
\begin{align}
\begin{split} \label{impds}
\C^3 \otimes \End(V)  &\rightarrow  \Lambda^2 \C^3 \otimes \End(V) \\
 \sum_{i=1}^{3} e_i \otimes f_i  &\mapsto \sum_{i,j=1}^{3} e_i \wedge e_j \cdot (f^P_i \circ f_j - f_j \circ f^P_i).
\end{split}
\end{align}

Consider the projection $p \colon \C^4 \rightarrow \C^3$ onto the first three coordinates. Then the cohomology of $R \Hom(p_* Q, p_* Q)$ is computed by the cohomology of a ($\T'$-equivariant) complex of $\O_{\C^3}$-modules
$$
\End(V) \rightarrow \C^3 \otimes \End(V) \stackrel{\theta_1}{\rightarrow} \Lambda^2 \C^3 \otimes \End(V) \stackrel{\theta_2}{\rightarrow} \Lambda^3 \C^3 \otimes \End(V), 
$$
where the map $\theta_1$ coincides with \eqref{impds}. 

Let $\vec{\pi}$ be the $r$-tuple corresponding to $P = P_{\vec{\pi}}$. Furthermore, we write $Q = \bigoplus_\alpha \O_{Z_\alpha}$, where $Z_\alpha$ corresponds to a solid partition $\pi_\alpha$. Then
$$
\Ext^2(p_* Q, p_* Q)^f = \bigoplus_{\alpha=1}^{r} \Ext^2(p_* \O_{Z_\alpha}, p_* \O_{Z_\alpha})^f = 0,
$$
where the second equality follows from the lemma below. Therefore
\begin{align*}
\mathrm{coker}(\theta_1)^f \cong \Bigg( \frac{\Lambda^2 \C^3 \otimes \End(V)}{\ker(\theta_2)} \Bigg)^f \cong \mathrm{im}(\theta_2)^f,
\end{align*}
and
\begin{align*}
\dim \mathrm{im}(\theta_2)^f &= \dim (\Lambda^3 \C^3 \otimes \End(V))^f - \dim \Ext^3(p_* Q, p_* Q)^f \\
&=\sum_{\alpha=1}^{r} \Big\{ \dim (t_1^{-1}t_2^{-1}t_3^{-1} Z_\alpha^\vee Z_\alpha)^f - \dim \Ext^3(p_* \O_{Z_\alpha}, p_* \O_{Z_\alpha})^f \Big\}.
\end{align*}
By Lemma \ref{induct} and the lemma below, this equals
$$
\sum_{\alpha=1}^{r} (k_{\pi_\alpha} - \mu_{\pi_\alpha}) \pmod 2
$$
as desired. The proposition now follows from the definitions of $\mathsf{G}_r$ \eqref{defG}, $\mathsf{Z}_r^{\NP}$ \eqref{NPZ}, and Proposition \ref{globaltolocal}.
\end{proof}

\begin{lemma}
Denote by $p \colon \C^4 \rightarrow \C^3$ the projection onto the first three coordinates. For any $T_t$-fixed $0$-dimensional subscheme $Z \subset \C^4$ corresponding to a solid partition $\pi$ we have
\begin{align*}
&\dim \Hom(p_* \O_Z, p_* \O_Z)^f = |\{(i,i,i,j) \in \pi \, : \, j \geq i\}|, \\
&\dim \Ext^1(p_* \O_Z, p_* \O_Z)^f = \dim \Ext^2(p_* \O_Z, p_* \O_Z)^f  = 0, \\
&\dim \Ext^3(p_* \O_Z, p_* \O_Z)^f = |\{(i,i,i,j) \in \pi \, : \, j > i\}|. 
\end{align*}
\end{lemma}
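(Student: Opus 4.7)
The plan is to compute the $T_t$-fixed parts of $\Hom$ and $\Ext^3$ directly by enumerating $T_t$-equivariant $R_3$-module homomorphisms, and then to deduce $\Ext^1(p_*\O_Z,p_*\O_Z)^f = \Ext^2(p_*\O_Z,p_*\O_Z)^f = 0$ from the Euler-characteristic identity together with the vanishing of a Zariski tangent space at an isolated reduced fixed point of an auxiliary Quot scheme.

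The key structural observation is that $p_*\O_Z$ decomposes $T_t$-equivariantly as an $R_3 := \C[x_1,x_2,x_3]$-module
\[
p_*\O_Z \;\cong\; \bigoplus_{l\ge 0} \O_{Z_l}\otimes t_4^l,
\]
where $\pi_l := \{(i,j,k) : (i,j,k,l)\in\pi\}$ is the $l$-th slice of $\pi$ (itself a plane partition) and $Z_l\subset\C^3$ is its associated $0$-dimensional subscheme; this holds because the generators $x_4^l$ (indexed by $l$ with $(0,0,0,l)\in\pi$) have $R_3$-annihilator $I_{\pi_l}$ and sit in distinct $x_4$-degrees, so the submodules $R_3\cdot x_4^l$ intersect trivially. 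By bilinearity, a $T_t$-weight-$0$ $R_3$-endomorphism of $p_*\O_Z$ is determined by sending each generator $x_4^l$ to a scalar multiple of $x_1^{l'-l}x_2^{l'-l}x_3^{l'-l}x_4^{l'}$ for some $l'\ge l$ with $(l'-l,l'-l,l'-l,l')\in\pi$, and downward-closure of $\pi$ makes the annihilator constraint automatic. Substituting $i=l'-l,\ j=l'$ yields $\dim\Hom^f = |\{(i,i,i,j)\in\pi : j\ge i\}|$. For $\Ext^3^f$, local Serre duality on $\C^3$ in the form
\[
\Ext^3(\F,\G) \;\cong\; \Hom(\G,\F)^\vee \otimes \omega_{\C^3}
\]
(with $\omega_{\C^3}$ of $T_t$-character $t_1^{-1}t_2^{-1}t_3^{-1}$) identifies $\dim\Ext^3^f$ with $\dim\Hom(p_*\O_Z,p_*\O_Z)^{(-1,-1,-1)}$; the parallel enumeration, now with the image of $x_4^l$ a multiple of $x_1^{d-l-1}x_2^{d-l-1}x_3^{d-l-1}x_4^d$ for $d\ge l+1$ with $(d-l-1,d-l-1,d-l-1,d)\in\pi$, gives $|\{(i,i,i,j)\in\pi : j>i\}|$ after the substitution $i=d-l-1,\ j=d$.

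Finally, a direct character computation of $[q^\vee q \prod_{m=1}^3 (1-t_m^{-1})]^f$, with $q$ the $T_t$-character of $p_*\O_Z$, yields $\chi(R\Hom(p_*\O_Z,p_*\O_Z))^f = |\{i : (i,i,i,i)\in\pi\}|$, and combined with the formulas above, the Euler-characteristic identity forces $\dim\Ext^1^f = \dim\Ext^2^f$. To prove both vanish, I would realize $[p_*\O_Z]$ as a $T_t$-fixed point of the Quot scheme of length-$|\pi|$ quotients of $\bigoplus_{l:(0,0,0,l)\in\pi}\O_{\C^3}\otimes t_4^{-l}$ on $\C^3$. An argument paralleling the one recalled at the beginning of Section \ref{sec:vertex}, now on $\C^3$ with its natural $(\C^*)^3\cong T_t$-action, shows that the $T_t$-fixed locus is a finite set of reduced isolated points indexed by tuples of plane partitions slicing $\pi$. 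Hence the $T_t$-fixed part of the Zariski tangent space $\Hom(I,p_*\O_Z)$ at our point vanishes, and the exact sequence $\Hom(\O^{\oplus r},p_*\O_Z)\to\Hom(I,p_*\O_Z)\to\Ext^1(p_*\O_Z,p_*\O_Z)\to 0$ gives $\Ext^1^f = 0$, whence $\Ext^2^f = 0$ via the Euler identity. The main obstacle will be verifying the scheme-theoretic isolation of our point in the framed Quot; an alternative, more self-contained route is to directly analyze, summand by summand in the decomposition above, the weight-$(l'-l)(1,1,1)$ part of an equivariant free resolution of $\O_{Z_l}$ and show that the monomials produced by its generators and syzygies never realize the requisite diagonal weight in $\Ext^2(\O_{Z_l},\O_{Z_{l'}})$.
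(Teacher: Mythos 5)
Your decomposition $p_*\O_Z \cong \bigoplus_l \O_{Z_l} \otimes t_4^l$ and the direct weight enumerations for the fixed parts of $\Hom$ and $\Ext^3$ are sound and essentially reproduce the paper's building-block computation in those two degrees (modulo a small sign slip: the framing module you want is $\bigoplus_l \O_{\C^3}\otimes t_4^{l}$, not $t_4^{-l}$, so that the $l$-th generator can map onto the class of $x_4^l$).

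Where your proposal has a genuine gap is the vanishing of the fixed parts of $\Ext^1$ and $\Ext^2$. Your primary route is circular. The framing weights $(t_1t_2t_3)^l$ lie in the character lattice of $\C[x_1,x_2,x_3]$ itself, so the Bifet / Cao--Kool argument for isolated reduced fixed loci of Quot schemes --- which relies on an \emph{independent} framing torus $T_w$ --- does not apply here. Concretely, with $F=\bigoplus_l\O_{\C^3}\otimes t_4^l$ and $I=\ker(F\twoheadrightarrow p_*\O_Z)$, a short weight count shows $\dim\Hom(F,p_*\O_Z)^f = |\{(i,i,i,j)\in\pi : j\ge i\}| = \dim\Hom(p_*\O_Z,p_*\O_Z)^f$, so the exact sequence gives $\dim\Hom(I,p_*\O_Z)^f = \dim\Ext^1(p_*\O_Z,p_*\O_Z)^f$ on the nose: ``scheme-theoretic isolation of the fixed point'' is exactly the claim you are trying to prove, not a usable input. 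Your alternative resolution-based sketch points in the right direction but omits the key ingredient. The paper uses the Taylor resolution $0\to R_t\to\cdots\to R_1\to\O_{\C^3}\to\O_{Z_l}\to 0$, whose terms $R_i$ for $i\ge 1$ are sums of line bundles $\O_{\C^3}\otimes t_1^{a_1}t_2^{a_2}t_3^{a_3}$ with $(a_1,a_2,a_3)\in\Z_{\ge 0}^3\setminus\pi_l$. For $l\le l'$, the fixed part of $\Hom(\O_{\C^3}\otimes t_1^{a_1}t_2^{a_2}t_3^{a_3}, \O_{Z_{l'}}\otimes(t_1t_2t_3)^{l-l'})$ is the weight $(a_1+l'-l,\,a_2+l'-l,\,a_3+l'-l)$ piece of $\O_{Z_{l'}}$, which vanishes because that exponent vector lies in $\Z_{\ge 0}^3\setminus\pi_l\subseteq\Z_{\ge 0}^3\setminus\pi_{l'}$; so the fixed part of $\Hom(R_i,-)$ is already zero at the chain level for all $i\ge 1$, giving vanishing of the fixed part of $\Ext^k$ for all $k\ge1$, and the case $l>l'$ follows by Serre duality. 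Absent a chain-level vanishing of this kind, your Euler-characteristic identity only gives that the fixed parts of $\Ext^1$ and $\Ext^2$ have equal dimensions, not that both are zero.
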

\begin{proof}
Let $Z \subset Z' \subset \C^3$ be $T_t$-fixed 0-dimensional subschemes corresponding to \emph{plane} partitions $\pi,\pi'$ and let $(w_1,w_2,w_3) \in \Z^3$.\footnote{Recall that the torus $T_t \subset (\C^*)^4$ is defined by the relation $t_1t_2t_3t_4=1$. Projecting onto the first three coordinates $T_t \cong (\C^*)^3$, which acts in the standard way on $\C^3$.} We first establish the following basic building blocks
\begin{align*}
\Hom(\O_{Z'}, \O_Z \otimes t_1^{-w_1}t_2^{-w_2}t_3^{-w_3})^f &= \C, \quad \mathrm{if \ } (w_1,w_2,w_3) \in \pi \\
\Hom(\O_{Z'}, \O_Z \otimes t_1^{-w_1}t_2^{-w_2}t_3^{-w_3})^f &= 0, \quad \mathrm{if \ } (w_1,w_2,w_3) \notin \pi \\
\Ext^k(\O_{Z'}, \O_Z \otimes t_1^{-w_1}t_2^{-w_2}t_3^{-w_3})^f &= 0, \quad \mathrm{if \ } w_1,w_2,w_3 \geq 0, \, k>0 \\
\Ext^3(\O_{Z}, \O_{Z'} \otimes t_1^{-w_1}t_2^{-w_2}t_3^{-w_3})^f &= \C, \quad \mathrm{if \ } (-w_1-1,-w_2-1,-w_3-1) \in \pi \\ 
\Ext^3(\O_{Z}, \O_{Z'} \otimes t_1^{-w_1}t_2^{-w_2}t_3^{-w_3})^f &= 0, \quad \mathrm{if \ } (-w_1-1,-w_2-1,-w_3-1) \notin \pi \\ 
\Ext^k(\O_{Z}, \O_{Z'} \otimes t_1^{-w_1}t_2^{-w_2}t_3^{-w_3})^f &= 0, \quad \mathrm{if \ } w_1,w_2,w_3 \leq -1, \, k<3.
\end{align*}
The last three statements follow from the first three by using $(\C^*)^3$-equivariant Serre duality and $K_{\C^3} = \O_{\C^3} \otimes t_1t_2t_3$. For the first two statements, we use the short exact sequence
$0 \to I_{Z'} \to \O_{\C^3} \to \O_{Z'} \to 0$, which gives an exact sequence
\begin{align*}
0 \to \Hom(\O_{Z'},\O_Z  \otimes t_1^{-w_1}t_2^{-w_2}t_3^{-w_3}) &\to \Hom(\O_{\C^3},\O_Z  \otimes t_1^{-w_1}t_2^{-w_2}t_3^{-w_3}) \\
&\to \Hom(I_{Z'},\O_Z  \otimes t_1^{-w_1}t_2^{-w_2}t_3^{-w_3}).
\end{align*}
Since $Z \subset Z'$, the third map is zero and we obtain
$$
\Hom(\O_{Z'},\O_Z  \otimes t_1^{-w_1}t_2^{-w_2}t_3^{-w_3}) \cong \Hom(\O_{\C^3},\O_Z  \otimes t_1^{-w_1}t_2^{-w_2}t_3^{-w_3}),
$$
which has 1-dimensional fixed part if and only if $(w_1,w_2,w_3) \in \pi$ and 0-dimensional fixed part otherwise. For the third statement, we use the Taylor resolution \cite{Tay}
$$
0 \to R_t \to \cdots \to R_1 \to \O_{\C^3} \to \O_{Z'} \to 0,
$$
which is a $(\C^{*})^3$-equivariant resolution where each $R_i$ is a direct sum of $(\C^{*})^3$-equivariant line bundles $\O_{\C^3} \otimes t_1^{a_1}t_2^{a_2}t_3^{a_3}$ with $(a_1,a_2,a_3) \in \Z_{\geq 0}^3 \setminus \pi'$. For any such line bundle and $w_1,w_2,w_3 \geq 0$, we have 
$$
\Hom(\O_{\C^3} \otimes t_1^{a_1}t_2^{a_2}t_3^{a_3}, \O_Z \otimes t_1^{-w_1}t_2^{-w_2}t_3^{-w_3})^f =0,
$$
because $(a_1+w_1,a_2+w_2,a_3+w_3) \in \Z_{\geq 0}^3 \setminus \pi' \subset \Z_{\geq 0}^3 \setminus \pi$. This establishes the third statement.

To prove the lemma, we take $Z \subset \C^4$ a $T_t$-fixed $0$-dimensional subscheme corresponding to a \emph{solid} partition $\pi$. Then 
$$
p_* \O_Z = \bigoplus_{i \geq 0} \O_{Z_i} \otimes t_4^{i} = \bigoplus_{i \geq 0} \O_{Z_i} \otimes (t_1t_2t_3)^{-i},
$$
where $Z_0 \supset Z_1 \supset \cdots$ is a finite flag of $(\C^*)^3$-invariant 0-dimensional subschemes of $\C^3$. We denote the corresponding plane partitions by $\pi_0 \supset \pi_1 \supset \cdots$. For the first statement of the lemma, we consider
$$
\Hom(\O_{Z_i} \otimes (t_1t_2t_3)^{-i}, \O_{Z_j} \otimes (t_1t_2t_3)^{-j})^f.
$$
When $i \leq j$, this is 1-dimensional if and only if $(j-i,j-i,j-i) \in \pi_j$ and 0-dimensional otherwise. When $i > j$, this is 0-dimensional. The first statement of the lemma follows. For the second statement of the lemma, we note that for $k=1,2$ we have
$$
\Ext^k(\O_{Z_i}, \O_{Z_j} \otimes (t_1t_2t_3)^{i-j})^f = 0,
$$
for all $i,j$. For the final statement of the lemma, we consider
$$
\Ext^3(\O_{Z_i}, \O_{Z_j} \otimes (t_1t_2t_3)^{i-j})^f. 
$$
When $i \leq j$, this is 0-dimensional. When $i>j$, this is 1-dimensional if and only if $(i-j-1,i-j-1,i-j-1) \in \pi_i$ and 0-dimensional otherwise. The third statement of the lemma follows.
\end{proof}

\section{Limits} \label{sec:limits}

We have established equality of the following power series (Proposition \ref{signprop})
\begin{align*}
\mathsf{G}_{r} = \sum_{n=0}^{\infty} N_{r,n}^\glob \, q^n, \quad \mathsf{Z}_{r}^{\NP} = \sum_{\vec{\pi} = (\pi_1, \ldots, \pi_r)}  (-1)^{ \mu_{\vec{\pi}}} [ -\mathsf{v}_{\vec{\pi}} ]  \, ((-1)^{r} q)^{|\vec{\pi}|}.
\end{align*}
For $r=1$, combining with factorizability (Proposition \ref{Exp}) gives
$$
\mathsf{Z}_{1}^{\NP} = \mathrm{Exp}\Bigg(\frac{G_1}{[t_1][t_2][t_3][t_4]}\Bigg),
$$
for some $G_1 \in qK_0^{\T'}(\pt)[[q]]$. As we will show in this section, this is enough information to first determine $G_1$ and then complete the proof of the Nekrasov--Piazzalunga conjecture for all $r>0$.

\subsection{Rigidity and boundedness}

In this section, we consider $\mathsf{Z}_{1}^{\NP}$, i.e., $r=1$.  
From Section \ref{sec:vertex}, it is clear that $\mathsf{Z}_{1}^{\NP}$ is independent of $w:=w_1$ and we will write $y:=y_1$. 
We first proceed in a similar fashion to Okounkov's proof of Nekrasov's conjecture on $\C^3$ \cite[Sect.~3.5]{Oko}. The first result is the 4-fold analog of \cite[Sect.~3.5.8]{Oko} and \cite[Lem.~5]{MNOP2}.
\begin{proposition} \label{H}
There exists  $H_1 \in q K_0^{\T'}(\pt) [[q]]$ such that
\begin{align*}
\mathsf{Z}_{1}^{\NP} = \mathrm{Exp}\Bigg(\frac{[t_1t_2][t_1t_3][t_2t_3]H_1}{[t_1][t_2][t_3][t_4]}\Bigg).
\end{align*}
\end{proposition}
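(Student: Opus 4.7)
The plan is to combine Corollary \ref{Exp} with Okounkov's rigidity principle to deduce the desired divisibility. From Corollary \ref{Exp} and Proposition \ref{signprop} we already have
\[
\sfZ_1^{\NP} = \mathrm{Exp}\Bigg(\frac{G_1}{[t_1][t_2][t_3][t_4]}\Bigg), \qquad G_1 \in qK_0^{\T'}(\pt)[[q]],
\]
so the coefficients of $G_1$ are Laurent polynomials in $t_1^{\pm 1},\dots,t_4^{\pm 1},y^{\pm 1/2}$ modulo $t_1t_2t_3t_4=1$. To prove the proposition I must exhibit $H_1 \in qK_0^{\T'}(\pt)[[q]]$ with $G_1 = [t_1t_2][t_1t_3][t_2t_3]\,H_1$ in the ring $K_0^{\T'}(\pt)[[q]]$. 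In this Laurent polynomial ring the product $[t_1t_2][t_1t_3][t_2t_3]$ equals, up to the unit $t_4$, the product $\prod_{i<j \le 3}(t_it_j-1)$, whose three factors are pairwise coprime. Consequently the divisibility statement is equivalent to the vanishing $G_1|_{t_it_j=1}=0$ for each pair $(i,j)\in\{(1,2),(1,3),(2,3)\}$, and, since $[t_1][t_2][t_3][t_4]$ is nonzero on these hypersurfaces, it is equivalent to $\sfZ_1^{\NP}|_{t_it_j=1}=1$. By the $S_4$-symmetry of $(-1)^{\mu_\pi}[-\mathsf{v}_\pi]$ established in \cite[Thm.~2.9]{Mon}, the series $\sfZ_1^{\NP}$ (and hence $G_1$) is symmetric in $(t_1,\dots,t_4)$, so it suffices to treat the single specialization $t_1t_2=1$.

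The heart of the argument is therefore to show $N_{1,n}^{\glob}|_{t_1t_2=1}=0$ for all $n \ge 1$. Geometrically, on the codimension-one subtorus $\T''\subset\T'$ cut out by $t_1t_2=1$ (equivalently $t_3t_4=1$), the coordinates on $\C^4$ realize it as $T^*\C^2$ carrying a $\T''$-invariant holomorphic symplectic form $dx_1\wedge dx_2+dx_3\wedge dx_4$. Under this enhanced symmetry the $\T''$-fixed locus of $\Hilb^n(\C^4)$ becomes positive-dimensional, and applying $K$-theoretic localization for $\T''$ re-expresses $N_{1,n}^{\glob}|_{t_1t_2=1}$ as a holomorphic Euler characteristic on this enlarged fixed locus, twisted by $\cN_{1,n}^{\sheaf}|_{t_1t_2=1}$. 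Combined with the factorizability of Proposition \ref{prop:factor} and the vertex formula of Proposition \ref{globaltolocal}, Okounkov's rigidity principle (as in \cite[Sect.~3.5]{Oko}) bounds the Newton polytope of each coefficient of $G_1|_{t_1t_2=1}$ tightly enough in the remaining equivariant variables that it is determined by its values at finitely many boundary limits, which an explicit computation then shows to vanish.

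The main obstacle lies in making the rigidity/boundedness step rigorous in this four-dimensional setting and in controlling the orientation data across the specialization. Proposition \ref{globaltolocal} presents $N_{1,n}^{\glob}$ as a signed sum over solid partitions with signs $(-1)^{n_{P_\pi}^{\Lambda^\vee}+k_\pi}=(-1)^{\mu_\pi}$, and on the enlarged $\T''$-fixed locus one must re-derive the spin-module data of Section \ref{sec:pullbackspin}, verify its compatibility with the new symplectic structure on $\C^4$, and track the resulting signs in the $K$-theoretic Euler characteristic. Once these compatibilities are pinned down, the cancellations required by the rigidity argument take effect, giving $G_1|_{t_1t_2=1}=0$. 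Invoking the $S_4$-symmetry then produces $H_1 \in qK_0^{\T'}(\pt)[[q]]$ satisfying $G_1 = [t_1t_2][t_1t_3][t_2t_3]\,H_1$ and completes the proof.
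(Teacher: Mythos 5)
Your reduction is sound and is essentially the one the paper uses: from Corollary~\ref{Exp} the $q$-coefficients of $G_1$ are Laurent polynomials, the three irreducible factors $t_it_j-1$ of $[t_1t_2][t_1t_3][t_2t_3]$ (up to the unit $t_4$) are pairwise coprime, and by symmetry it suffices to prove $G_1|_{t_1t_2=1}=0$, i.e., that each $[-\mathsf{v}_\pi]$ for $\pi$ nonempty vanishes along $t_1t_2=1$. But that vanishing --- the entire content of the proposition --- is exactly what you do not prove. Your plan (re-realize $\C^4$ as $T^*\C^2$ on the subtorus cut out by $t_1t_2=1$, rebuild the spin-module and orientation data, localize on the enlarged fixed locus, then invoke rigidity and ``finitely many boundary limits'') is not executed; ``the cancellations required by the rigidity argument take effect'' is an assertion, not an argument. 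Rigidity alone yields boundedness, hence constancy, of a Laurent polynomial; to conclude it is zero you must still evaluate some limit, and you supply no such computation. The symplectic/spin discussion is moreover a red herring: $[-\mathsf{v}_\pi]|_{t_1t_2=1}$ is a specialization of an explicit rational function produced by the vertex formula of Section~\ref{sec:vertex}, and no new geometry is needed to evaluate it.

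The paper's proof is a direct, term-by-term combinatorial inequality, the four-dimensional analogue of \cite[Lem.~5]{MNOP2} and \cite[Sect.~3.5.8]{Oko}. Substituting $t_1=u$, $t_2=u^{-1}$, $t_3=v$, one has $Z_\pi=\sum_{i,j}a^\pi_{ij}u^iv^j$ and $\mathsf{v}_\pi=\sum b^\pi_{ijk}u^iv^jy^k$; the order of vanishing of $[-\mathsf{v}_\pi]$ along $t_1t_2=1$ equals $-b^\pi_{000}$, so it suffices to prove $b^\pi_{000}<0$. The paper does this via the closed form
\begin{equation*}
b^\pi_{000} \;=\; a^\pi_{00} \;-\; \frac12\sum_{i,j\in\Z}\bigl(a^\pi_{ij}-a^\pi_{i+1,j}-a^\pi_{i,j+1}+a^\pi_{i+1,j+1}\bigr)^2
\end{equation*}
together with the bound $\sum_{i,j}(\cdots)^2 \ge 4a^\pi_{00}$, obtained by inserting a sign function on the four quadrants of $\Z^2$, whence $b^\pi_{000}\le -a^\pi_{00}<0$. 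This concrete inequality is the ingredient your proposal is missing. Note also that rigidity does appear in Section~\ref{sec:limits}, but only in the subsequent Proposition~\ref{tindep} (independence of $H_1$ from $t_1,t_2,t_3$, once $H_1$ is known to exist); you have conflated that later constancy step with the divisibility step proved here.
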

\begin{proof}
Let $\pi$ be a non-empty solid partition.
It suffices to show that, for all $1 \leq i < j \leq 3$, the numerator of $[-\mathsf{v}_{\pi}]$ is divisible by $t_i^n t_j^n-1$ for some $n>0$ (and no such terms appear in the denominator). By symmetry, we may take $i=1$ and $j=2$. We write
$$
\mathsf{v}_{\pi} = \sum_{i,j,k,l \in \Z} c^{\pi}_{ijkl} \, t_1^i t_2^j t_3^k y^l.
$$
Then the claim is equivalent to 
$$
\sum_{i \in \Z} c^{\pi}_{ii00} < 0.
$$
Let $t_1 = u$, $t_2 = u^{-1}$, and $t_3 = v$. We define coefficients $a_{ij}^{\pi}, b_{ijk}^{\pi}$ by
\begin{align*}
Z_\pi &=  \sum_{(i,j,k,l) \in \pi} t_1^i t_2^j t_3^k t_4^l = \sum_{(i,j,k,l) \in \pi} u^{i-j} v^{k-l} = \sum_{i,j \in \Z} a^{\pi}_{ij} \, u^i v^ j, \\ 
\mathsf{v}_{\pi} &= \sum_{i,j,k \in \Z} b^{\pi}_{ijk} \, u^i v^ j y^k,
\end{align*}
and we have to show $b^{\pi}_{000}<0$. Concretely
$$
a_{ij}^{\pi} = |\{(a+i,a,b+j,b) \in \pi \, : \, a,b \geq 0\}|.
$$

By the same calculation as in \cite[Lem.~5]{MNOP2}, we find
\begin{align*}
b^{\pi}_{000} &= a^{\pi}_{00} + \sum_{i,j \in \Z} \Big\{ (2a^{\pi}_{i,j+1}a^{\pi}_{ij} - a^{\pi}_{i,j+1}a^{\pi}_{i+1,j} - a^{\pi}_{i+1,j+1} a^{\pi}_{ij}) - (2(a^{\pi}_{ij})^2 - 2a^{\pi}_{ij} a^{\pi}_{i+1,j}) \Big\} \\
&= a^\pi_{00} - \frac{1}{2}\sum_{i,j \in \Z}(a_{ij}^\pi - a_{i+1,j}^\pi - a_{i,j+1}^\pi + a_{i+1,j+1}^\pi)^2.
\end{align*}
Now let $s(i,j) = 1$ if $i \ge 0$  and $j \ge 0$, or if $i<0$ and $j < 0$. Otherwise let $s(i,j) = -1$.
Then
\begin{align*}
\sum_{i,j \in \Z}(a_{ij}^\pi - a_{i+1,j}^\pi - a_{i,j+1}^\pi + a_{i+1,j+1}^\pi)^2 &\ge \sum_{i,j \in \Z}|a_{ij}^\pi - a_{i+1,j}^\pi - a_{i,j+1}^\pi + a_{i+1,j+1}^{\pi}| \\
&\ge \sum_{i,j \in \Z} s(i,j)(a_{ij}^\pi - a_{i+1,j}^\pi - a_{i,j+1}^\pi + a_{i+1,j+1}^{\pi}) \\
&= 4a^\pi_{00},
\end{align*}
since the sum over each quadrant $\{(i,j) : i,j \geq 0\}$, $\{(i,j) : i,j < 0\}$, $\{(i,j) : i \geq 0, j < 0\}$, $\{(i,j) : i < 0, j \geq 0\}$ in $\Z^2$ equals $a^\pi_{00}$.
Thus we get $b^{\pi}_{000} \le -a^{\pi}_{00} < 0$, since $\pi$ is non-empty.
\end{proof}

Our next result is the 4-fold analog of \cite[Prop.~3.5.11]{Oko}.
\begin{proposition} \label{tindep}
The class $H_1 \in q K_0^{\T'}(\pt) [[q]]$ is independent of $t_1,t_2,t_3$, i.e., only depends on $y$.
\end{proposition}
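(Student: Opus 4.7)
The plan is a rigidity argument modeled on \cite[Sect.~3.5]{Oko}. From Corollary \ref{Exp} and Proposition \ref{H}, the $q^n$-coefficient of $H_1$ can be written as
\[
H_{1,n} = G_{1,n}\big/\big([t_1 t_2][t_1 t_3][t_2 t_3]\big),
\]
with $G_{1,n} \in K_0^{\T'}(\mathrm{pt})$ a Laurent polynomial in $t_1^{\pm 1/2}, t_2^{\pm 1/2}, t_3^{\pm 1/2}, y^{\pm 1/2}$ (using $t_4 = (t_1 t_2 t_3)^{-1}$). Since, as in the proof of Proposition \ref{H}, $G_{1,n}$ vanishes on each of the codimension-one subtori $\{t_it_j = 1\}$, the quotient $H_{1,n}$ is itself a Laurent polynomial in these variables. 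The task is to show this Laurent polynomial does not involve $t_1, t_2, t_3$.

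The classical rigidity principle for torus characters states that a Laurent polynomial $f$ on $T_t$ which admits finite limits along every non-trivial 1-parameter subgroup must be constant on $T_t$. Accordingly, for each 1-parameter subgroup with weights $(a_1,a_2,a_3,a_4)$ satisfying $\sum a_i = 0$, I would substitute $t_i = s^{a_i}$ and analyze the limiting behavior as $s \to 0$ and $s \to \infty$ of both sides of
\[
\sfZ^{\NP}_1 = \Exp\!\left(\frac{[t_1 t_2][t_1 t_3][t_2 t_3]\, H_1}{[t_1][t_2][t_3][t_4]}\right),
\]
from which I would extract boundedness of each $H_{1,n}$ in both limits.

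To analyze the left-hand side, I would use the localization expression $\sfZ^{\NP}_1 = \sum_\pi (-1)^{|\pi|+\mu_\pi}[-\mathsf{v}_\pi]\, q^{|\pi|}$ and track the asymptotics of each $[-\mathsf{v}_\pi]$ along $\lambda$. Since $\mathsf{v}_\pi$ has no fixed part under $T_t$ (Proposition \ref{Ktheorycalc}), the character $\mathsf{v}_\pi$ pairs in a balanced way against $\lambda$, so each $[-\mathsf{v}_\pi]$ has matching leading behavior at $s=0$ and $s=\infty$. The asymptotic of the right-hand side's prefactor $[t_1 t_2][t_1 t_3][t_2 t_3]/([t_1][t_2][t_3][t_4])$ is immediate, and taking plethystic logarithms then isolates $H_1$.

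The main obstacle will be the degenerate subgroups for which $a_i + a_j = 0$ for some pair, along which some $[t_it_j]$ vanishes identically and the naive asymptotic bound degenerates. In those cases I would combine the asymptotic analysis with the 3-fold rigidity of \cite[Sect.~3.5]{Oko}: along such subgroups $\sfZ^{\NP}_1$ partially degenerates to a generating series built from $K$-theoretic DT invariants of $\C^3$ (Theorem \ref{Okounkovthm}), for which the corresponding boundedness is known. Once boundedness along a spanning set of 1-parameter subgroups is verified, rigidity forces each $H_{1,n}$ to be independent of $t_1, t_2, t_3$, yielding the proposition.
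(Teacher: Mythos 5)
Your overall strategy — Okounkov's rigidity principle, reading boundedness off the localization formula — is the same as the paper's, but two steps in the proposal do not go through as written.

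\textbf{The boundedness of $[-\mathsf{v}_\pi]$ is not a consequence of ``no fixed part.''} You write that since $\mathsf{v}_\pi$ has no $T_t$-fixed terms, it ``pairs in a balanced way'' so that $[-\mathsf{v}_\pi]$ has matching leading behavior at $s=0$ and $s=\infty$. This inference is false in general: the class $\mathsf{v} = t_2 - t_1$ has rank zero and no fixed part, yet $[-\mathsf{v}] = [t_1]/[t_2]$ is unbounded as $t_1 \to \infty$. The actual boundedness needs the specific shape of $\mathsf{v}_\pi$. The paper proves it in a dedicated lemma: each monomial $t_1^{i_1}t_2^{i_2}t_3^{i_3}$ of $Z$ is paired with the corresponding $-y t_1^{-i_1}t_2^{-i_2}t_3^{-i_3}$ of $-y Z^\vee$, producing a ratio that tends to $-y^{\pm 1/2}$ or is constant as $t_1^{\pm 1} \to \infty$; and the $(1-t_1^{-1})(1-t_2^{-1})(1-t_3^{-1})Z^\vee Z$ contributions are shown bounded termwise because the $t_1$-exponents come in cancelling pairs. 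You would need to carry out something of this kind; the ``balanced pairing'' assertion is the crux and cannot be waved through.

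\textbf{The detour through arbitrary one-parameter subgroups, and the appeal to Theorem~\ref{Okounkovthm}, are unnecessary and misplaced.} Since $H_{1,n}$ is (a priori) a Laurent polynomial in $t_1^{\pm1}, t_2^{\pm1}, t_3^{\pm1}, y^{\pm1/2}$, it suffices to take, for each $i \in \{1,2,3\}$, the limit $t_i^{\pm1}\to\infty$ with the other two $t_j$ held generic and fixed. For these limits the factors $[t_j t_k]$ with $j,k\neq i$ remain nonzero constants, so the degeneration $[t_it_j]\equiv 0$ that worries you never arises, and no appeal to the three-dimensional Theorem~\ref{Okounkovthm} is needed. (In the paper, that theorem enters only afterwards, in Section~\ref{sec:rk1}, to pin down the $y$-dependence of $H_1$ once $t$-independence has been established; pulling it into this proof muddies the logical order.) Finally, the induction on $n$ is load-bearing and should be set up explicitly: the $q^n$-coefficient of the plethystic exponential is
\[
N^{\glob}_{1,n} \;=\; \frac{[t_1t_2][t_1t_3][t_2t_3]}{[t_1][t_2][t_3][t_4]}\, H_{1,n} \;+\; C,
\]
where $C$ involves only $H_{1,m}$ with $m<n$, known $t$-independent by the inductive hypothesis; boundedness of $N^{\glob}_{1,n}$, of $C$, and of the prefactor then isolates boundedness of $H_{1,n}$. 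Without this, ``taking plethystic logarithms'' does not directly isolate the coefficient whose boundedness you need.
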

\begin{proof}
We expand $H_1$ in $q$ 
$$
H_1 = \sum_{n=1}^{\infty} H_{1,n}(t_1,t_2,t_3,y) \, q^n.
$$ 
We argue, by induction on $n$, that $H_{1,n}$ is independent of the $t_i$. For $n=1$, this is obvious by direct calculation. For $n>1$, expanding the plethystic exponential gives 
\begin{align} 
\begin{split} \label{nthterm}
N_{1,n}^\glob  = \frac{[t_1t_2][t_1t_3][t_2t_3]}{[t_1][t_2][t_3][t_4]} H_{1,n}(t_1,t_2,t_3,y) + C,
\end{split}
\end{align}
where $C$ is a polynomial combination of terms of the form
$$
\frac{[t_1^{\ell}t_2^{\ell}][t_1^{\ell}t_3^{\ell}][t_2^{\ell}t_3^{\ell}]}{[t_1^{\ell}][t_2^{\ell}][t_3^{\ell}][t_4^{\ell}]} H_{1,m}(y^{\ell}),
$$
for various $\ell \geq 1$ and $m<n$. Here we used the induction hypothesis to conclude that $H_{1,m}$ is independent of $t_1,t_2,t_3$. Since $H_{1,n}$ is (a priori) a Laurent polynomial in the variables $t_i^{\pm 1}$ and $y^{\pm \frac{1}{2}}$, it suffices to show that its limits for $t_i^{\pm 1} \to \infty$ exist, i.e.~$H_{1,n}$ is bounded as $t_i^{\pm 1} \to \infty$. This is known as \emph{Okounkov's rigidity principle}. 

Without loss of generality, let $i=1$. Clearly 
$$
 \frac{[t_1^{\ell}t_2^{\ell}][t_1^{\ell}t_3^{\ell}][t_2^{\ell}t_3^{\ell}]}{[t_1^{\ell}][t_2^{\ell}][t_3^{\ell}][t_4^{\ell}]} =  \frac{[t_1^{\ell}t_2^{\ell}][t_1^{\ell}t_3^{\ell}][t_2^{\ell}t_3^{\ell}]}{[t_1^{\ell}][t_2^{\ell}][t_3^{\ell}][(t_1t_2t_3)^{-\ell}]} 
$$
is bounded as $t_1^{\pm 1} \to \infty$. By the lemma below, the left hand side of \eqref{nthterm} is also bounded as $t_1^{\pm 1} \to \infty$. The result follows.
\end{proof}

\begin{lemma}
Let $i=1,2,3$. The limit
$$
\lim_{t_i^{\pm 1} \to \infty} N_{1,n}^\glob
$$
exists, i.e., $N_{1,n}^\glob$ is bounded as $t_i^{\pm 1} \to \infty$.
\end{lemma}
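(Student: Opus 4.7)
The strategy is to bound each term of the fixed-point expression from Proposition~\ref{globaltolocal},
\[
N_{1,n}^{\mathrm{glob}} = (-1)^n \sum_\pi (-1)^{n^{\Lambda^\vee}_{P_\pi} + k_\pi}\, [-\mathsf{v}_\pi],
\]
a \emph{finite} sum over solid partitions $\pi$ of size $n$. It therefore suffices to show that each individual $[-\mathsf{v}_\pi]$ is bounded as $t_i^{\pm 1} \to \infty$ for $i=1,2,3$. By symmetry, I focus on $i=1$.

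The key tool is the $t_1$-balance. After substituting $t_4 = (t_1 t_2 t_3)^{-1}$, every virtual character is a Laurent polynomial in $t_1, t_2, t_3, y^{\pm 1/2}$; for $V = \sum_w c_w w$ with $w$ a Laurent monomial, let $a_1(w)$ denote its $t_1$-exponent and define $\beta_1(V) := \sum_w c_w |a_1(w)|$. A direct computation shows that, in both limits $t_1 \to \infty$ and $t_1 \to 0$, the quantity $[w] = w^{1/2} - w^{-1/2}$ has leading $t_1$-order $|a_1(w)|/2$ (the sign of $a_1(w)$ only affects the leading coefficient, not the order). Consequently $[-V] = \prod_w [w]^{-c_w}$ has leading $t_1$-order $-\beta_1(V)/2$ at $t_1 = \infty$ and order $\beta_1(V)/2$ at $t_1 = 0$, so it is bounded in both limits if and only if $\beta_1(V) \ge 0$. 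I will in fact show $\beta_1(\mathsf{v}_\pi) = 0$.

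Using linearity of $\beta_1$ and the decomposition $\mathsf{v}_\pi = Z_\pi - P Z_\pi^\vee Z_\pi - y Z_\pi^\vee$ from Definition~\ref{def:NPweight}, with $P = (1-t_1^{-1})(1-t_2^{-1})(1-t_3^{-1})$, I check each piece separately. For the first and third terms, the box $(i_b, j_b, k_b, l_b) \in \pi$ contributes the weight $t_1^{i_b - l_b}t_2^{j_b - l_b}t_3^{k_b - l_b}$ with coefficient $+1$ in $Z_\pi$, and its inverse (times $y$) with coefficient $+1$ in $y Z_\pi^\vee$; combined with the sign $-1$ in $\mathsf{v}_\pi$, the two contributions cancel in absolute value, giving $\beta_1(Z_\pi) - \beta_1(y Z_\pi^\vee) = 0$. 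For the middle term, factor $P = (1 - t_1^{-1}) \cdot A$ with $A = (1-t_2^{-1})(1-t_3^{-1})$ independent of $t_1$. Since $A$ has no $t_1$-dependence, computing $\beta_1(A \cdot W)$ by summing over pairs $(a,v)$ with $A=\sum c_a u_a$ and $W=\sum c_v w_v$ before combining like terms gives
\[
\beta_1(A \cdot W) = \Bigl(\textstyle\sum_a c_a\Bigr) \cdot \beta_1(W) = A|_{t_2 = t_3 = 1} \cdot \beta_1(W)
\]
for any $W$. Since $A|_{t_2=t_3=1} = (1-1)(1-1) = 0$, we obtain $\beta_1(P Z_\pi^\vee Z_\pi) = 0$, and hence $\beta_1(\mathsf{v}_\pi) = 0$ as required.

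The main obstacle, such as it is, is recognizing the correct combinatorial invariant ($\beta_i$) that controls the asymptotic behavior of $[-\mathsf{v}_\pi]$ and spotting that the Laurent structure of $P$ absorbs all potential growth via the factorization $P = (1-t_i^{-1})A_i$ with $A_i$ vanishing at $t_j = 1$ for $j \neq i$. Once this is observed, the argument is short and extends verbatim to $i = 2, 3$ by the symmetry of $P$ in $t_1, t_2, t_3$; no deformation or geometric limit argument is required.
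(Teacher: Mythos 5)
Your proof is correct, and it takes a genuinely different route from the paper's.

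The paper proves boundedness term by term: it pairs each monomial $w \neq 1$ of $Z$ with the corresponding monomial $-y w^{-1}$ of $-yZ^\vee$ and explicitly evaluates the limit of the ratio, and for each monomial of $Z^\vee Z$ it writes out the full $8$-term expansion against $(1-t_1^{-1})(1-t_2^{-1})(1-t_3^{-1})$ and observes that the $t_1^{\pm i_1/2}$ and $t_1^{\pm(i_1-1)/2}$ factors pair off between numerator and denominator; it then handles $T_t$-fixed terms separately at the end. Your approach packages the same underlying balance of $t_1$-exponents into a single additive invariant $\beta_1(V) = \sum_w c_w|a_1(w)|$, whose vanishing is equivalent to the $t_1$-order of $[-V]$ being zero at both $t_1 = 0$ and $t_1 = \infty$. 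This linearizes the bookkeeping: the $Z$ and $-yZ^\vee$ pieces cancel in $\beta_1$ by inspection, and the $PZ^\vee Z$ piece is killed at once because $\beta_1(A\cdot W) = A|_{t_2=t_3=1}\cdot\beta_1(W) = 0$ for the $t_1$-independent factor $A = (1-t_2^{-1})(1-t_3^{-1})$. What your approach buys is concision and uniformity: $T_t$-fixed monomials automatically contribute $0$ to $\beta_1$, so no separate cancellation argument is needed, and you in fact establish the sharper statement that the $t_1$-order of each $[-\mathsf v_\pi]$ is exactly zero (not merely $\le 0$ at $\infty$). The one point you pass over lightly is why $\beta_1$ of a product computed by expanding and summing over all factor pairs equals $\beta_1$ of the combined Laurent polynomial; this does hold, since $\beta_1$ weights each monomial by its signed coefficient and is therefore additive under merging like terms, but a one-line remark to that effect would tighten the argument.
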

\begin{proof}
Without loss of generality, we consider the $i=1$ case. 
Let $\pi$ be a solid partition of size $n$. We consider the Laurent monomials $t_1^{i_1}t_2^{i_2}t_3^{i_3} y^j$ appearing in $\mathsf{v}_{\pi}$ (where $t_4$ is eliminated by $t_4 = (t_1t_2t_3)^{-1}$). 
It is known that $\mathsf{v}_{\pi}$ has no fixed terms by \cite[Sect.~2.4.1]{NP}.
We also note that the class $\mathsf{v}_{\pi}$ has rank zero (Definition \ref{def:NPweight}). 

For any Laurent monomial $t_1^{i_1}t_2^{i_2}t_3^{i_3} \neq 1$ appearing in $Z$, there is a corresponding term $- y t_1^{-i_1}t_2^{-i_2}t_3^{-i_3}$ in $- y Z^\vee$.
Taken together, their contribution to $[-\mathsf{v}_{\pi}]$ is
$$
\frac{    t_1^{-\frac{i_1}{2}} t_2^{-\frac{i_2}{2}} t_3^{-\frac{i_3}{2}}  y^{\frac{1}{2}} - t_1^{\frac{i_1}{2}} t_2^{\frac{i_2}{2}} t_3^{\frac{i_3}{2}}  y^{-\frac{1}{2}}   }{ t_1^{\frac{i_1}{2}} t_2^{\frac{i_2}{2}} t_3^{\frac{i_3}{2}}  - t_1^{-\frac{i_1}{2}} t_2^{-\frac{i_2}{2}} t_3^{-\frac{i_3}{2}}   }.
$$
As $t_1^{\pm 1} \to \infty$, either $i_1=0$ and this is constant, or $i_1 \neq 0$ and this is $-y^{\pm \frac{1}{2}}$. 

Consider a Laurent monomial $t_1^{i_1}t_2^{i_2}t_3^{i_3}$ appearing in $Z Z^\vee$.
Suppose, after multiplying $t_1^{i_1}t_2^{i_2}t_3^{i_3}$ by $(1-t_1^{-1})(1-t_2^{-1})(1-t_3^{-1})$, there are no $T_t$-fixed terms.
Then its contribution to $\mathsf{v}_{\pi}$ is
\begin{align*}
&\frac{   (t_1^{\frac{i_1}{2}} t_2^{\frac{i_2}{2}} t_3^{\frac{i_3}{2}}  -   t_1^{-\frac{i_1}{2}} t_2^{-\frac{i_2}{2}} t_3^{-\frac{i_3}{2}}  )    (t_1^{\frac{i_1-1}{2}} t_2^{\frac{i_2-1}{2}} t_3^{\frac{i_3}{2}}  - t_1^{-\frac{i_1-1}{2}} t_2^{-\frac{i_2-1}{2}} t_3^{-\frac{i_3}{2}} ) }{ (t_1^{\frac{i_1-1}{2}} t_2^{\frac{i_2}{2}} t_3^{\frac{i_3}{2}}  -   t_1^{-\frac{i_1-1}{2}} t_2^{-\frac{i_2}{2}} t_3^{-\frac{i_3}{2}}  )   (t_1^{\frac{i_1}{2}} t_2^{\frac{i_2-1}{2}} t_3^{\frac{i_3}{2}}  -   t_1^{-\frac{i_1}{2}} t_2^{-\frac{i_2-1}{2}} t_3^{-\frac{i_3}{2}}  )      } \\
&\times \frac{     (t_1^{\frac{i_1-1}{2}} t_2^{\frac{i_2}{2}} t_3^{\frac{i_3-1}{2}}  - t_1^{-\frac{i_1-1}{2}} t_2^{-\frac{i_2}{2}} t_3^{-\frac{i_3-1}{2}} )        (t_1^{\frac{i_1}{2}} t_2^{\frac{i_2-1}{2}} t_3^{\frac{i_3-1}{2}}  - t_1^{-\frac{i_1}{2}} t_2^{-\frac{i_2-1}{2}} t_3^{-\frac{i_3-1}{2}} )}{  (t_1^{\frac{i_1}{2}} t_2^{\frac{i_2}{2}} t_3^{\frac{i_3-1}{2}}  -   t_1^{-\frac{i_1}{2}} t_2^{-\frac{i_2}{2}} t_3^{-\frac{i_3-1}{2}}  )   (t_1^{\frac{i_1-1}{2}} t_2^{\frac{i_2-1}{2}} t_3^{\frac{i_3-1}{2}}  -   t_1^{-\frac{i_1-1}{2}} t_2^{-\frac{i_2-1}{2}} t_3^{-\frac{i_3-1}{2}}  )     }.
\end{align*}
The terms involving $t_1^{\pm \frac{i_1}{2}}$ (respectively $t_1^{\pm \frac{i_1-1}{2}}$) in the numerator and denominator come in pairs so this product is bounded for $t_1^{\pm 1} \to \infty$ as well. Since the $T_t$-fixed terms of  $(1-t_1^{-1})(1-t_2^{-1})(1-t_3^{-1})Z^\vee Z$ cancel the $T_t$-fixed term of $Z$, the result follows. 
\end{proof}

\subsection{Rank one} \label{sec:rk1}

We start with the following observation, which is a special case of \cite[Prop.~2.1]{CKM} (which also deals with subschemes of dimension 1). For completeness, we include the proof.
\begin{proposition} \label{dimred}
Let $Z \subset \C^4$ be a $T_t$-fixed 0-dimensional subscheme corresponding to a solid partition $\pi$. Then 
$[-\mathsf{v}_\pi] |_{y=t_4} = 0$ unless $Z \subset \C^3 = \{x_4=0\}$. For $Z \subset \C^3 = \{x_4=0\}$, $\mathsf{v}_\pi |_{y=t_4}$ is identical to the (fully equivariant) Donaldson--Thomas vertex $\mathsf{V}_{\pi}^{\mathrm{DT}}$ in \cite[Sect.~4.7]{MNOP1}. In particular
$$
\mathsf{Z}_1^{\NP} |_{y=t_4} = \sum_{n=0}^{\infty} \chi(\Hilb^n(\C^3), \widehat{\O}^{\vir}) \, (-q)^n.
$$
\end{proposition}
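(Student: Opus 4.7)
My plan is to re\hyp{}express $\mathsf{v}_\pi|_{y=t_4}$ as the equivariant Euler characteristic of a $3$\hyp{}fold $R\Hom$ computation. Let $p\colon \C^4 \to \C^3$ be the projection onto the first three coordinates and set $G := p_* \O_Z$, so that $\chi(G) = Z$ as a $T_t$\hyp{}equivariant class on $\C^3$ (where $T_t$ is identified with the standard $3$-torus via $t_4 = (t_1t_2t_3)^{-1}$). Using $y = t_4$ together with the standard Ext\hyp{}character identity $\chi(R\Hom_{\C^d}(F_1,F_2)) = \chi(F_1)^\vee \chi(F_2)\prod_{i=1}^d(1-t_i^{-1})$, a direct algebraic manipulation will yield the key identity
\begin{equation*}
\mathsf{v}_\pi|_{y=t_4} = \chi(R\Hom_{\C^3}(\O,G)) + \chi(R\Hom_{\C^3}(G,\O)) - \chi(R\Hom_{\C^3}(G,G)).
\end{equation*}

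For the dimensional\hyp{}reduction assertion, if $Z \subset \{x_4 = 0\}$ then $G = \O_Z$ is the structure sheaf of an honest $0$-dimensional subscheme of $\C^3$, so the right\hyp{}hand side collapses to $\chi(R\Hom_{\C^3}(\O,\O)) - \chi(R\Hom_{\C^3}(I_Z,I_Z))$. This is precisely the defining expression for the equivariant DT vertex $\mathsf{V}_\pi^{\mathrm{DT}}$ of \cite[Sect.~4.7]{MNOP1}, establishing the second claim.

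For the vanishing claim, I will show that the coefficient $c_1$ of the trivial $T_t$-character in $\mathsf{v}_\pi|_{y=t_4}$ is strictly negative whenever $\pi \not\subseteq \{l=0\}$; this forces $-\mathsf{v}_\pi|_{y=t_4}$ to contain the trivial character with positive multiplicity, and hence $[-\mathsf{v}_\pi]|_{y=t_4}$ picks up the factor $[1] = 0$. Since $\pi$ is a solid partition containing a box of $l$\hyp{}coordinate $\geq 1$, it must contain $(0,0,0,1)$ and hence at least one box of the form $(i,i,i,i+1)$. A term\hyp{}by\hyp{}term fixed\hyp{}part count in the above identity then gives: $\dim \Hom_{\C^3}(\O,G)^f = |\{(i,i,i,i)\in\pi\}|$ with higher $\Ext$ vanishing ($\C^3$ is affine); $\dim \Ext^3_{\C^3}(G,\O)^f = |\{(i,i,i,i+1)\in\pi\}|$ by local duality and a direct character computation, with lower $\Ext$ vanishing (codimension 3); and $\chi(R\Hom_{\C^3}(G,G))^f = |\{(i,i,i,i)\in\pi\}|$, which is exactly the alternating sum supplied by the explicit lemma that appears immediately after Proposition \ref{signprop}. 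Accounting for the sign $(-1)^3=-1$ from the $\Ext^3$ contribution yields $c_1 = -|\{(i,i,i,i+1)\in\pi\}|\leq -1$, giving the vanishing.

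The final identity for $\mathsf{Z}_1^{\NP}|_{y=t_4}$ will then be immediate: only plane\hyp{}partition summands survive, $\mu_\pi = 0$ for every plane partition (no box with $l=0$ can satisfy $j>i$ in the fourth slot), and Theorem \ref{Okounkovthm} identifies the remaining series with $\sum_n \chi(\Hilb^n(\C^3),\widehat\O^{\vir})\,(-q)^n$. The main obstacle will be establishing the opening $R\Hom$\hyp{}identity in a way that is clean on the level of classes (requiring careful bookkeeping of how the substitution $y=t_4$ interacts with the $4$-fold Ext computation recorded in Proposition \ref{Ktheorycalc}), but the crucial input $\chi(R\Hom_{\C^3}(G,G))^f = |\{(i,i,i,i)\in\pi\}|$ is already available from the lemma in Section \ref{sec:local}, so the remaining work is essentially bookkeeping.
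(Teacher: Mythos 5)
Your proof is correct, and it takes a genuinely more systematic route than the paper's. Your key move is to re-express $\mathsf{v}_\pi|_{y=t_4}$ as $\chi(R\Hom_{\C^3}(\O,G)) + \chi(R\Hom_{\C^3}(G,\O)) - \chi(R\Hom_{\C^3}(G,G))$ with $G = p_*\O_Z$; I checked that this identity is indeed an easy consequence of the Ext-character formula (using $\prod_{i=1}^3(1-t_i^{-1})/\prod_{i=1}^3(1-t_i) = -t_4$) and Definition \ref{def:NPweight}. From this single identity you derive both the plane-partition specialization (via the K-theoretic collapse to $\chi(R\Hom(\O,\O)) - \chi(R\Hom(I_Z,I_Z))$) and the vanishing (via a complete fixed-character count, reusing the lemma following Proposition \ref{signprop}). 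The paper instead handles the two cases separately: for $Z \subset \{x_4=0\}$ it directly compares Definition \ref{def:NPweight} to \cite[Sect.~4.7]{MNOP1}, and for the vanishing it invokes Nekrasov--Piazzalunga's fact \cite[Sect.~2.4.1]{NP} that $\mathsf{v}_\pi^{\mathrm{pre}}$ has no $T_t$-fixed terms, so that only the $-yZ^\vee$ part can contribute a fixed character, and since $(0,0,0,1)\in\pi$ one immediately gets a fixed summand $-1$. Your version is longer but self-contained, and as a byproduct it re-derives, for the specialization $y=t_4$, the no-fixed-terms property of $\mathsf{v}_\pi^{\mathrm{pre}}$ rather than citing it. One small cosmetic remark: the quantity you call the "defining expression" for $\mathsf{V}_\pi^{\mathrm{DT}}$ is really the deformation-theoretic characterization; in \cite[Sect.~4.7]{MNOP1} the vertex is introduced by an explicit Laurent formula, and it is the \v{C}ech computation in that paper that identifies the two — the match is correct, but worth phrasing as an identification rather than a definition.
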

\begin{proof}
For $Z \subset \{x_4=0\}$, the statement follows at once by comparing Definition \ref{def:NPweight} to \cite[Sect.~4.7]{MNOP1}. 
Suppose $Z$ is not scheme theoretically supported in $\{x_4=0\}$. The vertex $\mathsf{v}_{\pi}$ does not have a $T_t$-fixed part by \cite[Sect.~2.4.1]{NP}. Therefore, it suffices to consider the $T_t$-fixed terms arising from setting $y=t_4 = (t_1t_2t_3)^{-1}$ in the $y$-dependent part of $\mathsf{v}_{\pi}$, i.e.~$-y Z^\vee$. Since $Z$ contains the term $t_4$, setting $y=t_4$, it contributes $-yt_4^{-1}=-1$, so $\mathsf{v}_\pi |_{y=t_4} = 0$.

The 0-dimensional $(\C^*)^3$-fixed subschemes $Z \subset \C^3$ correspond to \emph{plane} partitions $\pi$. By \cite[Sect.~3]{Oko}, we have
$$
\sum_{n=0}^{\infty} \chi(\Hilb^n(\C^3), \widehat{\O}^{\vir}) \, q^n = \sum_{\pi} [-\mathsf{V}_{\pi}^{\mathrm{DT}}] \, q^{|\pi|}.
$$
The result of the proposition follows by noting that $(-1)^{n + \mu_\pi} = (-1)^n$ for all plane partitions $\pi$ (Propositions \ref{globaltolocal} and \ref{signprop}).
\end{proof}

The proof of Theorem \ref{mainthm} for $r=1$ can now be completed as follows. 
\begin{proof}[Proof of Theorem \ref{mainthm} for $r=1$]
By Propositions \ref{H} and \ref{tindep}, taking plethystic log yields
$$
H_1(y) = \frac{[t_1][t_2][t_3][t_4]}{[t_1t_2][t_1t_3][t_2t_3]} \, \mathrm{Log}\big(\mathsf{Z}_1^{\NP} \big),
$$
where we stress that $H_1(y)$ only depends on $y:=y_1$ and the formal variable $q$. By Proposition \ref{dimred} and Okounkov's Theorem \ref{Okounkovthm} 
$$
 \mathrm{Log} \big( \mathsf{Z}_1^{\NP} \big) \Big|_{y=t_4} = \frac{[t_1t_2][t_1t_3][t_2t_3]}{[t_1][t_2][t_3][\kappa^{\frac{1}{2}} q ]   [\kappa^{\frac{1}{2}} q^{-1} ]},
$$
where $\kappa = t_1t_2t_3 = t_4^{-1}$. Hence 
$$
H_1(t_4) = \frac{[t_4]}{ [t_4^{-\frac{1}{2}} q ]   [t_4^{-\frac{1}{2}} q^{-1} ] } = \frac{[t_4]}{[t_4^{\frac{1}{2}} q ]   [t_4^{\frac{1}{2}} q^{-1} ] },
$$
and the proof is complete.
\end{proof}

\subsection{Higher rank} \label{sec:higherrk}

In this section, we complete the proof of Theorem \ref{mainthm} for $r>1$. It will follow from showing independence of the framing parameters $w_1, \ldots, w_r$ and reducing to the $r=1$ case. We learned this strategy from \cite{FMR}. The rank reduction in this section does not require factorizability.
We start with the following analog of \cite[Lem.~6.7]{FMR}. Recall the definition of $\mathsf{v}_{\vec{\pi},\alpha\beta}$ from Definition \ref{def:NPweight}.
\begin{proposition} \label{offdiag}
Let $\vec{\pi} = (\pi_1, \ldots, \pi_r)$ be an $r$-tuple of solid partitions. For any $\alpha < \beta$, we have
$$
\lim_{L \rightarrow \infty} [-\mathsf{v}_{\vec{\pi},\alpha\beta}] [-\mathsf{v}_{\vec{\pi},\beta\alpha}]\Big|_{(w_1 = L, w_2 = L^2, \ldots, w_r = L^r)} = \frac{(-y_\beta^{\frac{1}{2}})^{|\pi_\alpha|}}{(-y_\alpha^{\frac{1}{2}})^{|\pi_\beta|}}.
$$
\end{proposition}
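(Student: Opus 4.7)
The plan is to determine the asymptotic contribution of each factor as $L \to \infty$, verify that the $L$-powers cancel in the product, and then read off the surviving expression. Set $d = \beta - \alpha > 0$, so the substitution sends $w_\beta/w_\alpha \mapsto L^d$ and $w_\alpha/w_\beta \mapsto L^{-d}$.

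For any monomial $m$ in $t_i, y_j$, one has
\[
[L^{d}m] = L^{d/2}m^{1/2} - L^{-d/2}m^{-1/2} \sim L^{d/2}m^{1/2}, \qquad [L^{-d}m] \sim -L^{d/2}m^{-1/2}.
\]
Writing a virtual representation as $V = \sum_k c_k m_k$ with distinct monomials $m_k$, introduce the formal square-root determinant $\widehat V := \prod_k m_k^{c_k/2}$. It is additive, satisfies $\widehat{V^\vee} = \widehat V^{-1}$, and obeys the product rule $\widehat{V_1 V_2} = \widehat{V_1}^{\rk V_2}\widehat{V_2}^{\rk V_1}$. Extending $[\cdot]$ multiplicatively and using $[-W] = 1/[W]$, the two asymptotics above propagate to
\[
[-L^{d}V']  \sim  L^{-d\,\rk V'/2}\,\widehat{V'}^{-1}, \qquad [-L^{-d}V']  \sim  (-1)^{\rk V'} L^{-d\,\rk V'/2}\,\widehat{V'},
\]
for any virtual representation $V'$ in $t_i, y_j$.

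Writing $\mathsf{v}_{\vec\pi,\alpha\beta} = (w_\beta/w_\alpha)V_{\alpha\beta}$ with $V_{\alpha\beta} = Z_\beta - KZ_\alpha^\vee Z_\beta - y_\beta Z_\alpha^\vee$ and $K := (1-t_1^{-1})(1-t_2^{-1})(1-t_3^{-1})$, and symmetrically $V_{\beta\alpha}$, I get $\rk V_{\alpha\beta} = |\pi_\beta| - |\pi_\alpha|$ and $\rk V_{\beta\alpha} = |\pi_\alpha| - |\pi_\beta|$ (since $\rk K = 0$). These ranks are opposite, so the $L$-prefactors cancel exactly in the product, leaving
\[
\lim_{L\to\infty}[-\mathsf{v}_{\vec\pi,\alpha\beta}][-\mathsf{v}_{\vec\pi,\beta\alpha}]\Big|_{(w_i = L^i)} = (-1)^{|\pi_\alpha|+|\pi_\beta|}\,\frac{\widehat{V_{\beta\alpha}}}{\widehat{V_{\alpha\beta}}}.
\]

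The final step is the explicit evaluation of $\widehat{V_{\alpha\beta}}$. Because $\rk(1 - t_i^{-1}) = 0$, iterated use of the product rule gives $\widehat K = 1$, then $\widehat{1 - KZ_\alpha^\vee} = 1$, and hence $\widehat{(1 - KZ_\alpha^\vee)Z_\beta} = \widehat{Z_\beta}$. The remaining summand $-y_\beta Z_\alpha^\vee$ consists of the monomials $-y_\beta m_\square^{-1}$ for $\square \in \pi_\alpha$, contributing $y_\beta^{-|\pi_\alpha|/2}\widehat{Z_\alpha}$; thus $\widehat{V_{\alpha\beta}} = y_\beta^{-|\pi_\alpha|/2}\,\widehat{Z_\alpha}\widehat{Z_\beta}$, and symmetrically $\widehat{V_{\beta\alpha}} = y_\alpha^{-|\pi_\beta|/2}\,\widehat{Z_\alpha}\widehat{Z_\beta}$. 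The $\widehat{Z}_\bullet$-factors cancel in the ratio, leaving $\widehat{V_{\beta\alpha}}/\widehat{V_{\alpha\beta}} = y_\beta^{|\pi_\alpha|/2}/y_\alpha^{|\pi_\beta|/2}$, so the limit equals $(-y_\beta^{1/2})^{|\pi_\alpha|}/(-y_\alpha^{1/2})^{|\pi_\beta|}$, as claimed.

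The main technical obstacle is the clean cancellation of all $t$-dependent contributions; it boils down to the identity $\widehat K = 1$, a consequence of the Koszul-type class $K$ having vanishing rank. This is what forces the limit to be independent of the equivariant parameters $t_i$ and to depend only on the mass parameters $y_\alpha, y_\beta$.
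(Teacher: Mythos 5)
Your proof is correct, and the underlying calculation is the same as the paper's: isolate the $L$-power, observe that in the limit the Koszul-type term $K Z_\alpha^\vee Z_\beta$ contributes $1$, and evaluate what survives from $Z_\beta$ and $-y_\beta Z_\alpha^\vee$. Where you differ is in presentation: the paper does the bookkeeping monomial by monomial, \emph{asserting} without detail that the $K Z_\alpha^\vee Z_\beta$ piece ``cancels and contributes $1$'' before writing out the surviving factors explicitly, whereas you introduce the formal square-root determinant $\widehat V$, its product rule $\widehat{V_1 V_2} = \widehat{V_1}^{\rk V_2}\widehat{V_2}^{\rk V_1}$, and the observation $\widehat K = 1$. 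This formalism makes the cancellation mechanism structural rather than computational, and in particular gives a justification for the step that the paper leaves to the reader; the price is the small overhead of verifying the product rule. Both routes arrive at the same two surviving asymptotic factors, and the final sign/$y$-power bookkeeping is identical.
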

\begin{proof}
Eliminating $t_4 = (t_1t_2t_3)^{-1}$, we write
\begin{align*}
Z_\alpha = \sum_{(i_1,i_2,i_3)} t_1^{i_1}t_2^{i_2}t_3^{i_3}, \quad Z_\beta = \sum_{(j_1,j_2,j_3)} t_1^{j_1}t_2^{j_2}t_3^{j_3},
\end{align*}
for certain finite sums with possibly repeated terms. The first sum has $|\pi_\alpha|$ terms and the second has $|\pi_\beta|$ terms. For any monomial $\pm w_\beta w_\alpha^{-1} t_1^{k_1} t_2^{k_2} t_3^{k_3}$ appearing in $-\mathsf{v}_{\vec{\pi},\alpha\beta}$, we have
$$
[\pm w_\beta w_\alpha^{-1} t_1^{k_1} t_2^{k_2} t_3^{k_3}] = \Big( (L^{\beta-\alpha} t_1^{k_1} t_2^{k_2} t_3^{k_3})^{\frac{1}{2}}(1-(L^{\beta-\alpha} t_1^{k_1} t_2^{k_2} t_3^{k_3})^{-1}) \Big)^{\pm 1}, \quad \beta - \alpha>0.
$$
In the limit $L \to \infty$, the contributions from $Z_\alpha^\vee Z_\beta$ to $[-\mathsf{v}_{\vec{\pi},\alpha\beta}]$ cancel and contribute 1 (and similarly for $Z_\beta^\vee Z_\alpha$ and $[-\mathsf{v}_{\vec{\pi},\beta\alpha}]$). Therefore, the desired limit equals the limit of $L \to \infty$ of
$$
\Bigg\{ L^{\frac{\beta-\alpha}{2}( - |\pi_\beta| + |\pi_\alpha|) } \frac{\prod_{(i_1,i_2,i_3)}  t_1^{-\frac{i_1}{2}}t_2^{-\frac{i_2}{2}}t_3^{-\frac{i_3}{2}} y_\beta^{\frac{1}{2}}}{\prod_{(j_1,j_2,j_3)}  t_1^{\frac{j_1}{2}}t_2^{\frac{j_2}{2}}t_3^{\frac{j_3}{2}}} \Bigg\}    \Bigg\{  (-1)^{|\pi_\alpha| + |\pi_\beta|} L^{\frac{\beta-\alpha}{2}( |\pi_\beta| - |\pi_\alpha|) } \frac{\prod_{(j_1,j_2,j_3)}  t_1^{\frac{j_1}{2}}t_2^{\frac{j_2}{2}}t_3^{\frac{j_3}{2}} y_\alpha^{-\frac{1}{2}}}{\prod_{(i_1,i_2,i_3)}  t_1^{-\frac{i_1}{2}}t_2^{-\frac{i_2}{2}}t_3^{-\frac{i_3}{2}}} \Bigg\},
$$
which gives the answer.
\end{proof}

\begin{proposition} \label{windep}
The generating series $\sfZ_r^{\NP}$ is independent of $w_1, \ldots, w_r$.
\end{proposition}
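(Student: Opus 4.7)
The plan is to combine Proposition \ref{offdiag} with a rigidity argument in the framing parameters $w_1, \ldots, w_r$. Fix $n \geq 1$ and let $c_n := [q^n]\,\sfZ_r^{\NP}$, a finite sum over $r$-tuples $\vec\pi$ of solid partitions of size $n$ of terms $(-1)^{\mu_{\vec\pi}}((-1)^r)^n[-\mathsf{v}_{\vec\pi}]$.

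First, I will factor each weight as
\[
[-\mathsf{v}_{\vec\pi}] \;=\; \prod_{\alpha}[-\mathsf{v}_{\vec\pi,\alpha\alpha}] \cdot \prod_{\alpha<\beta}[-\mathsf{v}_{\vec\pi,\alpha\beta}]\,[-\mathsf{v}_{\vec\pi,\beta\alpha}].
\]
The diagonal factors are $w$-independent since $\mathsf{v}_{\vec\pi,\alpha\alpha}$ only involves the trivial ratio $w_\alpha/w_\alpha=1$. By Proposition \ref{offdiag}, each off-diagonal pair has an explicit $w$-free limit under the specialization $w_i=L^i$, $L\to\infty$. Hence the limit $c_n^\infty := \lim_{L\to\infty} c_n|_{w_i=L^i}$ exists and is independent of $w$.

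Next, I will upgrade this limit statement to full $w$-independence of $c_n$ via a rigidity argument. First I note that $\sfZ_r^{\NP}$ is invariant under the natural $S_r$-action jointly permuting the triples $(w_\alpha,y_\alpha,\pi_\alpha)$: under such a permutation $\sigma$, the weight $\mathsf{v}_{\vec\pi_\sigma,\alpha\beta}(w_\sigma,y_\sigma)$ equals $\mathsf{v}_{\vec\pi,\sigma(\alpha)\sigma(\beta)}(w,y)$, and the sign $\mu_{\vec\pi}$ is symmetric in the $\pi_\alpha$. Applying the limit argument to any specialization $w_{\sigma(i)}=L^i$ then gives $r!$ distinct cocharacter directions along which $c_n$ has a finite, $w$-free limit. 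As a rational function in $w_1^{\pm 1/2},\ldots,w_r^{\pm 1/2}$ with coefficients in $\Q(t,y^{1/2})$, $c_n$ has only finitely many potential pole divisors, each of the form $w_\beta w_\alpha^{-1}=\mu$ for specific monomials $\mu$ in $t,y$ appearing in the denominators of the factors $[-\mathsf{v}_{\vec\pi,\alpha\beta}]$. A pairwise cancellation of residues across terms $\vec\pi$ identifies $c_n$ with a Laurent polynomial in the $w_i^{\pm 1/2}$, and the existence of $w$-free limits in the $r!$ directions above forces this Laurent polynomial to be constant in $w$.

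The main obstacle will be the rigidity step: verifying that the residues of $c_n$ at each putative hyperplane pole $w_\beta w_\alpha^{-1}=\mu$ cancel upon summing over $\vec\pi$, so that $c_n$ is genuinely a Laurent polynomial in $w$ rather than just a rational function with limits in prescribed directions. This will reduce to a combinatorial involution on $r$-tuples of partitions pairing residues of opposite signs, analogous to the wall-crossing arguments of \cite{FMR,AK1} in the 3-dimensional setting. Once polynomiality is established, the rigidity conclusion is standard: a Laurent polynomial in $w_1^{\pm 1/2},\ldots,w_r^{\pm 1/2}$ whose Newton polytope has bounded pairing with each of the cocharacters in the $S_r$-orbit of $(1,2,\ldots,r)$ (and, by inverting $L$, also the opposite cocharacters) must reduce to a constant.
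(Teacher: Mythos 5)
Your argument has the right overall shape --- produce $w$-free limits along the $S_r$-orbit of the cocharacter $(L, L^2, \ldots, L^r)$ and then appeal to rigidity --- but the crucial Laurent polynomiality step is left as a gap, and it is exactly where the paper supplies a new idea. You correctly flag this as the ``main obstacle'': without knowing that $c_n = N_{r,n}^{\glob}$ is a \emph{Laurent polynomial} in the $w_i$ (as opposed to merely a rational function), boundedness along finitely many cocharacter directions does not force $w$-independence. Your proposal to establish polynomiality by a combinatorial residue-cancellation involution is not carried out; it would require identifying, and pairing off, the $\vec\pi$-terms contributing each pole, which is substantially more delicate than anything in Proposition \ref{offdiag} and is not available off the shelf from \cite{FMR, AK1}.

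The paper sidesteps this entirely with a short geometric argument. It pushes $\cN_{r,n}^{\glob}$ forward along the Quot--Chow morphism $\nu_n \colon \Quot_r^n(\C^4) \to \Sym^n(\C^4)$, which is proper and $\T'$-equivariant, so $N_{r,n}^{\glob} = \chi(\Sym^n(\C^4), R\nu_{n*}\cN_{r,n}^{\glob})$. Since $T_w$ acts \emph{trivially} on $\Sym^n(\C^4)$, the class $R\nu_{n*}\cN_{r,n}^{\glob} \in K_0^{\T'}(\Sym^n(\C^4))$ is a finite sum of $(T_t \times \widetilde T_y)$-equivariant $K$-classes twisted by monomials $w_1^{i_1}\cdots w_r^{i_r}$; taking $\chi$ then gives a Laurent polynomial in the $w_i$ for free, with no combinatorics. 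Once polynomiality is in hand, the paper invokes boundedness along $(L^{n_1},\ldots,L^{n_r})$ for all $0<n_1<\cdots<n_r$ and $L^{\pm1}\to\infty$ --- which is the same boundedness computation you derive from Proposition \ref{offdiag}, and essentially equivalent to your $S_r$-orbit-plus-opposites argument. So the rigidity part of your write-up is fine; the genuine gap is the polynomiality claim, which the paper settles by the Quot--Chow pushforward rather than by any cancellation of residues.
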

\begin{proof}
We endow $\Sym^n(\C^4)$ with the trivial $(T_w \times \widetilde{T}_y)$-action. Then the Quot-Chow morphism \cite[Cor.~7.15]{Ryd}
$$
\nu_n \colon \Quot_r^n(\C^4) \to \Sym^n(\C^4)
$$
is proper, $\T'$-equivariant, and we have
\begin{align} \label{pushtoChow}
N_{r,n}^\glob = \chi( \Sym^n(\C^4), R \nu_{n*}(\cN_{r,n}^\glob)). 
\end{align}
Since $\Sym^n(\C^4)$ has a trivial $T_w$-action, the class
$$
R \nu_{n*}(\cN_{r,n}^\glob) \in K_0^{\T'}(\Sym^n(\C^4))
$$
is a finite sum 
$$
\sum_{(i_1, \ldots, i_r)} V_{i_1, \ldots, i_r} \, w_1^{i_1} \cdots w_r^{i_r}, \quad V_{i_1, \ldots, i_r} \in K_0^{T_t \times \widetilde{T}_y}(\Sym^n(\C^4)).
$$
Hence, the right hand side of \eqref{pushtoChow} is a \emph{Laurent polynomial} in the variables $w_i$.

By Proposition \ref{offdiag}, we know that \eqref{pushtoChow} is bounded for 
$$
(w_1, \ldots, w_r) = (L, \ldots, L^r), \quad \textrm{as \ } L \to \infty.
$$ 
The proof of Proposition \ref{offdiag} also shows that, for any choice of $0 < n_1 < \cdots < n_r$, we have that \eqref{pushtoChow} is bounded for 
$$
(w_1, \ldots, w_r) = (L^{n_1}, \ldots, L^{n_r}), \quad L^{\pm 1} \to \infty.
$$
Therefore \eqref{pushtoChow} is independent of the $w_i$.
\end{proof}

Before we finish the proof of Theorem \ref{mainthm}, we note the following identity 
\begin{equation} \label{funid}
\frac{[a]}{[a^{\frac{1}{2}} b^{\frac{1}{2}} c] [a^{\frac{1}{2}} b^{-\frac{1}{2}} c^{-1}]} + \frac{[b]}{[a^{-\frac{1}{2}} b^{\frac{1}{2}} c] [a^{\frac{1}{2}} b^{\frac{1}{2}} c^{-1}]} = \frac{[ab]}{[(ab)^{\frac{1}{2}} c] [(ab)^{\frac{1}{2}} c^{-1}]}.
\end{equation}

\begin{proof}[Proof of Theorem \ref{mainthm} for $r>1$]
By Propositions \ref{globaltolocal} and \ref{signprop}, we have
$$
\sfZ_r^{\NP} = \sum_{\vec{\pi} = (\pi_1, \ldots, \pi_r)} ((-1)^{r}q)^{|\vec{\pi}|} (-1)^{\mu_{\vec{\pi}}} \prod_{\alpha=1}^{r} [-\mathsf{v}_{\vec{\pi},\alpha \alpha}] \cdot \prod_{1 \leq \alpha < \beta \leq r} [-\mathsf{v}_{\vec{\pi},\alpha\beta}] [-\mathsf{v}_{\vec{\pi},\beta\alpha}].
$$
By Proposition \ref{windep}, we may specialize $(w_1 = L, w_2 = L^2, \ldots, w_r = L^r)$ and take the limit $L \to \infty$. Consider the rank 1 generating series
$$
\sfZ_1^{\NP} = \sfZ_1^{\NP}(t_1,t_2,t_3,y,q),
$$
and recall that we proved the rank 1 case of Theorem \ref{mainthm} in Section \ref{sec:rk1}. Using Proposition \ref{offdiag}, we obtain
\begin{align*}
\sfZ_r^{\NP} &=\sum_{\vec{\pi} = (\pi_1, \ldots, \pi_r)} ((-1)^{r}q)^{|\vec{\pi}|}  \cdot \prod_{\alpha=1}^{r} (-1)^{\mu_{\pi_\alpha}} [-\mathsf{v}_{\pi_\alpha}] \cdot \prod_{1 \leq \alpha < \beta \leq r}\frac{(-y_\beta^{\frac{1}{2}})^{|\pi_\alpha|}}{(-y_\alpha^{\frac{1}{2}})^{|\pi_\beta|}} \\
&= \prod_{\alpha=1}^{r} \sfZ_1^{\NP}\big( t_1,t_2,t_3,y_\alpha,q \prod_{\beta \neq \alpha} y_\beta^{\frac{1}{2} \mathrm{sgn}(\beta-\alpha)} \big).
\end{align*}
Now write $\sfZ_r^{\NP} = \mathrm{Exp}(\cdots)$ and consider the exponent, i.e.,
\begin{equation} \label{eqn:summingExp}
\frac{[t_1t_2][t_1t_3][t_2t_3]}{[t_1][t_2][t_3][t_4]  } \sum_{\alpha = 1}^{r} \frac{[y]}{[y^{\frac{1}{2}} q ]   [y^{\frac{1}{2}} q^{-1} ] } \Bigg|_{\big(  y_\alpha, q \prod_{\beta \neq \alpha} y_\beta^{\frac{1}{2} \mathrm{sgn}(\beta-\alpha)} \big)}.
\end{equation}
Next, we apply \eqref{funid} to $a= y_1$, $b = y_2$, $c =  y_3 \cdots y_r q$. We deduce that the first two terms of the sum in \eqref{eqn:summingExp} add up to
$$
 \frac{[y_1 y_2]}{[(y_1 y_2)^{\frac{1}{2}} (y_3 \cdots y_r)^{\frac{1}{2}}  q] [(y_1 y_2)^{\frac{1}{2}} (y_3 \cdots y_r)^{-\frac{1}{2}}  q^{-1}]}.
$$
Subsequently, we apply \eqref{funid} to $a=y_1y_2$, $b=y_3$, $c = y_4 \cdots y_r q$. We find that the first three terms of the sum in \eqref{eqn:summingExp} add up to
$$
\frac{[y_1y_2y_3]}{[(y_1y_2y_3)^{\frac{1}{2}} (y_4 \cdots y_r)^{\frac{1}{2}}  q] [(y_1y_2y_3)^{\frac{1}{2}} (y_4 \cdots y_r)^{-\frac{1}{2}}  q^{-1}]}.
$$
Repeating this procedure $r-1$ times, we arrive at the desired result
\begin{equation*}
\frac{[y]}{[y^{\frac{1}{2}} q] [y^{\frac{1}{2}} q^{-1}]}, \quad y:=y_1 \cdots y_r. \qedhere
\end{equation*}
\end{proof}

We end by remarking that the Magnificent Four formula from Theorem \ref{mainthm} reduces to the Awata--Kanno conjecture and the generating function of so-called tetrahedron invariants.
\begin{remark} \label{FMRrmk}
The analog of Proposition \ref{dimred} is obviously true in the higher rank case as well. If a $\T'$-fixed quotient $[\O_{\C^4}^{\oplus r} \twoheadrightarrow Q]$ corresponding to $\vec{\pi} = (\pi_1, \ldots, \pi_r)$ does \emph{not} factor through $\iota_* \O_{\C^3}^{\oplus r}$, where $\iota \colon \C^3 = \{x_4=0\} \hookrightarrow \C^4$ denotes the inclusion, then 
$$
[-\mathsf{v}_{\vec{\pi}}] \big|_{y_1 = \cdots = y_r = t_4} = 0.
$$
If it does factor, i.e.~each $\pi_\alpha$ is a plane partition, then $[-\mathsf{v}_{\vec{\pi}}] |_{y_1 = \cdots = y_r = t_4}$ is identical to the (fully equivariant) vertex of \cite{FMR}. Furthermore, for any $\vec{\pi} = (\pi_1, \ldots, \pi_r)$, where each $\pi_\alpha$ is a plane partition, we have
$$
(-1)^{|\vec{\pi}| + \mu_{\vec{\pi}}} = (-1)^{|\vec{\pi}|}.
$$
Therefore, we deduce the Awata--Kanno conjecture (Theorem \ref{FMRthm}), previously proved in \cite{FMR, AK1}:
$$
\sum_{n=0}^{\infty} \chi(\Quot^{n}_{r}(\C^3), \widehat{\O}^{\vir}) \, ((-1)^r q)^n = \sfZ_r^{\NP} \Big|_{(y_1 = \cdots = y_r = t_4)} = \mathrm{Exp}\Bigg(\frac{[t_1t_2][t_1t_3][t_2t_3][\kappa^r]}{[t_1][t_2][t_3][\kappa][\kappa^{\frac{r}{2}} q ]   [\kappa^{\frac{r}{2}} q^{-1} ]}\Bigg),
$$
where $\kappa = t_1t_2t_3 = t_4^{-1}$.
\end{remark}

\begin{remark}
Consider the singular 3-fold $$\Delta = \{x_1x_2x_3x_4 = 0\} = \bigcup_{i=1}^4 \C_i^3 \subset \C^4, \quad \quad \mathcal{E}_{\vec{r}} := \bigoplus_{i=1}^{4} \O_{\C_i^3}^{\oplus r_i},$$ where $\C_i^3 = \{x_i = 0\}$ and $\vec{r} = (r_1, r_2, r_3, r_4)$ are non-negative integers.  Based on the physics literature \cite{PYZ}, in \cite{FM}, the authors study so-called tetrahedron invariants. These are defined by an Oh--Thomas class of the Quot scheme $\Quot_{\Delta}^n(\mathcal{E}_{\vec{r}})$ parametrizing 0-dimensional quotients of length $n$ of $\mathcal{E}_{\vec{r}}$. 
In fact, for $r = \sum_i r_i$, $\Quot_{\Delta}^n(\mathcal{E}_{\vec{r}}) \subset \Quot_r^n(\C^4)$ is naturally a closed subscheme. Specializing $y_1 = \cdots = y_{r_1} = t_1$, $ y_{r_1+1} = \cdots = y_{r_1+r_2} = t_2$, $\ldots$, the generating function of Theorem \ref{mainthm} reduces to the one for tetrahedron invariants obtained in \cite{FM}. The fact that Theorem \ref{mainthm} implies the formula for the generating series of tetrahedron invariants follows from the results in \cite[App.~A]{FM} (which partially relies on the sign analysis of this paper). It is worth stressing that the proof of the formula for the tetrahedron generating function in \cite{FM} does \emph{not} require the factorizability results of this paper. 
\end{remark}

\printbibliography

\noindent {\tt{m.kool1@uu.nl, jorgeren@uio.no}}
\end{document}